\documentclass[11pt]{amsart}
\usepackage{amssymb}
\usepackage{amsfonts}
\usepackage{amsmath}
\usepackage{graphicx}
\usepackage{xcolor}
\usepackage{mathrsfs}
\usepackage{stmaryrd}
\usepackage{epsfig,color}
\usepackage{blindtext}
\usepackage{enumerate}
\usepackage{hyperref}
\usepackage{url}
\usepackage{bbm}
\usepackage{filecontents}
\usepackage{nicefrac,mathtools}
\usepackage{bm}  
\usepackage[nocompress]{cite}
\DeclareGraphicsExtensions{.pdf,.jpeg,.png}
\usepackage{epstopdf}
\usepackage{cancel} 
\usepackage[normalem]{ulem} 
\usepackage{verbatim} 
\usepackage{enumitem} 
\usepackage{tikz-cd}
\usetikzlibrary{cd}
\usepackage{color}
\usepackage[msc-links, lite]{amsrefs}
\usepackage{geometry}
\usepackage{tikz}
\usetikzlibrary{decorations.markings}
\usetikzlibrary{arrows.meta}

\usepackage{extarrows}
\newtheorem{theorem}{Theorem}[section]

\newtheorem{proposition}[theorem]{Proposition}
\newtheorem{lemma}[theorem]{Lemma}
\newtheorem{corollary}[theorem]{Corollary}
\newtheorem{question}[theorem]{Question}

\theoremstyle{definition}
\newtheorem{definition}[theorem]{Definition}
\newtheorem{example}[theorem]{Example}

\newtheorem{remark}[theorem]{Remark}

\numberwithin{equation}{section}

\numberwithin{equation}{section}

\makeatletter
\let\save@mathaccent\mathaccent
\newcommand*\if@single[3]{%
\setbox0\hbox{${\mathaccent"0362{#1}}^H$}%
\setbox2\hbox{${\mathaccent"0362{\kern0pt#1}}^H$}%
\ifdim\ht0=\ht2 #3\else #2\fi
}

\makeatother

\makeatletter
\newcommand*{\transpose}{%
{\mathpalette\@transpose{}}%
}
\newcommand*{\@transpose}[2]{%
\raisebox{\depth}{$\m@th#1\intercal$}%
}
\makeatother
\usetikzlibrary{hobby}

 \usetikzlibrary{decorations}
\makeatletter
\def\pgfutil@Repeat#1#2{#2\ifnum#1>0
  \expandafter\pgfutil@firstofone\else\expandafter\pgfutil@gobble\fi
  {\expandafter\pgfutil@Repeat\expandafter{\the\numexpr#1-1\relax}{#2}}}
\tikzset{
  dash between/.code args={#1 and #2}{%
    \tikz@addoption{%
      \pgfgetpath\currentpath
      \pgfprocessround{\currentpath}{\currentpath}%
      \pgf@decorate@parsesoftpath{\currentpath}{\currentpath}%
      \pgfmathsetlengthmacro\firstpart{(#1)*\pgf@decorate@totalpathlength}%
      \pgfmathsetlengthmacro\secondpart{(#2-(#1))*\pgf@decorate@totalpathlength}%
      \pgfmathsetlengthmacro\thirdpart{(1-(#2))*\pgf@decorate@totalpathlength}%
      \edef\thirdpart{{\thirdpart}{0pt}}%
      \edef\firstpart{{\firstpart}{0pt}}%
      \pgfmathsetlengthmacro\secondpartlength{\pgfkeysvalueof{/tikz/dash between on}
                                            +(\pgfkeysvalueof{/tikz/dash between off})}%
      \pgfmathtruncatemacro\repetitions{\secondpart/\secondpartlength}%
      \pgfmathsetlengthmacro\secondexpand{\secondpart/\repetitions-\secondpartlength}%
      \edef\secondexpand{\the\dimexpr\pgfkeysvalueof{/tikz/dash between off}+\secondexpand\relax}%
      \edef\secondpart{%
        \pgfutil@Repeat{\the\numexpr\repetitions-1\relax}%
          {{\pgfkeysvalueof{/tikz/dash between on}}{\secondexpand}}%
      }%
      \edef\tikz@temp{\firstpart\secondpart\thirdpart}%
      \expandafter\pgfsetdash\expandafter{\tikz@temp}{+0pt}%
    }
  }
}
\makeatother
\tikzset{
  dash between style/.is choice,
  dash between style/dotted/.style        ={dash between on=\pgflinewidth,dash between off=2pt},
  dash between style/densely dotted/.style={dash between on=\pgflinewidth,dash between off=1pt},
  dash between style/loosely dotted/.style={dash between on=\pgflinewidth,dash between off=4pt},
  dash between style/dashed/.style        ={dash between on=3pt,dash between off=2pt},
  dash between style/loosely dashed/.style={dash between on=3pt,dash between off=6pt},
  dash between style/densely dashed/.style={dash between on=3pt,dash between off=2pt},
  dash between style/no/.style={dash between on=0pt, dash between off=1pt},
  dash between on/.initial=\pgflinewidth,
  dash between off/.initial=2pt,
  middle dotted line/.style={
    thick,
    dash between=.35 and .65}}

\newcommand\QQ{\mathbb{Q}}
\newcommand\CC{\mathbb{C}}

\newcommand\ZZ{\mathbb{Z}}

\newcommand\FF{\mathbb{F}}
\newcommand\OO{\mathcal{O}}

\DeclareMathOperator{\Aut}{Aut}
\DeclareMathOperator{\homo}{Hom}
\DeclareMathOperator{\Tor}{Tor}
\let\Im\relax
\DeclareMathOperator{\Im}{Im}
\DeclareMathOperator{\Ker}{Ker}
\DeclareMathOperator{\Coker}{Coker}
\DeclareMathOperator{\et}{\acute{e}t}

\DeclareMathOperator{\dR}{dR}
\DeclareMathOperator{\crys}{crys}

\DeclareMathOperator{\Sym}{Sym}

\DeclareMathOperator{\Gal}{Gal}

\let\Im\relax
\DeclareMathOperator{\Im}{Im}

\newcommand{\catname}[1]{\textbf{#1}}

\newcommand{\Gr}{\catname{Gr}}

\DeclareMathOperator{\Spec}{Spec}
\DeclareMathOperator{\Lie}{Lie}

\DeclareMathOperator{\End}{End}
\DeclareMathOperator{\cont}{cont}

\def\hocolim{\qopname\relax m{hocolim}}

\usepackage[all]{xy}

\hypersetup{pdfborder=0 0 0}

\title{$\QQ_p$-homotopy types and applications to topology and algebraic geometry}
\author{Runjie Hu, Guozhen Wang}
\newcommand{\Addresses}{{
  \bigskip
  \footnotesize

  Runjie Hu, \textsc{Department of Mathematics, Texas A\&M University,
    College Sta, TX 77843, US}\par\nopagebreak
  \textit{E-mail address}, \texttt{ runjie.hu@tamu.edu}

  Guozhen Wang, \textsc{Fudan University Jiangwan Campus,
2005 Songhu Road, Shanghai, China}\par\nopagebreak
  \textit{E-mail address}, \texttt{ wangguozhen@fudan.edu.cn}
}}
\date{}
\begin{document}

\maketitle

\begin{abstract}
We develop a $\mathbb Q_p$-homotopy theory for $p$-complete spaces. To a $p$-complete space $X$, we associate a commutative
differential graded algebra over $\QQ_p$ by rectifying the $E_\infty$-algebra $S^*(X;\widehat{\ZZ}_p)\otimes_{\widehat{\ZZ}_p} \QQ_p$ of singular cochains. For nilpotent $p$-complete finite type spaces, we prove that the minimal model of this algebra recovers the $\mathbb Q_p$-homotopy groups and Whitehead products, in direct analogy with Sullivan's rational homotopy theory. We also prove that, for a non-simply-connected $p$-complete space, the Lie algebra dual to its $1$-minimal model is the Lie algebra of the continuous Mal'cev $\QQ_p$-completion of the fundamental group. 

We apply the $\mathbb Q_p$-homotopy theory to several questions in topology and algebraic geometry, including finite realization problems for $p$-complete spaces, finiteness properties of étale homotopy types, formality of smooth proper varieties, Galois representations on étale homotopy groups, and constraints on étale fundamental groups.
\end{abstract}

\section*{Introduction}

A classical problem at the interface of topology and algebraic geometry asks which homotopy types can be realized by complex algebraic varieties.  Sullivan’s rational homotopy theory (\cite{Sullivan-rational-homotopy}) provides a powerful algebraic approach to this question: many topological restrictions on smooth varieties can be detected from their de Rham algebras. This perspective underlies, for example, formality results for smooth proper complex varieties (\cite{Deligne-Griffiths-Morgan-Sullivan-formality}) and restrictions on their fundamental groups (\cite{Morgan-hodge-theory}).

In arithmetic geometry, the \'etale homotopy type plays a role analogous to that of the underlying topological space of a complex variety. It is therefore natural to seek a $p$-adic analogue of Sullivan’s theory and to use it to determine which $p$-complete spaces can arise from algebraic varieties. More precisely, we ask:

\begin{question}\label{Question: main question}
Which $p$-complete CW complexes are, up to homotopy equivalence, the $p$-completion of the \'etale homotopy type of a smooth algebraic variety, particularly one in positive characteristic?    
\end{question}

In this manuscript, we develop a $\QQ_p$-homotopy theory for $p$-complete spaces, and apply it to this question and several related problems. 

\subsubsection*{\underline{$\QQ_p$-homotopy theory}}\,

The central object in Sullivan's rational homotopy theory (\cite{Sullivan-rational-homotopy}) is the \textit{minimal model}. The theory
associates with every space a rational ``de Rham'' algebra whose cohomology is its rational singular cohomology ring. Algebraic invariants of the minimal model of the rational ``de Rham'' algebra are closely related to
topological invariants of the rational homotopy type, especially nilpotent rational spaces.

However, one cannot obtain a useful $\QQ_p$-homotopy theory for $p$-complete spaces merely by replacing $\QQ$ by $\QQ_p$ in Sullivan's construction, for two principal reasons. First, for a nilpotent $p$-complete space $X$, the cohomology naturally associated with $X$ is $H^*(X;\widehat{\ZZ}_{p})\otimes_{\widehat{\ZZ}_p}\QQ_p$, rather than the ordinary singular cohomology $H^*(X;\QQ_p)$. For instance, if $S^n_{p}$ denotes the $p$-completion of the $n$-sphere, then $H^{qn}(S^n_{p};\QQ_p)$ has uncountable dimension for $q\geq 2$ (\cite{more-concise}*{Remark 11.1.5}). Second, $p$-complete CW complexes are typically very large and often have uncountably many cells. Consequently, the usual rational ``de Rham'' construction does not directly yield a useful algebra over $\QQ_p$.

We overcome these difficulties by using rectification of $E_{\infty}$-algebras. For a $p$-complete space $X$, the singular cochain complex $S^*(X;\widehat{\ZZ}_p)\otimes_{\widehat{\ZZ}_p}\QQ_p$ naturally carries the structure of an $E_{\infty}$-algebra. By the
rectification theorem for $E_\infty$-algebras over fields of characteristic zero (\cite{Hinich-Model-Structure-on-homotopy-algbras}\cite{White-E-infinity-algebra-characteristic-zero}),
this $E_\infty$-algebra can be modeled, up to quasi-isomorphism, by a
commutative differential graded algebra (CDGA). We denote the resulting rectified CDGA by $C^*(X;\widehat{\ZZ}_p)\otimes_{\widehat{\ZZ}_p}\QQ_p$. 

Our first main result states that, for a nilpotent $p$-complete space $X$, the minimal model of the CDGA $C^*(X;\widehat{\ZZ}_p)\otimes_{\widehat{\ZZ}_p}\QQ_p$ recovers the homotopy groups of $X$ after tensoring them with $\mathbb Q_p$, and that its differential encodes Whitehead products, just as in Sullivan's rational homotopy theory.

\begin{theorem}[Theorem \ref{Thm: minimal model computes rational homotopy groups}]\label{Intro: Q_p minimal model and homotopy groups}
Let $X$ be a nilpotent $p$-complete finite type CW complex (Definition \ref{Def: finite type spaces}). Let $\phi:M\rightarrow C^*(X;\widehat{\ZZ}_p)\otimes_{\widehat{\ZZ}_p} \QQ_p$ be a minimal model, and let $L(n)=I^n(M)^{\vee}$ be the dual of the degree-$n$ indecomposable space of $M$. Then
\begin{enumerate}[leftmargin=0.25in]
    \item for every $i\geq 2$, there is a natural nondegenerate pairing  of $\QQ_p$-vector spaces  $\langle -,- \rangle: I^i(M)\otimes_{\QQ_p} (\pi_i(X)\otimes_{\widehat{\ZZ}_p} \QQ_p) \rightarrow \QQ_p$;
    \item the vector-space isomorphism $\bigoplus_{*\geq 2} L(*)\rightarrow \bigoplus_{*\geq 2}(\pi_{*}(X)\otimes_{\widehat{\ZZ}_p} \QQ_p)$ induced by item (1) is an isomorphism of graded Lie algebras, where $\bigoplus_{n\geq 2}L(n)$ carries the Lie bracket dual to the quadratic part of the differential of $M$ and $\bigoplus_{*\geq 2}\pi_{*}(X)$ carries the Whitehead products.
\end{enumerate}
\end{theorem}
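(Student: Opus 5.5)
We will follow Sullivan's inductive approach, organized along a principal refinement of the Postnikov tower of $X$. Since $X$ is nilpotent, $p$-complete and of finite type, each $\pi_n(X)$ is a finitely generated $\widehat{\ZZ}_p$-module. Hence $X$ admits a tower of principal fibrations
\[
K(A_k,n_k)\to X_k\to X_{k-1},
\]
where each $A_k$ is either $\ZZ/p$ or $\widehat{\ZZ}_p$, and $X\simeq \lim_k X_k$. We induct along this tower.

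\textbf{Step 1: Eilenberg--MacLane spaces.} First we compute the basic building blocks. For $A=\ZZ/p$ we have $H^*(K(\ZZ/p,n);\widehat{\ZZ}_p)\otimes\QQ_p=\QQ_p$, since all positive-degree cohomology is $p$-torsion. For $A=\widehat{\ZZ}_p$, with $n\ge 1$, we need
\[
H^*(K(\widehat{\ZZ}_p,n);\widehat{\ZZ}_p)\otimes_{\widehat{\ZZ}_p}\QQ_p\cong \Lambda_{\QQ_p}(x_n),
\]
a free graded-commutative algebra on one generator in degree $n$. This follows from the Cartan--Serre computation: $K(\widehat{\ZZ}_p,n)$ is the $p$-completion of $K(\ZZ,n)$, and $\widehat{\ZZ}_p$-cochains are insensitive to $p$-completion. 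Because free CDGAs are cofibrant, the resulting quasi-isomorphism $\Lambda(x_n)\to C^*(K(\widehat{\ZZ}_p,n))\otimes\QQ_p$ is realized by a CDGA map sending $x_n$ to a representative of the fundamental class.

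\textbf{Step 2: a relative Sullivan model for principal fibrations.} For each fibration in the tower, with $k$-invariant $\kappa\in H^{n+1}(X_{k-1};A)$, we show that the rectified cochain functor carries $X_k$ to a relative Sullivan extension:
\[
M_{k}\simeq M_{k-1}\otimes\Lambda(x_n),\qquad dx_n=\text{a representative of } \kappa\otimes\QQ_p .
\]
When $A=\ZZ/p$ the image $\kappa\otimes\QQ_p$ is zero and the fibre is invisible, so $M_k=M_{k-1}$. The key input is an Eilenberg--Moore / Serre spectral sequence argument. The Serre spectral sequence with $\widehat{\ZZ}_p$-coefficients, tensored with $\QQ_p$ (which is exact), converges because of finite type. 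Since the fibration is principal, $\pi_1$ acts nilpotently, so the local system causes no trouble. Comparing with the algebraic spectral sequence of the relative Sullivan algebra then gives the quasi-isomorphism. Rectification is needed here: the map is first built at the $E_\infty$ level and then transported along the Quillen equivalence between $E_\infty$-algebras and CDGAs over $\QQ_p$ (Hinich/White).

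\textbf{Step 3: passing to the limit.} Next we handle $X=\lim X_k$. Because of finite type, in each fixed degree the maps $H^*(X_k;\widehat{\ZZ}_p)\to H^*(X;\widehat{\ZZ}_p)$ stabilise, so $\operatorname{colim}_k M_k\to C^*(X)\otimes\QQ_p$ is a quasi-isomorphism. A possibly non-minimal Sullivan algebra has a minimal quotient. Throughout, the generators of $M$ in degree $i$ are, up to this minimalization, dual to the free parts of the $\pi_i$-layers with $A=\widehat{\ZZ}_p$. This produces item (1): the pairing $\langle x,\alpha\rangle$ for $\alpha\in\pi_i(X)$ is obtained by pulling $x$ back along the CDGA map induced by the composite
\[
S^i\to X \;\text{ (well-defined up to homotopy, representing } \alpha\text{)},
\]
composed with the model map $M\to C^*(S^i)\otimes\QQ_p\simeq\Lambda(e_i)/(e_i^2)$ or its Sullivan model. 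Concretely,
\[
\langle x,\alpha\rangle=\text{the coefficient of } x \text{ on } e_i \text{ in } I^i .
\]
This is well defined on indecomposables and is natural. Nondegeneracy follows from comparing dimensions: both sides have dimension equal to the $\widehat{\ZZ}_p$-rank of $\pi_i(X)$, by the construction in Step 2. For this one must check that minimalization removes exactly the contractible pairs arising from torsion and from nontrivial $\QQ_p$-linear $k$-invariants; comparing linear parts shows this.

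\textbf{Step 4: Whitehead products.} For item (2), the Whitehead product $[\alpha,\beta]$ is the composite
\[
S^{i+j-1}\to S^i\vee S^j\xrightarrow{\alpha\vee\beta} X .
\]
Using naturality, the statement reduces to the universal case $S^i\vee S^j$, or $S^i\times S^j$ with the attaching map of the top cell. That model is computed by $p$-completing the classical rational computation, using that $\widehat{\ZZ}_p$-cochains of a finite complex, tensored with $\QQ_p$, agree with $C^*(-;\QQ)\otimes\QQ_p$. There the quadratic part of $d$ is dual to the bracket, up to Sullivan's sign convention.

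\textbf{Main obstacle.} We expect Step 2 to be the hardest step: constructing honest CDGA maps from the rectified $E_\infty$-cochains that realise the Serre spectral sequence comparison. This is also where the finite-type hypothesis has to rule out $\lim^1$ and uncountability phenomena. The first approach to try is to work entirely in the $\infty$-category of $E_\infty$-$\QQ_p$-algebras, where pushouts model pullbacks of $p$-complete nilpotent finite type spaces (an Eilenberg--Moore type theorem for $\widehat{\ZZ}_p$-cochains), and to rectify only at the end.
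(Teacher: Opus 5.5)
Your overall route is the paper's. You induct over a refined principal Postnikov tower and use the Eilenberg--MacLane computation. A Serre-spectral-sequence comparison then shows that each principal fibration adds an elementary extension. For (2) you use naturality plus reduction to $S^m\vee S^n$, whose $\QQ_p$-model is the scalar extension of the rational one.

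On your flagged obstacle in Step 2: the paper needs no $E_\infty$-level construction. The extension is free over $M_{k-1}$, so one simply sends $x_n$ to a cochain $\widetilde{x}$ with $d\widetilde{x}=f^*y$, where $y$ is the transgression cocycle. The real work is making this map filtration-compatible. The paper filters $C^*(E)\otimes k$ by homotopy fibres of restriction to $f^{-1}(B_{s-1})$ and compares $E_2$-pages.

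\textbf{The gap: nondegeneracy in (1).} You derive it from equality of dimensions, but equal dimensions do not make a pairing nondegenerate; the zero pairing is an example. What is needed, and what the paper's induction implicitly uses, is triangularity along the tower. Take a free layer $K(\widehat{\ZZ}_p,n)\xrightarrow{j}X_k\to X_{k-1}$ with $n\ge 2$. Since $\pi_{n+1}(X_{k-1})=0$, the sequence $0\to\pi_n(K)\to\pi_n(X_k)\to\pi_n(X_{k-1})\to 0$ is exact.
\begin{itemize}
\item The old generators in $I^n(M_{k-1})$ pair with $\pi_n(X_k)$ through $\pi_n(X_{k-1})$, so they vanish on $j_*\pi_n(K)$.
\item The new generator $x_n$ maps to a cochain whose restriction to the fibre represents the fundamental class. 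By the Eilenberg--MacLane lemma and Hurewicz, it pairs to $1$ with $j_*\iota$.
\end{itemize}
The pairing matrix is therefore block triangular. Its diagonal blocks are the old pairing, nondegenerate by induction, and the fibre pairing. Torsion layers change neither side after tensoring with $\QQ_p$.

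\textbf{The minimalization step.} This step is misconceived, and read literally it contradicts your own dimension count. Nothing needs to be removed. We have $dx_n\in M_{k-1}^{n+1}$, and $M_{k-1}$ is generated in degrees $\le n$, so $dx_n$ is automatically decomposable; a $k$-invariant can never contribute a linear term. Torsion layers add no generators, as you note in Step 2. Hence $\operatorname{colim}_k M_k$ is already minimal, with exactly one degree-$i$ generator per free $\pi_i$-layer. If minimalization really discarded generators for ``nontrivial linear $k$-invariants'', $\dim I^i(M)$ would fall below the rank of $\pi_i(X)$.

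\textbf{Well-definedness when $\pi_1(X)\neq 0$.} Your claim that the pairing is well defined ``up to homotopy'' needs care here. Free homotopy classes of maps $M\to M_{S^i}$ only see $\pi_1$-orbits. A homotopy derivation may take arbitrary constant values on $M^1$, and this shifts the induced map on $I^i$ through terms $a\cdot b$ with $a\in M^1$ in $dx$. To get a pairing on $\pi_i(X)$ itself, use based (augmented) homotopies, or define the pairing layer by layer as above.
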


Thus, the minimal model of the CDGA $C^*(X;\widehat{\ZZ}_p)\otimes_{\widehat{\ZZ}_p}\QQ_p$ contains more information than cohomological algebra. It is tantamount to all the $\QQ_p$-homotopy information of $X$, including the $\QQ_p$-homotopy groups and the Lie algebra structure induced by Whitehead products.

For a non-nilpotent space, the rational homotopy theory relates its $1$-minimal model to the Mal'cev completion (\cite{Malcev-nilpotent-torsion-free-groups}\cite{Quillen-rational-homotopy}\cite{Betts-thesis}\cite{Clement-Majewicz-Zyman-theory-nilpotent-groups}) of its fundamental group. We also establish a $p$-adic analogue of this correspondence. 

The \textit{$1$-minimal model} $M(1)$ of a CDGA is the subalgebra of its minimal model generated by degree-$1$ elements. The dual of the differential $d:M(1)^1\rightarrow M(1)^2$ defines a Lie bracket. The relationship between Mal'cev completion and minimal models is expressed by the following theorem.

\begin{theorem}[Theorem \ref{Thm: 1-minimal model is Malcev completion}]
Let $X$ be a connected CW complex whose fundamental group $\pi_1(X)$ is a topologically finitely generated pro-$p$ group. Let $\phi_X:M_X(1)\rightarrow C^*(X;\widehat{\ZZ}_p)\otimes_{\widehat{\ZZ}_p}\QQ_p$ be a $1$-minimal model, and let $L_X(1)$ denote the pro-nilpotent Lie algebra dual to $M_X(1)^1$. Then there is a canonical isomorphism of pro-nilpotent Lie algebras $L_X(1)\cong \Lie(\pi_1(X)\widehat{\otimes} \QQ_p)$.
\end{theorem}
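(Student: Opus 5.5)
Our plan is to follow the classical Sullivan--Quillen strategy, replacing discrete groups by pro-$p$ groups and $\QQ$ by $\QQ_p$. We build $M_X(1)$ as an increasing union $M(1)=\bigcup_k M_k$ of Hirsch extensions. Here $M_1=\Lambda(H^1)$ with $H^1=H^1(X;\widehat{\ZZ}_p)\otimes\QQ_p$, and $M_{k+1}=M_k\otimes\Lambda(V_{k+1})$, where $V_{k+1}=\Ker\bigl(H^2(M_k)\to H^2(X;\widehat{\ZZ}_p)\otimes\QQ_p\bigr)$. Dually, $L_k=(M_k^1)^\vee$ is a finite-dimensional nilpotent Lie algebra over $\QQ_p$. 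Finite dimensionality follows because $\pi_1(X)$ is topologically finitely generated, so $H^1(X;\FF_p)$ is finite and hence $H^1(X;\widehat{\ZZ}_p)$ has finite rank. The $L_k$ form a tower of central extensions $0\to V_{k+1}^\vee\to L_{k+1}\to L_k\to 0$, and $L_X(1)=\varprojlim L_k$.

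In parallel, we filter the pro-$p$ group $G=\pi_1(X)$ by its closed lower central series $\Gamma_k G$. Each $G/\Gamma_{k+1}G$ is a finitely generated nilpotent pro-$p$ group, so it is a $p$-adic analytic group. Its continuous Mal'cev completion $N_k=(G/\Gamma_{k+1}G)\widehat{\otimes}\QQ_p$ is then a unipotent group over $\QQ_p$ with Lie algebra $\mathfrak n_k$, and $\Lie(G\widehat{\otimes}\QQ_p)=\varprojlim \mathfrak n_k$.

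The key comparison is cohomological, carried out by induction on $k$. First, the CDGA $C^*(BN;\widehat{\ZZ}_p)\otimes\QQ_p$ for a finitely generated nilpotent pro-$p$ group $N$ computes continuous cohomology $H^*_{\cont}(N;\QQ_p)$. By Lazard's theorem, for uniform open subgroups this agrees with Lie algebra cohomology $H^*(\mathfrak n;\QQ_p)$. Passing to the whole of $N$ via a finite-index argument is harmless rationally, because invariants under a finite $p$-group are exact over $\QQ_p$. Lie algebra cohomology is in turn computed by the Chevalley--Eilenberg CDGA $\Lambda(\mathfrak n^\vee)$, which is exactly the minimal model dual to $\mathfrak n$. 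Second, the classifying map $X\to B(G/\Gamma_{k+1}G)$ induces an isomorphism on $H^1$ and an injection on $H^2$ with continuous $\widehat{\ZZ}_p$-coefficients tensored with $\QQ_p$. This is the Stallings-type five-term exact sequence for the central extension, taken in continuous cohomology.

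Combining these, we compare the two towers. Stallings' theorem in its rational pro-$p$ form says a map inducing an isomorphism on $H^1$ and an injection on $H^2$ induces isomorphisms on all lower-central quotients of Mal'cev completions. Applying it inductively identifies the Hirsch extension $M_k\to M_{k+1}$ with the Chevalley--Eilenberg algebra of the central extension $\mathfrak n_{k+1}\to\mathfrak n_k$. In particular $V_{k+1}^\vee\cong \Gamma_{k+1}G/\Gamma_{k+2}G\otimes\QQ_p$, which gives $L_k\cong\mathfrak n_k$ compatibly in $k$. Canonicity follows from the uniqueness of $1$-minimal models up to homotopy, which pins down the dual Lie algebra up to unique isomorphism. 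Passing to the limit then gives the theorem.

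The main obstacle is the first comparison: the rectified cochains of $X$, or of $BN$ (not $p$-complete a priori), with $\widehat{\ZZ}_p$-coefficients tensored with $\QQ_p$, must compute continuous group cohomology rather than discrete group cohomology. We would handle this by using the finite type hypothesis on $N$. Finite generation gives $H^*(BN^\delta;\FF_p)\cong H^*_{\cont}(N;\FF_p)$ for the relevant quotients, and we would pass to $\widehat{\ZZ}_p$-coefficients by the Bockstein/Milnor $\varprojlim$ sequence. The $\varprojlim^1$ term vanishes because the groups involved are finite. Only after this would we invert $p$.
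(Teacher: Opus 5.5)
Your overall architecture matches the paper's: compare the Hirsch-extension tower of $M_X(1)$ with the lower-central tower of $\pi_1(X)$, then pass to the limit. However, the step that carries the content is not justified. Lazard's theorem, with your finite-index/invariants argument, gives at best an isomorphism of graded rings $H^*_{\cont}(N;\QQ_p)\cong H^*(\mathfrak n;\QQ_p)$. That does not make $\Lambda(\mathfrak n^\vee)$ a minimal model of $C^*(BN;\widehat{\ZZ}_p)\otimes_{\widehat{\ZZ}_p}\QQ_p$. The minimal model, and hence its dual Lie algebra, depends on the quasi-isomorphism type of the CDGA and records Massey products that $H^*$ does not see; already the Heisenberg Lie algebra has a nonzero triple Massey product.

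What you need is a CDGA quasi-isomorphism $\Lambda(\mathfrak n_k^\vee)\to C^*(B(G/\Gamma_{k+1}G);\widehat{\ZZ}_p)\otimes_{\widehat{\ZZ}_p}\QQ_p$, compatible in $k$. Inductively, via Corollary~\ref{Cor: Extension of Minimal Model for Fibrations} applied to $K(\Gamma_{k+1}G/\Gamma_{k+2}G,1)\to B(G/\Gamma_{k+2}G)\to B(G/\Gamma_{k+1}G)$, this reduces to one concrete check. Under the cohomology map induced by the CDGA map already built (not an abstract Lazard isomorphism), the transgression of the group extension class must match the $2$-cocycle of the Lie extension $\mathfrak n_{k+1}\to\mathfrak n_k$. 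This compatibility is the heart of the theorem. The paper proves it in Theorem~\ref{Thm: minimal model computes rational homotopy groups}(3) by inducting over $Z(G)\to G\to G/Z(G)$, writing the group law in Mal'cev coordinates, and differentiating the skew-symmetrized group cocycles $h_i$ to obtain the Lie cocycles $k_i$. Your appeal to a ``Stallings theorem'' does not supply this.

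Your claim that $X\to B(G/\Gamma_{k+1}G)$ is injective on $H^2$ for each fixed $k$ is false, and the five-term sequence you cite shows why. The kernel of $H^2_{\cont}(G/\Gamma_{k+1}G;\QQ_p)\to H^2_{\cont}(G;\QQ_p)$ is $\homo(\Gamma_{k+1}G/\Gamma_{k+2}G,\QQ_p)$, which is exactly your $V_{k+1}$. Injectivity would force $V_{k+1}=0$, contradicting your own conclusion, and Stallings applied to $G\to G/\Gamma_{k+1}G$ would give a false statement. What is true and needed is injectivity of the colimit $\varinjlim_k H^2(B(G/\Gamma_{k+1}G);\widehat{\ZZ}_p)\otimes\QQ_p\to H^2(X;\widehat{\ZZ}_p)\otimes\QQ_p$. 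The paper gets it from Lemma~\ref{Lem: injectivity of colimit of lower central series into group}: a central extension of $G/\Gamma^i$ that splits over $G$ already splits over $G/\Gamma^{i+1}$. It applies this to the discrete group $\pi_1(X)$, together with Leray--Serre for $\widetilde X\to X\to K(\pi_1(X),1)$. Continuous-versus-discrete comparison is needed only for the nilpotent quotients, via Serre goodness. With $H^1$ an isomorphism, this makes $\bigcup_k\Lambda(\mathfrak n_k^\vee)$ itself a $1$-minimal model of $X$, so no Stallings-type theorem is needed.

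Finally, uniqueness of $1$-minimal models up to homotopy does not give a unique isomorphism of dual Lie algebras. Homotopic maps can differ on $M^1$ by inner automorphisms, so canonicity must come from the construction.
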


\subsubsection*{\underline{Finite realization of $p$-complete spaces}}\,

We use the $\QQ_p$-homotopy theory to determine when a $p$-complete space can be realized as the $p$-completion of a finite CW complex, which is the following question. 

\begin{question}\label{Question: finite CW complex realization for p-complete space}
Let $X$ be a $p$-complete space such that $H^q(X;\widehat{\ZZ}_p)$ is a finitely generated $\widehat{\ZZ}_p$-module for each $q$ and vanishes for all sufficiently large $q$. Must $X$ be the $p$-completion of a finite CW complex?
\end{question}

The notion of ``$p$-completion'' here is Sullivan's $p$-completion (\cite{Sullivan-MIT-notes}*{p.~73, the last paragraph}), which is defined even for non-nilpotent spaces. This is \textit{not} Bousfield-Kan's $\ZZ/p$-localization, Bousfield's $\mathbb{H}(\ZZ/p)$-localization or Bousfield's $\mathbb{S}(\ZZ/p)$-localization.

Belfi and Wilkerson \cite{Victor-Wilkerson-Counterexample-p-completion}*{the paragraph above Lemma 3.4} constructed a nilpotent $p$-complete finite type CW complex that is not the $p$-completion of any nilpotent finite type CW complex. They also showed a sufficient condition for a nilpotent $p$-complete finite type CW complex to be the $p$-completion of a nilpotent finite type CW complex in \cite{Victor-Wilkerson-Counterexample-p-completion}*{Theorem 3.1}.

We show a sufficient and necessary condition for Question \ref{Question: finite CW complex realization for p-complete space} in the simply-connected case. 

\begin{theorem}[Corollary \ref{Cor: integral lifting of simply-connected p-complete spaces}]\label{Intro Thm: p-completion of a finite CW complex}
Let $X$ be a simply-connected $p$-complete finite type CW complex. The following statements are equivalent.
\begin{enumerate}[leftmargin=0.25in]
    \item $X$ is the $p$-completion of a finite CW complex.
    \item The singular cochain complex $S^*(X;\widehat{\ZZ}_p)\otimes_{\widehat{\ZZ}_p} \QQ_p$ is quasi-isomorphic to the scalar extension of an $E_{\infty}$-algebra over $\QQ$, and $H^{q}(X;\ZZ/p)=0$ for sufficiently large $q$.
    \item The $\QQ_p$-minimal CDGA model of $X$ is the scalar extension of a minimal CDGA over $\QQ$, and $H^{q}(X;\ZZ/p)=0$ for sufficiently large $q$.
\end{enumerate}
\end{theorem}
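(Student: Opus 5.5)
The cycle of implications I would establish is (1)$\Rightarrow$(2)$\Leftrightarrow$(3)$\Rightarrow$(1).

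\emph{(1)$\Rightarrow$(2).} Suppose $X\simeq Y^\wedge_p$ with $Y$ a finite CW complex. Since $X$ is simply connected and of finite type, I would first reduce to the case where $Y$ is simply connected; for instance, one can kill the perfect-group part by plus-construction and cell attachments that are invisible after $p$-completion. Then $S^*(Y;\QQ)$ is an $E_\infty$-algebra over $\QQ$. Finite type of $Y$ gives $S^*(X;\widehat{\ZZ}_p)\simeq S^*(Y;\ZZ)\otimes\widehat{\ZZ}_p$ as $E_\infty$-algebras, so after inverting $p$ we get $S^*(X;\widehat{\ZZ}_p)\otimes\QQ_p\simeq S^*(Y;\QQ)\otimes_\QQ\QQ_p$. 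The $\ZZ/p$-cohomology of $X$ equals that of $Y$, which vanishes above $\dim Y$.

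\emph{(2)$\Leftrightarrow$(3).} This is formal. Rectify the $\QQ$-form $A$ to a CDGA and take its Sullivan minimal model $M_\QQ$. Then $M_\QQ\otimes\QQ_p$ is minimal and quasi-isomorphic to the rectified $C^*(X;\widehat{\ZZ}_p)\otimes\QQ_p$, and minimal models are unique up to isomorphism. Conversely, a $\QQ$-form of the minimal model yields an $E_\infty$ $\QQ$-form of the cochains, because rectification is an equivalence of homotopy theories compatible with base change.

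\emph{(3)$\Rightarrow$(1).} This is the main work. Let $M_\QQ$ be a minimal CDGA over $\QQ$ with $M_\QQ\otimes\QQ_p\cong M_X$. By Theorem~\ref{Thm: minimal model computes rational homotopy groups}, $M_\QQ$ has finite type and is simply connected. Sullivan realization then gives a simply connected rational space $X_\QQ$ of finite type whose $\QQ_p$-model is $M_X$. I would build $Y$ by the arithmetic fracture square, gluing $X$, the rational space $X_\QQ$, and $p$-completions at other primes chosen trivial, over $(X)_\QQ$. Concretely I would take the homotopy pullback
\[
Y' = X\times_{(X)_{\QQ_p}} X_\QQ ,
\]
where $(X)_{\QQ_p}$ denotes the rationalization of $X$; by finite type its homotopy groups are $\pi_*(X)\otimes\QQ$, and these contain $\pi_*(X_\QQ)\otimes\QQ_p$. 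The map $X_\QQ\to (X)_{\QQ}$ must be constructed from the isomorphism $M_\QQ\otimes\QQ_p\cong M_X$. This is the delicate step, because $\pi_*(X)\otimes\QQ$ is an uncountable $\QQ$-vector space while the models only see $\pi_*(X)\otimes_{\widehat{\ZZ}_p}\QQ_p$. I would handle this by building the map through the Postnikov towers inductively: at each stage the $k$-invariants are rational cohomology classes of $X_\QQ$ that map to the $k$-invariants of $X$ under $\otimes\QQ_p$, and the obstructions vanish because the $\QQ_p$-models agree. The resulting $Y'$ is simply connected, of finite type, with $\pi_*(Y')$ finitely generated over $\ZZ_{(p)}$ and with $Y'^\wedge_p\simeq X$, since the fibres of $X_\QQ\to(X)_{\QQ}$ are uniquely $p$-divisible.

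It remains to get finiteness. I would take $Y''$ to be a finite-type simply connected integral space by choosing a $\ZZ$-lattice: replace $\ZZ_{(p)}$ by $\ZZ[1/N]$-forms and glue with points at the primes $\ell\neq p$, so that $H_*(Y'';\ZZ)$ is finitely generated and $\ell$-torsion-free. The condition $H^q(X;\ZZ/p)=0$ for $q$ large gives $H_*(Y'';\ZZ_{(p)})$ finite in total. The rational homology, however, need not be bounded a priori. I expect this to be the real obstacle, and I would handle it by noting that $H^*(M_\QQ)\otimes\QQ_p=H^*(X;\widehat{\ZZ}_p)\otimes\QQ_p$, and that this vanishes in high degrees because $H^q(X;\ZZ/p)=0$ together with finite generation forces $H^q(X;\widehat{\ZZ}_p)=0$. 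So $Y''$ has finitely generated, bounded integral homology and is simply connected. By Wall's finiteness criterion (trivially satisfied, since $\pi_1=0$ gives $\tilde K_0(\ZZ)=0$), $Y''$ is homotopy equivalent to a finite CW complex $Y$, and $Y^\wedge_p\simeq X$.
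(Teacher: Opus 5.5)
Your (2)$\Leftrightarrow$(3) and the first half of (3)$\Rightarrow$(1) follow the paper's proof of Theorem~\ref{Cor: minimal model lifting is space lifting}. That is: realizing $M_\QQ$ by a rational space, building the map to the rationalization of $X$ through the Postnikov towers by matching $k$-invariants after $\otimes\QQ_p$, and taking the fracture-square pullback $Y'$ with $(Y')^\wedge_p\simeq X$. Your uncountability worry there is moot, since $\pi_*(X)\otimes_\ZZ\QQ=\pi_*(X)\otimes_{\widehat{\ZZ}_p}\QQ_p$.

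The genuine gap is the passage from $Y'$ to a finite complex. ``Gluing with points at the primes $\ell\neq p$'' cannot give an integral finite-type space. A nilpotent space whose $\ell$-completions are trivial for all $\ell\neq p$ is $p$-local, so this just returns $Y'$, and a $\ZZ[1/N]$-form is still not of finite type over $\ZZ$. An integral finite-type $Y''$ with $Y''_{(p)}\simeq Y'$ does exist (Lemma~\ref{Lem: local spaces have an integral lifting}: clear denominators prime to $p$ in the $k$-invariants). But nothing forces its homology to be torsion-free, or even bounded, at primes $\ell\neq p$. For instance, $Y''\times K(\ZZ/\ell,2)$ has the same $p$-localization and $\ell$-torsion in infinitely many degrees. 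So you never actually establish the bounded, finitely generated integral homology that Wall's criterion needs. Meanwhile, the obstacle you single out is not one. $H_q(Y'';\ZZ_{(p)})$ is finitely generated over $\ZZ_{(p)}$, and $H_q(Y'';\ZZ_{(p)})\otimes\ZZ/p$ embeds in $H_q(Y'';\ZZ/p)\cong H_q(X;\ZZ/p)$, which vanishes for large $q$. So it vanishes by Nakayama, and then so does $H_q(Y'';\QQ)=H_q(Y'';\ZZ_{(p)})\otimes\QQ$.

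The missing idea is to control the primes $\ell\neq p$ by truncating, which is what the paper does. Apply the homology decomposition (Hatcher, Theorem 4H.3) to the simply connected integral lift $Z'$. Take the stage $Y_N$ with $H_i(Y_N;\ZZ)\cong H_i(Z';\ZZ)$ for $i\le N$ and $H_i(Y_N;\ZZ)=0$ for $i>N$, where $N$ is chosen so that $H_i(X;\ZZ/p)=0$ for all $i>N$. By universal coefficients, $Y_N\to Z'$ is a mod-$p$ homology isomorphism: in degree $N+1$ the only candidate, $\Tor(H_N(Z';\ZZ),\ZZ/p)$, is a summand of $H_{N+1}(Z';\ZZ/p)=0$. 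So $X$ is also the $p$-completion of $Y_N$. Since $Y_N$ is simply connected with finitely generated, bounded integral homology, it is homotopy equivalent to a finite complex.

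Your torsion-free idea can also be salvaged. Glue $Y'$ in the fracture square with the $\ZZ[1/p]$-localization of a finite simply connected complex that has zero cellular differentials and is rationally equivalent to $X_\QQ$; but that complex has to be constructed.

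Finally, the reduction to simply connected $Y$ in (1)$\Rightarrow$(2) is unnecessary. Lemma~\ref{Lem: sullivan's p-adic completion of finite CW complex is good} gives $H^*(X;\ZZ/p^a)\cong H^*(Y;\ZZ/p^a)$ for any finite $Y$, and the paper argues directly from that.
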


We briefly introduce our idea of Theorem \ref{Intro Thm: p-completion of a finite CW complex} here. From the following arithmetic square of a simply-connected finite type space $Z$ as in \cite{Sullivan-MIT-notes}*{p.~87}, the sufficient and necessary condition for a simply-connected $p$-complete finite type CW complex $X$ to be the $p$-completion of a simply-connected $p$-local space is whether its rationalization is the formal $p$-completion of a simply-connected rational space.
\[
\begin{tikzcd}
    Z_{(p)} \arrow[rrr,"\text{$p$-completion}"] \arrow[d,"\text{rationalization}"'] & & & Z_p \arrow[d,"\text{rationalization}"] \\
    Z_{\QQ} \arrow[rrr,"\text{formal $p$-completion}"'] & & &  Z_{\QQ_p}
\end{tikzcd}
\]
This condition is equivalent to whether the $\QQ_p$-minimal model of $X$ is a $\QQ$-minimal model tensoring with $\QQ_p$. Along with Lemma \ref{Lem: local spaces have an integral lifting}, which shows that every simply-connected $p$-local finite space is the $p$-localization of a simply-connected finite type space, we obtain the following theorem.

Moreover, we also construct a counterexample to Question \ref{Question: finite CW complex realization for p-complete space}. Note that our example is different from Belfi-Wilkerson's in two aspects: (1) we add the cohomological condition on the $p$-complete space $X$ to make $X$ cohomologically look like the $p$-completion of a finite CW complex; (2) we consider the $p$-completion of arbitrary finite CW complexes, including non-nilpotent cases.

\begin{theorem}[Example \ref{Example: p-complete space which is not a finite type space}]
There exists a simply-connected, $p$-complete finite type CW complex $X$ such that $H^q(X;\widehat{\ZZ}_p)\otimes_{\widehat{\ZZ}_p} \QQ_p=0$ for all sufficiently large $q$, but $X$ is not, up to homotopy equivalence, the $p$-completion of any finite CW complex.
\end{theorem}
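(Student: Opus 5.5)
The plan is to reduce to Corollary \ref{Cor: integral lifting of simply-connected p-complete spaces}, i.e. Theorem \ref{Intro Thm: p-completion of a finite CW complex}. I will build a simply-connected $p$-complete finite type CW complex $X$ from finitely many $p$-complete cells. Then $H^*(X;\ZZ/p)$ and $H^*(X;\widehat{\ZZ}_p)$ are bounded, and the second condition of that theorem holds. The example will therefore have to fail the first condition: its $\QQ_p$-minimal model (equivalently, its cochain $E_\infty$-algebra) must not descend to $\QQ$. The obstruction I intend to use is a transcendental modulus of the cup product structure. For a simply-connected space, $H^*(X;\widehat{\ZZ}_p)\otimes\QQ_p$ is the cohomology of the minimal model. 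If the model were the scalar extension of a $\QQ$-minimal CDGA $M_\QQ$, then $H^*(X;\widehat{\ZZ}_p)\otimes\QQ_p\cong H^*(M_\QQ)\otimes_\QQ\QQ_p$ as graded rings. So it suffices that this ring is not the base change of any graded $\QQ$-algebra.

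The concrete target is a Poincaré-duality-type ring $H$ with $H^0=\QQ_p$, $H^2=V\cong\QQ_p^3$, $H^4\cong V^\vee$, $H^6=\QQ_p$, and zero in other degrees. Its multiplication is determined by a nonsingular ternary cubic form $f\in\Sym^3(V^\vee)$ with coefficients in $\widehat{\ZZ}_p$, via $xyz=f(x,y,z)\cdot[\mathrm{top}]$. Two such rings are isomorphic exactly when the cubics are equivalent under $GL(V)$ up to scalar. The $j$-invariant of the plane cubic curve $\{f=0\}$ is therefore an invariant of the ring. Since $\widehat{\ZZ}_p$ is uncountable, I can choose $f$ nonsingular with $j(f)$ transcendental over $\QQ$. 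Any ring obtained by base change from $\QQ$ would have degree-$6$ pairing given by a cubic defined over $\QQ$, hence with $j$-invariant in $\QQ$. This is a contradiction, so the model does not descend.

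The construction of $X$ goes in three steps.
\begin{enumerate}[leftmargin=0.25in]
\item Start with $(S^2\vee S^2\vee S^2)_p$.
\item Attach three $p$-complete $4$-cells along elements of $\pi_3((\vee^3 S^2)_p)\cong \widehat{\ZZ}_p^{6}$. These elements are chosen as $\widehat{\ZZ}_p$-combinations of Hopf maps and Whitehead products $[\iota_i,\iota_j]$, so that the products $H^2\otimes H^2\to H^4$ are prescribed by the partial derivatives of $f$.
\item Attach one $p$-complete $6$-cell along an element of $\pi_5$ of the $4$-skeleton, chosen so that the resulting cup products $H^2\otimes H^4\to H^6$ realize $f$. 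The coefficients are again arbitrary elements of $\widehat{\ZZ}_p$, since the relevant homotopy groups are $\widehat{\ZZ}_p$-modules.
\end{enumerate}
The integral cohomology is then free over $\widehat{\ZZ}_p$ of ranks $1,3,3,1$ in degrees $0,2,4,6$. Hence $H^q(X;\widehat{\ZZ}_p)\otimes\QQ_p=0$ for $q>6$, and $H^q(X;\ZZ/p)=0$ for $q>6$.

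I expect the main obstacle to be the realization step: showing that every cubic $f$ with $\widehat{\ZZ}_p$-coefficients, or at least one with transcendental $j$, arises from $p$-complete attaching maps. I would try a rational-homotopy argument over $\QQ_p$ first. The formal CDGA $(H,0)$ has a minimal model with generators in degrees $2$, $3$, $5$ and higher. Using the Whitehead-product description of Theorem \ref{Thm: minimal model computes rational homotopy groups}, the cup products $H^2\cdot H^2$ and $H^2\cdot H^4$ can be read off from the attaching maps modulo torsion. After clearing denominators by rescaling basis elements by powers of $p$, which does not change $j$, the needed attaching classes lie in the $\widehat{\ZZ}_p$-lattices $\pi_3$ and $\pi_5\otimes\QQ_p$. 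Finally, Theorem \ref{Intro Thm: p-completion of a finite CW complex} is stated for arbitrary finite CW complexes, including non-nilpotent ones. So once $X$ is built, the equivalence (1)$\Leftrightarrow$(3), together with the $j$-invariant obstruction, shows that $X$ is not the $p$-completion of any finite CW complex.
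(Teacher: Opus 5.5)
Your obstruction argument is sound and matches the paper's idea. A descent $M\cong M_\QQ\otimes_\QQ\QQ_p$ would make the triple product on $H^2$ a rational cubic up to $GL_3(\QQ_p)$ and scaling, so its $j$-invariant would lie in $\QQ$. The paper encodes the same obstruction differently: it takes generators $x_i$ in degree $2r$ with $dy_j=f_j$ and kills cohomology above degree $2rn$, so that the cohomology ring through degree $2rn$ is $\QQ_p[x_1,\dots,x_n]/(f_j)$ (no relations below degree $2rn$), and this cannot descend. It then concludes with Corollary \ref{Cor: integral lifting of simply-connected p-complete spaces}, as you do. Your Poincar\'e-duality ring works equally well.

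\textbf{The gap: step (3) of the realization.} You assert that some $\beta\in\pi_5(X_4)$ produces the evaluation pairing $H^2\otimes H^4\to H^6$ in the cone $X_4\cup_\beta e^6$, because ``the relevant homotopy groups are $\widehat{\ZZ}_p$-modules''. That fact says nothing about which pairings actually occur. The products realizable in such cones are constrained: associativity in the cone forces $(x,x',x'')\mapsto P(x'',\mu(x,x'))$ to be totally symmetric. You must exhibit a $\beta$ whose pairing is nondegenerate after $\otimes\QQ_p$.

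Theorem \ref{Thm: minimal model computes rational homotopy groups} does not supply this. It identifies $\pi_5(X_4)\otimes\QQ_p$ with the dual of $I^5$ of the minimal model of $X_4$, and Whitehead brackets with the quadratic differential. It says nothing about cup products in mapping cones. Over $\QQ$ the required class comes from a routine Quillen-model computation (the relevant cycle exists because $\mu$ is symmetric, by the Jacobi identity). However, the paper's $\QQ_p$-theory has no such cellular dictionary for $p$-complete spaces with $\widehat{\ZZ}_p$-coefficient attaching maps, and Sullivan/Quillen realization works only over $\QQ$. So your proposed ``rational-homotopy argument over $\QQ_p$'' is exactly the unproved step.

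\textbf{The repair.} Drop the explicit cells. The minimal model of the formal CDGA $(H,0)$ is simply connected with every $M^n$ finite-dimensional. Theorem \ref{Thm: realization of a minimal model} therefore gives a simply-connected $p$-complete finite type $X$ with $H^*(X;\widehat{\ZZ}_p)\otimes_{\widehat{\ZZ}_p}\QQ_p\cong H$. This vanishes above degree $6$, which is all the stated theorem asks for. Corollary \ref{Cor: integral lifting of simply-connected p-complete spaces}(1), together with your $j$-argument, then shows $X$ is not even the $p$-completion of a complex with finitely many cells in each dimension, hence not of a finite one.

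This is the paper's route. If you also want $H^q(X;\widehat{\ZZ}_p)=0$ itself for $q>6$, append the paper's tower of fibrations killing high-degree torsion. Your eight-cell construction, if step (3) were completed, would be a nicer example, but as written it is not a proof.
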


\subsubsection*{\underline{\'Etale homotopy types and finite CW complexes}}\,

We also apply the $\QQ_p$-homotopy theory to several problems in algebraic geometry. 

In \cite{grothendieck2021pursuing}*{p.~45, Appendix \S17}, Grothendieck suggested that the \'etale
homotopy type of a well-behaved scheme should resemble a finite polyhedron. This motivates the following realization problem.

\begin{question}\label{Question: finite CW complex for varieties}
When is the $\ell$-completion of the \'etale homotopy type of an algebraic variety over an algebraically closed field, up to homotopy equivalence, the $\ell$-completion of a finite CW complex?
\end{question}

There are two reasons why this question is useful. First, for a smooth proper variety in positive characteristic, the finite CW realizability of its
$\ell$-completed \'etale homotopy type can provide an obstruction to lifting the variety to characteristic zero. Second, many tools in topology, including Spanier-Whitehead duality and surgery theory, are most naturally formulated for finite CW complexes.

A first indication is that, for a proper variety, each $\QQ_{\ell}$-\'etale cohomology group is finite-dimensional and the cohomology vanishes in sufficiently high degree. Further evidence is that the $\ell$-completed \'etale homotopy type of an $\ell$-adically simply-connected smooth proper variety is $\ell$-complete finite type (\cite{Hu-Zhang-formal-manifold-structures}). Nevertheless, finite-dimensional cohomology alone is not sufficient. Our $\QQ_\ell$-homotopy theory shows that the cohomology algebra must  also satisfy a stronger rational descent condition.

\begin{theorem}[Corollary \ref{Cor: finite CW complex for positive haract3eristic variety}]
\label{Cor: finite CW complex for positive haract3eristic variety}\label{Thm: finitenes result for varieties in Introduction}
Let $X$ be a connected smooth proper variety over a separably closed field $k$ of characteristic $p\geq 0$, and let $\ell$ be a prime number. Assume that $(\pi_1^{\et}X)^{\wedge}_{\ell}=0$. Then the \'etale homotopy type of $X$ is $\ell$-adic weak equivalent to a finite CW complex if and only if $H^*_{\et}(X;\QQ_{\ell})$ is isomorphic to the scalar extension of a graded $\QQ$-algebra.
\end{theorem}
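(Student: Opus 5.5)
The plan is to reduce the statement to Theorem \ref{Intro Thm: p-completion of a finite CW complex} (with $p$ replaced by $\ell$), applied to $Y:=(\mathrm{Et}\,X)^{\wedge}_\ell$, and then to use formality of smooth proper varieties to pass between the minimal model and the cohomology algebra.

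\textbf{Step 1: $Y$ satisfies the hypotheses of the realization criterion.} Since $(\pi_1^{\et}X)^{\wedge}_\ell=0$, the space $Y$ is simply connected. By \cite{Hu-Zhang-formal-manifold-structures}, $Y$ is an $\ell$-complete finite type CW complex. Because $X$ is proper of dimension $d$ over a separably closed field, $H^q_{\et}(X;\ZZ/\ell)=0$ for $q>2d$, and hence $H^q(Y;\ZZ/\ell)=0$ for large $q$. This is the mod-$\ell$ vanishing condition in items (2) and (3) of Theorem \ref{Intro Thm: p-completion of a finite CW complex}. That theorem then says that ``$\ell$-adic weak equivalent to a finite CW complex'' holds if and only if the $\QQ_\ell$-minimal model $M_Y$ of $C^*(Y;\widehat{\ZZ}_\ell)\otimes\QQ_\ell$ descends to a minimal CDGA over $\QQ$.

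\textbf{Step 2: formality.} I will use the $\ell$-adic formality of smooth proper varieties. For $\ell\neq p$ (or in characteristic $0$), spread $X$ out over a finitely generated base, specialize to a finite field, and use Deligne's weight argument in the Deligne--Griffiths--Morgan--Sullivan style. The Frobenius action on the minimal model is then semisimple up to homotopy, with weights prescribed by degree. This gives a quasi-isomorphism $C^*(Y;\widehat{\ZZ}_\ell)\otimes\QQ_\ell\simeq H^*_{\et}(X;\QQ_\ell)$. I expect this formality step to be the main obstacle. It requires passing from separably closed $k$ to a field of finite type via invariance of étale cohomology under extension of separably closed fields, and making the weight argument work on the rectified CDGA. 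I would cite or prove it as the paper's formality result, presumably established later. The case $\ell=p$ is also delicate, and I would first check whether the paper excludes it or handles it via crystalline weights.

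\textbf{Step 3: both directions.} Write $H:=H^*_{\et}(X;\QQ_\ell)$.

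For ``if'', suppose $H\cong A\otimes_\QQ\QQ_\ell$ for a graded commutative $\QQ$-algebra $A$. Then $A$ is finite dimensional and simply connected (since $H^1=0$), so its minimal model $M_A$ over $\QQ$ exists. Because minimal models commute with field extension, $M_A\otimes\QQ_\ell$ is a minimal model of $H$. By Step 2 it is also a minimal model of $Y$, so item (3) holds.

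For ``only if'', item (3) gives $M_Y\cong N\otimes_\QQ\QQ_\ell$ for a minimal CDGA $N$ over $\QQ$. Then
\[
H\cong H^*(M_Y)\cong H^*(N)\otimes_\QQ\QQ_\ell,
\]
which is a scalar extension of the graded $\QQ$-algebra $H^*(N)$. This direction needs only Theorem \ref{Intro Thm: p-completion of a finite CW complex} and not formality.
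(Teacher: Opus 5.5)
Your proposal is correct and follows the paper's route. The paper deduces the statement from the simply-connected finite-realization criterion, packaged for \'etale homotopy types as Theorem~\ref{Thm: finite CW complex realizations of a locally noetherian scheme}, together with Theorem~\ref{Thm: formality of varieties}; the proof of the former via Proposition~\ref{Prop: liftings of simply connected p-complete finite type pro-spaces} supplies the comparison between the pro-space $X_{\et}$ and its homotopy limit that you pass over quickly, and, as you observe, formality is needed only for the ``if'' direction.

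Your hedge about $\ell=p$ is resolved because Theorem~\ref{Thm: formality of varieties} explicitly covers that case, using purity of Frobenius weights on $\QQ_p$-\'etale cohomology after a Gros--Suwa specialization. Also, neither case needs Frobenius to be semisimple up to homotopy, as your Step 2 suggests. The weight grading on the minimal model comes from generalized eigenspaces of a homotopic automorphism that preserves finite-dimensional pieces (Lemma~\ref{Lemma: rigidity of automorphisms of a minimal model}), after which Proposition~\ref{Prop: purity implies formality} applies.
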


In \cite{Hu-Zhang-nonliftable-varieties}, Hu-Zhang constructed a simply-connected smooth projective variety $X$ over $\overline{\FF}_p$ whose $H^*_{\et}(X;\QQ_{\ell})$ is not the scalar extension of any graded $\QQ$-algebra, for every $\ell\neq p$. From Theorem \ref{Thm: finitenes result for varieties in Introduction}, the $\ell$-completion of the \'etale homotopy type of Hu-Zhang's example $X$ is not the $\ell$-completion of any finite CW complex. In particular, $X$ does not admit a characteristic zero lifting.

\subsubsection*{\underline{Formality of smooth proper varieties}}\,

A CDGA is \textit{formal} if its minimal model is quasi-isomorphic to its cohomology ring, equipped with the zero differential. Formality is a strong constraint: for example, it implies that every Massey product vanishes.

In \cite{Deligne-Griffiths-Morgan-Sullivan-formality}, Deligne, Griffiths, Morgan and Sullivan proved that smooth proper complex varieties are formal over $\QQ$. Later, Deligne (\cite{Deligne-Weil-II}*{Section 5}) sketched a proof for the $\QQ_{\ell}$-formality for the \'etale homotopy type of a smooth proper variety in characteristic $p\geq 0$ with $\ell\neq p$, based on the idea that ``purity implies formality'' (\cite{Cirici-Horel-formality-torsion-coefficient}\cite{Emprin-Horel-weight-and-formality}). We prove that the formality result also holds for the $\ell=p$ case. A basic idea is to use purity of the Frobenius weight on $\QQ_p$-\'etale cohomology (\cite{Chiarellotto-le-Stum-on-purity-of-crystalline-cohomology}\cite{Milne-values-of-zeta-functions-of-varieties-over-finite-fields}). 

\begin{theorem}[Theorem \ref{Thm: formality of varieties}, Deligne-Griffiths-Morgan-Sullivan for $p=0$, Deligne for $\ell\neq p$]\label{Thm: formality in Introduction}\label{Intro Thm: formality}
Let $k$ be a separably closed field of characteristic $p\geq 0$, and let $\ell$ be a prime. Let $X$ be a connected smooth proper variety over $k$. Then the $\ell$-completion of the \'etale homotopy type of $X$ is $\QQ_{\ell}$-formal.    
\end{theorem}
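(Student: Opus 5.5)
Since $\ell$-completion and étale homotopy types are invariant under extension of separably closed fields, I will first reduce to the case where $X$ descends to a finitely generated field. Spreading out, I then pass to a smooth proper model over a finite field $\FF_q$ whose base change to $k$ (or to $\overline{\FF}_q$) recovers $X$, after possibly replacing $k$ by an algebraic closure. When $\operatorname{char} k=0$, I either specialize to a finite field of characteristic different from $\ell$ using smooth proper base change, which preserves the $\ell$-completed étale homotopy type, or invoke Deligne--Griffiths--Morgan--Sullivan via the comparison theorem. I may therefore assume $X=X_0\times_{\FF_q}\overline{\FF}_q$. The geometric Frobenius $F$ then acts on $X$, hence on the $\ell$-completed étale homotopy type, hence functorially on the rectified CDGA $A=C^*(X_{\ell};\widehat{\ZZ}_\ell)\otimes\QQ_\ell$. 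This requires the rectification to be functorial up to coherent homotopy, and I would invoke the naturality built into the construction of the earlier sections.

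The formality argument is the standard "purity implies formality". I take a minimal model $M\to A$; by uniqueness of minimal models, $F$ lifts to an automorphism $\phi$ of $M$, well defined up to homotopy. Replacing $\phi$ by the semisimple part of its Jordan decomposition (which is again a CDGA automorphism, since the semisimple part of an algebra automorphism is an algebra automorphism, and it commutes with $d$), I may assume $\phi$ acts semisimply on $M$ after a finite extension of $\QQ_\ell$. I then grade $M$ by weights, that is, by the absolute values of eigenvalues under a fixed embedding into $\CC$; for $\ell=p$ I use the $p$-adic slopes together with the archimedean weights of the crystalline Frobenius. Purity says that $H^n(A)=H^n_{\et}(X;\QQ_\ell)$ is pure of weight $n$. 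For $\ell\neq p$ this is Weil II. For $\ell=p$, I use Chiarellotto--Le Stum purity for crystalline cohomology combined with the identification $H^n_{\et}(X;\QQ_p)\otimes K\cong (H^n_{\crys}\otimes K)^{F=1}$ (the unit-root part), so that the eigenvalues of Frobenius on $p$-adic étale cohomology are $p$-adic units that are archimedean-pure of weight $n$.

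Next I show inductively on generators that $M$ may be chosen so that each generator in degree $n$ has weight $n$. Indecomposables of degree $n$ map into $H^n$ together with obstruction classes in $H^{n+1}$ of products, so the weights are forced equal to the degree. The subalgebra of elements with weight equal to degree then consists of cocycles: $d$ raises degree by one and preserves weight, so $d$ kills every element whose weight equals its degree. The inclusion of the weight-equals-degree part into $M$ and the projection onto cohomology then give quasi-isomorphisms $M\simeq H^*(M)$ in the usual way, after enlarging scalars. Formality descends along the field extension $\QQ_\ell\subset E$ by the known descent of formality for CDGAs over fields of characteristic zero.

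The main obstacle is the case $\ell=p$. Here the $p$-adic étale cohomology $H^*(X;\QQ_p)$ is only the unit-root part and need not form the full cohomology. It must be checked that this part is still stable under cup product and weight-pure; stability holds since the unit-root slope-zero subspaces multiply. It must also be checked that the Frobenius action on the $p$-completed homotopy type is compatible with the crystalline comparison. I would try first to prove that the comparison $H^*_{\et}(X;\QQ_p)\otimes W(\overline{\FF}_q)[1/p]\to H^*_{\crys}$ is an injective, Frobenius-equivariant ring map, so that purity of the crystalline eigenvalues restricts to étale cohomology. Once the eigenvalues are known to be Weil numbers of weight $n$, the weight argument above applies verbatim.
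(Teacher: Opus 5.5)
The gap is in your reduction to $X=X_0\times_{\FF_q}\overline{\FF}_q$. It sits exactly in the new case $\ell=p=\operatorname{char}k>0$.

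Spreading out only gives a smooth proper $\chi\to\Spec S$ with $S$ finitely generated over $\FF_p$ and $F=\mathrm{Frac}(S)\subset k$. A variety over $k$ need not be defined over any finite field; an elliptic curve with transcendental $j$-invariant is not. For $\ell\neq p$ you can repair this by specializing to a closed point, since smooth proper base change identifies the $\ell$-completed étale homotopy types of the generic and special geometric fibres.

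For $\ell=p$ this repair is impossible. The $p$-adic étale homotopy type, and already $H^1_{\et}(-;\QQ_p)$, is not constant in smooth proper families. For example, the Legendre family $y^2=x(x-1)(x-t)$ over $\FF_p(t)$, $p$ odd, has ordinary generic fibre with $\dim H^1_{\et}(-;\QQ_p)=1$, but supersingular fibres with $H^1_{\et}(-;\QQ_p)=0$. So as written you only prove the $\ell=p$ case for varieties already defined over a finite field, which is Corollary \ref{Cor: Formality for any varieties over finite fields}. Your crystalline discussion cannot close this gap, because it only concerns the Frobenius of a special fibre.

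The missing idea is that the weight argument only needs \emph{some} automorphism of the $p$-completed étale homotopy type of $\chi_{\overline F}$ itself whose eigenvalues on $H^r_{\et}(\chi_{\overline F};\QQ_p)$ are pure of weight $r$. The paper builds this in four steps:
\begin{enumerate}
\item It chooses a closed point $s\in\Spec S$ at which there is a Galois-equivariant isomorphism $H^r_{\et}(\chi_{\overline F};\QQ_p)\cong H^r_{\et}(\chi_{\overline{k_s}};\QQ_p)$ for all $r$, citing Gros--Suwa and Jannsen.
\item It takes a Frobenius lift $\phi$ in the decomposition group $G_s\subset\Gal(\overline F/F)$ and lets $\phi^{-1}$ act on $(\chi_{\overline F})^{\wedge}_{\et,p}$. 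Finite-field purity then makes these eigenvalues pure.
\item Lemma \ref{Lemma: rigidity of automorphisms of a minimal model} and Proposition \ref{Prop: purity implies formality} give formality of $\chi_{\overline F}$.
\item Artin--Mazur's invariance of proper varieties under extension of separably closed fields transfers formality to $X$.
\end{enumerate}

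Two smaller points in your weight step also need fixing.
\begin{itemize}
\item It is false that generators of degree $n$ can be taken of weight $n$. For $\mathbb{P}^2$ the minimal model is $\bigwedge(x,y)$ with $|x|=2$, $|y|=5$, $dy=x^3$, so $y$ has weight $6$. Indeed, if all weights equalled degrees, $d$ would vanish and $H^*$ would be free. The correct statement, Lemma \ref{Lem: weight and cohomological degree in a weighted minimal CDGA}, is that weight is at least degree and equality forces $d=0$. The formality map then kills the generators of weight strictly greater than degree.
\item When $\pi_1$ is not nilpotent, $M^1$ is infinite-dimensional. Your Jordan decomposition, and the claim that all eigenvalues on $M$ are products of eigenvalues on $H^*$, then require first replacing $\phi$ by a homotopic automorphism preserving an exhaustive tower of finitely generated sub-CDGA's. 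That is what Lemma \ref{Lemma: rigidity of automorphisms of a minimal model} supplies.
\end{itemize}
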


Theorem \ref{Intro Thm: formality} gives a constraint for Question \ref{Question: main question} on the $\ell$-complete spaces that can arise from \'etale homotopy types of smooth proper varieties.

With the $\QQ_{p}$-homotopy theory, we also obtain lower and upper bounds for the Frobenius weights of higher \'etale homotopy groups (Theorem \ref{Thm: weights on etale homotopy groups}).

\subsubsection*{\underline{Continuity of homotopy Galois actions on $\QQ_{\ell}$ minimal models}}\,

The duality between the $\QQ_p$-minimal model of a $p$-complete space and its homotopy groups, established in Theorem \ref{Intro: Q_p minimal model and homotopy groups}, allows us to apply the $\QQ_p$-homotopy theory to study the Galois representations on higher \'etale homotopy groups. From a topological perspective, the Galois action on \'etale homotopy groups is more fundamental than the induced action on cohomology groups. 

Before doing so, however, we must address the following question posed by Deligne in \cite{Deligne-Weil-II}*{(5.3.9.1)}.

\begin{question}[Deligne]\label{Question: Deligne}
Let $k$ be a field of characteristic $p\geq 0$, let $\ell$ be a prime, and let $X$ be a connected smooth proper variety over $k$. Let $M$ be a $\QQ_{\ell}$-minimal model for the $\ell$-completion of the \'etale homotopy type of $X_{\overline{k}}$. 
Is the natural homomorphism $\Gal(\overline{k}/k)\rightarrow \Aut_h(M)$ continuous, where $\Aut_h(M)$ is the group of homotopy classes of automorphisms of $M$, equipped with its natural $\ell$-adic topology?
\end{question}

We provide an affirmative answer to Question \ref{Question: Deligne}.

\begin{proposition}[Proposition \ref{Prop: continuity of Galois action on automorphism group of minimal model}]\label{Intro Prop: continuity from Galois to automorphisms of minimal models}
Let $k$ be a field of characteristic $p\geq 0$, and let $\ell$ be a prime number. Let $X$ be a connected smooth proper variety over $k$, equipped with a $k$-rational point. Let $M$ be a $\QQ_{\ell}$-minimal model of $(X_{\overline{k}})^{\wedge}_{\et,\ell}$. Then the natural homomorphism $\Gal(\overline{k}/k)\rightarrow \Aut_h(M)$ induced by the continuous $\Gal(\overline{k}/k)$-action on $(X_{\overline{k}})^{\wedge}_{\et,\ell}$ is continuous, where $\Aut_h(M)$ is equipped with the $\ell$-adic topology arising from its pro-algebraic group structure over $\QQ_{\ell}$.  
\end{proposition}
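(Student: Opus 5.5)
Throughout, write $Y=(X_{\overline{k}})^{\wedge}_{\et,\ell}$ and $G=\Gal(\overline{k}/k)$. The plan is to reduce continuity into $\Aut_h(M)$ to continuity of actions on finite-dimensional $\QQ_\ell$-vector spaces, where continuity comes from the $\ell$-adic topology of étale cohomology with finite coefficients.

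\textbf{Step 1: describe the topology on $\Aut_h(M)$.} Since $X$ is proper, each $H^i_{\et}(X_{\overline{k}};\QQ_\ell)$ is finite dimensional. Assuming $M$ has finite type, $M$ is the colimit of finitely generated sub-CDGAs $M(n)$, namely the stages generated in degrees $\le n$ together with the $1$-minimal filtration stages. Hence
\[
\Aut_h(M)=\varprojlim_n \Aut_h(M(n)),
\]
each $\Aut_h(M(n))$ is a linear algebraic group over $\QQ_\ell$, and the $\ell$-adic topology is the inverse-limit topology. So it suffices to prove that each composite $G\to\Aut_h(M(n))$ is continuous.

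\textbf{Step 2: embed each stage in $\mathrm{GL}$ of a finite-dimensional space.} A homotopy automorphism of the finitely generated minimal algebra $M(n)$ is determined, up to homotopy, by how it acts on generators. I will use the standard description of $\Aut_h(M(n))$ as an algebraic quotient $\Aut(M(n))/\text{(homotopies)}$. By a faithful finite-dimensional representation, I then reduce continuity to the following claim: $G$ acts continuously on a suitable finite-dimensional invariant, namely the truncated cochains, or equivalently the cohomology of the relevant Postnikov or lower-central-series stages.

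The cleanest route is to express $M(n)$ as a minimal model of the $n$-th Postnikov stage $Y[n]$, respectively the Mal'cev quotient, using the earlier theorems. Those theorems identify the indecomposables $I^i(M)$ with duals of $\pi_i(Y)\otimes\QQ_\ell$, and identify $L(1)$ with $\Lie(\pi_1\widehat\otimes\QQ_\ell)$. The $G$-action on $Y$ through the $k$-rational point gives actions on $\pi_i(Y)$, which are finitely generated $\widehat{\ZZ}_\ell$-modules, and on $\pi_1^{\et}$. These actions are continuous because they are inverse limits of actions on finite sets: the relevant homotopy groups are profinite and the action comes from finite étale covers.

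\textbf{Step 3: conclude by the inverse-limit topology.} The homomorphism $\Aut_h(M(n))\to\prod_{i\le n}\mathrm{GL}(I^i(M))$, combined with its action on the $k$-invariants (the differentials), is injective modulo a unipotent part. To control that unipotent part, I use the continuity of $G$ on $H^*(Y[n];\widehat{\ZZ}_\ell)\otimes\QQ_\ell$, which is finite dimensional by finite type. From this I deduce continuity into $\Aut_h(M(n))$.

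\textbf{Main obstacle.} The hardest step is passing from continuity on these finite-dimensional linear invariants to continuity into $\Aut_h(M(n))$ itself. The $G$-action lives on the space $Y$, not on $M$, and the rectification is only functorial up to homotopy.

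I will first try the following. Find an open subgroup $U\subset G$ acting trivially on $\pi_*(Y[n])/\ell^m$ and on the cohomology mod $\ell^m$. Then show that elements of $U$ induce homotopy automorphisms congruent to the identity modulo $\ell^m$, using $\widehat{\ZZ}_\ell$-integral lattices in $M(n)$ coming from $S^*(Y[n];\widehat{\ZZ}_\ell)$. Neighborhoods of this form should form a basis of the $\ell$-adic topology on $\Aut_h(M(n))$, and that is what continuity requires.
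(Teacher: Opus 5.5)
The gap is in Step 3 and in the plan under ``Main obstacle''. The only inputs you use are the $\Gal(\overline{k}/k)$-actions on $I^i(M)\cong(\pi_i\otimes\QQ_\ell)^\vee$, on the $k$-invariants, on $H^*(Y[n])$, and on their reductions mod $\ell^m$. These invariants do not control $\Aut_h(M(n))$, not even up to a bounded set, so cohomology does not ``control the unipotent part''.

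Here is a concrete example. Take the minimal model $\bigwedge(a_2,c_3,x_5)$ with $dx=a^3$ (a model of $\mathbb{CP}^2\times S^3$). Define $\phi_t(x)=x+t\,ac$, $\phi_t(a)=a$, $\phi_t(c)=c$. Then:
\begin{itemize}
\item each $\phi_t$ is the identity on indecomposables, on cohomology, and on the cohomology of every Postnikov stage;
\item $\phi_t\phi_s=\phi_{t+s}$;
\item $\phi_t$ is homotopic to the identity only for $t=0$. A homotopy derivation $\eta$ would have $\eta(a)=0$ and $\eta(x)\in M^4=\QQ_\ell\,a^2$, so $d\eta(x)+\eta(a^3)=0\neq t\,ac$.
\end{itemize}
Thus $t\mapsto[\phi_t]$ embeds all of $\QQ_\ell$ into the subgroup on which your invariants, and their reductions mod $\ell^m$, are trivial. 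Hence ``$U$ acts trivially on $\pi_*/\ell^m$ and $H^*/\ell^m$'' cannot imply ``$U$ induces homotopy automorphisms congruent to the identity mod $\ell^m$''.

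The integral lattice meant to bridge this is not available as stated. $M(n)$ maps into a rectification over $\QQ_\ell$, not into $S^*(Y[n];\widehat{\ZZ}_\ell)$. You also give no argument comparing such congruences (taken modulo homotopy, through the $E_\infty$-rectification) with the $\ell$-adic topology of the algebraic group $\Aut_h(M(n))$. Separately, Step 2's identification of $I^i(M)$ with $(\pi_i\otimes\QQ_\ell)^\vee$ for $i\geq 2$ needs nilpotent finite type, which the proposition does not assume.

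What is missing is a mechanism that makes continuity automatic. The paper gets it from algebraicity. With $\Gamma=\Gal(\overline{k}/k)$, it uses the $k$-rational point to form the Borel construction $Y\to E\to B\Gamma$ with a section. By Pridham's Theorem 3.32, it identifies $Y^{\mathrm{Mal}}$ with the homotopy fibre of the relative Mal'cev completions $G(E)^{(R,\rho),\mathrm{Mal}}\to G(B\Gamma)^{R,\mathrm{Mal}}$. Conjugation through the induced section then gives a morphism of pro-algebraic groups $\Gamma^{\mathrm{alg}}\to\mathrm{RAut}(Y^{\mathrm{Mal}})$ through which the Galois action factors. Since $\Gamma\to\Gamma^{\mathrm{alg}}(\QQ_\ell)$ is continuous and morphisms of pro-algebraic groups are continuous on $\QQ_\ell$-points, continuity follows. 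Finally, $\mathrm{RAut}(Y^{\mathrm{Mal}})(\QQ_\ell)\cong\Aut_h(M)$ via cosimplicial algebras, $E_\infty$-algebras and rectification.

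If you want to keep a direct approach, you need much finer input than homotopy or cohomology. Specifically, you would use that an open subgroup acts trivially, at the simplicial level, on each finite stage $Y_i$ of the pro-system. You would then have to prove a bounded-denominator lifting statement through the $E_\infty$-rectification showing that such elements give classes near the identity. None of this is in the proposal.
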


The idea to prove Proposition \ref{Intro Prop: continuity from Galois to automorphisms of minimal models} is the following. Sullivan in \cite{Sullivan-rational-homotopy}*{Theorem 6.1} showed that, if $M$ is finitely generated, $\Aut_h(M)$ is an algebraic group over $\QQ_{\ell}$. In \cite{Pridham-Galois-actions-on-homotopy-groups-algebraic-varieties}, Pridham constructed a simplicial pro algebraic group over $\QQ_p$ for any space. This construction provides an algebraic model for the ``$\QQ_p$-completion'' of a space. For a general minimal model $M$, we use Pridham's results to show that $\Aut_h(M)$ is a pro algebraic group over $\QQ_{\ell}$ and the homomorphism $\Gal(\overline{k}/k)\rightarrow \Aut_h(M)$ factors through the algebraization of $\Gal(\overline{k}/k)$.

\subsubsection*{\underline{Galois actions on \'etale homotopy groups}}\,

\'Etale cohomology carries natural Galois actions satisfying strong arithmetic constraints. For example, $p$-adic Hodge theory studies the property of Galois representations on $\QQ_p$-\'etale cohomology groups of $p$-adic varieties. We use the $\QQ_p$-homotopy theory to study the $p$-adic Hodge theoretic properties of Galois representations on higher \'etale homotopy groups.

We first briefly recall some results of $p$-adic Hodge theory. Let $K/\QQ_{p}$ be a finite field extension, and let $\OO_K$ be its ring of integers. Fontaine introduced
the $p$-adic period rings $B_{\dR}$ and $B_{\crys}$, which are $\QQ_{p}[\Gal(\overline{K}/K)]$-algebras  (\cite{Fontaine-p-adic-hodge-I}\cite{Fontaine-p-adic-Hodge-II}). One has that $B_{\dR}^{\Gal(\overline{K}/K)}=K$ and $B_{\crys}^{\Gal(\overline{K}/K)}=K_0$, where $K_0$ is the maximal unramified subfield of $K$. A continuous finite-dimensional $\QQ_p$-representation $V$ of $\Gal(\overline{K}/K)$ is \textit{de Rham} or \textit{crystalline} if $\dim_K((B_{\dR}\otimes_{\QQ_p}V)^{\Gal(\overline{K}/K)})=\dim_{\QQ_p}(V)$ or $\dim_{K_0}((B_{\crys}\otimes_{\QQ_p}V)^{\Gal(\overline{K}/K)})=\dim_{\QQ_p}(V)$, respectively. For every connected smooth proper algebraic variety $X$ over $K$, the representation $H^n(X_{\overline{K}};\QQ_p)$ is de Rham for each $n$. If $X$ has \textit{good reduction}, namely, if there exists a smooth proper scheme $\chi$ over $\OO_K$ with generic fiber $\chi_{K}=X$, then $H^n(X_{\overline{K}};\QQ_p)$ is crystalline for every $n$. 

We prove that the higher \'etale homotopy groups satisfy weaker $p$-adic Hodge-theoretic properties. Define a continuous finite-dimensional $\Gal(\overline{K}/K)$-representation to be \textit{successively de Rham} if it admits a finite $\Gal(\overline{K}/K)$-stable filtration whose graded pieces are de Rham (Definition \ref{Def: successively de Rham}). Successively crystalline representations are defined analogously. Using the $\QQ_p$-homotopy theory, we then obtain the following theorem for Galois representations on the \'etale homotopy groups of a $p$-adic variety.

\begin{theorem}[Theorem \ref{Thm: de Rham represenations for etale homotopy groups}, Theorem \ref{Thm: crystalline represenations for etale homotopy groups}]\label{Intro Thm: p-adic hodge}
Let $K/\QQ_p$ be a finite field extension, and let $\mathcal{O}_{K}$ denote its ring of integers.
\begin{enumerate}[leftmargin=0.25in]
    \item Let $X$ be a connected smooth proper adic space over $\mathrm{Spa}(K,\OO_K)$, equipped with a $\mathrm{Spa}(K,\OO_K)$-point. 
    \begin{enumerate}[leftmargin=0.27in]
        \item The Lie algebra of the continuous Mal'cev $\QQ_p$-completion of $\pi^{\et}_1(X_{\overline{K}})^{\wedge}_{p}/\Gamma^n$ is a successively de Rham representation of $\Gal(\overline{K}/K)$ for every $n$, and the Lie bracket respects the $\Gal(\overline{K}/K)$-action, where $\Gamma^*$ denotes the lower central series.
    \item If $\pi^{\et}_1(X_{\overline{K}})^{\wedge}_{p}=0$, then $\pi_n((X_{\overline{K}})^{\wedge}_{\et,p})\otimes_{\widehat{\ZZ}_{p}}\QQ_{p}$ is a successively de Rham representation of $\Gal(\overline{K}/K)$, for every $n\geq 2$.
    \end{enumerate}
    \item Let $X$ be a connected smooth proper algebraic variety over $K$, equipped with a $K$-rational point. Assume that $X$ has good reduction.
    \begin{enumerate}[leftmargin=0.27in]
        \item The Lie algebra of the continuous Mal'cev $\QQ_p$-completion of $\pi^{\et}_1(X_{\overline{K}})^{\wedge}_{p}/\Gamma^n$ is a successively crystalline representation of $\Gal(\overline{K}/K)$ for every $n$, and the Lie bracket respects the $\Gal(\overline{K}/K)$-action.
    \item If $\pi^{\et}_1(X_{\overline{K}})^{\wedge}_{p}=0$, then $\pi_n((X_{\overline{K}})^{\wedge}_{\et,p})\otimes_{\widehat{\ZZ}_{p}}\QQ_{p}$ is a successively crystalline representation of $\Gal(\overline{K}/K)$, for every $n\geq 2$.
    \end{enumerate}
\end{enumerate}
\end{theorem}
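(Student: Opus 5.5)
The plan is to reduce both parts to two inputs: the comparison between minimal models and homotopy Lie algebras (Theorem \ref{Intro: Q_p minimal model and homotopy groups} and Theorem \ref{Thm: 1-minimal model is Malcev completion}), and the fact that the Galois action on the minimal model is compatible with these identifications. Together with the continuity of that action (Proposition \ref{Intro Prop: continuity from Galois to automorphisms of minimal models}), this reduces everything to closure properties of de Rham (respectively crystalline) representations.

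\textbf{Step 1: a minimal model built from Galois-stable pieces.} Let $G=\Gal(\overline{K}/K)$. The rational point makes $(X_{\overline K})^{\wedge}_{\et,p}$ a pointed space with a continuous $G$-action. I will build the minimal model $M$ of $C^*((X_{\overline K})^{\wedge}_{\et,p};\widehat{\ZZ}_p)\otimes\QQ_p$ inductively in Sullivan's way, one stage at a time. At each stage the new generators are
\[
V_{n+1}\cong \Coker\bigl(H^{*}(M_n)\to H^*(X)\bigr)\oplus \Ker\bigl(H^{*+1}(M_n)\to H^{*+1}(X)\bigr).
\]
\begin{itemize}
\item In case (a) these are degree-one generators, giving the $1$-minimal model filtered by the lower central series.
\item In case (b), simple connectivity gives a filtration by degree.
\end{itemize}
Because of Proposition \ref{Intro Prop: continuity from Galois to automorphisms of minimal models}, $G$ acts continuously on $M$ up to homotopy. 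It therefore acts continuously and genuinely on the indecomposables $I(M)$ and on the successive quotients $V_{n+1}$, since homotopic maps induce the same map on indecomposables.

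\textbf{Step 2: the graded pieces are de Rham.} Each $V_{n+1}$ is $G$-equivariantly a subquotient of a representation built from $H^*_{\et}(X_{\overline K};\QQ_p)$ and from cohomology of $M_n$. The cohomology of $M_n$ is itself a subquotient of tensor algebras on earlier generators, by the spectral sequence of the Hirsch extension.
\begin{itemize}
\item The cohomology $H^*(X_{\overline K};\QQ_p)$ is de Rham by Scholze's theorem for proper smooth rigid spaces.
\item In case (2) it is crystalline, by $C_{\crys}$.
\end{itemize}
The class of successively de Rham (respectively crystalline) representations is closed under subquotients, extensions, tensor products and duals. To see this, refine the filtrations; graded pieces of tensor products are tensor products of de Rham pieces. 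Hence by induction every $V_{n+1}$, and so each finite piece $I^i(M)$ or $M(1)^1_{\le n}$, is successively de Rham.

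Dualizing preserves the property. Theorem \ref{Intro: Q_p minimal model and homotopy groups} identifies $I^n(M)^\vee$ equivariantly with $\pi_n\otimes\QQ_p$, because the pairing is natural and hence commutes with the $G$-action; this proves (1b) and (2b). Theorem \ref{Thm: 1-minimal model is Malcev completion} identifies the dual of the $n$-th stage of $M(1)$ with $\Lie(\pi_1/\Gamma^n\widehat\otimes\QQ_p)$, which proves (1a) and (2a). The bracket is equivariant since it is dual to the differential, which commutes with the $G$-action up to homotopy; on indecomposables it commutes strictly.

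\textbf{Main obstacle.} The delicate point is the naturality in Step 2. The cohomology of the finite stages $M_n$, for example $\Ker(H^2(M_n)\to H^2(X))$, must carry well-defined continuous $G$-actions, even though $G$ acts only up to homotopy. I expect to handle this using the relative uniqueness of minimal models: each stage $M_n\subset M$ is preserved up to homotopy, and homotopy acts trivially on cohomology. Continuity comes from Proposition \ref{Intro Prop: continuity from Galois to automorphisms of minimal models} via the pro-algebraic structure on $\Aut_h(M)$ restricted to each finite stage. A second check is that all relevant pieces are finite-dimensional. This holds because $\pi_1^\wedge$ is topologically finitely generated in case (a), and because the space is of finite type in case (b), as cited from \cite{Hu-Zhang-formal-manifold-structures}.
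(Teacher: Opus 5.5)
\textbf{Parts (1b) and (2b).} Your argument here is essentially the paper's. The ingredients are:
\begin{itemize}
\item continuity of $\Gal(\overline{K}/K)\rightarrow \Aut_h(M)$ and of its restrictions to $\Aut_h(M(n))$;
\item rigidity of $I^n(M)$, which holds because $M$ is simply connected;
\item the equivariant short exact sequence $0\rightarrow \Coker(H^n(M(n-1))\rightarrow H^n(M))\rightarrow I^nM(n)\rightarrow \Ker(H^{n+1}(M(n-1))\rightarrow H^{n+1}(M))\rightarrow 0$;
\item an induction showing that $H^*(M(n))$ is successively de Rham;
\item the natural duality with $\pi_n\otimes_{\widehat{\ZZ}_p}\QQ_p$.
\end{itemize}
Two minor differences. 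Your direct sum is equivariantly only an extension, which is harmless. Where you use the Hirsch-extension spectral sequence, the paper uses the filtration by powers of the augmentation ideal.

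\textbf{Parts (1a) and (2a): a genuine error.} Your step ``homotopic maps induce the same map on indecomposables'' is false in degree one. This is exactly the non-rigidity recorded in Example \ref{Example: rigid invariants}.

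For instance, take $M=\bigwedge(x,y,z)$ with all generators in degree $1$ and $dz=xy$ (the dual of the Heisenberg Lie algebra). The automorphism fixing $x,y$ and sending $z\mapsto z+y$ is homotopic to the identity. A homotopy derivation from it to the identity is $\theta(x)=-1$, $\theta(y)=\theta(z)=0$. Yet it acts nontrivially on $I^1(M)=M^1$. In general, homotopy classes of automorphisms of a $1$-minimal model see the dual Lie algebra automorphism only up to inner automorphisms.

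Consequently, the up-to-homotopy action of $\Gal(\overline{K}/K)$ on $M$ does not define a representation on $M(1)^1$, nor on a finite stage $M(1,n)^1$. Two of your claims are therefore not meaningful as written:
\begin{itemize}
\item that $M(1)^1_{\leq n}$ is successively de Rham;
\item that the bracket is equivariant because ``on indecomposables it commutes strictly''.
\end{itemize}
Only the graded pieces $M(1,s)^1/M(1,s-1)^1$ are rigid. Indeed, for homotopic automorphisms $f,g$ preserving the tower, $(g-f)(x)=\theta(dx)\in M(1,s-1)^1$ for every $x\in M(1,s)^1$.

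\textbf{How to fix it.} The representation in (1a) and (2a) must come from the genuine continuous action of $\Gal(\overline{K}/K)$ on $\pi=\pi_1^{\et}(X_{\overline{K}})^{\wedge}_p$ supplied by the rational point, via functoriality of continuous Mal'cev completion. Then $\Gal(\overline{K}/K)$ acts on $L=\Lie((\pi/\Gamma^n)\widehat{\otimes}\QQ_p)$ by Lie algebra automorphisms. So the bracket is automatically equivariant and the lower central series of $L$ is Galois-stable.

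This is the paper's route, and it needs no minimal models. Set $W=L/\Gamma^2L$. Iterated brackets give equivariant surjections $W^{\otimes r}\twoheadrightarrow \Gamma^rL/\Gamma^{r+1}L$. Moreover $W^\vee\cong H^1_{\cont}(\pi;\QQ_p)\cong H^1_{\et}(X_{\overline{K}};\QQ_p)$ is de Rham, and crystalline under good reduction. Hence every graded piece is de Rham, and $L$ is successively de Rham by closure under extensions.

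You could keep your minimal-model route by using this genuine action on $L$. You would then need to check that the identification of $\Gamma^sL/\Gamma^{s+1}L$ with the dual of $M(1,s)^1/M(1,s-1)^1$ is equivariant; it passes through $\Ker(H^2_{\cont}(\pi/\Gamma^s\pi)\rightarrow H^2_{\cont}(\pi))$. But once the genuine action is available, the direct argument is shorter. It also gives de Rham, not merely successively de Rham, graded pieces for the lower central series.
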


For item (2)(a), Pridham (unpublished \cite{Pridham-arxiv-galois-actions-pro-l-unipotent-fundamental}*{Corollary 3.4}) and Olsson (\cite{Olsson-p-adic-hodge}*{Theorem 1.8}) independently proved that the Lie algebra of the continuous Mal'cev $\QQ_p$-completion of $\pi^{\et}_1(X_{\overline{K}})^{\wedge}_{p}$ is a pro-object of crystalline representations.

For item (2)(b), Pridham defined a finite-dimensional $\QQ_p$-representation of $\Gal(\overline{K}/K)$ to be \textit{potentially crystalline} if there exists a finite extension $K'$ of $K$ such that the action of $\Gal(\overline{K}/K')$ is crystalline (\cite{Pridham-Galois-actions-on-homotopy-groups-algebraic-varieties}*{Definition 5.17}). He then showed that $\pi_n((X_{\overline{K}})^{\wedge}_{\et,p})\otimes_{\widehat{\ZZ}_{p}}\QQ_{p}$ is potentially crystalline if $\pi^{\et}_1(X_{\overline{K}})^{\wedge}_{p}=0$ (\cite{Pridham-Galois-actions-on-homotopy-groups-algebraic-varieties}*{Theorem 2.25, Corollary 7.29}). In \cite{Olsson-p-adic-hodge}*{Theorem 1.9}, Olsson proved that certain algebraic homotopy groups of $X$ are pro-objects of crystalline representations.

For item (1)(a), Betts in \cite{Betts-de-Rham}*{Theorem 1.4} proved that, if $X$ is the analytification of a connected smooth algebraic variety over $K$, then the Lie algebra of the continuous Mal'cev $\QQ_p$-completion of $\pi^{\et}_1(X_{\overline{K}})^{\wedge}_{p}$ is a pro-object of de Rham representations of $\Gal(\overline{K}/K)$.

To the best of our knowledge, item (1)(b) is not known in references. Related results appeared in \cite{Deglise-niziol-p-adic-hodge}*{Lemma 2.22} and \cite{Petrov-Zavyalov-formality-rigid-analytic}*{Proposition 4.17}. 

\subsubsection*{\underline{\'Etale fundamental groups}}\,

Finally, we apply the relationship between $1$-minimal models and continuous Mal'cev completions to the study of \'etale fundamental groups. Morgan proved that the Lie algebra of the Mal'cev completion of the fundamental group of a smooth complex variety admits relations of bounded bracket lengths (\cite{Morgan-hodge-theory}): in the proper case, the Mal'cev Lie algebra is quadratically presented, and in the non-proper case the defining ideal is generated by brackets of lengths $2$, $3$ and $4$. We establish an analogous result for \'etale fundamental groups, thereby obtaining another constraint on the realization problem in Question \ref{Question: main question}.

\begin{theorem}[Theorem \ref{Thm: fundamental group of algebraic varieties}]
Let $k$ be a separably closed field of characteristic $p\geq 0$, let $\ell$ be a prime number, and let $X$ be a connected smooth variety over $k$. 
\begin{enumerate}[leftmargin=0.25in]
    \item If $X$ is proper, then the Lie algebra of the continuous Mal'cev $\QQ_{\ell}$-completion of  $\pi_1^{\et}(X)^{\wedge}_{\ell}$ is the pro-nilpotent completion of a free Lie algebra on a finite-dimensional $\QQ_{\ell}$-vector space modulo an ideal generated by elements of bracket length $2$. In particular, $\pi_1^{\et}(X)^{\wedge}_{\ell}\widehat{\otimes }\QQ_{\ell}$ is determined by $(\pi_1^{\et}(X)^{\wedge}_{\ell}/\Gamma^3)\widehat{\otimes}\QQ_{\ell}$.
    \item If $\ell\neq p$ and $k$ is the algebraic closure of a finite field, then the Lie algebra of the continuous Mal'cev $\QQ_{\ell}$-completion of $\pi_1^{\et}(X)^{\wedge}_{\ell}$ is the pro-nilpotent completion of a free Lie algebra on a finite-dimensional $\QQ_{\ell}$-vector space modulo an ideal generated by elements of bracket lengths $2$, $3$ and $4$. In particular, $\pi_1^{\et}(X)^{\wedge}_{\ell}\widehat{\otimes }\QQ_{\ell}$ is determined by $(\pi_1^{\et}(X)^{\wedge}_{\ell}/\Gamma^5)\widehat{\otimes}\QQ_{\ell}$.
\end{enumerate}
\end{theorem}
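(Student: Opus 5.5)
The plan is to reduce both parts to the $1$-minimal model of the $\ell$-completed étale homotopy type and then run Morgan's argument, with Frobenius weights playing the role of Hodge weights. By the Mal'cev theorem stated in the introduction, the Lie algebra of $\pi_1^{\et}(X)^{\wedge}_{\ell}\widehat{\otimes}\QQ_\ell$ is the pro-nilpotent Lie algebra $L_X(1)$ dual to $M_X(1)^1$. The hypothesis applies because $\pi_1^{\et}(X)^{\wedge}_{\ell}$ is topologically finitely generated. By construction, $M_X(1)$ is built from $V_1=H^1$ by successive Hirsch extensions. Dually, $L_X(1)$ is the completion of the free Lie algebra on $(H^1)^{\vee}$ modulo the ideal generated by the image of the dual map $H^2(M(1))^{\vee}\to$ (brackets). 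So it suffices to show that the relations, i.e.\ the ``new generators'' of $M(1)$ in each Hirsch stage, can be chosen of bracket length $2$ (proper case) or $\le 4$.

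\textbf{Part (1).} I use Theorem~\ref{Intro Thm: formality}: $C^*(X^{\wedge}_{\et,\ell};\widehat{\ZZ}_\ell)\otimes\QQ_\ell$ is formal. Hence its $1$-minimal model is the $1$-minimal model of $(H^*_{\et}(X;\QQ_\ell),0)$. For a formal CDGA, the standard argument of Deligne--Griffiths--Morgan--Sullivan applies. One can inductively choose the $1$-minimal model $M(1)$ bigraded by ``length''. Generators of weight $w$ kill the kernel of $\cup:\Lambda^2H^1\to H^2$ in the first stage, and then the higher stages are forced. The resulting Lie algebra is the completed free Lie algebra on $H_1$ modulo the image of $H_2\to\Lambda^2H_1$, which is quadratic. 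The ``in particular'' follows because a quadratically presented completed Lie algebra is determined by its quotient by $\Gamma^3$.

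\textbf{Part (2).} Descend $X$ to a finite field $\FF_q$; this is possible since $X$ is of finite type over $\overline{\FF}_q$. Frobenius then acts on the étale homotopy type, hence (after the continuity of Proposition~\ref{Intro Prop: continuity from Galois to automorphisms of minimal models}) on a minimal model up to homotopy. By Deligne's Weil II, $H^1_{\et}(X;\QQ_\ell)$ has weights in $\{1,2\}$ and $H^2$ has weights in $\{2,3,4\}$. I would arrange a Frobenius-equivariant $1$-minimal model by taking generalized eigenspaces. The mixed weights then give a weight grading on $L_X(1)$ with $(H^1)^\vee$ in weights $-1,-2$, compatible with brackets. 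The relations sit in $H_2$, of weights $-2,-3,-4$. A bracket of $k$ generators has weight $\le -k$, and a relation in weight $-w$ with $w\le 4$ can be expressed as a combination of brackets of length at most $4$. Thus, after splitting the weight filtration, the ideal is generated in bracket length $2,3,4$. The determination by $\Gamma^5$ follows as before.

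\textbf{Main obstacle.} The delicate step is splitting the weight filtration on the completed Lie algebra compatibly with the presentation, i.e.\ producing an actual grading rather than just a filtration. My plan is to use semisimplicity of Frobenius on the pro-algebraic automorphism group. I would first pass to the semisimple part of Frobenius, whose eigenvalues have the correct absolute values, obtaining a $\QQ_\ell$-grading after enlarging scalars if needed and descending by Galois invariance. I would then check that the presentation with relations in weights $\geq -4$ forces the relation lengths to be at most $4$.
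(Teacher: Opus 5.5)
Your proposal is correct, and for part (2) it is essentially the paper's argument. In both, one descends to a finite field and uses Deligne's bounds (weights of $H^1$ in $\{1,2\}$, of $H^2$ in $\{2,3,4\}$). One then puts a Frobenius weight grading on the $1$-minimal model and applies the Morgan-type statement that is the paper's Lemma~\ref{Lem: weighted minimal model and Lie algebra tower} with $q=4$.

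For part (1) you take a different route. The paper spreads out and specializes, separately for $\ell\neq p$ and $\ell=p$. It then applies the same lemma with $q=2$ to a Frobenius-weighted $1$-minimal model of $C^*$. You instead invoke Theorem~\ref{Thm: formality of varieties} to replace $C^*$ by $(H^*_{\et}(X;\QQ_\ell),0)$, where the grading ``weight $=$ degree'' is canonical and trivially pure. You then quote the Deligne--Griffiths--Morgan--Sullivan/Morgan fact that a formal CDGA has Mal'cev Lie algebra $\widehat{\mathbb L}(H_1)/(\operatorname{im}(H_2\to\Lambda^2H_1))$.

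Your route buys uniformity in $\ell$ and in the characteristic, since all specialization is already absorbed into the formality theorem. It also avoids building a weighted $1$-minimal model in the proper case. The price is that you use full formality (weights on the whole minimal model, Lemma~\ref{Lemma: rigidity of automorphisms of a minimal model}, Proposition~\ref{Prop: purity implies formality}) where weights on the $1$-minimal model suffice. Moreover, the DGMS fact is itself just the weighted lemma applied to $(H^*,0)$. The paper's route treats (1) and (2) with one lemma. Yours cannot reach (2), since non-proper varieties need not be formal.

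Three points in your write-up need fixing.

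\emph{The reduction sentence.} You call the relations ``the new generators of $M(1)$ in each Hirsch stage.'' This is wrong. The stage-$(n+1)$ generators $V(n+1)$ are dual to $\Gamma^{n+1}L/\Gamma^{n+2}L$, i.e.\ brackets of length exactly $n+1$. Minimal relations are instead dual to $H^2(M(1))\subset H^2(X)$, which is what you actually use later.

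\emph{The ``main obstacle''.} It is not one. An honest automorphism representing Frobenius preserves the finite-dimensional stages $M(1,n)$. These stages are intrinsically determined by the differential; alternatively, use Lemma~\ref{Lemma: rigidity of automorphisms of a minimal model}. On each stage, the sum of generalized eigenspaces with eigenvalues of a fixed weight is already a $\QQ_\ell$-rational, multiplicative, $d$-stable grading, because $\QQ$-conjugate eigenvalues share weights. No semisimplification, scalar extension or descent is needed.

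\emph{The step ``relations sit in $H_2$'', hence generated in weights $-2,-3,-4$.} This is the real content of the paper's lemma, and you must argue it. One clean way:
\begin{enumerate}
\item Form $\mathfrak g=\bigoplus_w\varprojlim_nL_n^w$. Each weight space stabilizes because $V(n)$ has weights $\ge n$.
\item Write $\mathfrak g=F/R$ with $R\subset[F,F]$.
\item Hopf's formula gives $(R/[F,R])^{-w}\cong(H^2(M(1))^w)^\vee$. This vanishes for $w>4$, because $H^2(M(1))\hookrightarrow H^2_{\et}(X;\QQ_\ell)$ equivariantly.
\item Graded Nakayama applies, since $F$ has only negative weights with finite-dimensional weight spaces. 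It shows that a weight-homogeneous lift of $R/[F,R]$ generates $R$.
\item Finally, $L$ is the pro-nilpotent completion of $\mathfrak g$.
\end{enumerate}
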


Pridham (unpublished \cite{Pridham-etale-fundamental-group-of-smooth-variety}*{Corollaries 3.4, 3.6}) used Lie models to obtain an analogous result when $\ell\neq p$ and $k$ is a finite field, while our proof is based on $1$-minimal CDGA's and treats more cases.

\subsubsection*{\underline{Organization of the paper}}\,

Section \ref{Section: Q_p homotopy theory} constructs the $\mathbb Q_p$-homotopy theory of $p$-complete spaces.
For every space $X$, we define the CDGA $C^*(X;\widehat{\ZZ}_p)
\otimes_{\widehat{\ZZ}_p}\QQ_p$ by rectifying the $E_{\infty}$-algebra $S^*(X;\widehat{\ZZ}_p)
\otimes_{\widehat{\ZZ}_p}\QQ_p$ of singular cochains. We then show that its minimal model recovers the $\QQ_p$-homotopy groups and
Whitehead products, while its $1$-minimal model recovers the Lie algebra of the continuous Mal'cev $\QQ_p$-completion of the fundamental group. Section \ref{Section: Application} applies the $\QQ_p$-homotopy theory to problems in topology and algebraic geometry. We study the finite CW complex realization problem for $p$-complete spaces and \'etale homotopy types, prove $\QQ_{\ell}$-formality for smooth proper varieties, analyze Galois representations on higher \'etale homotopy groups, and obtain bounded presentations for \'etale fundamental groups.

\medskip

\subsubsection*{Acknowledgements.}
We thank Fedor Manin, John Morgan, John Pardon, Shmuel Weinberger, Ruijie Yang and Siqing Zhang for valuable comments and discussions. We are grateful to Dennis Sullivan for discussions on rational homotopy theory and for suggesting several of the questions studied in this manuscript. R.H. is supported by 
NSF Grant 2247322. G.W. is partially supported by grants NSFC12325102, NSFC-12226002, the New Cornerstone Science Foundation, and Shanghai
Pilot Program for Basic Research–Fudan University 21TQ1400100 (21TQ002).

\tableofcontents

\subsection*{Terminology and Notation}

\begin{itemize}[leftmargin=0.25in]
    \item CDGA abbreviates ``commutative differential graded algebra''.
    \item For a CDGA $C$, $\Aut(C)$ is the group of automorphisms of $C$ and $\Aut_h(C)$ is the group of homotopy classes of automorphisms of $C$.
    \item For simplicity, we use the definition of minimal CDGA's in \cite{Deligne-Griffiths-Morgan-Sullivan-formality}\cite{Morgan-hodge-theory}; these are called nilpotent minimal CDGA's in \cite{Sullivan-rational-homotopy}.
    \item For a pointed CW complex $Z$, $Z^{\wedge,S}_{p}$ denotes Sullivan's $p$-completion (\cite{Sullivan-MIT-notes}*{Chapter III}). \cite[Section 2.3]{Barthel-Bousfield-Comparison-of-p-completion} shows that, for nilpotent finite type  CW complexes, Bousfield's $\mathbb{H}(\ZZ/p)$-localization, Bousfield-Kan's $\ZZ/p$-localization (\cite{Bousfield-Kan-homotopy-limits-completions-localizations}) and Sullivan's $p$-completion agree up to homotopy.
    \item For a CW complex $X$ and a commutative ring $k$, $S^*(X;k)$ denotes the $E_{\infty}$-algebra of singular cochains on $X$. If $k$ is a field containing $\QQ_p$, $C^*(X;\widehat{\ZZ}_p)\otimes_{\widehat{\ZZ}_p}k$ is a CDGA rectification of $S^*(X;\widehat{\ZZ}_p)\otimes_{\widehat{\ZZ}_p}k$ (paragraph below Fact \ref{Fact: rectification}).
    \item For a group $G$, we write $\Gamma^*G$ and $Z_*(G)$ for its lower and upper central series, respectively.
    \item For a topological group $G$ and a topological field $k$ of characteristic zero, we write $G\widehat{\otimes } k$ for the continuous Mal'cev $k$-completion of $G$ (Definition \ref{Fact: definition of continuous Malcev completion}).
    \item We say that $\QQ_p$ is a weakly cartesian subfield of a topological field $k$ if every finite-dimensional $\QQ_p$-linear subspace of $k$ is closed (Definition \ref{Def: weakly cartesian subfield}). A typical example of such a topological field $k$ is a closed subfield of $\CC_p$ (Example \ref{Example: weakly cartesian pair}).
    \item Several notions of finite type for pointed CW complexes are introduced in Definition \ref{Def: finite type spaces}.
\end{itemize}

\section{$\QQ_p$-homotopy Theory}\label{Section: Q_p homotopy theory}

In this section, we develop a $\QQ_p$-homotopy theory for $p$-complete spaces. For a $p$-complete space $X$, we consider the $E_\infty$-algebra
$S^*(X;\widehat{\ZZ}_p)\otimes_{\widehat{\ZZ}_p}\QQ_p$
of singular cochains and use rectification to model it by a commutative differential graded algebra $C^*(X;\widehat{\ZZ}_p)\otimes_{\widehat{\ZZ}_p}\QQ_p$ over $\mathbb Q_p$.
We then study the minimal model of this CDGA. For a nilpotent $p$-complete finite type space $X$, the indecomposable spaces of its $\QQ_{p}$-minimal model recover the $\QQ_p$-homotopy groups of $X$, and the quadratic part of its differential encodes Whitehead products (Theorem \ref{Thm: minimal model computes rational homotopy groups}). We also study the $1$-minimal model for the non-simply-connected case. For spaces whose fundamental groups are topologically finitely generated pro-$p$ groups, we identify the Lie algebra dual to the $1$-minimal model with the Lie algebra of the continuous Mal’cev $\QQ_p$-completion of the fundamental group (Theorem \ref{Thm: 1-minimal model is Malcev completion}). 

We extend these results to pro-systems of $p$-finite spaces (Corollary \ref{Cor: duality between minimal model and homotopy groups for pro spaces}), which will be used to study étale homotopy types of algebraic varieties in Section \ref{Section: Application}.

\subsection{Nilpotent $p$-Complete Finite Type Spaces}\,

In Sullivan’s rational homotopy theory, the minimal model of a nilpotent rational finite-type space $X$ recovers its homotopy type: the degree-$n$ indecomposable space is dual to $\pi_n(X)\otimes \mathbb{Q}$, while the quadratic part of the differential is dual to the Whitehead product. This construction is closely related to the Postnikov tower, whose successive $k$-invariants correspond to elementary extensions of minimal CDGA's.

In this subsection, we establish $p$-adic analogues of these results for a nilpotent $p$-complete finite-type space $X$. Using a refined principal Postnikov tower and the correspondence between the $k$-invariants of $X$ and elementary extensions of minimal CDGA's, we show that the degree-$n$ indecomposable space of a $\QQ_p$-minimal model is naturally dual to $\pi_n(X)\otimes_{\widehat{\ZZ}_{p}} \QQ_p$. We also prove that the induced Lie bracket agrees with the Whitehead product (Theorem \ref{Thm: minimal model computes rational homotopy groups}).

We begin by reviewing various notions of finite type for spaces.

\begin{definition}\label{Def: finite type spaces}
Let $X$ be a pointed connected CW complex, and let $T$ be a set of primes.
\begin{enumerate}[leftmargin=0.25in]
    \item $X$ is \textbf{nilpotent finite type} if $\pi_1(X)$ is a finitely generated nilpotent group, and for every $i\geq 2$, $\pi_i(X)$ is a finitely generated abelian group that is nilpotent as a $\pi_1(X)$-module.
    \item $X$ is \textbf{nilpotent $T$-local finite type} if $\pi_1(X)$ is a finitely generated nilpotent $T$-local group, and, for every $i\geq 2$, $\pi_i(X)$ is a finitely generated $\ZZ_{(T)}$-module that is nilpotent as a $\pi_1(X)$-module.
    \item $X$ is \textbf{nilpotent $p$-complete finite type} if $\pi_1(X)$ is a topologically finitely generated nilpotent pro-$p$ group, and for every $i\geq 2$, $\pi_i(X)$ is a finitely generated $\widehat{\ZZ}_p$-module that is nilpotent as a continuous $\pi_1(X)$-module.
    \item $X$ is \textbf{cohomologically $p$-complete finite type} if, for every $i$, $H^i(X;\ZZ/p)$ is a finitely generated $\widehat{\ZZ}_p$-module.
\end{enumerate}
\end{definition}

Let $k$ be a field containing $\QQ_p$. 

\begin{definition}
Assume that $k$ is a topological field whose topology restricts to the $p$-adic topology on $\QQ_p$. The pair $\QQ_p\subset k$ is called a \textbf{weakly cartesian pair} if every finite-dimensional $\QQ_p$-linear subspace of $k$ is closed.
\end{definition}

The notion of weakly cartesian pairs for more general topological fields is given in Definition \ref{Def: weakly cartesian subfield}. A typical example for a weakly cartesian pair $\QQ_p\subset k$ is obtained when $k$ is closed subfield of $\CC_p$ (Example \ref{Example: weakly cartesian pair}).

We fix a cofibrant $E_{\infty}$-operad $\mathcal{E}$ over $k$ as in Theorem \ref{Thm: simplicial cochain complex is a functorial E-infinity algebra}.

\begin{definition}\label{Def: rectified CDGA}
Let $X$ be a CW complex. Then the singular cochain $S^*(X;\widehat{\ZZ}_p)\otimes_{\widehat{\ZZ}_p} k$ is naturally an $\mathcal{E}$-algebra. Define $
C^{*}(X;\widehat{\ZZ}_{p})\otimes_{\widehat{\ZZ}_{p}} k
$ to be the rectified CDGA of the $E_{\infty}$-algebra $S^*(X;\widehat{\ZZ}_p)\otimes_{\widehat{\ZZ}_p} k$ of $S^*(X;\widehat{\ZZ}_p)\otimes_{\widehat{\ZZ}_p} k$ as in Definition \ref{Def: rectification of an E-infinity algebra}.
\end{definition}

Theorem \ref{Fact: rectification} shows that $S^*(X;\widehat{\ZZ}_p)\otimes_{\widehat{\ZZ}_p} k$ and $
C^{*}(X;\widehat{\ZZ}_{p})\otimes_{\widehat{\ZZ}_{p}} k
$ are connected by a functorial zigzag of quasi-isomorphisms, as $\mathcal{E}$-algebras. 

\begin{remark}
In Definition \ref{Def: rectified CDGA}, we do not use the singular cochain $S^*(X;k)$. In general, $S^*(X;k)$ is not quasi-isomorphic to $S^*(X;\widehat{\ZZ}_p)\otimes_{\widehat{\ZZ}_p} k$ for $p$-complete spaces. For example, if $X$ is the $p$-complete $n$-sphere, then $H^{qn}(X;\QQ_p)$ has uncountable dimension for $q\geq 2$ (\cite{more-concise}*{Remark 11.1.5}), while $H^{qn}(X;\widehat{\ZZ}_p)\otimes_{\widehat{\ZZ}_p}\QQ_p=0$ for $q\geq 2$.
\end{remark}

The following theorem is one of the main results of this section. Its proof is given at the end of this subsection.

\begin{theorem}\label{Thm: minimal model computes rational homotopy groups}
Let $X$ be a connected nilpotent $p$-complete finite type CW complex. Let $\phi:M\rightarrow C^*(X;\widehat{\ZZ}_p)\otimes_{\widehat{\ZZ}_p} k$ be a minimal model, and let $L(n)$ be the dual of the degree-$n$ indecomposable space $I^n(M)$. Let $\Gamma^*$ be the lower central series of $\pi_1(X)$. Then
\begin{enumerate}[leftmargin=0.25in]
    \item there are natural nondegenerate $k$-bilinear pairings as follows;
    \[
\langle -,- \rangle: I^i(M)\otimes_{k} (\pi_i(X)\otimes_{\widehat{\ZZ}_p} k) \rightarrow k \text{\,\,\, (for $i\geq 2$)}
\]
\[
\langle -,- \rangle: I^1(M)\otimes_{k} ((\bigoplus_s \Gamma^s\pi_1(X)/\Gamma^{s+1}\pi_1(X))\otimes_{\widehat{\ZZ}_p} k) \rightarrow k
\]
    \item the vector-space isomorphism $\bigoplus_{*\geq 2} L(*)\rightarrow \bigoplus_{*\geq 2}(\pi_{*}(X)\otimes_{\widehat{\ZZ}_p} k)$ induced by item (1) is an isomorphism of graded Lie algebras after shifting degrees by $1$, where the Lie bracket on $\bigoplus_{*\geq 2}L(*)$ is dual to the quadratic part of the differential $d$ of $M$, as in Theorem \ref{Fact: nilpotent Lie algebra induced from minimal model}, and on $\bigoplus_{*\geq 2}\pi_{*}(X)$ it is given by the Whitehead product;
    \item if, in addition, $\QQ_p\subset k$ is a closed weakly cartesian pair of Hausdorff topological fields, then the nilpotent Lie algebra $L(1)$ associated with the $1$-minimal model, as in Theorem \ref{Fact: nilpotent Lie algebra induced from minimal model}, is naturally isomorphic to the Lie algebra $\Lie(\pi_1(X)\widehat{\otimes} k)$ of the continuous Mal'cev $k$-completion of $\pi_1(X)$.
\end{enumerate}
\end{theorem}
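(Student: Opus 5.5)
I will follow Sullivan's inductive proof, applied to a refined principal Postnikov tower of $X$. Since $X$ is nilpotent $p$-complete of finite type, there is a tower
\[
X\to\cdots\to X_{m+1}\to X_m\to\cdots\to X_0=*,
\]
where each map $X_{m+1}\to X_m$ is a principal fibration with fiber $K(A_m,n_m)$. Here $A_m$ is $\widehat{\ZZ}_p$ or $\ZZ/p^r$, and for each degree $n$ only finitely many stages have $n_m=n$. The tower comes from refining the lower central series of $\pi_1$ and the $\pi_1$-nilpotent filtrations of the higher $\pi_i$.

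The first input is a computation for Eilenberg--MacLane spaces:
\[
C^*(K(\widehat{\ZZ}_p,n);\widehat{\ZZ}_p)\otimes k\simeq \Lambda(x_n),
\qquad
C^*(K(\ZZ/p^r,n);\widehat{\ZZ}_p)\otimes k\simeq k .
\]
For $\widehat{\ZZ}_p$ coefficients I would use Cartan's computation: $H^*(K(\widehat{\ZZ}_p,n);\widehat{\ZZ}_p)$ is the free graded-commutative algebra on one generator modulo torsion. The torsion is killed by $\otimes\QQ_p$, and finite type is used here.

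The second input is that $C^*(-;\widehat{\ZZ}_p)\otimes k$ takes each principal fibration to a relative Sullivan extension. I would prove an Eilenberg--Moore / Serre spectral sequence statement. Cochains with $\widehat{\ZZ}_p$ coefficients on $p$-complete finite type spaces satisfy Eilenberg--Moore convergence for principal fibrations with nilpotent action, and $-\otimes_{\widehat{\ZZ}_p}\QQ_p$ is exact. Then, by rectification (Theorem \ref{Fact: rectification}), one obtains a model $M_{m+1}=M_m\otimes\Lambda(V_m)$ with $dV_m\subset M_m$. For $A_m=\widehat{\ZZ}_p$, $V_m$ is one-dimensional in degree $n_m$, and $d$ of the generator represents the image of the $k$-invariant in $H^{n_m+1}(X_m;\widehat{\ZZ}_p)\otimes k$. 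For finite $A_m$, nothing is added.

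Passing to the colimit gives a Sullivan model of $X$ whose degree-$n$ generators are in bijection with a $\widehat{\ZZ}_p$-basis of the free part of $\pi_n\otimes k$. For $n=1$, they correspond to the graded pieces $\Gamma^s/\Gamma^{s+1}$. Minimalizing this model gives $M$.

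Item (1) needs more than a dimension count: the pairing must be natural. For $i\ge2$ I would define it by the Hurewicz-type evaluation
\[
\langle v,\alpha\rangle=\alpha^*(\phi v)\in H^i(S^i;\widehat{\ZZ}_p)\otimes k=k .
\]
This is well defined on indecomposables because products pull back to zero on spheres. Nondegeneracy follows from the Postnikov induction: a new generator pairs with $\pi_{n_m}$ exactly through the fundamental class of the fiber. For $i=1$, I would run the same argument with the successive central extensions, pairing $I^1$ against $\Gamma^s/\Gamma^{s+1}\otimes k$.

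For item (2), I would reduce to the universal case, as in Sullivan. Given $\alpha\in\pi_p$ and $\beta\in\pi_q$, the Whitehead product is the composite
\[
[\alpha,\beta]:S^{p+q-1}\to S^p\vee S^q\xrightarrow{\alpha\vee\beta}X ,
\]
where the first map is the attaching map of the top cell of $S^p\times S^q$. Naturality of the pairing and of quadratic parts under maps of minimal models reduces the claim to $X=S^p\vee S^q$, in fact to its $p$-completion. There the minimal model is the classical one tensored with $k$, so the bracket identification is Sullivan's classical computation. The sign and degree shift are handled as there.

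Item (3) is the part I expect to be the main obstacle. Both sides are built inductively: $L(1)$ from elementary extensions of the $1$-minimal model, and $\Lie(\pi_1\widehat{\otimes}k)$ from the central extensions $\Gamma^s/\Gamma^{s+1}$. By the Postnikov computation, $M(1)$ models $K(\pi_1,1)$, so $L(1)$ is the nilpotent Lie algebra classifying the tower of central extensions with $k$-coefficients. What remains is to show that the continuous Mal'cev completion is governed by continuous cohomology. Concretely, I need
\[
H^2_{\mathrm{cont}}(\pi_1;\widehat{\ZZ}_p)\otimes k\;\cong\; H^2\bigl(\Lie(\pi_1\widehat{\otimes}k)\bigr),
\]
and that the extension classes match. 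The weakly cartesian hypothesis should enter here, to ensure that continuous homomorphisms into unipotent $k$-groups factor through finite-dimensional $\QQ_p$-structures and are therefore determined by $\widehat{\ZZ}_p$-lattices. With that in hand, the induction on nilpotency class, using uniqueness of Mal'cev completions of central extensions, closes the argument.
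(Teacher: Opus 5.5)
Your treatment of items (1) and (2) is essentially the paper's. The ingredients match: a refined principal Postnikov tower, $H^*(K(A,n);\widehat{\ZZ}_p)\otimes_{\widehat{\ZZ}_p}k\cong\bigwedge^*\homo_{\widehat{\ZZ}_p}(A,k)$, transgressions of $k$-invariants giving elementary extensions, evaluation on spheres, and reduction of the bracket to $S^m\vee S^n$ with the classical computation. One correction: there should be no ``minimalization'' step. The Postnikov-built model is already minimal. A new degree-$n$ generator has differential of degree $n+1$ in an algebra generated in degrees $\le n$, so that differential is decomposable. Your nondegeneracy argument in (1) silently relies on this. A nontrivial minimalization would cancel generators and break the bijection with a basis of $\pi_n(X)\otimes_{\widehat{\ZZ}_p}k$.

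The genuine gap is item (3). You set up the right induction on nilpotency class via $1\to Z\to G\to G/Z\to 1$. However, the decisive step, that the extension classes match, is left as ``what remains''. The isomorphism $H^2_{\cont}(\pi_1;\widehat{\ZZ}_p)\otimes k\cong H^2(\Lie(\pi_1\widehat{\otimes}k))$ you propose is itself a Lazard-type theorem you would have to import, and it would not suffice even if granted. What is needed is a specific identity. Under the identification $M_{G/Z}\cong\mathrm{CE}(L_{G/Z})\cong\mathrm{CE}(\Lie((G/Z)\widehat{\otimes}\QQ_p))$ supplied by the induction, the differential of each new degree-one generator $u_i$ must equal the Chevalley--Eilenberg $2$-cocycle $k_i$ classifying $\Lie(G\widehat{\otimes}\QQ_p)\to\Lie((G/Z)\widehat{\otimes}\QQ_p)$. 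By construction, $du_i$ is a transgression computed from singular cochains on $K(G/Z,1)$, so it lives in the cohomology of $G/Z$ as a \emph{discrete} group. The Mal'cev side, by contrast, only sees continuous data. The paper bridges this gap in several moves. It reduces to $k=\QQ_p$ by base change of Mal'cev completions; this, not the $\QQ_p$ case, is where the weakly cartesian hypothesis enters. It passes to $G/\Tor(G)$ and picks a Mal'cev basis adapted to $Z$, so that the group law is polynomial with $Z$-components $h_i$. It reads off the Mal'cev bracket as the skew-symmetrized quadratic parts of the $h_i$, which gives $k_i$. Finally, it identifies the Lyndon--Hochschild--Serre connecting map $\partial(u_i)=[h_i]$ in continuous cohomology with the topological transgression, using that topologically finitely generated nilpotent pro-$p$ groups are Serre $p$-good. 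Your proposal contains neither the discrete-versus-continuous comparison nor any mechanism that actually computes the Lie extension class from the group cocycle. That mechanism could be explicit coordinates, or a van Est-type map from Chevalley--Eilenberg cochains to group cochains carried along the induction as part of the inductive hypothesis. ``With that in hand'' is exactly the missing argument.
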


\begin{lemma}\label{Lemma: Computation of Eilenberg-Maclane spaces}
Let $A$ be a finitely generated $\widehat{\ZZ}_p$-module, and let $V_n=\homo_{\widehat{\ZZ}_p}(A,k)$, regarded as a graded $k$-vector space concentrated in degree $n$. Then there are natural isomorphisms $H^*(C^*(K(A,n);\widehat{\ZZ}_p)\otimes_{\widehat{\ZZ}_p} k)\cong H^*(K(A,n);\widehat{\ZZ}_p)\otimes_{\widehat{\ZZ}_p} k\cong \bigwedge^* V_n$.
\end{lemma}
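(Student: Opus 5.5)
The plan is to prove the two isomorphisms separately. The first isomorphism $H^*(C^*(K(A,n);\widehat{\ZZ}_p)\otimes_{\widehat{\ZZ}_p} k)\cong H^*(K(A,n);\widehat{\ZZ}_p)\otimes_{\widehat{\ZZ}_p} k$ is formal. By Theorem \ref{Fact: rectification}, $C^*(X;\widehat{\ZZ}_p)\otimes_{\widehat{\ZZ}_p}k$ is connected to $S^*(X;\widehat{\ZZ}_p)\otimes_{\widehat{\ZZ}_p}k$ by a functorial zigzag of quasi-isomorphisms of $\mathcal{E}$-algebras, so their cohomology rings agree. Moreover, $k$ is flat over $\widehat{\ZZ}_p$, since it is a torsion-free module over a PID. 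Hence $H^*(S^*(X;\widehat{\ZZ}_p)\otimes k)=H^*(X;\widehat{\ZZ}_p)\otimes k$, and this identification is compatible with cup products.

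For the second isomorphism I reduce to cyclic summands. A finitely generated $\widehat{\ZZ}_p$-module splits as $A\cong \widehat{\ZZ}_p^{r}\oplus \bigoplus_j \ZZ/p^{e_j}$, and correspondingly $K(A,n)\simeq \prod K(\widehat{\ZZ}_p,n)^{r}\times\prod_j K(\ZZ/p^{e_j},n)$. Each factor has finitely generated $\widehat{\ZZ}_p$-cohomology in every degree. For $K(\ZZ/p^e,n)$ this is Serre's theorem. For $K(\widehat{\ZZ}_p,n)$, view it as the $p$-completion of $K(\ZZ,n)$; then $H^*(K(\widehat{\ZZ}_p,n);\widehat{\ZZ}_p)\cong \varprojlim_m H^*(K(\ZZ,n);\ZZ/p^m)$ up to $\lim^1$ terms, which vanish by finiteness. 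Because of this finite generation, the Künneth theorem over $\widehat{\ZZ}_p$ holds for the product, with Tor terms, and becomes an honest tensor product after applying $\otimes k$. This reduces the claim to the cyclic cases, using $\bigwedge^*(V\oplus W)=\bigwedge^*V\otimes\bigwedge^*W$ and $\homo(A\oplus B,k)=\homo(A,k)\oplus\homo(B,k)$.

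In the torsion case, $\homo(\ZZ/p^e,k)=0$. The cohomology $H^q(K(\ZZ/p^e,n);\widehat{\ZZ}_p)$ is finitely generated and $p$-power torsion for $q>0$: all mod-$p$ homology is finite, so the integral homology in positive degrees is a finite $p$-group. Therefore $H^*\otimes k=k$ in degree $0$, matching $\bigwedge^*0$.

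In the free case, $A=\widehat{\ZZ}_p$ and I compute $H^*(K(\widehat{\ZZ}_p,n);\widehat{\ZZ}_p)$ through the map $K(\ZZ,n)\to K(\widehat{\ZZ}_p,n)$. This map is a mod-$p$ homology isomorphism, so the cohomology of $K(\widehat{\ZZ}_p,n)$ with $\ZZ/p^m$ coefficients equals that of $K(\ZZ,n)$. The group $H^*(K(\ZZ,n);\ZZ)$ is finitely generated with rationalization $\bigwedge^*(x_n)$, polynomial or exterior according to the parity of $n$. Passing to the inverse limit over $m$ gives $H^*(K(\widehat{\ZZ}_p,n);\widehat{\ZZ}_p)\cong H^*(K(\ZZ,n);\ZZ)\otimes\widehat{\ZZ}_p$, again using finite generation. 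Tensoring with $k$ then yields the free graded-commutative algebra on one generator of degree $n$, namely $\bigwedge^*V_n$ with $V_n=\homo(\widehat{\ZZ}_p,k)$. Naturality comes from describing the generator as the fundamental class: the map $V_n\to H^n$ sends $f$ to $f_*\iota$ with $\iota\in H^n(K(A,n);A)$, which is natural in $A$. Extending multiplicatively then gives a natural ring map $\bigwedge^*V_n\to H^*(K(A,n);\widehat{\ZZ}_p)\otimes k$, and the computations above show it is an isomorphism.

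The main obstacle is the continuity and $\lim^1$ bookkeeping. We must identify $H^*(-;\widehat{\ZZ}_p)$ of a non-finite-type CW model of $K(\widehat{\ZZ}_p,n)$ with $\varprojlim H^*(-;\ZZ/p^m)$, since singular cochains with $\widehat{\ZZ}_p$ coefficients are not a priori the limit. For this I would use $S^*(X;\widehat{\ZZ}_p)=\varprojlim_m S^*(X;\ZZ/p^m)$, which holds because $\homo(C_*,-)$ preserves limits. The Milnor sequence then applies, and its $\lim^1$ term vanishes because each $H^{q}(X;\ZZ/p^m)$ is finite.
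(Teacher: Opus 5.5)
Your proposal is correct and follows the paper's route: split $A$ into cyclic summands, note that torsion summands contribute nothing after $\otimes_{\widehat{\ZZ}_p}k$, and treat $A=\widehat{\ZZ}_p$ via $H^*(K(\widehat{\ZZ}_p,n);\widehat{\ZZ}_p)\cong H^*(K(\ZZ,n);\ZZ)\otimes_{\ZZ}\widehat{\ZZ}_p$ followed by rationalization. The only difference is that you prove this isomorphism directly, via the mod-$p$ homology equivalence $K(\ZZ,n)\to K(\widehat{\ZZ}_p,n)$ and the Milnor sequence, where the paper cites Sullivan's Theorem 3.9.

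One justification should be tightened. The K\"unneth theorem with Tor terms over $\widehat{\ZZ}_p$ assumes a factor with finite-type homology, and this fails here because $H_n(K(\widehat{\ZZ}_p,n);\widehat{\ZZ}_p)\cong\widehat{\ZZ}_p\otimes_{\ZZ}\widehat{\ZZ}_p$. So when $A$ has free rank at least $2$, it is cleaner to observe that $K(\ZZ,n)^r\times\prod_j K(\ZZ/p^{e_j},n)\to K(A,n)$ is already a mod-$p$ homology isomorphism, and to run your Milnor argument on this finite-type space, where the ordinary (even rational) K\"unneth theorem applies.
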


\begin{proof}
It suffices to consider the cases $A=\widehat{\ZZ}_p$ and $A=\ZZ/p^{\alpha}$. 
The second case is clear. It remains to consider the case $A=\widehat{\ZZ}_p$.

Since $k$ is flat over $\widehat{\ZZ}_p$, the first isomorphism follows from the quasi-isomorphism defining the rectification. By \cite{Sullivan-MIT-notes}*{Theorem 3.9}, $
H^*(K(\widehat{\ZZ}_p,n);\widehat{\ZZ}_p)\cong  H^*(K(\ZZ,n);\ZZ)\otimes_{\ZZ} \widehat{\ZZ}_p
$. Since $H^*(K(\ZZ,n);\ZZ)\otimes_{\ZZ} \QQ\cong  H^*(K(\QQ,n);\QQ)
$, $H^*(K(\widehat{\ZZ}_p,n);\widehat{\ZZ}_p)\otimes_{\widehat{\ZZ}_p} k \cong (H^*(K(\ZZ,n);\ZZ)\otimes_{\ZZ} \widehat{\ZZ}_p)\otimes_{\widehat{\ZZ}_p} k \cong (H^*(K(\ZZ,n);\ZZ)\otimes_{\ZZ} \QQ) \otimes_{\QQ} k \cong H^*(K(\QQ,n);\QQ) \otimes_{\QQ}k$. Since $H^*(K(\QQ,n);\QQ)\cong \bigwedge^* W_n$, where $W_n=\homo_{\ZZ}(\ZZ,\QQ)$ is a graded $\QQ$-vector space concentrated in degree $n$, $H^*(K(\widehat{\ZZ}_p,n);\widehat{\ZZ}_p)\otimes_{\widehat{\ZZ}_p} k\cong \bigwedge^* V_n$, where $V_n=W_n\otimes_{\QQ}k\cong \homo_{\widehat{\ZZ}_p}(\widehat{\ZZ}_p,k)$.
\end{proof}

\begin{lemma}\label{Lemma: Serre spectral sequence}
Let $F\xrightarrow{h} E\xrightarrow{f} B$ be a fibration, and let $(E^{i,j}_r,d^r)$ denote its Leray-Serre spectral sequence, with $E^{i,j}_2=H^i(B;H^j(F;\widehat{\ZZ}_p))$. Assume that  the action of $\pi_1(B)$ on $H^*(F;\widehat{\ZZ}_p)$ is trivial and that $F$ is cohomologically $p$-complete finite type. Then $(E^{i,j}_r\otimes_{\widehat{\ZZ}_p} k,d^r)$ is a spectral sequence satisfying the following properties: 
\begin{enumerate}[leftmargin=0.25in]
    \item $E^{i,j}_2\otimes_{\widehat{\ZZ}_p} k\cong (H^i(B;\widehat{\ZZ}_p)\otimes_{\widehat{\ZZ}_p} k)\otimes_{k} (H^j(F;\widehat{\ZZ}_p)\otimes_{\widehat{\ZZ}_p} k)$;
    \item $(E^{i,j}_r\otimes_{\widehat{\ZZ}_p} k,d^r)$ converges to $H^{i+j}(E;\widehat{\ZZ}_p)\otimes_{\widehat{\ZZ}_p} k$.
\end{enumerate}
\end{lemma}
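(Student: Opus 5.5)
The plan is to derive both items from the integral Leray--Serre spectral sequence by base change along the flat extension $\widehat{\ZZ}_p\to k$. Since $k$ contains $\QQ_p$, it is a torsion-free, hence flat, $\widehat{\ZZ}_p$-module. Tensoring with $k$ is therefore exact and commutes with taking cohomology of each page. So $(E_r^{i,j}\otimes_{\widehat{\ZZ}_p}k,\, d^r\otimes 1)$ is again a spectral sequence, with $E_{r+1}\otimes k\cong H(E_r\otimes k, d^r\otimes 1)$. Exactness also transports the finite filtration on $H^{n}(E;\widehat{\ZZ}_p)$ and the identifications $E_\infty^{i,j}\cong F^iH^{i+j}/F^{i+1}H^{i+j}$ to the corresponding statements after $\otimes k$.

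For item (1), I would first use the triviality of the $\pi_1(B)$-action, which makes the local system $H^j(F;\widehat{\ZZ}_p)$ constant, so $E_2^{i,j}=H^i(B;H^j(F;\widehat{\ZZ}_p))$ is ordinary cohomology with coefficients in the finitely generated $\widehat{\ZZ}_p$-module $N=H^j(F;\widehat{\ZZ}_p)$. Finite generation should follow from the cohomological $p$-complete finite type hypothesis on $F$. From $\ZZ/p$-cohomology one passes to $\widehat{\ZZ}_p$-cohomology by the Bockstein and Milnor $\lim^1$ argument. The standard route is: finiteness mod $p$ gives finiteness mod $p^m$ by induction on $m$, and then Mittag-Leffler holds and the limit is finitely generated. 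I would state this as the expected use of the hypothesis; if the definition is meant to directly include finite generation of $H^j(F;\widehat{\ZZ}_p)$, this step is immediate.

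Next, decompose $N\cong\widehat{\ZZ}_p^{\,r}\oplus T$ with $T$ finite $p$-torsion. Cohomology commutes with finite direct sums, and $H^i(B;T)\otimes k=0$ because it is annihilated by a power of $p$. For the free part, $H^i(B;\widehat{\ZZ}_p^{\,r})\otimes k\cong (H^i(B;\widehat{\ZZ}_p)\otimes k)^{\oplus r}$. Since $N\otimes k\cong k^r$, this is $(H^i(B;\widehat{\ZZ}_p)\otimes k)\otimes_k(N\otimes k)$, which gives (1). To make the isomorphism natural rather than choice-dependent, I would realize it via the cup/coefficient pairing
\[
H^i(B;\widehat{\ZZ}_p)\otimes_{\widehat{\ZZ}_p} N\to H^i(B;N),
\]
tensored with $k$. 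The decomposition above is then only used to check that this map is bijective, which reduces to the cases $N=\widehat{\ZZ}_p$ (identity) and $N$ torsion (both sides zero after $\otimes k$).

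For item (2), the integral spectral sequence is first-quadrant and converges strongly to $H^{*}(E;\widehat{\ZZ}_p)$. In each total degree $n$ the filtration has length at most $n+1$ and $E_\infty=E_{r}$ for $r>n+1$ in that range. Applying the exact functor $-\otimes k$ to the short exact sequences $0\to F^{i+1}\to F^i\to E_\infty^{i,n-i}\to 0$ yields a finite filtration of $H^n(E;\widehat{\ZZ}_p)\otimes k$ with graded pieces $E_\infty^{i,n-i}\otimes k$, which is the required convergence. The main obstacle is the finite-generation step in item (1), namely passing from mod-$p$ finiteness to finite generation over $\widehat{\ZZ}_p$ and handling $\lim^1$. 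Everything else is formal flatness.
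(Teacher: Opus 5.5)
Your proposal is correct and follows the paper's proof essentially verbatim. Flatness of $k$ over $\widehat{\ZZ}_p$ makes $E_r\otimes_{\widehat{\ZZ}_p}k$ a spectral sequence; item (1) comes from checking the canonical map $(H^i(B;\widehat{\ZZ}_p)\otimes k)\otimes_k(H^j(F;\widehat{\ZZ}_p)\otimes k)\to H^i(B;H^j(F;\widehat{\ZZ}_p))\otimes k$ on the cases $\widehat{\ZZ}_p$ and $\ZZ/p^s$; and item (2) comes from tensoring $0\to F^{i+1}\to F^i\to E_\infty^{i,j}\to 0$ with $k$.

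The paper simply asserts that $H^j(F;\widehat{\ZZ}_p)$ is finitely generated. If you want to justify this, finiteness of each $H^j(F;\ZZ/p^m)$ together with Mittag-Leffler is not by itself enough. You also need that $H^j(F;\widehat{\ZZ}_p)/p\hookrightarrow H^j(F;\ZZ/p)$ is finite, and then apply topological Nakayama to the compact module $H^j(F;\widehat{\ZZ}_p)\cong\varprojlim_m H^j(F;\ZZ/p^m)$.
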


\begin{proof}
Since $k$ is a flat $\widehat{\ZZ}_p$-module, $(\{E^{p,q}_r,d^r\})\otimes_{\widehat{\ZZ}_p} k$ is a spectral sequence. 

\textbf{Item (1).} It suffices to prove that the canonical homomorphism $\phi:(H^i(B;\widehat{\ZZ}_p)\otimes_{\widehat{\ZZ}_p} k)\otimes_{k} (H^j(F;\widehat{\ZZ}_p)\otimes_{\widehat{\ZZ}_p} k)\rightarrow H^i(B;H^j(F;\widehat{\ZZ}_p))\otimes_{\widehat{\ZZ}_p} k$ is an isomorphism.
Since $H^j(F;\widehat{\ZZ}_p)$ is a finitely generated $\widehat{\ZZ}_p$-module, the proof reduces to the cases $H^j(F;\widehat{\ZZ}_p)=\widehat{\ZZ}_p$ and $H^j(F;\widehat{\ZZ}_p)=\ZZ/p^s$. The assertion is immediate for the free case. In the torsion case, both sides vanish after tensoring with $k$.

\textbf{Item (2).} The Leray-Serre filtration $F^*$ on the cohomology $H^*(E;\widehat{\ZZ}_p)$ gives a short exact sequence $0\rightarrow F^{i+1}H^{i+j}(E;\widehat{\ZZ}_p)\rightarrow F^{i}H^{i+j}(E;\widehat{\ZZ}_p)\rightarrow E^{i,j}_{\infty}\rightarrow 0$.
Tensoring this short exact sequence with the flat $\widehat{\ZZ}_p$-module $k$ preserves exactness. This proves item (2).
\end{proof}

\begin{definition}[\cite{McCleary-spectral-sequence}*{Section 6.2}]\label{Def: Transgression}
Retain the assumptions of Lemma \ref{Lemma: Serre spectral sequence}. Let $x\in C^i(F;\widehat{\ZZ}_p)\otimes_{\widehat{\ZZ}_p} k$ be a cocycle. The cohomology class $[x]$ is \textbf{transgressive} if there exist  $\widetilde{x}\in C^i(E;\widehat{\ZZ}_p)\otimes_{\widehat{\ZZ}_p} k$ and a cocycle $y\in C^{i+1}(B;\widehat{\ZZ}_p)\otimes_{\widehat{\ZZ}_p} k$ such that the restriction of $[\widetilde{x}]$ is $[x]$ and $f^*y=d\widetilde{x}$. The image of $[y]$ in $H^{i+1}(B;\widehat{\ZZ}_p)\otimes_{\widehat{\ZZ}_p} k/\Ker(f^*)$ is called the \textbf{transgression} of $[x]$.
\end{definition}

Consider the following commutative diagram of long exact cohomology sequences.
\[
\begin{tikzcd}
    ... \arrow[r] & H^n(pt;\widehat{\ZZ}_p)\otimes_{\widehat{\ZZ}_p} k \arrow[r] \arrow[d] & H^{n+1}(B,pt;\widehat{\ZZ}_p)\otimes_{\widehat{\ZZ}_p} k \arrow[r] \arrow[d,"f^*"] & H^{n+1}(B;\widehat{\ZZ}_p)\otimes_{\widehat{\ZZ}_p} k \arrow[r] \arrow[d,"f^*"] & ... \\
    ... \arrow[r] & H^n(F;\widehat{\ZZ}_p)\otimes_{\widehat{\ZZ}_p} k \arrow[r,"\partial"] \arrow[rru,dashed] & H^{n+1}(E,F;\widehat{\ZZ}_p)\otimes_{\widehat{\ZZ}_p} k \arrow[r] & H^{n+1}(E;\widehat{\ZZ}_p)\otimes_{\widehat{\ZZ}_p} k \arrow[r] & ...
\end{tikzcd}
\]
The class $[x]$ is transgressive precisely when it is in $\partial^{-1}(\Im(f^*))$. In that case, the dashed arrow gives its transgression.

\begin{lemma}[\cite{McCleary-spectral-sequence}*{Theorem 6.8}]
Under the assumptions of Lemma \ref{Lemma: Serre spectral sequence},
\begin{enumerate}[leftmargin=0.25in]
    \item $E^{n+1,0}_{n+1}\otimes_{\widehat{\ZZ}_p} k\cong H^{n+1}(B;\widehat{\ZZ}_p)\otimes_{\widehat{\ZZ}_p} k/\Ker(f^*)$;
    \item $E^{0,n}_{n+1}\otimes_{\widehat{\ZZ}_p} k\cong \partial^{-1}(\Im(f^*))$;
    \item under these identifications, the differential $d^{n+1}:E^{0,n}_{n+1}\otimes_{\widehat{\ZZ}_p} k\rightarrow E^{n+1,0}_{n+1}\otimes_{\widehat{\ZZ}_p} k$ agrees with the transgression map.
\end{enumerate}
\end{lemma}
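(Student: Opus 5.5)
The plan is to deduce all three items from the classical theorem for the Leray--Serre spectral sequence with $\widehat{\ZZ}_p$-coefficients (\cite{McCleary-spectral-sequence}*{Theorem 6.8}), using that $-\otimes_{\widehat{\ZZ}_p}k$ is exact because $k$ is flat over $\widehat{\ZZ}_p$. Before tensoring, the integral statements hold: $E^{n+1,0}_{n+1}$ is the image of $H^{n+1}(B;\widehat{\ZZ}_p)$ in $H^{n+1}(E;\widehat{\ZZ}_p)$, that is, $H^{n+1}(B)/\Ker(f^*)$. Likewise $E^{0,n}_{n+1}$ is the submodule $\partial^{-1}(\Im f^*)\subset H^n(F;\widehat{\ZZ}_p)$, and $d^{n+1}$ is the transgression obtained from the diagram of long exact sequences of the pairs $(B,pt)$ and $(E,F)$, with $H^{n+1}(B,pt)\cong H^{n+1}(B)$ for $n\geq 1$.

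First, by Lemma \ref{Lemma: Serre spectral sequence}, the pages $E_r\otimes_{\widehat{\ZZ}_p}k$ with the differentials $d^r\otimes k$ form a spectral sequence. Since tensoring with $k$ commutes with homology, $(E_r\otimes k)$ is the homology of $(E_{r-1}\otimes k,d^{r-1}\otimes k)$. So $E^{n+1,0}_{n+1}\otimes k$ and $E^{0,n}_{n+1}\otimes k$ are the corresponding pages of the tensored spectral sequence.

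Next, apply exactness of $\otimes k$ to the integral identifications. For item (1), the sequence $0\to\Ker f^*\to H^{n+1}(B)\to E^{n+1,0}_{n+1}\to 0$ stays exact after tensoring. Moreover, $\Ker(f^*)\otimes k=\Ker(f^*\otimes k)$ because kernels commute with flat base change, and $H^{n+1}(B;\widehat{\ZZ}_p)\otimes k$ is the cohomology of $C^*(B;\widehat{\ZZ}_p)\otimes k$, the source of the classes $[y]$ in Definition \ref{Def: Transgression}. For item (2), preimages and images are built from kernels and cokernels, so they also commute with flat base change: $\partial^{-1}(\Im f^*)\otimes k=(\partial\otimes k)^{-1}(\Im(f^*\otimes k))$. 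Item (3) then follows because $d^{n+1}\otimes k$ is the base change of the integral transgression, which is the dashed arrow. The cochain-level description in Definition \ref{Def: Transgression} matches the dashed arrow by the same zig-zag after tensoring, as explained in the paragraph following the definition.

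The main obstacle is this last cochain-level point. One must check that the transgression defined with $\widetilde{x}\in C^i(E;\widehat{\ZZ}_p)\otimes k$ and $y\in C^{i+1}(B;\widehat{\ZZ}_p)\otimes k$ agrees with the connecting-map description in the diagram. I would verify it on the singular cochain complexes $S^*\otimes k$, where the relative cochains $S^*(E,F)\otimes k$ are an honest subcomplex by flatness. The standard argument then runs verbatim: $d\widetilde{x}$ restricts to $0$ on $F$, so it defines $\partial[x]$, and it equals $f^*y$. The statement transfers to the rectified model through the functorial quasi-isomorphism zig-zag of Theorem \ref{Fact: rectification}, which induces isomorphisms on all the cohomology groups involved.
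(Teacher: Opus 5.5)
Your strategy matches the paper's. The paper simply cites McCleary's integral theorem and implicitly base-changes along the flat map $\widehat{\ZZ}_p\to k$, and your flat-base-change bookkeeping (pages, kernels, preimages) is fine.

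The problem is the integral input you feed in for item (1): it is off by one page. You take $\Ker(f^*)$ to be the kernel of the absolute map $H^{n+1}(B;\widehat{\ZZ}_p)\to H^{n+1}(E;\widehat{\ZZ}_p)$. The image of $H^{n+1}(B)$ in $H^{n+1}(E)$ is the edge term $E^{n+1,0}_\infty=E^{n+1,0}_{n+2}=E^{n+1,0}_{n+1}/\Im(d^{n+1})$, not $E^{n+1,0}_{n+1}$.

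Under your reading the lemma becomes false. If $[x]$ is transgressive with $f^*y=d\widetilde{x}$, then $f^*[y]=[d\widetilde{x}]=0$ in $H^{n+1}(E;\widehat{\ZZ}_p)\otimes_{\widehat{\ZZ}_p}k$. So every transgression would vanish in your quotient, and item (3) would force $d^{n+1}=0$. This already fails for the path fibration $\Omega S^{n+1}\to PS^{n+1}\to S^{n+1}$ with $n\geq 1$:
\begin{itemize}
\item $d^{n+1}\colon H^n(\Omega S^{n+1};\widehat{\ZZ}_p)\to E^{n+1,0}_{n+1}=H^{n+1}(S^{n+1};\widehat{\ZZ}_p)\cong\widehat{\ZZ}_p$ must be an isomorphism, because the total space is contractible;
\item but the image of $H^{n+1}(S^{n+1})$ in $H^{n+1}(PS^{n+1})=0$ is zero.
\end{itemize}
It would also break Corollary \ref{Cor: k-invariant computation}, which rests on the universal fibration $K(G,n)\to pt\to K(G,n+1)$.

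The $\Ker(f^*)$ in item (1) must be the kernel of the relative map $f^*_{\mathrm{rel}}\colon H^{n+1}(B,pt;\widehat{\ZZ}_p)\to H^{n+1}(E,F;\widehat{\ZZ}_p)$. This is the middle vertical arrow of the diagram, the same $f^*$ whose image appears in item (2). It is exactly the indeterminacy of the relation $(f^*_{\mathrm{rel}})^{-1}\circ\partial$. Your own cochain computation in the last paragraph produces this relative map, since $d\widetilde{x}=f^*y$ is a relative cocycle on $(E,F)$, so it is inconsistent with your item (1).

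With this correction the pieces fit together:
\begin{itemize}
\item $E^{n+1,0}_{n+1}\cong H^{n+1}(B,pt)/\Ker(f^*_{\mathrm{rel}})$;
\item the image of $d^{n+1}$ in it is $(f^*_{\mathrm{rel}})^{-1}(\Im\partial)/\Ker(f^*_{\mathrm{rel}})$;
\item $(f^*_{\mathrm{rel}})^{-1}(\Im\partial)$ equals the absolute kernel, so $E^{n+1,0}_{n+2}$ is your $H^{n+1}(B)/\Ker(f^*_{\mathrm{abs}})$.
\end{itemize}
The rest of your argument then goes through unchanged, using $H^{n+1}(B,pt)\cong H^{n+1}(B)$ for $n\geq 1$ and exactness of $-\otimes_{\widehat{\ZZ}_p}k$.
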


Let $B_s$ denote the $s$-th skeleton of $B$, and let $E_s:=f^{-1}(B_s)$. The spectral sequence $(E^{*,*}_*,d)$ of Lemma \ref{Lemma: Serre spectral sequence} arises from the decreasing filtration $F^*$ on the singular cochain complex given by $F^sS^*(E;\widehat{\ZZ}_p):=\Ker(S^*(E;\widehat{\ZZ}_p)\rightarrow S^*(E_{s-1};\widehat{\ZZ}_p))$.

\begin{lemma}\label{Lemma: identification of spectral sequences tensoring with a field}
Retain the assumptions of Lemma \ref{Lemma: Serre spectral sequence}, and let $A=S^*(E;\widehat{\ZZ}_p)$, equipped with the filtration $F^*$ induced from the skeletons of $B$, as above. Then the spectral sequence associated with the filtration $F^s(A\otimes_{\widehat{\ZZ}_p} k)=\Ker(S^*(E;\widehat{\ZZ}_p)\otimes_{\widehat{\ZZ}_p} k\rightarrow S^*(E_{s-1};\widehat{\ZZ}_p)\otimes_{\widehat{\ZZ}_p} k)$ on $A\otimes_{\widehat{\ZZ}_p} k=S^*(E;\widehat{\ZZ}_p)\otimes_{\widehat{\ZZ}_p} k$ is canonically isomorphic to the Leray-Serre spectral sequence $(E^{*,*}_*\otimes_{\widehat{\ZZ}_p} k,d)$ as in Lemma \ref{Lemma: Serre spectral sequence}.
\end{lemma}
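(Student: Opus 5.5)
The plan is to prove that the spectral sequence of a filtered complex commutes with the exact functor $-\otimes_{\widehat{\ZZ}_p} k$. The first step is to identify the filtration. For each $s$, the restriction $S^*(E;\widehat{\ZZ}_p)\rightarrow S^*(E_{s-1};\widehat{\ZZ}_p)$ is surjective, since singular cochains restrict surjectively along the inclusion of a subspace. This gives a short exact sequence
\[
0\rightarrow F^sA\rightarrow A\rightarrow S^*(E_{s-1};\widehat{\ZZ}_p)\rightarrow 0 .
\]
Because $k$ is flat over $\widehat{\ZZ}_p$ (it is a $\QQ_p$-vector space and $\QQ_p$ is a localization of $\widehat{\ZZ}_p$), tensoring with $k$ preserves exactness. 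Hence the canonical map $F^sA\otimes_{\widehat{\ZZ}_p}k\rightarrow A\otimes_{\widehat{\ZZ}_p}k$ is injective, and its image is exactly $F^s(A\otimes_{\widehat{\ZZ}_p}k)$ as defined in the statement. So the filtered complex $(A\otimes k, F^*(A\otimes k))$ is canonically isomorphic to $(A,F^*A)\otimes_{\widehat{\ZZ}_p}k$, and the inclusions $F^{s+1}\subset F^s$ correspond.

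The second step is to show that the spectral sequence of a filtered complex is compatible with flat base change. I will write the pages in the standard way:
\[
E_r^{s,t}=Z_r^{s,t}/(B_{r-1}^{s,t}+Z_{r-1}^{s+1,t-1}),\qquad Z_r^{s,t}=\{x\in F^sA^{s+t}: dx\in F^{s+r}A^{s+t+1}\}.
\]
Each of $Z_r$, $B_r$ and $E_r$ is built from the filtered complex by finitely many kernels, images, intersections, sums and quotients of submodules of $A$. Since $-\otimes_{\widehat{\ZZ}_p}k$ is exact, it commutes with each of these operations. For intersections, I will write $M\cap N$ as the kernel of $M\rightarrow A/N$ and use exactness. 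It follows that $Z_r(A\otimes k)=Z_r(A)\otimes k$, that $B_r(A\otimes k)=B_r(A)\otimes k$, and hence that $E_r(A\otimes k)\cong E_r(A)\otimes k$. The differentials $d_r$ are induced by $d$, so they correspond to $d_r\otimes k$. All of these identifications are natural, so the isomorphism is canonical and compatible with the pages and their differentials.

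Finally, I will check that $E_r(A)$, for $r\geq 2$ (or from $E_1$ on), is the Leray–Serre spectral sequence in the paper's convention; this is the definition recalled just before the statement. Tensoring it with $k$ gives $(E^{*,*}_*\otimes k,d)$, which is the spectral sequence of Lemma \ref{Lemma: Serre spectral sequence}. I expect the main subtle point to be the injectivity of $F^sA\otimes k\rightarrow A\otimes k$ together with the identification of its image as the kernel. Both follow from the surjectivity of restriction on singular cochains and flatness. Everything else is the formal exactness of $-\otimes k$.
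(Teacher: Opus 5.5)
Your proposal is correct and follows essentially the same route as the paper: flatness of $k$ over $\widehat{\ZZ}_p$ identifies $F^s(A\otimes_{\widehat{\ZZ}_p}k)$ with $F^sA\otimes_{\widehat{\ZZ}_p}k$. The explicit formula for $E_r$, a quotient of submodules built from kernels, preimages under $d$, intersections and sums, then commutes with the exact functor $-\otimes_{\widehat{\ZZ}_p}k$. The surjectivity of restriction on singular cochains is harmless but not needed, since tensoring with a flat module already preserves the kernel of $A\rightarrow S^*(E_{s-1};\widehat{\ZZ}_p)$.
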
 

\begin{proof}
Since $k$ is a flat $\widehat{\ZZ}_p$-module, $\Ker(S^*(E;\widehat{\ZZ}_p)\rightarrow S^*(E_{s-1};\widehat{\ZZ}_p))\otimes_{\widehat{\ZZ}_p}k$ is canonically isomorphic to $\Ker(S^*(E;\widehat{\ZZ}_p)\otimes_{\widehat{\ZZ}_p}k\rightarrow S^*(E_{s-1};\widehat{\ZZ}_p)\otimes_{\widehat{\ZZ}_p}k)$. Then the filtration $F^s(A\otimes_{\widehat{\ZZ}_p} k)$ on $A\otimes_{\widehat{\ZZ}_p} k$ is isomorphic to the filtration $F^sA \otimes_{\widehat{\ZZ}_p} k$. It suffices to show that the spectral sequence $(E^{*,*}_*\otimes_{\widehat{\ZZ}_p} k,d)$ is isomorphic to the spectral sequence associated with the filtration $F^*A \otimes_{\widehat{\ZZ}_p} k$ on $A \otimes_{\widehat{\ZZ}_p} k$.

Recall that $E^{i,j}_r=\frac{\{x\in F^i A^{i+j}\vert dx\in F^{i+r}A^{i+j+1}\}}{\{x\in F^{i+1}A^{i+j}\vert dx\in F^{i+r}A^{i+j+1}\}+dF^{i-r+1}A^{i+j}\bigcap F^i A^{i+j+1}}$ (\cite{Morgan-hodge-theory}*{(1.4)}). The numerator is the pullback of the diagram $F^i A^{i+j}\xrightarrow{d} A^{i+j+1}\hookleftarrow F^{i+r}A^{i+j+1}$.  Since $k$ is a flat $\widehat{\ZZ}_p$-module, $E^{i,j}_r\otimes_{\widehat{\ZZ}_p}k =\frac{\text{``numerator''}\otimes_{\widehat{\ZZ}_p}k}{\text{``denominator''}\otimes_{\widehat{\ZZ}_p}k}$, and $\text{``pullback of a diagram''}\otimes_{\widehat{\ZZ}_p}k$ is the pullback of $\text{``diagram''}\otimes_{\widehat{\ZZ}_p}k$. Then  $\text{``numerator''}\otimes_{\widehat{\ZZ}_p}k=\{y\in F^i A^{i+j}\otimes_{\widehat{\ZZ}_p}k\vert dy\in F^{i+r}A^{i+j+1}\otimes_{\widehat{\ZZ}_p}k\}$. By an analogous argument, $\text{``denominator''}\otimes_{\widehat{\ZZ}_p}k$ is isomorphic to the replacement of every $F^*A^*$ with $F^*A^*\otimes_{\widehat{\ZZ}_p}k$. This finishes the proof.
\end{proof}

\begin{lemma}\label{Lemma: Comparison of spectral sequences for rectification}
Retain the assumptions of Lemma \ref{Lemma: Serre spectral sequence}.  Define $F^s(C^*(E;\widehat{\ZZ}_p)\otimes_{\widehat{\ZZ}_p} k)$ to be the homotopy fiber of $C^*(E;\widehat{\ZZ}_p)\otimes_{\widehat{\ZZ}_p} k\rightarrow C^*(E_{s-1};\widehat{\ZZ}_p)\otimes_{\widehat{\ZZ}_p} k$ in the category of cochain complexes, and let $(\widetilde{E}^{*,*}_*,d)$ be the spectral sequence associated with $(C^*(E;\widehat{\ZZ}_p)\otimes_{\widehat{\ZZ}_p} k,F^\bullet)$, following \cite{Lurie-higher-algebra}*{Section 1.2.2}. Then the rectification zig-zag of quasi-isomorphisms between $S^*(E;\widehat{\ZZ}_p)\otimes_{\widehat{\ZZ}_p} k$ and $C^*(E;\widehat{\ZZ}_p)\otimes_{\widehat{\ZZ}_p}k$ induces a zig-zag of morphisms between the spectral sequences $(E^{*,*}_r\otimes_{\widehat{\ZZ}_p} k)_{r\geq 1}$ and $(\widetilde{E}^{*,*}_r)_{r\geq 1}$, and these spectral-sequence morphisms are isomorphisms.
\end{lemma}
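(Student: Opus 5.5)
The plan is to compare filtered objects in the stable $\infty$-category (or derived category) of $k$-modules, where spectral sequences of filtered complexes are functorial and invariant under filtered quasi-isomorphisms. Write $S_E:=S^*(E;\widehat{\ZZ}_p)\otimes_{\widehat{\ZZ}_p}k$ and $S_{E_{s-1}}$ similarly, and $C_E, C_{E_{s-1}}$ for their rectifications. By Theorem \ref{Fact: rectification}, the rectification zigzag $S_E\leftarrow \cdots\rightarrow C_E$ is functorial in the space. Applied to the inclusions $E_{s-1}\hookrightarrow E_s\hookrightarrow E$, it therefore gives zigzags of quasi-isomorphisms compatible with the restriction maps $S_E\to S_{E_{s-1}}$ and $C_E\to C_{E_{s-1}}$, and compatible for varying $s$. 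Each intermediate term $Z$ of the zigzag is likewise functorial in the space. I will filter it by $F^sZ(E):=\mathrm{hofib}(Z(E)\to Z(E_{s-1}))$, and functoriality makes this a filtered object. Each map in the zigzag is then a map of filtered complexes. Since it is a quasi-isomorphism on $Z(E)$ and on every $Z(E_{s-1})$, the five lemma applied to the homotopy fiber sequences shows it is a quasi-isomorphism on each $F^s$ and on each associated graded piece $F^s/F^{s+1}$.

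Next I identify the spectral sequence of $S_E$ with this homotopy-fiber filtration with $(E_r\otimes_{\widehat{\ZZ}_p}k)$. The restriction $S^*(E)\to S^*(E_{s-1})$ is surjective, since singular cochains restrict surjectively along inclusions. Hence the strict kernel $F^s$ used in Lemma \ref{Lemma: identification of spectral sequences tensoring with a field} is canonically quasi-isomorphic to the homotopy fiber, compatibly in $s$. By flatness of $k$, surjectivity persists after $\otimes k$. Lemma \ref{Lemma: identification of spectral sequences tensoring with a field} then identifies the classical spectral sequence of the strict filtration with $E_r\otimes k$. I also need the classical spectral sequence of a strict filtration of complexes (the formula of \cite{Morgan-hodge-theory}*{(1.4)}) to agree with Lurie's spectral sequence of the associated filtered object \cite{Lurie-higher-algebra}*{Section 1.2.2}; this is standard, as both are built from the exact couple of $H^*(F^s)$ and $H^*(F^s/F^{s+1})$.

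Finally, a map of filtered objects induces a morphism of the associated spectral sequences (of exact couples) from $E_1$ on. If it is an isomorphism on $E_1=H^*(F^s/F^{s+1})$, it is an isomorphism on every $E_r$ by induction, since $E_{r+1}=H(E_r,d_r)$. Applying this to each arrow of the zigzag gives the claim.

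The main obstacle is making the homotopy-fiber filtration genuinely functorial, i.e.\ compatible with the structure maps $F^{s+1}\to F^s$ and not merely defined up to non-coherent homotopy. I would handle this by working in Lurie's $\infty$-categorical framework, where the filtered object is the functor $s\mapsto \mathrm{fib}(Z(E)\to Z(E_{s-1}))$ obtained by applying a functor to the tower $\cdots\to E_{s-1}\to E_s\to\cdots\to E$. A second subtlety is convergence, since $B$ may be infinite-dimensional. The statement only concerns the pages $E_r$ for finite $r$, though, so this issue does not arise here.
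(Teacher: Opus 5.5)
Your proposal is correct and follows essentially the same route as the paper. Naturality of the rectification zigzag gives quasi-isomorphisms on each filtration piece $F^s$, and these induce isomorphisms of the associated spectral sequences in Lurie's construction. You also make explicit two points the paper leaves implicit: that the strict kernel filtration on singular cochains agrees with the homotopy-fiber filtration because restriction is surjective, and that the classical spectral sequence of a strict filtration matches Lurie's spectral sequence.
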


\begin{proof}
Let $S:=S^*(E;\widehat{\ZZ}_p)\otimes_{\widehat{\ZZ}_p} k$, $C:=C^*(E;\widehat{\ZZ}_p)\otimes_{\widehat{\ZZ}_p} k$, $S(s):=S^*(E_s;\widehat{\ZZ}_p)\otimes_{\widehat{\ZZ}_p} k$, and $C(s):=C^*(E_s;\widehat{\ZZ}_p)\otimes_{\widehat{\ZZ}_p} k$. The naturality of rectification (Theorem \ref{Fact: rectification}) gives a zig-zag of quasi-isomorphisms between cochain complexes $F^s S$ and $F^s C$ for every $s$.
By the construction of the spectral sequence in \cite{Lurie-higher-algebra}*{Definition 1.2.2.2 - Construction 1.2.2.6}, the zig-zag of morphisms between $F^s S$ and $F^s C$ induces a zig-zag of morphisms between  spectral sequences $(E^{*,*}_r\otimes_{\widehat{\ZZ}_p} k)_{r\geq 1}$ and $(\widetilde{E}^{*,*}_r)_{r\geq 1}$. Since each morphism in the zig-zag of morphisms between $F^s S$ and $F^s C$ is a quasi-isomorphism, the induced spectral-sequence morphisms are isomorphisms.
\end{proof}

\begin{lemma}\label{Lemma: fibration and elementary extension}
Retain the assumptions of Lemma \ref{Lemma: Serre spectral sequence}. Assume further that every class in $H^{n}(F;\widehat{\ZZ}_p)\otimes_{\widehat{\ZZ}_p} k$ is transgressive and that $H^*(F;\widehat{\ZZ}_p)\otimes_{\widehat{\ZZ}_p} k\cong \bigwedge^*V_n$, where $V_n$  is a finite-dimensional graded $k$-vector space concentrated in degree $n$. Then there is an elementary extension $D$ of $C^*(B;\widehat{\ZZ}_p)\otimes_{\widehat{\ZZ}_p} k$ by $V_n$ together with a CDGA morphism $\phi:D\rightarrow C^*(E;\widehat{\ZZ}_p)\otimes_{\widehat{\ZZ}_p} k$ satisfying the following properties:
\begin{enumerate}[leftmargin=0.25in]
    \item $\phi$ extends $f^*:C^*(B;\widehat{\ZZ}_p)\otimes_{\widehat{\ZZ}_p} k\rightarrow C^*(E;\widehat{\ZZ}_p)\otimes_{\widehat{\ZZ}_p} k$; 
    \item $\phi$ is a quasi-isomorphism.
\end{enumerate}
\end{lemma}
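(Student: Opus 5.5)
Write $C_B:=C^*(B;\widehat{\ZZ}_p)\otimes_{\widehat{\ZZ}_p} k$, $C_E:=C^*(E;\widehat{\ZZ}_p)\otimes_{\widehat{\ZZ}_p} k$ and $C_F:=C^*(F;\widehat{\ZZ}_p)\otimes_{\widehat{\ZZ}_p} k$. The plan is the classical Sullivan argument, adapted so that every step takes place in the rectified CDGAs.

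\textbf{Step 1: choose the transgression data.} Choose a basis $x_1,\dots,x_m$ of $V_n\cong H^n(F;\widehat{\ZZ}_p)\otimes_{\widehat{\ZZ}_p} k$. By hypothesis each $[x_j]$ is transgressive. Transport Definition \ref{Def: Transgression} from singular cochains to the rectified model. The rectification zig-zag is natural, so it is compatible with $f^*$ and with restriction to the fibre, and it induces isomorphisms on the long exact sequences of pairs displayed after Definition \ref{Def: Transgression}. Hence each $[x_j]$ is also transgressive in the rectified sense: there are elements $\widetilde{x}_j\in C_E^n$ and cocycles $y_j\in C_B^{n+1}$ with $d\widetilde{x}_j=f^*y_j$ and $h^*[\widetilde{x}_j]=[x_j]$.

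To get this equality on the nose rather than up to homotopy, first replace $f^*:C_B\to C_E$ by a cofibration, using a relative Sullivan model (still a rectified model of $E$). Then factor the zig-zag so that a class in $\partial^{-1}(\Im f^*)$ lifts to an honest pair $(\widetilde{x},y)$. This uses the identification of relative cohomology $H^*(E,F)$ with the cohomology of the mapping cone of $C_E\to C_F$, which is preserved by quasi-isomorphisms.

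\textbf{Step 2: build the extension and the map.} Define
\[
D=C_B\otimes\bigwedge V_n, \qquad dv_j=y_j .
\]
Define $\phi$ to be $f^*$ on $C_B$, and set $\phi(v_j)=\widetilde{x}_j$, extended multiplicatively. Graded commutativity of $C_E$ makes this a well-defined algebra map. The relation $d\widetilde{x}_j=f^*y_j$ makes it a chain map. Item (1) holds by construction.

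\textbf{Step 3: prove item (2) by comparing spectral sequences.} Filter $D$ by $F^sD=F^sC_B\otimes\bigwedge V_n$, using the skeletal filtration of $B$ transported to $C_B$ as in Lemma \ref{Lemma: Comparison of spectral sequences for rectification}. Filter $C_E$ by the filtration pulled back along the skeleta of $B$. Then $\phi$ is filtration preserving and induces a morphism of spectral sequences.

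On $E_2$, the source is $H^*(C_B)\otimes\bigwedge V_n$, since $V_n$ is finite dimensional and $d(V_n)\subset C_B$ lies in positive filtration. By Lemmas \ref{Lemma: Serre spectral sequence}, \ref{Lemma: identification of spectral sequences tensoring with a field} and \ref{Lemma: Comparison of spectral sequences for rectification}, the target is $(H^*(B;\widehat{\ZZ}_p)\otimes k)\otimes(H^*(F;\widehat{\ZZ}_p)\otimes k)$. The map between them is the identity on the base factor, and on the fibre factor it sends $v_j\mapsto h^*[\widetilde{x}_j]=[x_j]$. Since $H^*(F)\otimes k=\bigwedge V_n$ is a free graded commutative algebra and the $E_2$ page is multiplicative, this map is an isomorphism. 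The comparison theorem for first-quadrant, convergent spectral sequences (Lemma \ref{Lemma: Serre spectral sequence}(2)) then shows that $\phi$ is a quasi-isomorphism.

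\textbf{Main obstacle.} The delicate point is the multiplicativity and identification on the $E_2$ page for the rectified model. The spectral sequence of Lemma \ref{Lemma: Comparison of spectral sequences for rectification} is built from homotopy fibres, so one must check two things: that it is multiplicative, and that its $E_2$ term is tensor-decomposed compatibly with the product. The approach I would try first is to compare with the singular cochain spectral sequence via the $\mathcal{E}$-algebra zig-zag, where cup products are available. This reduces the question to the classical multiplicative Serre spectral sequence, tensored with the flat module $k$.
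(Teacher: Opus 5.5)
The construction of $D$ and $\phi$ in your Steps 1--2 matches the paper's. Definition \ref{Def: Transgression} is already stated in the rectified CDGAs, so you do not need the transport via relative Sullivan models.

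\textbf{The gap.} It is the sentence in Step 3, ``Then $\phi$ is filtration preserving and induces a morphism of spectral sequences.''
\begin{itemize}
\item In Lemma \ref{Lemma: Comparison of spectral sequences for rectification}, $F^s$ is a homotopy fibre of $C_E\to C^*(E_{s-1};\widehat{\ZZ}_p)\otimes_{\widehat{\ZZ}_p}k$, not a subcomplex. So $F^sC_B\otimes\bigwedge V_n$ is not a subcomplex of $D$, and there is no map $F^sD\to F^sC_E$ until you construct one.
\item Switching to strict kernels of the restriction maps makes preservation automatic, since kernels are ideals and rectification is functorial. But a kernel computes the homotopy fibre only if the rectified restriction map is surjective, and nothing in Theorem \ref{Fact: rectification} guarantees that.
\item A kernel filtration is also only multiplicative in the weak sense $F^s\cdot F^t\subset F^{\max(s,t)}$. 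So for $c\in F^sC_B$ the product $c\,y_j$ need not lie in $F^{s+1}$, and the differential on $\mathrm{Gr}^sD$ is twisted.
\item Consequently your stated reason for $E_2(D)\cong H^*(C_B)\otimes\bigwedge V_n$ (that $d(V_n)$ lies in positive filtration) is insufficient. A separate degree argument would be needed.
\end{itemize}

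\textbf{The missing idea.} This is where the paper's proof carries its weight.
\begin{itemize}
\item Filter $D$ by base cochain degree, $F^sD=C_B^{\geq s}\otimes\bigwedge V_n$. This is an honest multiplicative filtration by ideals, with $E_2=H^*(C_B)\otimes\bigwedge V_n$ immediately.
\item Since $D\to D/F^sD$ is surjective, $F^sD$ is also its homotopy fibre.
\item Because $H^{\geq s}(B_{s-1})=0$, truncating $C^*(B_{s-1};\widehat{\ZZ}_p)\otimes_{\widehat{\ZZ}_p}k$ induces the identity on cohomology. The truncation is the identity below degree $s-1$, projection onto cocycles in degree $s-1$, and zero in degrees $\geq s$.
\item Hence the composite $D\to C_E\to C^*(E_{s-1};\widehat{\ZZ}_p)\otimes_{\widehat{\ZZ}_p}k$ agrees on cohomology with a map that factors through $D/F^sD$.
\item Over a field, such maps are chain homotopic. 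The homotopy produces the required map $F^sD\to F^sC_E$ and hence the morphism of spectral sequences.
\end{itemize}

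The multiplicativity issue you single out as the main obstacle enters the paper only through the $E_2$ identifications of Lemmas \ref{Lemma: Serre spectral sequence} and \ref{Lemma: Comparison of spectral sequences for rectification}. The crux is instead the filtration compatibility above.
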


\begin{proof}
We retain the notation of Lemma \ref{Lemma: Serre spectral sequence} and Lemma \ref{Lemma: Comparison of spectral sequences for rectification}.

Choose a basis $x_1,...,x_r$ of $V_n\cong H^n(F;\widehat{\ZZ}_p)\otimes_{\widehat{\ZZ}_p} k$. Since each class $x_i$ is transgressive, there are $\widetilde{x}_i\in C^{n}(E;\widehat{\ZZ}_p)\otimes_{\widehat{\ZZ}_p} k$ and a cocycle $y_i\in C^{n+1}(B;\widehat{\ZZ}_p)\otimes_{\widehat{\ZZ}_p} k$ such that the restriction of $[\widetilde{x}_i]$ is $x_i$ and  $f^*y_i=d\widetilde{x}_i$.

Let $(D,d_D)$ be the elementary extension of $C^*(B;\widehat{\ZZ}_p)\otimes_{\widehat{\ZZ}_p} k$ by $V_n$ with the differential $d_D$ given by $d_D(x_i)=y_i$. Define a CDGA morphism $\phi:D\rightarrow C^*(E;\widehat{\ZZ}_p)\otimes_{\widehat{\ZZ}_p} k$ by extending $f^*$ and setting $\phi(x_i)=\widetilde{x}_i$. It remains to show that $\phi$ is a quasi-isomorphism.

Filter $D$ by $F^s D:=(C^{*\geq s}(B;\widehat{\ZZ}_p)\otimes_{\widehat{\ZZ}_p} k)\otimes_{k} \bigwedge^*V_n$. Let $\widetilde{\widetilde{E}}$ be the spectral sequence associated with this filtration. By the computations in \cite{MGriffiths-Morgan-rational-homotopy}*{p.~155}, $\widetilde{\widetilde{E}}^{i,j}_2=(H^i(B;\widehat{\ZZ}_p)\otimes_{\widehat{\ZZ}_p} k) \otimes_{k} (\bigwedge^*V_n)^j$. Moreover, this is an isomorphism of bigraded algebras: the multiplication on $\widetilde{\widetilde{E}}^{*,*}_2$ is induced by the cup product of $H^*(B;\widehat{\ZZ}_p)\otimes_{\widehat{\ZZ}_p} k$ and the multiplication of $(\bigwedge^*V_n)^*$. In particular, the multiplication induces an isomorphism $\widetilde{\widetilde{E}}^{i,0}_2\otimes_k \widetilde{\widetilde{E}}^{0,j}_2\rightarrow \widetilde{\widetilde{E}}^{i,j}_2$.

We will show that $\phi:D\rightarrow C^*(E;\widehat{\ZZ}_p)\otimes_{\widehat{\ZZ}_p} k$ respects the filtrations and induces an isomorphism of spectral sequences on the second page.

Recall from Lemma \ref{Lemma: identification of spectral sequences tensoring with a field} that $B_s$ denotes the $s$-th skeleton of $B$ and that $E_s=f^{-1}(B_s)$. Consider the following short exact sequence of CDGA's.
\[
0\rightarrow C^{*\geq s}(B;\widehat{\ZZ}_p)\otimes_{\widehat{\ZZ}_p} k\rightarrow C^{*}(B;\widehat{\ZZ}_p)\otimes_{\widehat{\ZZ}_p} k \rightarrow C^{*<s}(B;\widehat{\ZZ}_p)\otimes_{\widehat{\ZZ}_p} k \rightarrow 0
\]
Tensoring this sequence with $\bigwedge^* V_n$ preserves exactness.

Restriction from $B$ to $B_{s-1}$ induces a natural CDGA morphism $g_{s-1}:D=(C^*(B;\widehat{\ZZ}_p)\otimes_{\widehat{\ZZ}_p} k)\otimes_k \bigwedge^*V_n\rightarrow (C^*(B_{s-1};\widehat{\ZZ}_p)\otimes_{\widehat{\ZZ}_p} k)\otimes_k \bigwedge^*V_n$. Let $h_{s-1}:C^*(E;\widehat{\ZZ}_p)\otimes_{\widehat{\ZZ}_p} k\rightarrow C^*(E_{s-1};\widehat{\ZZ}_p)\otimes_{\widehat{\ZZ}_p} k$ be the natural CDGA morphism induced by $E_{s-1}\hookrightarrow E$. There is a natural morphism $a_{s-1}:(C^*(B_{s-1};\widehat{\ZZ}_p)\otimes_{\widehat{\ZZ}_p} k)\otimes_k \bigwedge^*V_n\rightarrow C^*(E_{s-1};\widehat{\ZZ}_p)\otimes_{\widehat{\ZZ}_p} k$ which makes the following diagram commute.
\[
\begin{tikzcd}
   D=(C^*(B;\widehat{\ZZ}_p)\otimes_{\widehat{\ZZ}_p} k)\otimes_k \bigwedge^*V_n \arrow[r,"g_{s-1}"] \arrow[d,"\phi"] & (C^*(B_{s-1};\widehat{\ZZ}_p)\otimes_{\widehat{\ZZ}_p} k)\otimes_k \bigwedge^*V_n \arrow[d,"a_{s-1}"] \\
   C^*(E;\widehat{\ZZ}_p)\otimes_{\widehat{\ZZ}_p} k \arrow[r,"h_{s-1}"] & C^*(E_{s-1};\widehat{\ZZ}_p)\otimes_{\widehat{\ZZ}_p} k
\end{tikzcd}
\]

Let $k_{s-1}:(C^*(B_{s-1};\widehat{\ZZ}_p)\otimes_{\widehat{\ZZ}_p} k)\otimes_k \bigwedge^*V_n\rightarrow (C^{*<s}(B_{s-1};\widehat{\ZZ}_p)\otimes_{\widehat{\ZZ}_p} k)\otimes_k \bigwedge^*V_n$ denote the natural projection. We now construct a chain map $l_{s-1}:(C^{*<s}(B_{s-1};\widehat{\ZZ}_p)\otimes_{\widehat{\ZZ}_p} k)\otimes_k \bigwedge^*V_n\rightarrow (C^*(B_{s-1};\widehat{\ZZ}_p)\otimes_{\widehat{\ZZ}_p} k)\otimes_k \bigwedge^*V_n$ such that $l_{s-1}\circ k_{s-1}$ induces the identity on cohomology.

Let $Z^{s-1}$ denote the subspace of cocycles in $C^{s-1}(B_{s-1};\widehat{\ZZ}_p)\otimes_{\widehat{\ZZ}_p} k$, and let $B^{s}$ be the subspace of exact cochains in $C^{s}(B_{s-1};\widehat{\ZZ}_p)\otimes_{\widehat{\ZZ}_p} k$. Choose a vector-space complement $L^s$ to $Z^{s-1}$ in  $C^{s-1}(B_{s-1};\widehat{\ZZ}_p)\otimes_{\widehat{\ZZ}_p} k$. Then the differential restricts to an isomorphism $L^s\rightarrow B^s$. Then we may write $C^{s-1}(B_{s-1};\widehat{\ZZ}_p)\otimes_{\widehat{\ZZ}_p} k=Z^{s-1}\oplus B^{s}$.
Define a chain map $C^{*<s}(B_{s-1};\widehat{\ZZ}_p)\otimes_{\widehat{\ZZ}_p} k\rightarrow C^{*}(B_{s-1};\widehat{\ZZ}_p)\otimes_{\widehat{\ZZ}_p} k$ by taking the identity for $*<s-1$ and projecting onto $Z^{s-1}$ for $*=s-1$. Tensoring this map with the identity on $\bigwedge^*V_n$ gives a chain map $l_{s-1}:(C^{*<s}(B_{s-1};\widehat{\ZZ}_p)\otimes_{\widehat{\ZZ}_p} k)\otimes_k \bigwedge^*V_n\rightarrow (C^{*}(B_{s-1};\widehat{\ZZ}_p)\otimes_{\widehat{\ZZ}_p} k)\otimes_k \bigwedge^*V_n$.
\medskip

\textbf{Claim.} $l_{s-1}\circ k_{s-1}$ induces the identity on cohomology.

\textit{Proof of the claim.} Define an auxiliary complex $\tau^s C^*(B_{s-1};\widehat{\ZZ}_p)\otimes_{\widehat{\ZZ}_p} k$ as follows: it is zero below degree $s-1$, its degree-$(s-1)$ term is $B^s$ and in higher degrees it agrees with $C^*(B_{s-1};\widehat{\ZZ}_p)\otimes_{\widehat{\ZZ}_p} k$. The quotient of $(C^{*}(B_{s-1};\widehat{\ZZ}_p)\otimes_{\widehat{\ZZ}_p} k)\otimes_k \bigwedge^*V_n$ by the image of $l_{s-1}\circ k_{s-1}$ is isomorphic to $(\tau^s C^*(B_{s-1};\widehat{\ZZ}_p)\otimes_{\widehat{\ZZ}_p} k)\otimes_k \bigwedge^*V_n$. Therefore, it suffices to show that $(\tau^s C^*(B_{s-1};\widehat{\ZZ}_p)\otimes_{\widehat{\ZZ}_p} k)\otimes_k \bigwedge^*V_n$ is acyclic. Filter $(\tau^s C^*(B_{s-1};\widehat{\ZZ}_p)\otimes_{\widehat{\ZZ}_p} k)\otimes_k \bigwedge^*V_n$ analogously to $D$. This induces a spectral sequence whose $E_2$-page is $(H^{*>s}(B_{s-1};\widehat{\ZZ}_p)\otimes_{\widehat{\ZZ}_p} k)\otimes_{k} \bigwedge^*V_n=0$.  This proves the claim.
\medskip

We now return to the proof of the lemma. Consider the following diagram, in which all solid arrows commute.
\[
\begin{tikzcd}
    D=(C^*(B;\widehat{\ZZ}_p)\otimes_{\widehat{\ZZ}_p} k)\otimes_k \bigwedge^*V_n \arrow[r,"pr_{s-1}"] \arrow[dd,bend right=80,"\phi"'] \arrow[d,"g_{s-1}"] & (C^{*<s}(B;\widehat{\ZZ}_p)\otimes_{\widehat{\ZZ}_p} k)\otimes_k \bigwedge^*V_n \arrow[d,"b_{s-1}"']  \\
    (C^*(B_{s-1};\widehat{\ZZ}_p)\otimes_{\widehat{\ZZ}_p} k)\otimes_k \bigwedge^*V_n \arrow[r,"k_{s-1}"] \arrow[dr,"a_{s-1}"']  & (C^{*<s}(B_{s-1};\widehat{\ZZ}_p)\otimes_{\widehat{\ZZ}_p} k)\otimes_k \bigwedge^*V_n \arrow[l,bend left=5, dashed, "l_{s-1}"]
    \\
   C^*(E;\widehat{\ZZ}_p)\otimes_{\widehat{\ZZ}_p} k \arrow[r,"h_{s-1}"] & C^*(E_{s-1};\widehat{\ZZ}_p)\otimes_{\widehat{\ZZ}_p} k 
\end{tikzcd}
\]
By the claim, $l_{s-1}\circ k_{s-1}$ is the identity on cohomology. Then this diagram shows that $h_{s-1}\circ \phi$ and $a_{s-1} \circ l_{s-1} \circ b_{s-1}\circ pr_{s-1}$ induce the same map on cohomology.

Every cochain complex over a field is a direct sum of its cohomology, equipped with the zero differential, and an acyclic cochain complex (\cite{Weibel-homological-algebra}*{Exercise 1.1.3}). Consequently, two chain maps between cochain complexes of vector spaces are chain homotopic whenever they induce the same map on cohomology. It follows that the two chain maps $h_{s-1}\circ \phi$ and $a_{s-1} \circ l_{s-1} \circ b_{s-1}\circ pr_{s-1}$ are chain homotopic. The chain homotopy therefore induces a chain map from the homotopy fiber of $D=(C^*(B;\widehat{\ZZ}_p)\otimes_{\widehat{\ZZ}_p} k)\otimes_k \bigwedge^*V_n\rightarrow (C^{*<s}(B;\widehat{\ZZ}_p)\otimes_{\widehat{\ZZ}_p} k)\otimes_k \bigwedge^*V_n$  to $F^s C^*(E;\widehat{\ZZ}_p)\otimes_{\widehat{\ZZ}_p} k$. Since $D=(C^*(B;\widehat{\ZZ}_p)\otimes_{\widehat{\ZZ}_p} k)\otimes_k \bigwedge^*V_n\rightarrow (C^{*<s}(B;\widehat{\ZZ}_p)\otimes_{\widehat{\ZZ}_p} k)\otimes_k \bigwedge^*V_n$ is degree-wise surjective, its homotopy fiber is represented by $F^sD=(C^{*>s}(B;\widehat{\ZZ}_p)\otimes_{\widehat{\ZZ}_p} k)\otimes_k \bigwedge^*V_n$. Then the chain map $\phi:D=(C^*(B;\widehat{\ZZ}_p)\otimes_{\widehat{\ZZ}_p} k)\otimes_k \bigwedge^*V_n\rightarrow C^*(E;\widehat{\ZZ}_p)\otimes_{\widehat{\ZZ}_p} k$ preserves filtrations and hence induces a spectral-sequence morphism $\widetilde{\widetilde{E}}^{*,*}_*\rightarrow \widetilde{E}^{*,*}_*$. 

By Lemma \ref{Lemma: Serre spectral sequence} and Lemma \ref{Lemma: Comparison of spectral sequences for rectification}, $\widetilde{E}^{i,0}_2= H^i(B;\widehat{\ZZ}_p)\otimes_{\widehat{\ZZ}_p}k$, $\widetilde{E}^{0,j}_2= H^j(F;\widehat{\ZZ}_p)\otimes_{\widehat{\ZZ}_p}k\cong \bigwedge^jV_n$ and $\widetilde{E}^{i,j}_2=\widetilde{E}^{i,0}_2\otimes_k \widetilde{E}^{0,j}_2$. Naturality with respect to $F\rightarrow pt$ shows that $\widetilde{\widetilde{E}}^{i,0}_2=H^i(B;\widehat{\ZZ}_p)\otimes_{\widehat{\ZZ}_p}k\rightarrow \widetilde{E}^{i,0}_2=H^i(B;\widehat{\ZZ}_p)\otimes_{\widehat{\ZZ}_p}k$ is the identity. Moreover, $\widetilde{\widetilde{E}}^{0,j}_2=\bigwedge^jV_n\rightarrow \widetilde{E}^{0,j}_2=H^j(F;\widehat{\ZZ}_p)\otimes_{\widehat{\ZZ}_p}k\cong \bigwedge^jV_n$ is clearly the identity. Then $\widetilde{\widetilde{E}}^{*,*}_2\rightarrow \widetilde{E}^{*,*}_2$ and therefore $\widetilde{\widetilde{E}}^{*,*}_r\rightarrow \widetilde{E}^{*,*}_r$ for every $r>2$ are isomorphisms. In particular, $\widetilde{\widetilde{E}}^{*,*}_{\infty}\rightarrow \widetilde{E}^{*,*}_{\infty}$ is an isomorphism. Applying the five lemma successively to the short exact sequences of filtration quotients shows, by induction on the filtration degree, that $\phi:D=(C^*(B;\widehat{\ZZ}_p)\otimes_{\widehat{\ZZ}_p} k)\otimes_k \bigwedge^*V_n\rightarrow C^*(E;\widehat{\ZZ}_p)\otimes_{\widehat{\ZZ}_p} k$ is a quasi-isomorphism.
\end{proof}

\begin{corollary}\label{Cor: Extension of Minimal Model for Fibrations}
Retain the assumptions of Lemma \ref{Lemma: fibration and elementary extension}. Let $A$ be a CDGA over $k$, equipped with a quasi-isomorphism $\psi:A\rightarrow C^*(B;\widehat{\ZZ}_p)\otimes_{\widehat{\ZZ}_p} k$. Then there is an elementary extension $D'$ of $A$ by $V_n$, together with a CDGA morphism $\phi':D'\rightarrow C^*(E;\widehat{\ZZ}_p)\otimes_{\widehat{\ZZ}_p} k$ satisfying the following properties:
\begin{enumerate}[leftmargin=0.25in]
    \item $\phi'$ extends $f^*\circ \psi$;
    \item $\phi'$ is a quasi-isomorphism.
\end{enumerate}
\end{corollary}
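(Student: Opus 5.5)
The plan is to transport the elementary extension $D$ of Lemma \ref{Lemma: fibration and elementary extension} along the quasi-isomorphism $\psi$. Write $C_B:=C^*(B;\widehat{\ZZ}_p)\otimes_{\widehat{\ZZ}_p} k$ and $C_E:=C^*(E;\widehat{\ZZ}_p)\otimes_{\widehat{\ZZ}_p} k$. Lemma \ref{Lemma: fibration and elementary extension} supplies $D=C_B\otimes_k\bigwedge^*V_n$ with $d_D(x_i)=y_i$, where $y_i\in C_B^{n+1}$ are cocycles, together with a quasi-isomorphism $\phi:D\rightarrow C_E$ extending $f^*$ and satisfying $\phi(x_i)=\widetilde{x}_i$, $d\widetilde{x}_i=f^*y_i$.

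\textbf{Lifting the cocycles.} Since $\psi$ is surjective on cohomology, for each $i$ I choose a cocycle $a_i\in A^{n+1}$ and an element $c_i\in C_B^{n}$ with $\psi(a_i)=y_i+dc_i$. I then define $D':=A\otimes_k\bigwedge^*V_n$ with $d(x_i)=a_i$. This is an elementary extension of $A$ by $V_n$. Next I define $\phi':D'\rightarrow C_E$ as $f^*\circ\psi$ on $A$ and $\phi'(x_i)=\widetilde{x}_i+f^*c_i$. This respects differentials because $d(\widetilde{x}_i+f^*c_i)=f^*(y_i+dc_i)=f^*\psi(a_i)$. Since $D'$ is free over $A$ as a graded-commutative algebra, $\phi'$ extends uniquely to a CDGA morphism, and it extends $f^*\circ\psi$ by construction, giving (1).

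\textbf{Quasi-isomorphism.} Rather than comparing $D'$ to $D$ directly, I repeat the spectral-sequence argument of Lemma \ref{Lemma: fibration and elementary extension}. Replacing $\widetilde{x}_i$ by $\widetilde{x}_i+f^*c_i$ changes neither its restriction to $F$, since $f^*c_i$ restricts to a cochain pulled back from a point, nor its transgression, since $[y_i]=[y_i+dc_i]$. So I may assume $c_i=0$ and view $D''=C_B\otimes\bigwedge^*V_n$ with $d x_i=\psi(a_i)$ as an instance of $D$ from the Lemma. Then $\phi'$ factors as
\[
D'\xrightarrow{\psi\otimes\mathrm{id}} D''\xrightarrow{\phi''} C_E,
\]
where $\phi''$ is a quasi-isomorphism by Lemma \ref{Lemma: fibration and elementary extension}, applied with the lifts $\widetilde{x}_i+f^*c_i$.

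It remains to show that $\psi\otimes\mathrm{id}:A\otimes\bigwedge^*V_n\rightarrow C_B\otimes\bigwedge^*V_n$ is a quasi-isomorphism. Filter both sides by the word-length filtration in $\bigwedge^*V_n$, meaning $\bigwedge^{\geq m}$ counted so that $d$ lowers exterior length by at most one. On the associated graded, $d$ acts only on the $A$ or $C_B$ factor, so the induced map is $\psi\otimes\mathrm{id}_{\bigwedge^m V_n}$, which is a quasi-isomorphism because $\psi$ is. Since $V_n$ is finite-dimensional, the filtration is finite in each degree, and a finite induction with the five lemma concludes.

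\textbf{Main obstacle.} The delicate step is ensuring that Lemma \ref{Lemma: fibration and elementary extension} genuinely applies to the modified lifts $\widetilde{x}_i+f^*c_i$. Its proof only used that the lifts restrict to a basis of $H^n(F)$ and have coboundaries pulled back from $B$, so I would check these two conditions explicitly. If $n$ is small and $V_n$ is concentrated in even degree, so that $\bigwedge$ denotes the free graded-commutative algebra, the word-length filtration still works degreewise; I will note that case separately.
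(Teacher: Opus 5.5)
Your proposal is correct and follows essentially the same route as the paper. You lift each $y_i$ to a cocycle in $A$, absorb the correcting coboundary into $\widetilde{x}_i$ so that one may assume $\psi(z_i)=y_i$, and factor $\phi'=\phi\circ(\psi\otimes\mathrm{id})$, which reduces everything to showing that $\psi\otimes\mathrm{id}$ is a quasi-isomorphism. The only difference is in that last step: the paper filters by degree in $A$ (resp.\ in $C^*(B;\widehat{\ZZ}_p)\otimes_{\widehat{\ZZ}_p}k$) and compares $E_2$-pages, while you use the word-length filtration. Your filtration works equally well, provided you take the increasing one, $A\otimes\bigwedge^{\leq m}V_n$; since $d$ lowers word length, $A\otimes\bigwedge^{\geq m}V_n$ is not a subcomplex.
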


\begin{proof}
We use the notation introduced in the proof of Lemma \ref{Lemma: fibration and elementary extension}. For each $i$, choose a cocycle $z_i\in A^{n+1}$ whose cohomology class maps to $[y_i]$ under $\psi^*$. Form the elementary extension $(D',d_{D'})$ of $A$ by $V_n$ such that $d_{D'}(x_i)=z_i$.  Since $\psi(z_i)$ and $y_i$ represent the same cohomology class, there exists a cochain $y'_i\in C^{n}(B;\widehat{\ZZ}_p)\otimes_{\widehat{\ZZ}_p} k$ such that $\psi(z_i)=y_i+dy'_i$. Replace $y_i$ and $\widetilde{x}_i$ with $y_i+dy'_i$ and $\widetilde{x}_i+f^*y'_i$, respectively. These replacements preserve the relation $d\widetilde{x}_i=f^*y_i$, and we may therefore assume that $\psi(z_i)=y_i$.

Define $\psi':D'=A\otimes_{k} \bigwedge^* V_n\rightarrow D=(C^*(B;\widehat{\ZZ}_p)\otimes_{\widehat{\ZZ}_p} k)\otimes_{k} \bigwedge^* V_n$ to be the CDGA morphism which restricts to $\psi$ on $A$ and restricts to the identity on $V_n$. Define $\phi'=\phi\circ \psi'$.

Recall that $D$ is associated with the filtration $F^s D=(C^{*\geq s}(B;\widehat{\ZZ}_p)\otimes_{\widehat{\ZZ}_p} k)\otimes_{k}\bigwedge^* V_n$. The associated spectral sequence $\widetilde{\widetilde{E}}^{*,*}_*$ has $\widetilde{\widetilde{E}}^{i,j}_2=(H^i(B;\widehat{\ZZ}_p)\otimes_{\widehat{\ZZ}_p}k)\otimes_{k} (\bigwedge^* V_n)^j$. Similarly, filter $D'$ by $F^s D'=A^{*\geq s}\otimes_{k}\bigwedge^*V_n$. Its associated spectral sequence $\widetilde{\widetilde{E}}'^{*,*}_*$ has $\widetilde{\widetilde{E}}'^{i,j}_2=H^i(A)\otimes_k (\bigwedge^* V_n)^j\cong (H^i(B;\widehat{\ZZ}_p)\otimes_{\widehat{\ZZ}_p}k)\otimes_{k} (\bigwedge^* V_n)^j$. The morphism $\psi'$ respects these filtrations, and therefore it induces a spectral-sequence morphism $\widetilde{\widetilde{E}}'^{*,*}_*\rightarrow \widetilde{\widetilde{E}}^{*,*}_*$. Since $\widetilde{\widetilde{E}}'^{i,0}_2=H^i(A)\rightarrow \widetilde{\widetilde{E}}^{i,0}_2=H^i(B;\widehat{\ZZ}_p)\otimes_{\widehat{\ZZ}_p}k$ and $\widetilde{\widetilde{E}}'^{0,j}_2=\wedge^j V_n\rightarrow \widetilde{\widetilde{E}}^{0,j}_2=\wedge^j V_n$ are both isomorphisms, $\widetilde{\widetilde{E}}'^{*,*}_2\rightarrow \widetilde{\widetilde{E}}^{*,*}_2$ is an isomorphism. The spectral-sequence comparison used in Lemma \ref{Lemma: fibration and elementary extension} now shows that $\psi'$ is a quasi-isomorphism. Since $\phi$ is also a quasi-isomorphism, the composition $\phi'=\phi\circ \psi'$ is a quasi-isomorphism.
\end{proof}

The following result provides an explicit  correspondence between elementary extensions of minimal models and the $k$-invariants in the Postnikov tower.

\begin{corollary}\label{Cor: k-invariant computation}
Retain the assumptions of Lemma \ref{Lemma: Serre spectral sequence}. Assume further that $F=K(G,n)$, where $G$ is a finitely generated $\widehat{\ZZ}_p$-module. Choose a basis $\{x^{\vee}_1,...,x^{\vee}_r\}$ of $G\otimes_{\widehat{\ZZ}_p} k$. Using the duality of Lemma \ref{Lemma: Computation of Eilenberg-Maclane spaces}, let $\{x_1,...,x_r\}$ be the corresponding dual basis of $H^n(K(G,n);\widehat{\ZZ}_p)\otimes_{\widehat{\ZZ}_p} k$. Let $y_i$ denote the transgression of $x_i$ with the choices made in the proof of Lemma \ref{Lemma: fibration and elementary extension}. Let $K \in H^{n+1}(B;G)\otimes_{\widehat{\ZZ}_p} k$ be the $k$-invariant of the fibration $K(G,n)\rightarrow E \rightarrow B$.  Then, under the natural identification $H^{n+1}(B;G)\otimes_{\widehat{\ZZ}_p} k\cong (H^{n+1}(B;\widehat{\ZZ}_p)\otimes_{\widehat{\ZZ}_p} k)\otimes_{k} (G\otimes_{\widehat{\ZZ}_p} k)$, one has $K=y_1\otimes x^{\vee}_1+...+y_r\otimes x^{\vee}_r$.
\end{corollary}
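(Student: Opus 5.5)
The plan is to reduce to the universal case, where the fibration is the path-loop fibration $K(G,n)\to PK(G,n+1)\to K(G,n+1)$. By definition, the $k$-invariant of $K(G,n)\to E\to B$ is the class $K=g^*\iota_{n+1}$, where $g:B\to K(G,n+1)$ classifies the fibration and $\iota_{n+1}\in H^{n+1}(K(G,n+1);G)$ is the fundamental class. The fibration is the pullback of the path-loop fibration along $g$, and we have the map of fibrations over $g$ that is the identity on the fiber $K(G,n)$. Transgression is natural with respect to maps of fibrations. This is visible from the commutative ladder of long exact sequences preceding the McCleary lemma, or directly from Definition \ref{Def: Transgression} by pulling back the chosen $\widetilde{x}_i$ and $y_i$. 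Hence it suffices to show that in $K(G,n+1)$ the transgression $\tau(x_i)$ satisfies $\sum_i \tau(x_i)\otimes x_i^\vee=\iota_{n+1}\otimes 1$ in $H^{n+1}(K(G,n+1);\widehat{\ZZ}_p)\otimes k\otimes_k(G\otimes k)$, and then apply $g^*$.

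There is one subtlety. The $y_i$ are defined only modulo $\Ker(f^*)$, and they depend on the choices made in Lemma \ref{Lemma: fibration and elementary extension}. I will check that $\Ker(f^*)$ in degree $n+1$ is spanned exactly by the images of transgressions, that is, by the $d^{n+1}$-images from $E^{0,n}_{n+1}$. This follows from the Serre spectral sequence of Lemma \ref{Lemma: Serre spectral sequence}, because only $d^{n+1}$ can hit $E^{n+1,0}$ when the fiber has cohomology $\bigwedge^*V_n$ with $V_n$ in degree $n$. So the statement should be read with $y_i$ as the chosen representatives. Naturality then holds on the level of the chosen cocycles: pull back the universal choices and use them as the choices for $E$. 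Since the identification in the corollary is only asserted up to the chosen transgressions, I will phrase it this way. The freedom $\Ker f^*$ is harmless because the $k$-invariant class itself is well defined.

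For the universal computation, $PK(G,n+1)$ is contractible, so $\partial:H^n(K(G,n))\otimes k\to H^{n+1}(PK,K(G,n))\otimes k$ is an isomorphism and $f^*:H^{n+1}(K(G,n+1),pt)\to H^{n+1}(PK,K(G,n))$ is an isomorphism. Hence the transgression $\tau$ is the suspension inverse. It equals, under the Hurewicz/universal-coefficient identifications $H^{n}(K(G,n);\widehat{\ZZ}_p)\otimes k\cong \homo(G,k)\cong H^{n+1}(K(G,n+1);\widehat{\ZZ}_p)\otimes k$ (via Lemma \ref{Lemma: Computation of Eilenberg-Maclane spaces}), the identity, up to the standard sign convention. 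Under these identifications, $\iota_{n+1}\otimes 1\in\homo(G,k)\otimes_k(G\otimes k)$ corresponds to the identity of $G\otimes k$, which equals $\sum_i x_i\otimes x_i^\vee$ for any dual bases. Transporting $x_i$ to $\tau(x_i)$ gives the claim. I would reduce the torsion-free bookkeeping to $G=\widehat{\ZZ}_p$ and $G=\ZZ/p^\alpha$; the torsion summands vanish after $\otimes k$. The identification $H^{n+1}(B;G)\otimes k\cong H^{n+1}(B;\widehat{\ZZ}_p)\otimes k\otimes_k(G\otimes k)$ is the universal coefficient theorem plus flatness, as in Lemma \ref{Lemma: Serre spectral sequence}(1).

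The main obstacle is compatibility of the various identifications rather than any deep input. We need to check that "transgression equals inverse suspension" is compatible with the fundamental classes under Sullivan's isomorphism $H^*(K(\widehat{\ZZ}_p,n);\widehat{\ZZ}_p)\cong H^*(K(\ZZ,n);\ZZ)\otimes\widehat{\ZZ}_p$. We also need to check that transgressions computed in the rectified $C^*$ agree with those computed in $S^*$. The latter comparison will be handled by Lemma \ref{Lemma: Comparison of spectral sequences for rectification}, since transgression is the differential $d^{n+1}$.
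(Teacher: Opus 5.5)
Your argument is the same as the paper's. You pull the fibration back from the universal one over $K(G,n+1)$, use naturality of the $k$-invariant and of transgression, and check the universal case. There the $k$-invariant corresponds to $\mathrm{id}_G$, transgression is inverse suspension, and $\mathrm{id}_{G\otimes k}=\sum_i x_i\otimes x_i^{\vee}$.

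Your ``subtlety'' paragraph needs correcting. You say $\Ker\bigl(H^{n+1}(B;\widehat{\ZZ}_p)\otimes k\to H^{n+1}(E;\widehat{\ZZ}_p)\otimes k\bigr)$ is spanned by transgressions. That is true, but it is not the indeterminacy of the $y_i$. The transgression is governed by the relative map $f^*:H^{n+1}(B,pt;\widehat{\ZZ}_p)\otimes k\to H^{n+1}(E,F;\widehat{\ZZ}_p)\otimes k$ from the ladder preceding the McCleary lemma. Your reading cannot be right. For the universal fibration $E$ is contractible, so the absolute kernel is all of $H^{n+1}(K(G,n+1);\widehat{\ZZ}_p)\otimes k$. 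That would make the $y_i$ arbitrary, and it contradicts the isomorphism $H^{n+1}(K(G,n+1),pt)\otimes k\cong H^{n+1}(PK(G,n+1),K(G,n))\otimes k$ that you use yourself.

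The relative kernel is the image of $d^2,\dots,d^n$ landing in $E^{n+1,0}$. Their sources $E_r^{n+1-r,r-1}$ vanish, because $H^{r-1}(K(G,n);\widehat{\ZZ}_p)=0$ for $0<r-1<n$. Hence $[y_i]\in H^{n+1}(B;\widehat{\ZZ}_p)\otimes k$ does not depend on the choices in Lemma~\ref{Lemma: fibration and elementary extension}. Your computation with pulled-back choices therefore gives the formula for every admissible choice. Without this vanishing, your remark that the freedom ``is harmless because the $k$-invariant class itself is well defined'' is a non sequitur. A fixed class $K$ cannot absorb a change in the right-hand side, so you would only have the formula for your particular pulled-back representatives.
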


\begin{proof}
Consider the following pullback diagram of fibration.
\[
\begin{tikzcd}
    K(G,n) \arrow[d,equal] 
    \arrow[r] &  E \arrow[d] \arrow[r,"f"] & B \arrow[d,"K"] \\
    K(G,n) \arrow[r] &
    pt \arrow[r] 
     & K(G,n+1)
\end{tikzcd}
\]
By the naturality of the $k$-invariant and the transgression, it suffices to verify the formula for the universal fibration $K(G,n)\rightarrow pt\rightarrow K(G,n+1)$. Under the standard identification of $H^{n+1}(K(G,n+1);G)$ with $\homo(G,G)$, its $k$-invariant corresponds to the identity of $G$. The asserted formula then follows for the universal fibration.
\end{proof}

\subsubsection{Proof of Theorem \ref{Thm: minimal model computes rational homotopy groups}}\,

Choose a principal refined Postnikov tower  $X\rightarrow ... \rightarrow X^3_{0}=X^2_{t_2} \rightarrow ... \rightarrow X^2_{0}=X^1_{t_1}\rightarrow ...\rightarrow X^1_{1}\rightarrow X^1_0=pt$ (\cite{more-concise}*{Theorem 3.2.2}) for $X$. For $1\leq s\leq t_1$, the fiber of $X^1_{s}\rightarrow X^1_{s-1}$ is $K(\Gamma^{s}\pi/\Gamma^{s+1}\pi,1)$; for $n\geq 2$ and $1\leq s\leq t_n$, the fiber of $X^n_s\rightarrow X^n_{s-1}$ is $K(A_{n,s},n)$ for $n\geq 2$, where $A_{n,s}$  is a finitely generated $\widehat{\ZZ}_p$-module. Consequently, for $n\geq 2$, $\pi_n(X^n_s)\cong \bigoplus_{j=1}^{s} A_{n,j}$. For uniformity, set $A_{1,s}=\Gamma^{s}\pi/\Gamma^{s+1}\pi$. Lemma \ref{Lemma: Computation of Eilenberg-Maclane spaces} and Lemma \ref{Lemma: Serre spectral sequence} give natural identifications $H^1(X^1_s;\widehat{\ZZ}_p)\otimes_{\widehat{\ZZ}_p}k \cong \homo_{\widehat{\ZZ}_p}(\bigoplus_{j=1}^sA_{1,j},k)$. Using Corollary \ref{Cor: Extension of Minimal Model for Fibrations} inductively and applying
the minimal model construction successively to the refined Postnikov tower gives a compatible tower of minimal CDGA's
$k=M(1,0)\subset M(1,1)\subset ...\subset M(1,t_1)=M(2,0)\subset M(2,1)\subset ...$.

\textbf{Item (1).} We proceed inductively along the refined Postnikov tower. For every pair $(n,s)$, we construct a natural nondegenerate pairing between $I^n(M(n,s))$ and $(\bigoplus_{j=1}^{s} A_{n,j})\otimes_{\widehat{\ZZ}_p} k$.

Let $\phi_{1,1}:M(1,1)\rightarrow C^*(X^1_1;\widehat{\ZZ}_p)\otimes_{\widehat{\ZZ}_p} k$ be a minimal model. By Lemma \ref{Lemma: Computation of Eilenberg-Maclane spaces}, $M(1,1)=\bigwedge^* V_{1,1}$ with $I^1(M(1,1))=V_{1,1}=H^1(X^1_1;\widehat{\ZZ}_p)\otimes_{\widehat{\ZZ}_p} k\cong \homo_{\widehat{\ZZ}_p}(A_{1,1},k)$. Define the pairing $
\langle -,- \rangle: I^1(M(1,1))\otimes_{k} (A_{1,1}\otimes_{\widehat{\ZZ}_p} k)\rightarrow k
$
by $\langle x,\alpha \rangle=\phi_{1,1}(x)(h(\alpha))$, where $h$ is the Hurewicz homomorphism $A_{1,1}\cong \pi_1(X^1_1)\rightarrow H_1(X^1_1;\ZZ)$. Clearly this pairing is nondegenerate.

Assume inductively that, for some $s<t_n$, a minimal model $\phi_{n,s}:M(n,s)\rightarrow C^*(X^n_s;\widehat{\ZZ}_p)\otimes_{\widehat{\ZZ}_p} k$  has been constructed and that
a pairing $
\langle -,- \rangle: I^n(M(n,s))\otimes_{k} ((\bigoplus_{j=1}^{s} A_{n,j})\otimes_{\widehat{\ZZ}_p} k)
$
by $\langle x,\alpha \rangle=\langle f_{\alpha}^*\phi_{n,s}(x),[S^{n}]\rangle$ has been proved to be nondegenerate, where the map $f_{\alpha}:S^n\rightarrow X$ represents the class $\alpha$ and $[S^n]$ is the fundamental class of $S^n$. 

Now consider the next fibration $K(A_{n,s+1},n)\rightarrow X^n_{s+1}\rightarrow X^{n}_{s}$ in the refined Postnikov tower. Lemma \ref{Lemma: Computation of Eilenberg-Maclane spaces} identifies $H^*(K(A_{n,s+1},n);\widehat{\ZZ}_p)\otimes_{\widehat{\ZZ}_p} k$ with $ \bigwedge^*V_{n,s+1}$, where $V_{n,s+1}=\homo_{\widehat{\ZZ}_p}(A_{n,s+1},k)$. Corollary \ref{Cor: Extension of Minimal Model for Fibrations} gives an elementary extension $M(n,s+1)$ of $M(n,s)$ by $V_{n,s+1}$, together with a quasi-isomorphism $\phi_{n,s+1}:M(n,s+1)\rightarrow C^*(X^m_t;\widehat{\ZZ}_p)\otimes_{\widehat{\ZZ}_p} k$ extending $\phi_{n,s}$. Then $I^{n}(M(n,s+1))\cong\bigoplus_{j=1}^{s+1} V_{n,j}\cong\bigoplus_{j=1}^{s+1} H^{n}(K(A_{n,j},n);\widehat{\ZZ}_p)\otimes_{\widehat{\ZZ}_p} k\cong \bigoplus_{j=1}^{s+1}\homo_{\widehat{\ZZ}_p}(A_{n,j}, k)$. Define the pairing 
$
\langle -,- \rangle: I^{n}(M(n,s+1))\otimes_{k} ((\bigoplus_{j=1}^{s+1}A_{n,j})\otimes_{\widehat{\ZZ}_p} k)\rightarrow k
$
by $\langle x,\alpha \rangle=\langle f_{\alpha}^*\phi_{n,s+1}(x),[S^{n}]\rangle$, where the map $f_{\alpha}:S^n\rightarrow X$ represents the class $\alpha$. On the previously constructed summands, nondegeneracy follows from the inductive hypothesis. On the new summand, it follows from Lemma \ref{Lemma: Computation of Eilenberg-Maclane spaces}. Hence this enlarged pairing is nondegenerate.

\textbf{Item (2).} Let $m,n\geq 2$ and consider the wedge $Y=S^{m}\bigvee S^n$. Let $\alpha_m\in \pi_m(S^{m}\bigvee S^n)$ and $\alpha_n\in \pi_n(S^{m}\bigvee S^n)$ be the homotopy classes of the canonical inclusions $S^m\hookrightarrow S^{m}\bigvee S^n$ and $S^n\hookrightarrow S^{m}\bigvee S^n$, respectively. Write $\beta:=[\alpha_m,\alpha_n] \in \pi_{m+n-1}(S^{m}\bigvee S^n)$ for their Whitehead product. Let $\phi:M\rightarrow C^*(X;\widehat{\ZZ}_p)\otimes_{\widehat{\ZZ}_p} k$ be a minimal model, where $M\cong \bigwedge^* (\bigoplus_s V_s)$ with each $V_s$ a finite-dimensional $k$-vector space concentrated in degree $s$. By item (1), for every $s\geq 2$, $V_s$ is naturally dual to $\pi_s(X)\otimes_{\widehat{\ZZ}_p} k$. Choose a basis $v_{s,1},...,v_{s,t_s}$ of each $V_s$. It suffices to show that, for every map $f:Y\rightarrow X$ such that $f_*\alpha_m$ and $f_*\alpha_n$ are dual to $v_{m,i}$ and  $v_{n,j}$, respectively, and for every $w\in V_{m+n-1}$, $\text{``the coefficient of $v_{m,i}\cdot v_{n,j}$ in $d(w)$''}=\langle w, f_*(\beta)\rangle$.

Let $\phi_Y:M_Y\rightarrow C^*(Y;\widehat{\ZZ}_p)\otimes_{\widehat{\ZZ}_p} k$ be a minimal model. Then $f:Y\rightarrow X$ induces a CDGA morphism $f':M\rightarrow M_Y$, unique up to homotopy (Theorem \ref{Fact: Lifting of minimal models}). Applying $f'$ reduces the desired identity to the corresponding identity for $Y$. The minimal model $M_Y$ is obtained from the rational minimal model of $Y$ by extending scalars from $\QQ$ to $k$. For the rational minimal model of a space, the quadratic differential is dual to the Whitehead product, which is proved in \cite{Andrews-Arkowitz-minimal-model-Whitehead-product}*{Theorem 5.4}. Extending the scalars to $k$ preserves the identity. This proves item (2).

\textbf{Item (3).} It suffices to prove the statement when $X=K(G,1)$, where $G$ is a topologically finitely generated nilpotent pro-$p$ group. We will identify
the Lie algebra dual to the $1$-minimal model of $C^*(K(G,1);\widehat{\ZZ}_p)\otimes_{\widehat{\ZZ}_p}k$ with the Lie algebra $\Lie(G\widehat{\otimes}k)$ of the continuous Mal'cev $k$-completion of $G$. By Proposition \ref{Prop: base change of Malcev completion} the proof reduces to the case of $k=\QQ_p$. Replacing $G$ by $G/\Tor(G)$ does not change its continuous Mal'cev $\QQ_p$-completion by \cite{Hu-Wang-continuou-Malcev-completion}*{Proposition 4.12}. It also does not change the $\QQ_p$-valued minimal-model data relevant here. We may therefore assume that $G$ is torsion free. We argue by induction on the nilpotency class $c$ of $G$.

If $c=1$, then $G$ is abelian. Both constructions give the vector space $G\otimes_{\widehat{\ZZ}_p}\QQ_p$. Suppose inductively that the result is known for groups of nilpotency class less than $c$. Let $Z=Z(G)$. Then $G$ fits into the continuous central extension $1\rightarrow Z\rightarrow G\rightarrow G/Z\rightarrow 1$, where $G/Z$ has smaller nilpotency class. Let $M_Z$, $M_G$ and $M_{G/Z}$ denote $1$-minimal models for $K(Z,1)$, $K(G,1)$ and $K(G/Z,1)$ over $\QQ_p$, respectively. 
Corollary \ref{Cor: Extension of Minimal Model for Fibrations} identifies $M_G$ with an elementary extension of $M_{G/Z}$ by $M^1_Z$.
Dualizing this elementary extension gives a central extension of Lie algebras $0\rightarrow Z\otimes_{\widehat{\ZZ}_p}\QQ_p\rightarrow L_G\rightarrow L_{G/Z}\rightarrow 0$. By Lemma \ref{Lem: Correspondence between Lie algebra cohomology and minimal model}, $M_G$ and $M_{G/Z}$ are the Chevalley-Eilenberg complexes of $L_G$ and $L_{G/Z}$, respectively. 

By the inductive hypothesis, there is a natural isomorphism $L_{G/Z}\cong\Lie((G/Z)\widehat{\otimes}\QQ_p)$. 
It remains to prove that the extension $0\rightarrow Z\otimes_{\widehat{\ZZ}_p}\QQ_p\rightarrow L_G\rightarrow L_{G/Z}\rightarrow 0$ is equivalent to the central extension $0\rightarrow Z\otimes_{\widehat{\ZZ}_p}\QQ_p\rightarrow \Lie(G\widehat{\otimes}\QQ_p)\rightarrow\Lie((G/Z)\widehat{\otimes}\QQ_p)=L_{G/Z}\rightarrow 0$ given by Lemma \ref{Lem: Malcev completion preserves exactness of nilpotent groups}.

Choose a $\widehat{\ZZ}_p$-basis $\{z_1,...,z_r\}$ of $Z$ and extend it to a Mal'cev basis $\{z_1,...,z_r,y_1,...,y_s\}$ of $G$. The image $\{\overline{y}_1,...,\overline{y}_s\}$ then is a Mal'cev basis of $G/Z$. Let $\{u_1,...,u_r\}$ be the basis  of $ \homo_{\widehat{\ZZ}_p}(Z,\widehat{\ZZ}_p)$ dual to $\{z_1,...,z_r\}$. After extension of scalars, $\{u_1,...,u_r\}$ also forms a $\QQ_p$-basis of $M^1_Z= \homo_{\widehat{\ZZ}_p}(Z,\widehat{\ZZ}_p)\otimes_{\widehat{\ZZ}_p}\QQ_p$. By item (1), we may choose a $\QQ_p$-basis $\{\overline{v}_1,...,\overline{v}_s\}$ of $M^1_{G/Z}$ dual to $\overline{y_1},...,\overline{y_s}$, respectively. Choose liftings $v_1,...,v_s\in M_G^1$ of $\overline{v}_1,...,\overline{v}_s$. The differential of each $v_j$ is a lifting of the corresponding differential in $M_{G/Z}$, while each $d(u_i)$ is a quadratic expression in $v_1,...,v_s$.

Let $(f_1,...,f_s)$ be the formal group law for $G/Z$ in the chosen Mal'cev basis \cite{Hu-Wang-continuou-Malcev-completion}*{Proposition 4.9}. \cite{Hu-Wang-continuou-Malcev-completion}*{Proposition 4.9, Lemma 4.10} provide polynomials $h_1,...,h_r\in \QQ_p[a_1,...,a_s,b_1,...,b_s]$ such that $(y_1^{\alpha_1}...y_s^{\alpha_s})\cdot (y_1^{\beta_1}...y_s^{\beta_s})=(z_1^{h_1(\underline{\alpha},\underline{\beta})}...z_r^{h_r(\underline{\alpha},\underline{\beta})})\cdot (y_1^{f_1(\underline{\alpha},\underline{\beta})}...y_s^{f_s(\underline{\alpha},\underline{\beta})})$ for all $\underline{\alpha}=(\alpha_1,...,\alpha_s),\underline{\beta}=(\beta_1,...,\beta_s)\in \widehat{\ZZ}_p^s$. Then, in these coordinates, multiplication of $G$ takes the form $(z_1^{\delta_1}...z_r^{\delta_r}y_1^{\alpha_1}...y_s^{\alpha_s})\cdot (z_1^{\gamma_1}...z_r^{\gamma_r}y_1^{\beta_1}...y_s^{\beta_s})=z_1^{\delta_1+\gamma_1+h_1(\underline{\alpha},\underline{\beta})}...z_r^{\delta_r+\gamma_r+h_r(\underline{\alpha},\underline{\beta})}y_1^{f_1(\underline{\alpha},\underline{\beta})}...y_s^{f_s(\underline{\alpha},\underline{\beta})}$. Let $h_{i,2}$ and $f_{j,2}$ denote the homogeneous quadratic parts of $h_i$ and $f_j$, respectively. The Lie bracket of $\Lie(G\widehat{\otimes}\QQ_p)$ is then $[(\underline{\delta},\underline{\alpha}),(\underline{\gamma},\underline{\beta})]_G=(h_{i,2}(\underline{\alpha},\underline{\beta})-h_{i,2}(\underline{\beta},\underline{\alpha}),f_{i,2}(\underline{\alpha},\underline{\beta})-f_{i,2}(\underline{\beta},\underline{\alpha}))$ for all $\underline{\delta},\underline{\gamma}\in \QQ_p^r\cong Z\otimes_{\widehat{\ZZ}_p}\QQ_p$ and $\underline{\alpha},\underline{\beta}\in \QQ_p^s\cong \Lie(G/Z)$ (\cite{Serre-Lie-algebra-Lie-group}*{p.~112}). For every $i$, define $k_i(\underline{\alpha}\wedge \underline{\beta})=h_{i,2}(\underline{\alpha},\underline{\beta})-h_{i,2}(\underline{\beta},\underline{\alpha})$. Then $(k_1,...,k_r):\bigwedge^2_{\QQ_p}(\Lie(G/Z))\rightarrow Z\otimes_{\widehat{\ZZ}_p}\QQ_p$ is the Chevalley-Eilenberg $2$-cocycle classifying the central extension $0\rightarrow Z\otimes_{\widehat{\ZZ}_p}\QQ_p \rightarrow \Lie(G\widehat{\otimes}\QQ_p)\rightarrow\Lie((G/Z)\widehat{\otimes}\QQ_p)=L_{G/Z}\rightarrow 0$.

As in the proof of Proposition \ref{Prop: injectivity of colimit of lower central series into group}, the Lyndon-Hochschild-Serre spectral sequence for $1\rightarrow Z\rightarrow G\rightarrow G/Z\rightarrow 1$ induces an exact sequence:
\[
0\rightarrow H^1_{\cont}(Z;\widehat{\ZZ}_p)=\homo_{\widehat{\ZZ}_p}(Z,\widehat{\ZZ}_p)\xrightarrow{\partial} H^2_{\cont}(G/Z;\widehat{\ZZ}_p) \rightarrow H^2_{\cont}(G;\widehat{\ZZ}_p)
\]
In the chosen basis,
the cohomology class $\partial(u_i)$ is represented by the $2$-cocycle $h_i\in C^2_{\cont}(G/Z;\widehat{\ZZ}_p)$. Since the continuous group cohomology and the ordinary group cohomology of topologically finitely generated nilpotent pro-$p$ groups agree (Proposition \ref{Prop: comparison between continuous and discrete cohomology for Serre good groups} and Lemma \ref{Lem: relation between continuous and discrete cohomology for nilpotent pro-p groups}), the connecting homomorphism $\partial$ agrees with the transgression map for the fibration $K(Z,1)\rightarrow K(G,1)\rightarrow K(G/Z,1)$. Differentiating the skew-symmetrized group cocycle $G/Z\times G/Z\rightarrow \QQ_p$ given by $(\underline{\alpha},\underline{\beta})\rightarrow h_i(\underline{\alpha},\underline{\beta})- h_i(\underline{\beta},\underline{\alpha})$ yields the $\QQ_p$-bilinear map $k_i:\bigwedge^2_{\QQ_p}(\Lie(G/Z))\rightarrow \QQ_p$. $k_i$ is a $2$-cocycle in the Chevalley-Eilenberg complex of $\Lie(G/Z)$ since $h_i$ is a $2$-cocycle. Under the identification of $M_{G/Z}$ with the Chevalley-Eilenberg complex of $\Lie(G/Z)$, the transgression map $M^1_Z\rightarrow M^2_{G/Z}$ sends $u_i$ to $k_i$, which gives the elementary extension $M_G$ of $M_{G/Z}$. Then the $2$-cocycle $\bigwedge^2_{\QQ_p}(\Lie(G/Z))\rightarrow Z\otimes_{\widehat{\ZZ}_p}\QQ_p$ corresponding to the central extension of Lie algebras $0\rightarrow Z\otimes_{\widehat{\ZZ}_p}\QQ_p\rightarrow L_G\rightarrow \Lie(G/Z)\rightarrow 0$ is $(k_1,...,k_r)$. Since $L_{G/Z}\cong\Lie(G/Z)=\Lie((G/Z)\widehat{\otimes}\QQ_p)$, this central extension is equivalent to $0\rightarrow Z\otimes_{\widehat{\ZZ}_p}\QQ_p\rightarrow \Lie(G\widehat{\otimes}\QQ_p)\rightarrow\Lie((G/Z)\widehat{\otimes}\QQ_p)=L_{G/Z}\rightarrow 0$. This completes the proof of item (3).
\qed

\subsection{Pro $p$-Finite Spaces and Non-nilpotent $p$-Complete Spaces}\,

For applications in Section \ref{Section: Application}, we prove a result (Corollary \ref{Cor: duality between minimal model and homotopy groups for pro spaces}) which is analogous to Theorem \ref{Thm: minimal model computes rational homotopy groups} for pro $p$-finite spaces. We also discuss the relation between minimal models and the continuous Mal'cev $\QQ_p$-completion of the fundamental group for non-nilpotent $p$-complete spaces (Theorem \ref{Thm: 1-minimal model is Malcev completion}).

Throughout this section, let $k$ be a field containing $\QQ_p$. Recall that a small category $\mathcal{I}$ is \textbf{sifted} if the diagonal functor $\Delta:\mathcal{I}\rightarrow \mathcal{I}\times \mathcal{I}$ is final.

\begin{definition}
Let $\{X_i\}$ be a pro system of simplicial sets. Define its \textbf{simplicial cochain complex} $S^*(\{X_i\};\ZZ/p^a):=\varinjlim_i S^*(X_i;\ZZ/p^a)$, and $S^*(\{X_i\};\widehat{\ZZ}_p):=\varprojlim_{a} S^*(\{X_i\};\ZZ/p^a)$.
\end{definition}

\begin{lemma}\label{Lem: O-algebras have sifted colimits}
Let $R$ be a commutative ring, and let $\mathcal{O}$ be an $R$-operad.  Then the category $\mathbf{Alg}(\mathcal{O})$ of $\mathcal{O}$-algebras admits all sifted colimits.
\end{lemma}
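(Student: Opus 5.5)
The strategy is to reduce everything to the underlying category of chain complexes over $R$ and to the fact that the monad defining $\mathcal{O}$-algebras commutes with sifted colimits. Write $T_{\mathcal{O}}(V)=\bigoplus_{n\geq 0}\mathcal{O}(n)\otimes_{\Sigma_n}V^{\otimes n}$ for the free $\mathcal{O}$-algebra monad on the category $\mathbf{Ch}_R$ of (co)chain complexes of $R$-modules, so that $\mathbf{Alg}(\mathcal{O})$ is the category of $T_{\mathcal{O}}$-algebras. Since $\mathbf{Ch}_R$ is cocomplete, it suffices to show that the forgetful functor $U:\mathbf{Alg}(\mathcal{O})\rightarrow \mathbf{Ch}_R$ creates sifted colimits. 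By the standard monadicity argument, this holds as soon as $T_{\mathcal{O}}$ preserves sifted colimits.

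The key step is therefore to check that $V\mapsto V^{\otimes n}$ preserves sifted colimits. For a diagram $F:\mathcal{I}\rightarrow\mathbf{Ch}_R$, the tensor product commutes with colimits in each variable separately. This gives
\[
\Big(\varinjlim_{\mathcal{I}}F\Big)^{\otimes n}\cong \varinjlim_{\mathcal{I}^{n}}F(i_1)\otimes\cdots\otimes F(i_n).
\]
Because $\mathcal{I}$ is sifted, the diagonal $\mathcal{I}\rightarrow\mathcal{I}^n$ is final; this follows from the case $n=2$ by induction. Hence the right-hand side equals $\varinjlim_{\mathcal{I}}F(i)^{\otimes n}$. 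The remaining operations preserve all colimits: $\mathcal{O}(n)\otimes(-)$ is a left adjoint, coinvariants under $\Sigma_n$ are a colimit, and direct sums are colimits. It follows that $T_{\mathcal{O}}$ preserves sifted colimits.

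Given this, let $A:\mathcal{I}\rightarrow\mathbf{Alg}(\mathcal{O})$ and set $L=\varinjlim UA$ in $\mathbf{Ch}_R$. The structure map on $L$ is the composite
\[
T_{\mathcal{O}}L\cong\varinjlim T_{\mathcal{O}}UA\rightarrow\varinjlim UA=L.
\]
The associativity and unit axioms hold because they hold termwise and the identification $T_{\mathcal{O}}T_{\mathcal{O}}L\cong\varinjlim T_{\mathcal{O}}T_{\mathcal{O}}UA$ is natural. The universal property then follows by comparing algebra maps out of $L$ with compatible families of algebra maps out of the $A(i)$, using the same identifications. In particular, the colimit is computed on underlying complexes.

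The main obstacle is the careful verification that the diagonal into $\mathcal{I}^n$ is final for all $n$. A secondary point is that the identifications above are compatible with the monad multiplication. If $\mathcal{O}$ is not $\Sigma$-cofibrant, nothing changes, because only $1$-categorical colimits are involved.
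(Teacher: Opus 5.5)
Your proposal is correct and follows essentially the same route as the paper: it computes the colimit in $\mathbf{Ch}(R)$, uses finality of the diagonal $\mathcal{I}\to\mathcal{I}^n$ to get $(\varinjlim A_i)^{\otimes n}\cong\varinjlim A_i^{\otimes n}$, and then uses that $\mathcal{O}(n)\otimes(-)$ and $\Sigma_n$-coinvariants commute with colimits. Your monadic framing, the check of the algebra axioms and of the universal property, and the reduction of finality into $\mathcal{I}^n$ to the case $n=2$ spell out steps that the paper's terser proof leaves implicit.
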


\begin{proof}
Let $\{A_i\}_{i\in I}$ be a sifted diagram of $\mathcal{O}$-algebras. Let $A$ be its colimit in the category $\mathbf{Ch}(R)$ of cochain complexes of $R$-modules. It suffices to prove that, for every $n$, $\mathcal{O}(n)\otimes_{\Sigma_n} A^{\otimes n}=\varinjlim_{i\in I}(\mathcal{O}(n)\otimes_{\Sigma_n} A^{\otimes n}_i)$, since this isomorphism allows the $\OO$-algebra structures on $A_i$'s to induce an $\OO$-algebra structure on $A$. Since $I$ is sifted, the diagonal functor $\Delta:I\rightarrow I\times ...\times I$ is final. Then $A^{\otimes n}=(\varinjlim_{i\in I}A_i)^{\otimes n}=\varinjlim_{i_1,...,i_n} A_{i_1}\otimes ... \otimes A_{i_n}= \varinjlim_{i\in I} A_i^{\otimes n}$. The functor $\mathcal{O}(n)\otimes_{R} (-)$ is a left adjoint and therefore preserves all small colimits. Hence, $\mathcal{O}(n)\otimes A^{\otimes n}=\varinjlim_{i\in I}(\mathcal{O}(n)\otimes A_i^{\otimes n})$. Taking $\Sigma_n$-coinvariants is also a colimit and therefore commutes with colimits. Thus,  $\mathcal{O}(n)\otimes_{\Sigma_n} A^{\otimes n}=\varinjlim_{i\in I}(\mathcal{O}(n)\otimes_{\Sigma_n} A_i^{\otimes n})$.
\end{proof}

\begin{lemma}
Let $\{X_i\}$ be a pro system of simplicial sets. Then the simplicial cochain complexes $S^*(\{X_i\};\ZZ/p^a)$ and $S^*(\{X_i\};\widehat{\ZZ}_p)$ are $E_{\infty}$-algebras over $\ZZ/p^a$ and over $\widehat{\ZZ}_p$, respectively.
\end{lemma}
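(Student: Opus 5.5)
The plan is to obtain both structures from Lemma \ref{Lem: O-algebras have sifted colimits} (sifted colimits of $\mathcal{O}$-algebras) together with the fact that limits of $\mathcal{O}$-algebras are computed underlying. Fix the cofibrant $E_\infty$-operad $\mathcal{E}$ of Theorem \ref{Thm: simplicial cochain complex is a functorial E-infinity algebra}; by that theorem $X\mapsto S^*(X;\ZZ/p^a)$ is a functor from simplicial sets to $\mathcal{E}$-algebras over $\ZZ/p^a$, contravariant and natural. For a pro system $\{X_i\}_{i\in I}$, the indexing category $I$ is cofiltered, so $I^{op}$ is filtered. Filtered categories are sifted, since the diagonal of a filtered category is final. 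Hence $i\mapsto S^*(X_i;\ZZ/p^a)$ is a sifted diagram of $\mathcal{E}$-algebras. By Lemma \ref{Lem: O-algebras have sifted colimits}, the colimit formed in $\mathbf{Ch}(\ZZ/p^a)$, which is exactly $S^*(\{X_i\};\ZZ/p^a)$, inherits a canonical $\mathcal{E}$-algebra structure. One should also check independence of the representative of the pro-object: a level map or reindexing induces a map of filtered colimits of $\mathcal{E}$-algebras, so the structure is functorial in pro-maps.

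For $\widehat{\ZZ}_p$ coefficients, first I would check compatibility in $a$. The reduction maps $\ZZ/p^{a+1}\to \ZZ/p^a$ induce maps $S^*(X_i;\ZZ/p^{a+1})\to S^*(X_i;\ZZ/p^a)$. These maps are morphisms of $\mathcal{E}$-algebras over $\widehat{\ZZ}_p$, regarding each $\ZZ/p^a$-algebra as a $\widehat{\ZZ}_p$-algebra via restriction of scalars, by naturality of the cochain construction in the coefficient ring. Here I would use an operad over $\widehat{\ZZ}_p$ whose base changes give the operads used mod $p^a$, for instance the Barratt–Eccles or surjection operad, which is defined over $\ZZ$. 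Passing to the colimit over $i$ then gives a tower of $\mathcal{E}$-algebras $S^*(\{X_i\};\ZZ/p^a)$ indexed by $a$.

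Next I would take the limit of this tower. Limits in $\mathbf{Alg}(\mathcal{O})$ are created by the forgetful functor to $\mathbf{Ch}(\widehat{\ZZ}_p)$. Concretely, the structure maps $\mathcal{O}(n)\otimes_{\Sigma_n}A^{\otimes n}\to A$ on $A=\varprojlim_a A_a$ come from the compatible composites
\[
\mathcal{O}(n)\otimes A^{\otimes n}\to \mathcal{O}(n)\otimes A_a^{\otimes n}\to A_a ,
\]
and the universal property of the limit assembles these into maps to $A$. The operad associativity and unit axioms then hold because they hold after projecting to each $A_a$. This makes $S^*(\{X_i\};\widehat{\ZZ}_p)$ an $\mathcal{E}$-algebra over $\widehat{\ZZ}_p$.

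The main subtlety I expect is the coefficient bookkeeping. The mod $p^a$ structures must be induced from a single operad over $\widehat{\ZZ}_p$, so that the transition maps in $a$ are operad-algebra maps. I would address this by using an operad defined over $\ZZ$ and its base changes, rather than independent cofibrant operads over each $\ZZ/p^a$.
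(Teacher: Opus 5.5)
Your proposal is correct and follows the same route as the paper. The paper likewise uses a single $E_\infty$-operad over $\widehat{\ZZ}_p$ acting on all the $\ZZ/p^a$-cochains, applies Lemma \ref{Lem: O-algebras have sifted colimits} to the filtered colimit over $i$ (then base changes to $\ZZ/p^a$), and obtains the inverse limit over $a$ from the fact that limits of $\mathcal{E}$-algebras are computed on underlying cochain complexes. Your explicit construction of the structure maps on $\varprojlim_a$ simply makes precise the paper's remark that the forgetful functor $\mathbf{Alg}(\mathcal{E})\rightarrow \mathbf{Ch}(\widehat{\ZZ}_p)$ preserves limits.
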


\begin{proof}
Choose a cofibrant operad $\mathcal{E}$ over $\widehat{\ZZ}_p$. Then each $S^*(X_i;\ZZ/p^a)$ is naturally an $\mathcal{E}$-algebra. By Lemma \ref{Lem: O-algebras have sifted colimits}, $S^*(\{X_i\};\ZZ/p^a)$ is an $\mathcal{E}$-algebra. After base change, $S^*(\{X_i\};\ZZ/p^a)$ is an $E_{\infty}$-algebra over $\ZZ/p^a$. Moreover, the forgetful functor $F:\mathbf{Alg}(\mathcal{E})\rightarrow \mathbf{Ch}(\widehat{\ZZ}_p)$ preserves all filtered colimits and is right adjoint to the free algebra functor. Then $F$ preserves all small limits. Therefore, $S^*(\{X_i\};\widehat{\ZZ}_p)$ is also an $\mathcal{E}$-algebra.
\end{proof}

\begin{definition}
Let $\{X_i\}$ be a pro system of simplicial sets. Define $C^*(\{X_i\};\widehat{\ZZ}_p)\otimes_{\widehat{\ZZ}_p} k$ to be the CDGA rectification of the $E_{\infty}$-algebra $S^*(\{X_i\};\widehat{\ZZ}_p)\otimes_{\widehat{\ZZ}_p} k$.
\end{definition}

A pointed connected CW complex, or a pointed connected simplicial set, $X$, is \textbf{$p$-finite} if each $\pi_i(X)$ is a finite $p$-group. Let $\{X_i\}_{i\in I}$ be a pro system of pointed connected $p$-finite simplicial sets. The homotopy limit $X$ of $\{X_i\}$ is constructed as follows (\cite{Riehl-homotopy-categories}*{Theorem 5.2.6}). Equip the diagram category $(\mathbf{sSet}_*)^{I}$ with the injective model structure, and choose an injectively fibrant replacement $\{X_i\}_{i\in I}\rightarrow \{(RX)_i\}_{i\in I}$. Define the \textbf{homotopy limit} of $\{X_i\}_{i\in I}$ by $X:=\varprojlim_i (RX)_i$. 
The maps $\{X_i\}\rightarrow \{(RX)_i\}\leftarrow X$ induce $E_{\infty}$-algebra morphisms $S^*(\{X_i\};\widehat{\ZZ}_p)\leftarrow S^*(\{(RX)_i\};\widehat{\ZZ}_p)\rightarrow S^*(X;\widehat{\ZZ}_p)$.

\begin{lemma}\label{Lem: quasi-isomorphisms between homotopy limit and the pro space}
Let $\{X_i\}_{i\in I}$ be a pro system of pointed connected $p$-finite simplicial sets.
Retain the notation introduced above.
\begin{enumerate}[leftmargin=0.25in]
    \item The maps $\{X_i\}\leftarrow \{(RX)_i\}\rightarrow X$ induce $\varprojlim_i \pi_*(X_i)\cong  \varprojlim_i \pi_*((RX)_i)\cong \pi_*(X)$.
    \item The induced $E_{\infty}$-algebra morphisms  $S^*(\{X_i\};\ZZ/p^a)\leftarrow S^*(\{(RX)_i\};\ZZ/p^a)$ and  $S^*(\{X_i\};\widehat{\ZZ}_p)\leftarrow S^*(\{(RX)_i\};\widehat{\ZZ}_p)$ are quasi-isomorphisms.
    \item If Sullivan's $p$-completion $X\rightarrow X^{\wedge,S}_p$ of $X$ is a homotopy equivalence, then the induced $E_{\infty}$-algebra morphisms $S^*(\{(RX)_i\};\ZZ/p^a)\rightarrow S^*(X;\ZZ/p^a)$ and $S^*(\{(RX)_i\};\widehat{\ZZ}_p)\rightarrow S^*(X;\widehat{\ZZ}_p)$ are quasi-isomorphisms.
    \item If $\varinjlim_i H^q(X_i;\ZZ/p)$ ($H^q(X;\ZZ/p)$, resp.) is finite for every $q$, then $H^*(S^*(\{X_i\};\widehat{\ZZ}_p))\cong \varprojlim_{a}\varinjlim_i H^*(X_i;\ZZ/p^a)$ ($H^*(X;\widehat{\ZZ}_p)\cong\varprojlim_{a} H^*(X;\ZZ/p^a)$, resp.).
\end{enumerate}
\end{lemma}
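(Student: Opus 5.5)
We treat the four items in order, reducing each to standard facts about injectively fibrant diagrams and finite $p$-groups.

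\textbf{Item (1).} Injective fibrancy of $\{(RX)_i\}$ makes each $(RX)_i$ a Kan complex and each structure map a fibration, and $X=\varprojlim_i (RX)_i$ computes the homotopy limit. The Bousfield--Kan (Milnor) sequence for homotopy limits expresses $\pi_*(X)$ via $\varprojlim_i$ and the higher derived limits $\varprojlim^s_i$ of $\pi_*((RX)_i)$. Since every $\pi_n(X_i)$ is a finite group, the pro-system is Mittag-Leffler, so $\varprojlim^1$ vanishes. For a general cofiltered index category we want all higher $\varprojlim^s$ to vanish. The cleanest route is to view $\pi_n$ of a finite system as a profinite (compact) object, where $\varprojlim$ is exact. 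This gives $\pi_*(X)\cong\varprojlim_i\pi_*((RX)_i)$. The levelwise weak equivalences $X_i\to (RX)_i$ give the first isomorphism.

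\textbf{Item (2).} The morphisms $(RX)_i\leftarrow X_i$ are levelwise weak equivalences, so each $S^*((RX)_i;\ZZ/p^a)\to S^*(X_i;\ZZ/p^a)$ is a quasi-isomorphism. Filtered colimits are exact, so $\varinjlim_i$ preserves quasi-isomorphisms, which settles the $\ZZ/p^a$ case. For $\widehat{\ZZ}_p$ we pass to $\varprojlim_a$. The transition maps $S^*(\{X_i\};\ZZ/p^{a+1})\to S^*(\{X_i\};\ZZ/p^a)$ are surjective, since they are colimits of surjections. Hence $\varprojlim_a$ computes the homotopy limit, and a levelwise quasi-isomorphism of such towers is a quasi-isomorphism by the Milnor sequence for cochain complexes and the five lemma.

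\textbf{Item (3).} This is the main obstacle. We must show $\varinjlim_i H^*((RX)_i;\ZZ/p)\to H^*(X;\ZZ/p)$ is an isomorphism. The $\ZZ/p^a$ and $\widehat{\ZZ}_p$ cases then follow by induction on $a$ via Bockstein short exact sequences $0\to\ZZ/p\to\ZZ/p^{a+1}\to\ZZ/p^a\to0$ and the five lemma, followed by the $\varprojlim_a$ argument of item (2).

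Our plan is to use Sullivan's description of $p$-completion. For a space $X$ that is $p$-complete in Sullivan's sense, $X^{\wedge,S}_p$ is the homotopy limit of the pro-$p$-finite system obtained from $X$, indexed by maps to $p$-finite spaces. Its mod $p$ cohomology equals $\varinjlim$ of the cohomology of that system. We would show that $\{(RX)_i\}$ is pro-equivalent to, or cofinal in, this canonical system. Every map $X\to Y$ with $Y$ $p$-finite should factor, up to homotopy, through some $(RX)_i$. The key input is that maps from $\varprojlim_i (RX)_i$ into a $p$-finite target factor through a finite stage, because $p$-finite spaces are cocompact in the pro-category. Together with the hypothesis that $X\to X^{\wedge,S}_p$ is an equivalence, this identifies $\varinjlim_i H^*((RX)_i;\ZZ/p)$ with $H^*(X^{\wedge,S}_p;\ZZ/p)\cong H^*(X;\ZZ/p)$.

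\textbf{Item (4).} Set $C_a:=S^*(\{X_i\};\ZZ/p^a)$, so $H^*(C_a)=\varinjlim_i H^*(X_i;\ZZ/p^a)$. The Milnor sequence for the surjective tower $\{C_a\}$ gives
\[0\to{\varprojlim_a}^1 H^{q-1}(C_a)\to H^q(S^*(\{X_i\};\widehat{\ZZ}_p))\to\varprojlim_a H^q(C_a)\to0.\]
By induction on $a$ using the Bockstein sequences, finiteness of the mod $p$ groups implies every $H^{q-1}(C_a)$ is finite. A tower of finite groups is Mittag-Leffler, so $\varprojlim^1$ vanishes and the stated isomorphism follows. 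The argument for $H^*(X;\widehat{\ZZ}_p)$ is identical, using the tower $S^*(X;\ZZ/p^a)$.
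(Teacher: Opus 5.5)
Items (1), (2) and (4) are correct and follow the paper; your item (1) just unpacks the Bousfield--Kan/Friedlander result that the paper cites. The gap is in item (3).

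Your key input is that maps from the limit space $X=\varprojlim_i (RX)_i$ into a $p$-finite space factor through a finite stage because $p$-finite spaces are cocompact in the pro-category. This conflates the limit space with the pro-object. Cocompactness, which here is just the definition of pro-morphisms, gives $\homo(\{(RX)_i\},F)=\varinjlim_i[(RX)_i,F]$. It says nothing about $[X,F]$.

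For $F=K(\ZZ/p,n)$, bijectivity of $\varinjlim_i[(RX)_i,F]\to[X,F]$ is exactly the isomorphism $\varinjlim_i H^n((RX)_i;\ZZ/p)\cong H^n(X;\ZZ/p)$ you are trying to prove, so the step is circular. It is also false without the hypothesis. Take $G=\prod_{n\geq 1}\ZZ/p$ and $X_i=K(G/U_i,1)$, with $U_i$ running over the open normal subgroups. Then $X\simeq K(G,1)$ with $G$ discrete. Here $H^1(X;\ZZ/p)=\homo(G,\ZZ/p)$ is vastly larger than $\varinjlim_i H^1(X_i;\ZZ/p)=\homo_{\mathrm{cont}}(G,\ZZ/p)$. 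So the hypothesis that $X\to X^{\wedge,S}_p$ is an equivalence has to do real work at precisely this step, and your sketch never says how it enters.

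The missing argument, which is the paper's, compares with the Artin--Mazur pro-$p$ completion $X\to\{Y_j\}$.
\begin{itemize}
\item Its universal property, applied to the $p$-finite targets $K(\ZZ/p,n)$, gives $\varinjlim_j H^*(Y_j;\ZZ/p)\cong H^*(X;\ZZ/p)$ for every $X$.
\item The map $X\to\{(RX)_i\}$ factors through $\{Y_j\}$.
\item The hypothesis and item (1) give $\varprojlim_j\pi_*(Y_j)=\pi_*(X^{\wedge,S}_p)\cong\pi_*(X)\cong\varprojlim_i\pi_*((RX)_i)$, compatibly with this factorization.
\item Both sides are pro-systems of finite groups, so $\{\pi_*(Y_j)\}\to\{\pi_*((RX)_i)\}$ is a pro-isomorphism.
\item Artin--Mazur's Whitehead theorem (their Theorem 4.3) then yields $\varinjlim_i H^*((RX)_i;\ZZ/p)\cong\varinjlim_j H^*(Y_j;\ZZ/p)$.
\end{itemize}
Composing the first and last isomorphisms gives the required one. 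Your Bockstein reduction to $\ZZ/p^a$ and the $\varprojlim_a$ step then finish item (3) as you describe.
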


\begin{proof}
\textbf{Item (1). } By the definition of the injective model category structure, each map $X_i\rightarrow (RX)_i$ is a weak equivalence. The asserted isomorphisms of homotopy groups now follow from \cite{Friedlander-etale-homotopy}*{Theorem 6.8} or \cite{Bousfield-Kan-homotopy-limits-completions-localizations}*{XI.7.1}.

\textbf{Item (2). } Since each map $X_i\rightarrow (RX)_i$ is a weak equivalence, the first arrow is a quasi-isomorphism. Since homology commutes with filtered colimits, applying the Milnor exact sequence (\cite{Weibel-homological-algebra}*{Theorem 3.5.8}) to the inverse limit over $a$ gives the following exact sequence.
\[
0\rightarrow \varprojlim_{a}{}^1 \varinjlim_i H^{q-1}(X_i;\ZZ/p^a) \rightarrow H^q(S^*(\{X_i\};\widehat{\ZZ}_p)) \rightarrow \varprojlim_{a} \varinjlim_i H^{q}(X_i;\ZZ/p^a) \rightarrow 0
\]
The same construction gives an analogous short exact sequence for $\{(RX)_i\}$. Then the second quasi-isomorphism follows from the first quasi-isomorphism and the five lemma.

\textbf{Item (3). } Let $X\rightarrow \{Y_j\}$ be Artin-Mazur's pro-$p$ completion (\cite{Artin-Mazur-etale-homotopy}*{Theorem 3.4}). Since each $RX_i$ is $p$-finite, the canonical map $X\rightarrow \{(RX)_i\}$ uniquely factors through $\{Y_j\}$. $X^{\wedge,S}_p$ is the homotopy limit of $\{Y_j\}$ (\cite{Sullivan-MIT-notes}*{p.~55, Corollary}). Then $\pi_*(X^{\wedge,S}_p)\cong \varprojlim_j \pi_*(Y_j)$. Since $X\rightarrow X^{\wedge,S}_p$ is a homotopy equivalence, $\pi_*(X)\cong\pi_*(X^{\wedge,S}_p)$. Since $\pi_*(X)\cong\varprojlim_i \pi_*((RX)_i)$, as shown in item (1), the map $\{\pi_*(Y_j)\}\rightarrow \{\pi_*((RX)_i)\}$ induced by $\{Y_j\}\rightarrow \{(RX)_i\}$ is an isomorphism. Artin-Mazur's comparison theorem (\cite{Artin-Mazur-etale-homotopy}*{Theorem 4.3}) therefore shows the first quasi-isomorphism. Applying the Milnor exact sequence argument in item (2) then shows the second quasi-isomorphism.

\textbf{Item (4). } 
The finiteness assumption for coefficients in $\ZZ/p$, together with the Bockstein exact sequence, implies that $\varinjlim_i H^q(X_i;\ZZ/p^a)$ is finite for every $q$ and $a$. Then the $\varprojlim^1\varinjlim_i H^q(X_i;\ZZ/p^a)=0$ for every $q$. Using the Milnor exact sequence in item (2) proves the isomorphism for $\{X_i\}$. The same argument proves the corresponding isomorphism for $X$.
\end{proof}

\begin{remark}
In summary, if Sullivan's $p$-completion $X\rightarrow X^{\wedge,S}_p$ of $X$ is a homotopy equivalence, then the $E_{\infty}$-algebra morphisms $S^*(\{X_i\};\widehat{\ZZ}_p)\leftarrow S^*(\{(RX)_i\};\widehat{\ZZ}_p)\rightarrow S^*(X;\widehat{\ZZ}_p)$ are quasi-isomorphisms. If, in addition, $\varinjlim_i H^q(X_i;\ZZ/p)$ is finite for every $q$, then the maps in the zig-zag $\{X_i\}\rightarrow \{(RX)_i\}\leftarrow X$ are $p$-adic weak equivalences by \cite{Morel-p-adic-spaces}*{Theorem 2.4.1}.
\end{remark}

Rectifying the preceding $E_{\infty}$-algebra morphisms $S^*(\{X_i\};\widehat{\ZZ}_p)\leftarrow S^*(\{(RX)_i\};\widehat{\ZZ}_p)\rightarrow S^*(X;\widehat{\ZZ}_p)$ gives CDGA morphisms $C^*(\{X_i\};\widehat{\ZZ}_p)\otimes_{\widehat{\ZZ}_p} k\leftarrow C^*(\{(RX)_i\};\widehat{\ZZ}_p)\otimes_{\widehat{\ZZ}_p} k\rightarrow C^*(X;\widehat{\ZZ}_p)\otimes_{\widehat{\ZZ}_p} k$. By Lemma \ref{Lem: quasi-isomorphisms between homotopy limit and the pro space}, these CDGA morphisms are quasi-isomorphisms if Sullivan's $p$-completion $X\rightarrow X^{\wedge,S}_p$ is a homotopy equivalence. Let $M_{\{X_i\}}$, $M_{\{(RX)_i\}}$, and $M_X$ denote minimal models of the CDGA's $C^*(\{X_i\};\widehat{\ZZ}_p)\otimes_{\widehat{\ZZ}_p} k$, $ C^*(\{(RX)_i\};\widehat{\ZZ}_p)\otimes_{\widehat{\ZZ}_p} k$, and $ C^*(X;\widehat{\ZZ}_p)\otimes_{\widehat{\ZZ}_p} k$, respectively. The previous CDGA quasi-isomorphisms induce isomorphisms $M_{\{X_i\}}\leftarrow M_{\{(RX)_i\}}\rightarrow M_X$.

\begin{definition}
A pro system $\{X_i\}$ of pointed connected $p$-finite simplicial sets is \textbf{nilpotent $p$-complete finite type} if its homotopy limit $X$ is a nilpotent $p$-complete finite type space.
\end{definition}

The following result is a direct corollary of \cite{Sullivan-1970-unpublished-notes}*{Proposition 3.8}.

\begin{lemma}\label{Lem: nilpotent $p$-complete finite types are good}
If $\{X_i\}$ is nilpotent $p$-complete finite type, then Sullivan's $p$-completion $X\rightarrow X^{\wedge,S}_p$ is a homotopy equivalence.
\end{lemma}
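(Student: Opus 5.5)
The plan is to identify $X^{\wedge,S}_p$ with the homotopy limit of Artin--Mazur's pro-$p$ completion $\{Y_j\}$ of $X$. Then we compare $\{Y_j\}$ with the given pro system $\{(RX)_i\}$, using the hypothesis that $X$ is nilpotent $p$-complete finite type.

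\textbf{Step 1: the target of the completion map.} By \cite{Sullivan-MIT-notes}*{p.~55, Corollary}, $X^{\wedge,S}_p=\operatorname{holim}_j Y_j$. Because each $(RX)_i$ is $p$-finite, the map $X\rightarrow\{(RX)_i\}$ factors uniquely through $\{Y_j\}$. This gives a map $X^{\wedge,S}_p\rightarrow \operatorname{holim}_i (RX)_i$, and composing $X\rightarrow X^{\wedge,S}_p$ with it yields the canonical map $X=\varprojlim_i (RX)_i\rightarrow \operatorname{holim}_i(RX)_i$, which is an equivalence.

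\textbf{Step 2: $X$ is $p$-good and $X\rightarrow X^{\wedge,S}_p$ is an equivalence.} It remains to show that $X\rightarrow X^{\wedge,S}_p$ induces isomorphisms on all homotopy groups; Whitehead's theorem then finishes the proof. Here I would invoke \cite{Sullivan-1970-unpublished-notes}*{Proposition 3.8}. Its content is that for a nilpotent space whose homotopy groups are finitely generated $\widehat{\ZZ}_p$-modules, with $\pi_1$ topologically finitely generated nilpotent pro-$p$, the completion map induces on $\pi_n$ the profinite $p$-completion of $\pi_n$. Since these groups are already $p$-complete, that completion is the identity.

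To check this directly, I would induct up a principal refined Postnikov tower of $X$ whose fibres are $K(A,n)$ with $A$ either $\widehat{\ZZ}_p$ or a finite $p$-group. For a single such Eilenberg--MacLane space, $K(\widehat{\ZZ}_p,n)$ is its own completion: its pro-$p$ completion is $\{K(\ZZ/p^a,n)\}$, with homotopy limit $K(\widehat{\ZZ}_p,n)$. One then uses that Sullivan completion preserves fibrations with nilpotent action and $p$-complete finite type fibres. Passing to the limit over the tower is harmless because the tower converges and $\varprojlim^1$ vanishes for compact groups. The same argument applies to $\pi_1$, treated as an iterated central extension by the $\Gamma^s/\Gamma^{s+1}$.

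\textbf{Main obstacle.} The hard step is the fibration-preservation claim in Step 2. I would either cite it directly from \cite{Sullivan-1970-unpublished-notes} or deduce it from Bousfield--Kan's fibre lemma, using the agreement of the completions for nilpotent finite type spaces \cite{Barthel-Bousfield-Comparison-of-p-completion}. That agreement needs a slight extension here, since the space is $p$-complete finite type rather than finite type.
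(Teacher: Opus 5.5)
This is essentially the paper's argument: the paper proves the lemma by a single appeal to Sullivan's Proposition 3.8, which is also the step that carries all the weight in your Step 2. Your Step 1 only produces a homotopy left inverse to $X\rightarrow X^{\wedge,S}_p$, and your Postnikov-tower verification is an outline of why the cited result holds; once the citation is accepted, neither is needed.
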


Theorem \ref{Thm: minimal model computes rational homotopy groups}, Lemma \ref{Lem: quasi-isomorphisms between homotopy limit and the pro space} and Lemma \ref{Lem: nilpotent $p$-complete finite types are good} immediately imply the following result.

\begin{corollary}\label{Cor: duality between minimal model and homotopy groups for pro spaces}
Let $\{X_i\}$ be a pro system of pointed connected $p$-finite simplicial sets, and assume that $\{X_i\}$ is nilpotent $p$-complete finite type. Let $\phi:M\rightarrow C^*(\{X_i\};\widehat{\ZZ}_p)\otimes_{\widehat{\ZZ}_p} k$ be a minimal model. For every $n$, let $L(n)$ denote the dual of the degree-$n$ indecomposable space $I^n(M)$ of $M$. Then
\begin{enumerate}[leftmargin=0.25in]
    \item there are natural nondegenerate $k$-bilinear pairings as follows;
    \[
\langle -,- \rangle: I^i(M)\otimes_{k} ((\varprojlim_j \pi_i(X_j))\otimes_{\widehat{\ZZ}_p} k) \rightarrow k \text{\,\,\, (for $i\geq 2$)}
\]
\[
\langle -,- \rangle: I^1(M)\otimes_{k} ((\bigoplus_s \Gamma^s(\varprojlim_j \pi_1(X_j))/\Gamma^{s+1}(\varprojlim_j \pi_1(X_j)))\otimes_{\widehat{\ZZ}_p} k) \rightarrow k
\]
    \item the vector-space isomorphism $\bigoplus_{*\geq 2} L(*)\rightarrow \bigoplus_{*\geq 2}(\varprojlim_j \pi_*(X_j))\otimes_{\widehat{\ZZ}_p} k$ induced by item (1) is an isomorphism of graded Lie algebras after shifting degrees by $1$;
    \item if, in addition, $\QQ_p\subset k$ is a closed weakly cartesian pair of Hausdorff topological fields, then the nilpotent Lie algebra $L(1)$ associated with the $1$-minimal model is naturally isomorphic to the Lie algebra $\Lie((\varprojlim_j \pi_1(X_j))\widehat{\otimes} k)$ of the continuous Mal'cev $k$-completion of $\varprojlim_j \pi_1(X_j)$.
\end{enumerate}    
\end{corollary}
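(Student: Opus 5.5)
The plan is to transport every item of Theorem \ref{Thm: minimal model computes rational homotopy groups} from the homotopy limit $X$ to the pro system $\{X_i\}$, using the zig-zag $\{X_i\}\to\{(RX)_i\}\leftarrow X$. By definition, $\{X_i\}$ being nilpotent $p$-complete finite type means $X$ is a nilpotent $p$-complete finite type space. First I would replace $X$ by a CW complex of the same weak homotopy type via geometric realization, so that Theorem \ref{Thm: minimal model computes rational homotopy groups} applies to $X$ directly. This yields the pairings, the Lie algebra identification, and the Mal'cev identification for a minimal model $M_X$ of $C^*(X;\widehat{\ZZ}_p)\otimes_{\widehat{\ZZ}_p}k$, phrased in terms of $\pi_*(X)$.

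Next, Lemma \ref{Lem: nilpotent $p$-complete finite types are good} says that $X\to X^{\wedge,S}_p$ is a homotopy equivalence. Hence Lemma \ref{Lem: quasi-isomorphisms between homotopy limit and the pro space}(2),(3) shows that $S^*(\{X_i\};\widehat{\ZZ}_p)\leftarrow S^*(\{(RX)_i\};\widehat{\ZZ}_p)\to S^*(X;\widehat{\ZZ}_p)$ are quasi-isomorphisms of $E_\infty$-algebras. Tensoring with the flat module $k$ and rectifying functorially (Theorem \ref{Fact: rectification}) gives CDGA quasi-isomorphisms, as noted in the paragraph before the corollary. By the lifting and uniqueness of minimal models (Theorem \ref{Fact: Lifting of minimal models}), the given $\phi:M\to C^*(\{X_i\};\widehat{\ZZ}_p)\otimes k$ is then identified, by an isomorphism unique up to homotopy, with $M_X$. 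Homotopic isomorphisms of minimal CDGAs induce the same maps on indecomposables $I^*(M)$ and on the quadratic part of the differential, so the data in items (1)–(3) do not depend on the choice.

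Finally, Lemma \ref{Lem: quasi-isomorphisms between homotopy limit and the pro space}(1) gives $\pi_*(X)\cong\varprojlim_j\pi_*(X_j)$, compatible with Whitehead products (these are natural in maps of spaces) and with the group structure on $\pi_1$. This identifies the lower central quotients and the continuous Mal'cev completions, so substituting into the conclusions for $X$ gives items (1)–(3).

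The one point needing care is naturality. For instance, the pairing for $\{X_i\}$ should be defined by transporting along the zig-zag, and one must check that this is independent of the chosen fibrant replacement $RX$. This again follows from uniqueness up to homotopy of maps between minimal models. I expect this bookkeeping to be the only real obstacle; everything else is formal.
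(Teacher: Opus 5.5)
Your route is the paper's. There the corollary is stated as an immediate consequence of Theorem~\ref{Thm: minimal model computes rational homotopy groups} applied to the homotopy limit $X$, together with Lemma~\ref{Lem: quasi-isomorphisms between homotopy limit and the pro space} and the lemma that $X\to X^{\wedge,S}_p$ is a homotopy equivalence. These make $C^*(\{X_i\};\widehat{\ZZ}_p)\otimes_{\widehat{\ZZ}_p}k\leftarrow C^*(\{(RX)_i\};\widehat{\ZZ}_p)\otimes_{\widehat{\ZZ}_p}k\rightarrow C^*(X;\widehat{\ZZ}_p)\otimes_{\widehat{\ZZ}_p}k$ a zig-zag of quasi-isomorphisms, and one then uses $\pi_*(X)\cong\varprojlim_j\pi_*(X_j)$. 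The existence of the nondegenerate pairings and of the Lie algebra isomorphisms transfers as you say. Any isomorphism of minimal CDGA's induces an isomorphism of indecomposables that intertwines the quadratic parts of the differentials.

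The step that fails is the one you use for independence of choices. It is not true that homotopic isomorphisms of minimal CDGA's induce the same map on $I^*(M)$ once $M$ has degree-$1$ generators, which is exactly the non-simply-connected nilpotent case the corollary allows. The paper records this in Example~\ref{Example: rigid invariants}.

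For example, take $M=\bigwedge(x,y,z)$ with $|x|=|y|=|z|=1$ and $dz=xy$, the model of $K(G,1)$ for the pro-$p$ Heisenberg group $G$. The derivation with $\theta(x)=a$, $\theta(y)=b$, $\theta(z)=0$ is a homotopy from the identity to the automorphism fixing $x,y$ and sending $z\mapsto z+ay-bx$. That automorphism acts nontrivially on $I^1(M)=M^1$; dually it is an inner automorphism of the Heisenberg Lie algebra. The mechanism is the term $\theta(y)g(w)$ in $\theta(dx)$ with $|y|=1$, where $\theta(y)\in k$ is a scalar. The same phenomenon therefore affects $I^n(M)$ for $n\geq 2$ whenever $\pi_1$ acts nontrivially on $\pi_n$. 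So your last paragraph does not establish the asserted naturality.

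You can repair this in either of two ways:
\begin{enumerate}
\item Work with augmented CDGA's and based homotopies, i.e.\ $\theta$ vanishing on $M^1$; every map in the zig-zag is pointed, so the augmentations are compatible. Then $g-f=d\theta+\theta d$ sends generators into decomposables, and the induced map on indecomposables is well defined.
\item Observe that under free homotopies only the associated graded of the tower $M(1,1)\subset M(1,2)\subset\cdots$ is rigid. This is what the degree-$1$ pairing with $\bigoplus_s\Gamma^s/\Gamma^{s+1}$ actually sees, but item (3) is then canonical only up to inner automorphisms.
\end{enumerate}
In the simply connected case your argument is correct as written.
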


\begin{theorem}\label{Thm: 1-minimal model is Malcev completion}
Let $\QQ_p\subset k$ be a weakly cartesian pair of Hausdorff topological fields.
\begin{enumerate}[leftmargin=0.25in]
    \item Let $X$ be a connected CW complex whose fundamental group $\pi_1(X)$ is a topologically finitely generated pro-$p$ group. Let $\phi_X:M_X(1)\rightarrow C^*(X;\widehat{\ZZ}_p)\otimes_{\widehat{\ZZ}_p}k$ be a $1$-minimal model, and let $L_X(1)$ denote the pro-finite-dimensional pro-nilpotent Lie algebra dual to $M_X(1)$ as in Theorem \ref{Fact: nilpotent Lie algebra induced from minimal model}. Then $L_X(1)$ is naturally isomorphic to the Lie algebra $\Lie(\pi_1(X)\widehat{\otimes} k)$ of the continuous Mal'cev $k$-completion of $\pi_1(X)$.
    \item Let $\{X_i\}$ be a pro system of pointed connected $p$-finite simplicial sets, and assume that the pro-$p$ group $\varprojlim_i \pi_1(X_i)$ is topologically finitely generated. Let $\phi_{\{X_i\}}:M_{\{X_i\}}(1)\rightarrow C^*(\{X_i\};\widehat{\ZZ}_p)\otimes_{\widehat{\ZZ}_p}k$ be a $1$-minimal model, and let $L_{\{X_i\}}(1)$ denote the pro-finite-dimensional pro-nilpotent Lie algebra dual to $M_{\{X_i\}}(1)$. Then $L_{\{X_i\}}(1)$ is naturally isomorphic to the Lie algebra $\Lie((\varprojlim_i \pi_1(X_i))\widehat{\otimes} k)$ of the continuous Mal'cev $k$-completion of $\varprojlim_i \pi_1(X_i)$.
\end{enumerate}
\end{theorem}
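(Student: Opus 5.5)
The plan is to reduce both items to the nilpotent case, item (3) of Theorem \ref{Thm: minimal model computes rational homotopy groups}, by passing through the lower central series quotients of $G:=\pi_1(X)$ and taking limits.

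\textbf{Step 1: a tower of Eilenberg--MacLane spaces.} Since $G$ is topologically finitely generated pro-$p$, each quotient $G_n:=G/\overline{\Gamma^{n+1}G}$ is a topologically finitely generated nilpotent pro-$p$ group. Hence $K(G_n,1)$ is nilpotent $p$-complete finite type. The classifying maps $X\to K(G,1)\to K(G_n,1)$ induce CDGA maps
\[
C^*(K(G_n,1);\widehat{\ZZ}_p)\otimes_{\widehat{\ZZ}_p}k\to C^*(X;\widehat{\ZZ}_p)\otimes_{\widehat{\ZZ}_p}k .
\]
By Theorem \ref{Thm: minimal model computes rational homotopy groups}(3), the $1$-minimal model $M_n$ of $K(G_n,1)$ is dual to $\Lie(G_n\widehat{\otimes}k)$. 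By the lifting property of minimal models (Theorem \ref{Fact: Lifting of minimal models}), the $M_n$ form an increasing chain $M_1\subset M_2\subset\cdots$, compatible up to homotopy, mapping to $M_X(1)$. I will show that $\varinjlim_n M_n\to M_X(1)$ is an isomorphism. Dualizing then gives $L_X(1)\cong\varprojlim_n\Lie(G_n\widehat{\otimes}k)$, and this equals $\Lie(G\widehat{\otimes}k)$ because continuous Mal'cev completion of a finitely generated pro-$p$ group is the inverse limit over its nilpotent quotients (from \cite{Hu-Wang-continuou-Malcev-completion}). Naturality follows from the uniqueness up to homotopy of the lifts.

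\textbf{Step 2: the key cohomological comparison.} A $1$-minimal model is characterized by two conditions: an isomorphism on $H^1$ and an injection on $H^2$ of the $1$-minimal part. So it suffices to check two things.

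First, $H^1(G_n)\otimes k\to H^1(X;\widehat{\ZZ}_p)\otimes k$ is an isomorphism for $n\ge1$. This holds because both sides equal $\homo_{\cont}(G,\widehat{\ZZ}_p)\otimes k$; continuity of homomorphisms is automatic for finitely generated pro-$p$ groups.

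Second, the colimit $\varinjlim_n H^2(G_n;\widehat{\ZZ}_p)\otimes k\to H^2(X;\widehat{\ZZ}_p)\otimes k$ is injective. I will factor this map through $H^2(K(G,1))$, using that $H^2(K(G,1))\to H^2(X)$ is injective (Hopf). Then I will use the five-term / Lyndon–Hochschild–Serre sequence for $1\to\overline{\Gamma^{n+1}}\to G\to G_n\to 1$, as in Proposition \ref{Prop: injectivity of colimit of lower central series into group}. The kernel of $H^2(G_n)\to H^2(G)$ is the image of $\homo(\overline{\Gamma^{n+1}}/[G,\overline{\Gamma^{n+1}}],\widehat{\ZZ}_p)$. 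Every such class dies after passing to $G_{n+1}$, so the colimit map is injective.

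To apply this I need continuous and discrete cohomology to agree, via Proposition \ref{Prop: comparison between continuous and discrete cohomology for Serre good groups} and Lemma \ref{Lem: relation between continuous and discrete cohomology for nilpotent pro-p groups} for the $G_n$. This gives that $\varinjlim M_n$ is a $1$-minimal model of $X$. For item (2) I will run the identical argument with $C^*(\{X_i\};\widehat{\ZZ}_p)\otimes k$, using $\varprojlim_i\pi_1(X_i)$ and Lemma \ref{Lem: quasi-isomorphisms between homotopy limit and the pro space} to handle the transition between the pro-system and its homotopy limit on $H^{\le 2}$.

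\textbf{Main obstacle.} The hardest step is the $H^2$ injectivity when $X$ is not $K(G,1)$ and $G$ is not good. The issue is that $H^2(K(G,1);\widehat{\ZZ}_p)$, as discrete group cohomology of the abstract group $G$, may differ from continuous cohomology. I would avoid it by working directly with the continuous classes coming from the $G_n$. A class killed in $H^2(X)$ gives, by the fibration $\widetilde X\to X\to K(\pi_1X,1)$ and the identity $\pi_1=G$, a central extension of $G_n$ by $\widehat{\ZZ}_p$ that splits over $G$. Such an extension is then continuously trivialized over some $G_{m}$, since the splitting has nilpotent image.
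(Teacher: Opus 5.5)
Your proposal is correct and follows essentially the paper's proof. The shared steps are:
\begin{itemize}
\item a tower of elementary extensions for $K(\pi/\Gamma^{s+1}\pi,1)$, which you should build strictly along the principal fibrations via Corollary~\ref{Cor: Extension of Minimal Model for Fibrations} rather than only compatibly up to homotopy, so that the colimit actually maps to $C^*(X;\widehat{\ZZ}_p)\otimes_{\widehat{\ZZ}_p}k$;
\item stagewise identification by Theorem~\ref{Thm: minimal model computes rational homotopy groups}(3), followed by the Hu--Wang inverse-limit formula;
\item the $H^1$-isomorphism;
\item $H^2$-injectivity, from Leray--Serre for $\widetilde{X}\to X\to K(\pi,1)$ together with the central-extension splitting argument.
\end{itemize}
The paper states that splitting argument for abstract groups in Lemma~\ref{Lem: injectivity of colimit of lower central series into group}(2), so the discrete-versus-continuous issue you flag as the main obstacle never actually arises.

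For item (2), the paper computes $H^1$ and $H^2$ of the pro-system directly, using Milnor sequences and left exactness. Passing through the homotopy limit as you suggest works only in one direction: injectivity into $H^2$ of the homotopy limit implies injectivity into $H^2(\{X_i\})$. That is enough for $H^2$, but Lemma~\ref{Lem: quasi-isomorphisms between homotopy limit and the pro space}(3) need not hold for $\{X_i\}$ itself, so you cannot claim an isomorphism on $H^{\le 2}$ between the pro-system and its homotopy limit.
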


\begin{proof}

\textbf{Item (1). } By Proposition \ref{Prop: base change of Malcev completion}, it suffices to prove the theorem when $k=\QQ_p$. Set $\pi=\pi_1(X)$, and let $\Gamma^*\pi$ denote the lower central series. For every $s$, set $\pi_s=\pi/\Gamma^{s+1}\pi$. Consider the following diagram.
\[
\begin{tikzcd}
    & & \vdots \arrow[d] \\
    & & K(\pi_2,1) \arrow[d] \\
    X \arrow[r] & K(\pi,1) \arrow[ruu] \arrow[ru] \arrow[r] & K(\pi_1,1)
\end{tikzcd}
\]
The proof of Theorem \ref{Thm: minimal model computes rational homotopy groups} constructs a tower of minimal models $M(1,1)\subset M(1,2)\subset M(1,3)\subset ...$ associated to $C^*(K(\pi_s,1);\widehat{\ZZ}_p)\otimes_{\widehat{\ZZ}_p} \QQ_p$, where each $M(1,s-1)\subset M(1,s)$ is an elementary extension by $\homo_{\widehat{\ZZ}_p}(\Gamma^{s}\pi/\Gamma^{s+1}\pi,\QQ_p)$. Furthermore, the finite-dimensional nilpotent Lie algebra $L(1,s)$ dual to $M(1,s)$ is naturally isomorphic to $\Lie(\pi_s\widehat{\otimes}\QQ_p)$. By Proposition \ref{Prop: Malcev completion is the inverse limit of Malcev completion of lower central series}, $\varprojlim_sL(1,s)=\varprojlim_s\Lie(\pi_{s}\widehat{\otimes}\QQ_p)=\Lie(\pi\widehat{\otimes}\QQ_p)$.

Let $M(1)=\bigcup M(1,s)$. Considering the composition of natural maps $M(1)=\varinjlim_sM(1,s)\rightarrow \varinjlim_sC^*(K(\pi_s,1);\widehat{\ZZ}_p)\otimes_{\widehat{\ZZ}_p} \QQ_p\rightarrow C^*(K(\pi,1);\widehat{\ZZ}_p)\otimes_{\widehat{\ZZ}_p} \QQ_p\rightarrow C^*(X;\widehat{\ZZ}_p)\otimes_{\widehat{\ZZ}_p} \QQ_p$, it remains to prove that this composition of CDGA morphisms is a $1$-minimal model of $C^*(X;\widehat{\ZZ}_p)\otimes_{\widehat{\ZZ}_p} \QQ_p$. 

$\varinjlim_s H^1(K(\pi_s,1);\widehat{\ZZ}_p)\otimes_{\widehat{\ZZ}_p} \QQ_p\rightarrow H^1(K(\pi,1);\widehat{\ZZ}_p)\otimes_{\widehat{\ZZ}_p} \QQ_p\rightarrow H^1(X;\widehat{\ZZ}_p)\otimes_{\widehat{\ZZ}_p} \QQ_p$ is clearly an isomorphism. By Theorem \ref{Thm: minimal model computes rational homotopy groups}, for every $s$, $M(1,s)$ is indeed a minimal model of $C^*(K(\pi_s,1);\widehat{\ZZ}_p)\otimes_{\widehat{\ZZ}_p} \QQ_p$. Thus, $M(1)$ is a minimal model of $\varinjlim_sC^*(K(\pi_s,1);\widehat{\ZZ}_p)\otimes_{\widehat{\ZZ}_p} \QQ_p$.
Lemma \ref{Lem: injectivity of colimit of lower central series into group}(2) shows that $\varinjlim_s H^2(K(\pi_s,1);\widehat{\ZZ}_p)\otimes_{\widehat{\ZZ}_p} \QQ_p\rightarrow H^2(K(\pi,1);\widehat{\ZZ}_p)\otimes_{\widehat{\ZZ}_p} \QQ_p$ is injective. By the Leray-Serre spectral sequence for $\widetilde{X}\rightarrow X\rightarrow K(\pi,1)$, $H^2(K(\pi,1);\widehat{\ZZ}_p)\otimes_{\widehat{\ZZ}_p} \QQ_p\rightarrow H^2(X;\widehat{\ZZ}_p)\otimes_{\widehat{\ZZ}_p} \QQ_p$ is also injective. Together, these injectivity statements prove that $M(1)\rightarrow C^*(X;\widehat{\ZZ}_p)\otimes_{\widehat{\ZZ}_p}\QQ_p$ is a $1$-minimal model, completing the proof of item (1).

\textbf{Item (2). } We use the same strategy as in the proof of item (1). For every $i$ and $s$, set $\pi^i=\pi_1(X_i)$, $\pi^i_s=\pi^i/\Gamma^{s+1}\pi^i$, $\pi=\varprojlim_i \pi_1(X_i)$ and $\pi_s=\pi/\Gamma^{s+1}\pi$.

By Corollary \ref{Cor: duality between minimal model and homotopy groups for pro spaces} there is a tower of minimal models $M(1,1)\subset M(1,2)\subset M(1,3)\subset ...$, in which $M(1,s)$ is a minimal model of $C^*(\{K(\pi^i_s,1)\};\widehat{\ZZ}_p)\otimes_{\widehat{\ZZ}_p} \QQ_p$. Let $L(1,s)$ denote the finite-dimensional nilpotent Lie algebra dual to $M(1,s)$, which is naturally isomorphic to $\Lie((\varprojlim_i \pi^i_s)\widehat{\otimes}\QQ_p)$. Since $\pi=\varprojlim_i \pi^i$ is topologically finitely generated, we have $\Gamma^s\pi=\varprojlim_i \Gamma^s\pi^i$ for every $s$. Because finite colimits commute with cofiltered limits, $\pi_s=\varprojlim_i \pi^i_s$ for every $s$. Proposition \ref{Prop: Malcev completion is the inverse limit of Malcev completion of lower central series} therefore gives $\varprojlim_sL(1,s)=\varprojlim_s\Lie(\pi_{s}\widehat{\otimes}\QQ_p)=\Lie(\pi\widehat{\otimes}\QQ_p)$.

Set $M(1)=\bigcup M(1,s)$. It remains to show that the composition of natural maps $\phi:M(1)= \varinjlim_sM(1,s)\rightarrow \varinjlim_sC^*(\{K(\pi^i_s,1)\};\widehat{\ZZ}_p)\otimes_{\widehat{\ZZ}_p} \QQ_p\rightarrow C^*(\{K(\pi^i,1)\};\widehat{\ZZ}_p)\otimes_{\widehat{\ZZ}_p} \QQ_p\rightarrow C^*(\{X_i\};\widehat{\ZZ}_p)\otimes_{\widehat{\ZZ}_p} \QQ_p$ is a $1$-minimal model. 
By Corollary \ref{Cor: duality between minimal model and homotopy groups for pro spaces}, $M(1)$ is indeed a minimal model of $\varinjlim_sC^*(\{K(\pi^i_s,1)\};\widehat{\ZZ}_p)\otimes_{\widehat{\ZZ}_p} \QQ_p$.

Since $\QQ_p$ is flat over $\widehat{\ZZ}_p$, $H^q(C^*(\{K(\pi^i,1)\};\widehat{\ZZ}_p)\otimes_{\widehat{\ZZ}_p} \QQ_p)=H^q(\varprojlim_a \varinjlim_i S^*(K(\pi^i,1);\ZZ/p^a)\otimes_{\widehat{\ZZ}_p}\QQ_p)=H^q(\varprojlim_a \varinjlim_i S^*(K(\pi^i,1);\ZZ/p^a))\otimes_{\widehat{\ZZ}_p}\QQ_p$ for every $q$. Since $H^0(\varinjlim_i S^*(K(\pi^i,1);\ZZ/p^a)))=\varinjlim_i H^0(\pi^i;\ZZ/p^a)=\ZZ/p^a$, the Milnor exact sequence gives that $H^1(\varprojlim_a \varinjlim_i S^*(K(\pi^i,1);\ZZ/p^a))=\varprojlim_a \varinjlim_i H^1(\pi_i;\ZZ/p^a)$. Applying the same calculation to $\{K(\pi^i_s,1)\}$ and $X^i$ and then using Proposition \ref{Fact: relation between continuous cohomology and usual cohomology for the system}, the composition of maps $H^1(\varinjlim_sC^*(\{K(\pi^i_s,1)\};\widehat{\ZZ}_p)\otimes_{\widehat{\ZZ}_p} \QQ_p)\rightarrow H^1(C^*(\{K(\pi^i,1)\};\widehat{\ZZ}_p)\otimes_{\widehat{\ZZ}_p} \QQ_p)\rightarrow H^1(C^*(\{X_i\};\widehat{\ZZ}_p)\otimes_{\widehat{\ZZ}_p} \QQ_p)$ is identified with $\varinjlim_s (H^1_{\cont}(\pi_s;\widehat{\ZZ}_p)\otimes_{\widehat{\ZZ}_p}\QQ_p)\rightarrow H^1_{\cont}(\pi;\widehat{\ZZ}_p)\otimes_{\widehat{\ZZ}_p}\QQ_p\rightarrow (\varprojlim_a\varinjlim_i H^1(X_i;\ZZ/p^a))\otimes_{\widehat{\ZZ}_p}\QQ_p$. These maps are isomorphisms since each term is naturally isomorphic to $\homo_{\cont}(\pi,\widehat{\ZZ}_p)\otimes_{\widehat{\ZZ}_p}\QQ_p$. This proves that $\phi$ induces an isomorphism on $H^1$.

For each $a$, there is a natural identification $\varinjlim_i H^1(K(\pi^i,1);\ZZ/p^a)=\homo_{\cont}(\pi,\ZZ/p^a)$. This group is finite because $\pi$ is topologically finitely generated. Thus, $\varprojlim^1_a\varinjlim_i H^1(K(\pi^i,1);\ZZ/p^a)=0$. The Milnor exact sequence implies that $\varprojlim_a \varinjlim_i H^2(\pi_i;\ZZ/p^a)=H^2(\varprojlim_a \varinjlim_i S^*(K(\pi^i,1);\ZZ/p^a))$. Applying the same calculation to $\{K(\pi^i_s,1)\}$ and $X^i$, the composition of maps $H^2(\varinjlim_sC^*(\{K(\pi^i_s,1)\};\widehat{\ZZ}_p)\otimes_{\widehat{\ZZ}_p} \QQ_p)\rightarrow H^2(C^*(\{K(\pi^i,1)\};\widehat{\ZZ}_p)\otimes_{\widehat{\ZZ}_p} \QQ_p)\rightarrow H^2(C^*(\{X_i\};\widehat{\ZZ}_p)\otimes_{\widehat{\ZZ}_p} \QQ_p)$ is identified with $\varinjlim_s (H^2_{\cont}(\pi_s;\widehat{\ZZ}_p)\otimes_{\widehat{\ZZ}_p}\QQ_p)\rightarrow H^2_{\cont}(\pi;\widehat{\ZZ}_p)\otimes_{\widehat{\ZZ}_p}\QQ_p\rightarrow (\varprojlim_a\varinjlim_i H^2(X_i;\ZZ/p^a))\otimes_{\widehat{\ZZ}_p}\QQ_p$. Since tensoring with $\QQ_p$ preserves colimits, $\varinjlim_s (H^2_{\cont}(\pi_s;\widehat{\ZZ}_p)\otimes_{\widehat{\ZZ}_p}\QQ_p)=(\varinjlim_sH^2_{\cont}(\pi_s;\widehat{\ZZ}_p))\otimes_{\widehat{\ZZ}_p}\QQ_p$. Lemma \ref{Lem: injectivity of colimit of lower central series into group} shows that $\varinjlim_sH^2_{\cont}(\pi_s;\widehat{\ZZ}_p)\rightarrow H^2_{\cont}(\pi;\widehat{\ZZ}_p)$ is injective. By the Leray-Serre spectral sequence, $H^2(K(\pi^i,1);\ZZ/p^a)\rightarrow H^2(X_i;\ZZ/p^a)$ is injective for every $i$ and $a$. Since both the filtered colimit functor and the inverse limit functor are left exact,  $\varprojlim_a\varinjlim_i H^2(K(\pi^i,1);\ZZ/p^a)\rightarrow \varprojlim_a\varinjlim_i H^2(X_i;\ZZ/p^a)$ is also injective. Since $\QQ_p$ is flat over $\widehat{\ZZ}_p$, $\phi$ induces an injection on $H^2$, completing the proof of item (2).
\end{proof}

We briefly recall the inductive construction of a $1$-minimal model $M(1)$ for a CDGA $C^*$ with $H^1(C^*)$ finite-dimensional. Set $V_1=H^1(C^*)$, regarded as a vector space concentrated in degree one, and let $M(1,1)=\bigwedge^* V_1$, equipped with the zero differential. Define $V_s=\Ker(H^2(M(1,s-1))\rightarrow H^2(C))$. Then $M(1,s)$ is an elementary extension of $M(1,s-1)$ by $V_s$, placed in degree $1$. Choose a basis $\{x_1,...,x_t\}$ of $V_s$, and choose degree-$2$ cocycles $y_1,...,y_t\in M(1,s-1)^2$ representing the corresponding cohomology classes. The differential on $M(1,s)$ is given by $d(x_i)=y_i$ for every $i$. Finally, $M(1)=\bigcup_s M(1,s)$.

The following corollary is a more elaborate version of Theorem \ref{Thm: 1-minimal model is Malcev completion}.

\begin{corollary}
Retain the assumptions of Theorem \ref{Thm: 1-minimal model is Malcev completion}(1). Equip $M_X(1)$ with the tower $0\subset M_X(1,1)\subset M_X(1,2)\subset ...$ constructed above. Then the dual tower of nilpotent Lie algebras is naturally isomorphic to the tower of nilpotent Lie algebras $...\twoheadrightarrow\Lie(\pi_1(X)\widehat{\otimes} k)/\Gamma^3\twoheadrightarrow\Lie(\pi_1(X)\widehat{\otimes} k)/\Gamma^2\twoheadrightarrow 0$ of the Lie algebra of the continuous Mal'cev $k$-completion of $\pi_1(X)$. The analogous statement also holds for a pro system $\{X_i\}$ of pointed connected $p$-finite simplicial sets such that $\varprojlim_i \pi_1(X_i)$ is topologically finitely generated.
\end{corollary}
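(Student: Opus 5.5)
The plan is to compare the canonical tower $M_X(1,s)$ built from $H^1$ and the kernels $V_s$ with the tower $M(1,s)$ of minimal models of $C^*(K(\pi_s,1);\widehat{\ZZ}_p)\otimes_{\widehat{\ZZ}_p}k$, where $\pi_s=\pi/\Gamma^{s+1}\pi$ and $\pi=\pi_1(X)$. The latter tower appears in the proof of Theorem \ref{Thm: 1-minimal model is Malcev completion}. Its duals are already known to be $\Lie(\pi_s\widehat{\otimes}k)$, by Theorem \ref{Thm: minimal model computes rational homotopy groups}(3), after reducing to $k=\QQ_p$ via Proposition \ref{Prop: base change of Malcev completion}. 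By Proposition \ref{Prop: Malcev completion is the inverse limit of Malcev completion of lower central series}, together with the fact that Mal'cev completion preserves the exactness of $1\to\Gamma^{s+1}\pi\to\pi\to\pi_s\to1$ (Lemma \ref{Lem: Malcev completion preserves exactness of nilpotent groups}), we have
\[
\Lie(\pi_s\widehat{\otimes}k)\cong\Lie(\pi\widehat{\otimes}k)/\Gamma^{s+1}.
\]
So it suffices to show that, for every $s$, the subalgebra $M_X(1,s)$ of the canonical tower coincides with the image of $M(1,s)$ under the isomorphism $M(1)\cong M_X(1)$ of Theorem \ref{Thm: 1-minimal model is Malcev completion}.

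I will argue by induction on $s$. For $s=1$, both sides equal $\bigwedge^*H^1$: the map $M(1,1)\to C^*(X)$ is an isomorphism on $H^1$ by the $H^1$ comparison in the proof of Theorem \ref{Thm: 1-minimal model is Malcev completion}. For the inductive step, assume $M(1,s-1)=M_X(1,s-1)$ inside $M(1)$. Then $M(1,s)$ is the elementary extension of $M(1,s-1)$ by $\homo(\Gamma^s\pi/\Gamma^{s+1}\pi,k)$, with differential given by the transgression. The canonical $M_X(1,s)$ is the extension by $V_s=\Ker(H^2(M(1,s-1))\to H^2(C^*(X)))$. I need two facts.
\begin{enumerate}[leftmargin=0.25in]
\item The transgression $\homo(\Gamma^s\pi/\Gamma^{s+1}\pi,k)\to H^2(M(1,s-1))=H^2(\pi_{s-1})\otimes k$ is injective. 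This follows from the five-term exact sequence of the central extension $1\to\Gamma^s\pi/\Gamma^{s+1}\pi\to\pi_s\to\pi_{s-1}\to1$, because $H^1(\pi_{s-1})\to H^1(\pi_s)$ is an isomorphism: both equal $\homo(\pi^{ab},-)$.
\item Its image equals $V_s$. The image is the kernel of $H^2(\pi_{s-1})\otimes k\to H^2(\pi_s)\otimes k$, so it lies in $V_s$. Conversely, suppose a class in $H^2(\pi_{s-1})\otimes k$ dies in $H^2(X)$. Then it dies in $H^2(\pi)\otimes k$, since $H^2(\pi)\to H^2(X)$ is injective by the Leray–Serre argument. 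By the injectivity of $\varinjlim_t H^2(\pi_t)\otimes k\to H^2(\pi)\otimes k$ (Lemma \ref{Lem: injectivity of colimit of lower central series into group}), the class already dies in some $H^2(\pi_t)$ with $t\ge s$.
\end{enumerate}

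The main obstacle is the remaining step: showing that the class dies already at stage $s$, not only at some later stage $t$. I would handle this with a Hopf-type argument. The kernel of $H^2(\pi_{s-1})\to H^2(\pi_t)$ is dual to $\Gamma^s\pi_t\cap R/\cdots$. Equivalently, on the Lie side, a 2-cocycle on $\Lie(\pi_{s-1})$ that becomes exact on $\Lie(\pi_t)$ is exact up to a cochain factoring through $\Lie(\pi_t)/\Gamma^{s+1}=\Lie(\pi_s)$. This uses that $M(1,t)$ is minimal: an element of degree-one weight $>s$ has differential involving generators of weight $<$ its weight, so the weight filtration can be truncated at $s$. Concretely, if $y=d(w)$ with $w\in M(1,t)^1$, I project $w$ onto its weight-$\le s$ part, which lies in $M(1,s)$. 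Since $y$ has weight $\le s$ and the differential respects the weight grading induced by the Lie algebra grading $\Gamma^\bullet/\Gamma^{\bullet+1}$, the projection still satisfies $d(w_{\le s})=y$ modulo terms that vanish. A choice of splitting of the filtration may be needed here.

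Once $V_s$ is identified with $\homo(\Gamma^s\pi/\Gamma^{s+1}\pi,k)$ compatibly with the differentials, the two elementary extensions agree, up to a change of the chosen cocycle representatives, which yields isomorphic extensions. Dualizing then gives the tower isomorphism, and naturality follows from the naturality of the transgression. For the pro system $\{X_i\}$, I would repeat the argument verbatim, using Theorem \ref{Thm: 1-minimal model is Malcev completion}(2), the identification $\pi_s=\varprojlim_i\pi^i_s$, and the $H^1$ and $H^2$ comparisons established in that proof.
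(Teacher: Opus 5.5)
Your reduction is sound and agrees with the paper's: everything comes down to showing that $V_s=\Ker\bigl(H^2(M(1,s-1))\to H^2(C^*(X;\widehat{\ZZ}_p)\otimes_{\widehat{\ZZ}_p}k)\bigr)$ is exactly the image of the transgression on $\homo(\Gamma^s\pi/\Gamma^{s+1}\pi,k)$. Your step (1) and the inclusion of that image in $V_s$ are fine.

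The gap is the step you flag as the main obstacle, and the weight argument you sketch for it rests on a false premise. The lower central series only \emph{filters} $L_t=\Lie(\pi_t\widehat{\otimes}k)$; it does not grade it. For a vector-space splitting $\Gamma^jL_t=W_j\oplus\Gamma^{j+1}L_t$ you only have $[W_a,W_b]\subset\bigoplus_{c\geq a+b}W_c$. So $d$ of a dual generator of weight $c$ can contain monomials $x_ax_b$ with $a+b<c$. No choice of splitting makes $d$ weight-homogeneous unless $L_t$ is isomorphic to its associated graded. That fails for general nilpotent Lie algebras (e.g.\ characteristically nilpotent ones, which occur as Mal'cev Lie algebras of torsion-free nilpotent pro-$p$ groups). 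So ``$d(w_{\leq s})=y$ modulo terms that vanish'' is precisely the unproved claim, not a consequence of minimality.

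The missing idea is the same five-term exact sequence you use in step (1), applied to $1\to\Gamma^s\pi\to\pi\to\pi_{s-1}\to 1$ instead of to $\pi_s\twoheadrightarrow\pi_{s-1}$. Because $H^1_{\cont}(\pi_{s-1};\widehat{\ZZ}_p)\to H^1_{\cont}(\pi;\widehat{\ZZ}_p)$ is an isomorphism, it gives
$\Ker\bigl(H^2_{\cont}(\pi_{s-1};\widehat{\ZZ}_p)\to H^2_{\cont}(\pi;\widehat{\ZZ}_p)\bigr)\cong H^1_{\cont}(\Gamma^s\pi;\widehat{\ZZ}_p)^{\pi}=\homo_{\widehat{\ZZ}_p}(\Gamma^s\pi/\Gamma^{s+1}\pi,\widehat{\ZZ}_p)$.
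This is Proposition \ref{Prop: injectivity of colimit of lower central series into group}, and it is exactly how the paper concludes.

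Since $H^2(\pi_{s-1})\to H^2(\pi)$ factors through $H^2(\pi_s)$, the kernel to $H^2(\pi_s)$ lies in the kernel to $H^2(\pi)$. After $\otimes k$ both have the same finite dimension, so they coincide. Together with the injectivity of $H^2(\pi)\to H^2(X)$, this identifies $V_s$ directly, with no detour through $\varinjlim_t$.

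Equivalently, on the Lie side: suppose $y=dw$ with $w\in L_t^{\vee}=M(1,t)^1$ and $y$ pulled back from $L_t/\Gamma^sL_t$. Then $w([u,v])=\pm y(\bar u\wedge\bar v)=0$ for all $v\in\Gamma^sL_t$. So $w$ vanishes on $[L_t,\Gamma^sL_t]=\Gamma^{s+1}L_t$ and already lies in $M(1,s)^1$; its ``weight $>s$ part'' is zero, not merely negligible. With this replacement the rest of your plan goes through.

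A smaller point: Lemma \ref{Lem: Malcev completion preserves exactness of nilpotent groups} is stated for nilpotent groups and does not apply to $1\to\Gamma^{s+1}\pi\to\pi\to\pi_s\to1$. The identification $\Lie(\pi_s\widehat{\otimes}k)\cong\Lie(\pi\widehat{\otimes}k)/\Gamma^{s+1}$ is Proposition \ref{Prop: lower central series of Lie algebra and malcev completion of lower central series}.
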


\begin{proof}
We prove the corollary only for $X$ since the proof for $\{X_i\}$ is identical. By Proposition \ref{Prop: base change of Malcev completion}, it is enough to treat the case $k=\QQ_p$. Let $\pi=\pi_1(X)$. We prove by induction that the vector space $M_X(1,i-1)^1$ is naturally dual to $\Lie(\pi\widehat{\otimes} \QQ_p)/\Gamma^i$. For $i=1$, $M_X(1,0)^1$ and $\Lie(\pi\widehat{\otimes}\QQ_p)$ are both zero, so the assertion is immediate. Assume inductively that the assertion has been proved for all integers less than $i$. Proposition \ref{Prop: lower central series of Lie algebra and malcev completion of lower central series} identifies $\Lie(\pi\widehat{\otimes} \QQ_p)/\Gamma^i\Lie(\pi\widehat{\otimes} \QQ_p)$ with the Lie algebra of $(\pi/\Gamma^i\pi)\widehat{\otimes} \QQ_p$. Consider the exact sequences $0\rightarrow (\Gamma^{i-1}\pi/\Gamma^{i}\pi)\otimes_{\widehat{\ZZ}_p}\QQ_p\rightarrow \Lie(\pi\widehat{\otimes} \QQ_p)/\Gamma^i\Lie(\pi\widehat{\otimes} \QQ_p)\rightarrow \Lie(\pi\widehat{\otimes} \QQ_p)/\Gamma^{i-1}\Lie(\pi\widehat{\otimes} \QQ_p)\rightarrow 0$ and $0\rightarrow V_{i-1}\rightarrow M(1,i-1)^1\rightarrow M(1,i-2)^1\rightarrow 0$. By the inductive hypothesis, $\Lie(\pi\widehat{\otimes} \QQ_p)/\Gamma^{i-1}$ is dual to $M(1,i-2)^1$. It therefore remains to prove that $ V_{i-1}$ is dual to $(\Gamma^{i-1}\pi/\Gamma^{i}\pi)\otimes_{\widehat{\ZZ}_p}\QQ_p$. By the inductive construction of the $1$-minimal model, $V_{i-1}\cong \Ker(H^2(K(\pi/\Gamma^{i-1}\pi,1);\widehat{\ZZ}_p)\otimes_{\widehat{\ZZ}_p}\QQ_p\rightarrow H^2(K(\pi,1);\widehat{\ZZ}_p)\otimes_{\widehat{\ZZ}_p}\QQ_p)$. Proposition \ref{Prop: comparison between continuous and discrete cohomology for Serre good groups}, Lemma \ref{Lem: relation between continuous and discrete cohomology for nilpotent pro-p groups} and Lemma \ref{Lem: injectivity of colimit of lower central series into group} identify $V_{i-1}\cong \Ker(H^2_{\cont}(\pi/\Gamma^{i-1}\pi;\widehat{\ZZ}_p)\otimes_{\widehat{\ZZ}_p}\QQ_p\rightarrow H^2_{\cont}(\pi;\widehat{\ZZ}_p)\otimes_{\widehat{\ZZ}_p}\QQ_p)$.  By Proposition \ref{Prop: injectivity of colimit of lower central series into group}, $\Ker(H^2_{\cont}(\pi/\Gamma^{i-1}\pi;\widehat{\ZZ}_p)\otimes_{\widehat{\ZZ}_p}\QQ_p\rightarrow H^2_{\cont}(\pi;\widehat{\ZZ}_p)\otimes_{\widehat{\ZZ}_p}\QQ_p)\cong\homo_{\widehat{\ZZ}_p}(\Gamma^{i-1}\pi/\Gamma^{i}\pi,\QQ_p)$. This completes the proof.
\end{proof}

\section{Applications}\label{Section: Application}
In this section, we apply the $\QQ_p$-homotopy theory developed in Section \ref{Section: Q_p homotopy theory} to several problems in topology and algebraic geometry. We first characterize simply-connected $p$-complete spaces that arise as $p$-completions of finite CW complexes (Corollary \ref{Cor: integral lifting of simply-connected p-complete spaces}). We then construct a simply-connected example showing that finite-dimensional cohomology alone does not guarantee finite realization (Example \ref{Example: p-complete space which is not a finite type space}). We establish a corresponding realization criterion for \'etale homotopy types (Corollary \ref{Cor: finite CW complex for positive haract3eristic variety}) and prove the $\QQ_{\ell}$-formality of smooth proper varieties in characteristic $p$ (Theorem \ref{Thm: formality of varieties}). We also give an affirmative answer (Proposition \ref{Prop: continuity of Galois action on automorphism group of minimal model}) to Deligne's question (Question \ref{Question: Deligne}) in \cite{Deligne-Weil-II}. Finally, we study Galois representations on \'etale homotopy groups of $p$-adic varieties (Theorem \ref{Thm: de Rham represenations for etale homotopy groups} and Theorem \ref{Thm: crystalline represenations for etale homotopy groups}), and derive bounded-presentation results for the continuous Mal’cev completions of \'etale fundamental groups (Theorem \ref{Thm: fundamental group of algebraic varieties}).

\subsection{Finite CW Complex Realization of Nilpotent $p$-Complete Spaces}\label{subsection: finite CW complex realization of nilpotent spaces}\,

This subsection addresses the finite realization problem posed in Question \ref{Question: finite CW complex realization for p-complete space}: whether a $p$-complete space with finitely generated $\widehat{\ZZ}_{p}$-cohomology, vanishing in sufficiently high degrees, arises as the $p$-completion of a finite CW complex. We show that finite realization is governed not only by cohomological finiteness, but also by a rational descent condition on the $\QQ_{p}$-homotopy type.

\begin{lemma}\label{Lem: local spaces have an integral lifting}
Let $T$ be a set of prime numbers. Every nilpotent $T$-local finite type CW complex is the $T$-localization of a nilpotent finite type CW complex.  
\end{lemma}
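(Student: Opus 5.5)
We prove the lemma by induction up a principal refined Postnikov tower, building an integral finite type model stage by stage and realizing each $T$-local $k$-invariant by an integral one. Let $Y$ be a nilpotent $T$-local finite type CW complex, and choose a principal refined Postnikov tower $Y\rightarrow\cdots\rightarrow Y^n_s\rightarrow Y^n_{s-1}\rightarrow\cdots\rightarrow pt$ as in the proof of Theorem \ref{Thm: minimal model computes rational homotopy groups}. Each fiber is $K(A_{n,s},n)$, where $A_{n,s}$ is a finitely generated $\ZZ_{(T)}$-module; for $n=1$ we take $A_{1,s}=\Gamma^s\pi/\Gamma^{s+1}\pi$. Each stage is the homotopy fiber of a $k$-invariant $\kappa_{n,s}:Y^n_{s-1}\rightarrow K(A_{n,s},n+1)$. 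By induction we construct nilpotent finite type spaces $Z^n_s$, principal fibrations $K(B_{n,s},n)\rightarrow Z^n_s\rightarrow Z^n_{s-1}$ with $B_{n,s}$ finitely generated abelian and $B_{n,s}\otimes\ZZ_{(T)}\cong A_{n,s}$, and compatible $T$-localization maps $Z^n_s\rightarrow Y^n_s$.

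For the inductive step, first choose a finitely generated lattice $B_{n,s}\subset A_{n,s}$ with $B_{n,s}\otimes\ZZ_{(T)}=A_{n,s}$. This is possible because finitely generated $\ZZ_{(T)}$-modules are sums of copies of $\ZZ_{(T)}$ and $\ZZ/q^a$ with $q\in T$. We then need an integral class $\kappa'\in H^{n+1}(Z^n_{s-1};B_{n,s})$ whose image under
\[
H^{n+1}(Z^n_{s-1};B_{n,s})\rightarrow H^{n+1}(Z^n_{s-1};A_{n,s})\cong H^{n+1}(Y^n_{s-1};A_{n,s})
\]
is $\kappa_{n,s}$. The isomorphism holds because $Z^n_{s-1}\rightarrow Y^n_{s-1}$ is a $T$-localization, and $T$-localization induces isomorphisms on cohomology with $T$-local coefficients.

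Since $Z^n_{s-1}$ is of finite type, $H^{n+1}(Z^n_{s-1};B_{n,s})$ is finitely generated, and its tensor product with $\ZZ_{(T)}$ equals $H^{n+1}(Z^n_{s-1};A_{n,s})$ by the universal coefficient theorem and flatness of $\ZZ_{(T)}$. Hence $\kappa_{n,s}=\kappa''/m$ for some integral class $\kappa''$ and some integer $m$ prime to $T$. Replacing the lattice $B_{n,s}$ by $\frac{1}{m}B_{n,s}\subset A_{n,s}$, or equivalently composing with multiplication by $m$, which is an automorphism after localization, makes $\kappa_{n,s}$ the image of an integral class. Let $Z^n_s$ be the homotopy fiber of the integral $k$-invariant. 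The comparison map of fibration sequences $Z^n_s\rightarrow Y^n_s$ is a $T$-localization, as one checks on homotopy groups by the five lemma together with the fact that localization of nilpotent spaces is detected on homotopy groups.

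The main obstacle is the case $n=1$, where the fiber sequences are central extensions of groups and the lattice must be chosen compatibly with the group structure. There we instead use that every finitely generated $T$-local nilpotent group is the $T$-localization of a finitely generated nilpotent group, namely the subgroup generated by finitely many generators. Integrality of the degree-$2$ extension classes then follows by the same denominator-clearing argument, applied in $H^2(\pi/\Gamma^s;-)$.

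Two further points need care. First, the finite-type condition must also hold for the infinite tower: each $\pi_n$ stabilizes after finitely many refined stages, so $Z=\operatorname{holim} Z^n_s$ has finitely generated homotopy groups, and we take a CW approximation of $Z$. Second, the action of $\pi_1$ on the higher groups must be nilpotent. This is automatic from principality, since the integral $k$-invariants are defined on $Z^n_{s-1}$. Finally, $Z\rightarrow Y$ is a $T$-localization because it induces $\pi_*Z\otimes\ZZ_{(T)}\cong\pi_*Y$, with $\pi_1$ handled via group localization.
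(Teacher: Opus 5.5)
Your proposal is correct and follows essentially the same route as the paper: induction up a principal refined Postnikov tower, using $H^{n+1}(X_{(T)};G_{(T)})\cong H^{n+1}(X;G)\otimes_{\ZZ}\ZZ_{(T)}$ to write the $T$-local $k$-invariant as an integral class divided by an integer prime to $T$, absorbing that unit (you rescale the lattice, the paper pulls back along multiplication by $\frac{1}{k}$ on $K(G_{(T)},n+1)$), and taking the homotopy fiber of the resulting integral class. Your discussion of the limit, nilpotency and the $n=1$ stages fills in details the paper leaves implicit. The $n=1$ stages need no special treatment, however: the same denominator-clearing argument in $H^2$ applies verbatim, and the homotopy fiber automatically produces the required central extension.
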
 

\begin{proof}
We argue by induction on a refined principal Postnikov tower. It suffices to prove the following: for a nilpotent finite type CW complex $X$ and a principal fibration $K(G_{(T)},n)\xrightarrow{i} Y_{(T)}\xrightarrow{q} X_{(T)}$, where $G$ is a finitely generated abelian group, $G_{(T)}=G\otimes_{\ZZ}\ZZ_{(T)}$, and $X_{(T)}$  is a $T$-localization of $X$, there exists a nilpotent finite type CW complex $Y$, together with a map of principal fibrations as in the following diagram, such that the vertical arrows are $T$-localizations.
\[
\begin{tikzcd}
    K(G,n) \arrow[r] \arrow[d] & Y \arrow[r] \arrow[d] & X \arrow[d] \\
    K(G_{(T)},n) \arrow[r,"i"] & Y_{(T)} \arrow[r,"q"] & X_{(T)}
\end{tikzcd}
\]
Equivalently, it suffices to construct the following diagram, where $b$ is induced by the principal fibration $K(G_{(T)},n)\xrightarrow{i} Y_{(T)}\xrightarrow{q} X_{(T)}$, the upper vertical arrows are $T$-localizations, and the lower vertical arrows are homotopy equivalences.
\[
\begin{tikzcd}
    Y \arrow[r] \arrow[d] & X \arrow[r] \arrow[d] & K(G,n+1) \arrow[d] \\
     Y'_{(T)} \arrow[r,"q'"] \arrow[d,"\simeq"] & X'_{(T)} \arrow[r,"b'"] \arrow[d,"\simeq"] & K(G_{(T)},n+1) \arrow[d,"\simeq","\alpha"'] \\
     Y_{(T)} \arrow[r,"q"] & X_{(T)} \arrow[r,"b"]  & K(G_{(T)},n+1) 
\end{tikzcd}
\]
The natural isomorphism $H^{n+1}(X_{(T)};G_{(T)})\cong H^{n+1}(X;G)\otimes_{\ZZ} \ZZ_{(T)}$ implies that there exist a class $[b']\in H^{n+1}(X;G)$ and an integer $k$ not divisible by any prime in $T$ such that $k\cdot [b']=[b]\in H^{n+1}(X_{(T)};G_{(T)})$. Let $\alpha$ be the self-map of $K(G_{(T)},n+1)$ induced by multiplication by $\frac{1}{k}$ on $ G_{(T)}$. Define the middle row by pulling back the bottom fibration along $\alpha$. The localization map $X\rightarrow X_{(T)}$ induces a map $X\rightarrow X'_{(T)}$ which makes the upper-right square commute up to homotopy. Since $X'_{(T)}\rightarrow X_{(T)}$ is a homotopy equivalence, $X\rightarrow X'_{(T)}$ is also a $T$-localization. Define $Y$ to be the homotopy fiber of $b':X\rightarrow K(G,n+1)$. The resulting map $Y\rightarrow Y'_{(T)}$ has the required properties, completing the proof.
\end{proof}

\begin{theorem}\label{Cor: minimal model lifting is space lifting}
Let $X$ be a connected nilpotent $p$-complete finite type CW complex. Let $S^*(X;\widehat{\ZZ}_p)\otimes_{\widehat{\ZZ}_p} \QQ_p$ be the $E_{\infty}$-algebra of singular cochains, and let $C^*(X;\widehat{\ZZ}_p)\otimes_{\widehat{\ZZ}_p} \QQ_p$ denote its CDGA rectification. Let $\phi:M\rightarrow C^*(X;\widehat{\ZZ}_p)\otimes_{\widehat{\ZZ}_p} \QQ_p$ be a minimal model. Then the following statements are equivalent.
\begin{enumerate}[leftmargin=0.25in]
    \item $X$ is the $p$-completion of a nilpotent finite type CW complex.
    \item $M$ is isomorphic to the scalar extension of a minimal CDGA over $\QQ$.
    \item $S^*(X;\widehat{\ZZ}_p)\otimes_{\widehat{\ZZ}_p} \QQ_p$ is quasi-isomorphic to the scalar extension of an $E_{\infty}$-algebra over $\QQ$.
\end{enumerate}
\end{theorem}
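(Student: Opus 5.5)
We prove the cycle of implications (1)$\Rightarrow$(3)$\Rightarrow$(2)$\Rightarrow$(1).

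\textbf{(1)$\Rightarrow$(3).} Suppose $X\simeq Z^{\wedge}_p$ with $Z$ nilpotent of finite type. Since $Z$ has finite type, $H^*(Z^{\wedge}_p;\widehat{\ZZ}_p)\cong H^*(Z;\ZZ)\otimes_{\ZZ}\widehat{\ZZ}_p$. The map $Z\to Z^{\wedge}_p$ therefore induces a quasi-isomorphism of $E_\infty$-algebras
\[
S^*(Z;\QQ)\otimes_{\QQ}\QQ_p\simeq S^*(Z;\ZZ)\otimes_{\ZZ}\QQ_p\longrightarrow S^*(X;\widehat{\ZZ}_p)\otimes_{\widehat{\ZZ}_p}\QQ_p .
\]
To see this, first note that the natural map $S^*(Z;\ZZ)\otimes_{\ZZ}\widehat{\ZZ}_p\to S^*(Z;\widehat{\ZZ}_p)$ is a quasi-isomorphism because $Z$ has finite type. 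Then use the Milnor sequence of Lemma \ref{Lem: quasi-isomorphisms between homotopy limit and the pro space}(4) together with flatness of $\QQ_p$.

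\textbf{(3)$\Rightarrow$(2).} Rectify the $\QQ$-form $A$ of the $E_\infty$-algebra to a CDGA over $\QQ$ and take its minimal model $M_\QQ$. Rectification commutes with the flat base change $-\otimes_\QQ\QQ_p$, and the scalar extension of a minimal CDGA is again minimal. Hence $M_\QQ\otimes_\QQ\QQ_p$ is a minimal model of $C^*(X;\widehat{\ZZ}_p)\otimes_{\widehat{\ZZ}_p}\QQ_p$. By uniqueness of minimal models up to isomorphism (Theorem \ref{Fact: Lifting of minimal models}), $M\cong M_\QQ\otimes_\QQ\QQ_p$.

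\textbf{(2)$\Rightarrow$(1).} This is the substantive direction, and it rests on the arithmetic square. Suppose $M\cong M_\QQ\otimes_\QQ\QQ_p$. By Theorem \ref{Thm: minimal model computes rational homotopy groups}, $M$ has finitely many generators in each degree, and the degree-$1$ part is a finite-dimensional nilpotent Lie algebra; the same therefore holds for $M_\QQ$. Sullivan's realization functor then gives a nilpotent rational finite type space $W=\langle M_\QQ\rangle$.

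The rationalization $X_\QQ$ of the $p$-complete space $X$ has $\QQ_p$-vector space homotopy groups, and its $\QQ_p$-minimal model is $M$. The plan is to produce a map $W\to X_\QQ$ realizing the inclusion $M_\QQ\subset M$, inducing $\pi_*(W)\otimes_\QQ\QQ_p\cong\pi_*(X)\otimes_{\widehat{\ZZ}_p}\QQ_p$; that is, to exhibit $X_\QQ$ as the formal $p$-completion of $W$. To build this map, proceed by induction up the refined Postnikov towers of $W$ and $X_\QQ$, matching $k$-invariants through Corollary \ref{Cor: k-invariant computation}. The Lie algebra identification of Theorem \ref{Thm: minimal model computes rational homotopy groups}(3) handles the $\pi_1$ layers.

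Next form the homotopy pullback $Y=W\times_{X_\QQ}X$. This is Sullivan's arithmetic square (with only one prime). Standard fracture-square arguments show that $Y$ is nilpotent and $p$-local of finite type: its homotopy groups are pullbacks of the form $\pi_*(W)\times_{\pi_*(W)\otimes\QQ_p}\pi_*(X)$, which are finitely generated $\ZZ_{(p)}$-modules, and the non-abelian analogue holds in degree $1$. Moreover $Y^{\wedge}_p\simeq X$. Finally, Lemma \ref{Lem: local spaces have an integral lifting} with $T=\{p\}$ gives a nilpotent finite type CW complex $Z$ with $Z_{(p)}\simeq Y$, and hence $Z^{\wedge}_p\simeq X$.

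\textbf{Main obstacle.} The hardest step is constructing the map $W\to X_\QQ$, which identifies $X_\QQ$ with the $\QQ_p$-extension of $W$ at the space level. Space-level realization over $\QQ_p$ is delicate because $\QQ_p$ is uncountable and $X_\QQ$ is not of finite type over $\QQ$. I would first try to avoid realizing $M$ directly. Instead, induct on Postnikov stages and show at each stage that the $k$-invariant of $X_\QQ$ lies in the image of $H^{n+1}(W_{n-1};\pi_n W)\to H^{n+1}(X_{\QQ,n-1};\pi_n W\otimes\QQ_p)$. This should follow from Corollary \ref{Cor: k-invariant computation}, since the differential of $M_\QQ$ is defined over $\QQ$; this is exactly where hypothesis (2) is used.
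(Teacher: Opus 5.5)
Your proposal is correct and follows essentially the same route as the paper. The paper proves (1)$\Rightarrow$(2) by a cochain-level zig-zag, which is your (1)$\Rightarrow$(3)$\Rightarrow$(2) compressed, and for (2)$\Rightarrow$(1) it likewise realizes the $\QQ$-form as a rational space, builds the map to $X_{\QQ}$ by Postnikov induction matching $k$-invariants via Corollary \ref{Cor: k-invariant computation}, and then glues via the arithmetic square (your homotopy pullback) before applying Lemma \ref{Lem: local spaces have an integral lifting}. One small correction: the condition to verify at each stage is $f_{n-1}^*k_{X_\QQ}=\alpha_*k_W$ in $H^{n+1}(W_{n-1};\pi_n X\otimes_{\widehat{\ZZ}_p}\QQ_p)$, since the map you display runs the wrong way, and this identity is exactly what the paper checks.
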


\begin{proof}
The equivalence between items (2) and (3) is clear. 

\textbf{$(1)\Rightarrow (2)$.} Let $Y$ be a nilpotent finite type CW complex whose $p$-completion is $X$. Let $f:Y\rightarrow X$ be the $p$-completion map. Let $C^*(Y;\QQ)$ and $C^*(Y;\QQ_p)$ be the rectifications of the singular cochain complexes $S^*(Y;\QQ)$ and $S^*(Y;\widehat{\ZZ}_p)\otimes_{\widehat{\ZZ}_p}\QQ_p$, respectively. The natural $E_{\infty}$-algebra morphism $S^*(Y;\QQ)\rightarrow S^*(Y;\QQ_p)$ induces a CDGA morphism $i_1:C^*(Y;\QQ)\otimes_{\QQ} \QQ_p\rightarrow C^*(Y;\QQ_p)$. Analogously, there is a CDGA morphism $i_2:C^*(Y;\widehat{\ZZ}_p)\otimes_{\widehat{\ZZ}_p}\QQ_p\rightarrow C^*(Y;\QQ_p)$. Let $\phi_Y:M_Y\rightarrow C^*(Y;\QQ)$ be a minimal model over $\QQ$. These maps fit into the zig-zag of CDGA morphisms, in which both $i_1$ and $i_2$ are quasi-isomorphisms.
\[
\begin{tikzcd}
    M_Y\otimes_{\QQ} \QQ_p \arrow[r,"\phi_Y\otimes_{\QQ}\QQ_p"] & C^*(Y;\QQ)\otimes_{\QQ} \QQ_p \arrow[r,"i_1"] & C^*(Y;\QQ_p) \\
    M \arrow[r,"\phi"] & C^*(X;\widehat{\ZZ}_p)\otimes_{\widehat{\ZZ}_p} \QQ_p \arrow[r,"f^*\otimes \QQ"] &  C^*(Y;\widehat{\ZZ}_p)\otimes_{\widehat{\ZZ}_p} \QQ_p  \arrow[u,"i_2"]
\end{tikzcd}
\]
Theorem \ref{Fact: Lifting of minimal models} therefore gives an isomorphism of minimal CDGA's $M_Y\otimes_{\QQ}\QQ_p\rightarrow M$. 

\textbf{$(2)\Rightarrow (1)$.} 
Recall the following arithmetic square of a nilpotent $p$-local finite type space $Z$ in \cite{Sullivan-MIT-notes}*{p.~87}, where $a_p$, $b$, $c$ and $d_p$ are $p$-completion, rationalization, rationalization and formal $p$-completion, respectively.
\[
\begin{tikzcd}
    Z_{(p)} \arrow[r,"a_p"] \arrow[d,"b"] & Z_p \arrow[d,"c"] \\
    Z_{\QQ} \arrow[r,"d_{p}"] & Z_{\QQ_{p}}
\end{tikzcd}
\]
By Lemma \ref{Lem: local spaces have an integral lifting} and the above arithmetic square, it suffices to construct a nilpotent rational finite type CW complex $Y_{\QQ}$ together with a map $f:Y_{\QQ}\rightarrow X_{\QQ}$ such that $f_*:\pi_*(Y_{\QQ})\otimes_{\QQ} \QQ_p\rightarrow \pi_*(X_{\QQ})$ is an isomorphism.

Let $N$ be a minimal CDGA over $\QQ$, together with a CDGA isomorphism $\psi:M\rightarrow N\otimes_{\QQ} \QQ_p$. By the rational homotopy theory (\cite{Deligne-Griffiths-Morgan-Sullivan-formality}*{p.~35, (2)}), there exists a rational nilpotent finite type CW complex $Y_{\QQ}$ together with a quasi-isomorphism $\phi_Y:N\rightarrow C^*(Y_{\QQ};\QQ)$.

Choose an exhaustive tower $\QQ=N(1,0)\subset N(1,1)\subset ... \subset N(1,s_1)=N(2,0)\subset N(2,1)\subset ...$ of elementary extensions of minimal subalgebras of $N$. $\psi$ induces a corresponding exhaustive tower of elementary extensions $\QQ_p=M(1,0)\subset M(1,1)\subset ...$ of $M$. The proof of Theorem \ref{Thm: minimal model computes rational homotopy groups} and \cite{MGriffiths-Morgan-rational-homotopy}*{Corollary 12.3} identify these towers of elementary extensions with refined principal Postnikov towers $...\rightarrow Y^1_{\QQ,2}\rightarrow Y^1_{\QQ,1}=K(G_{1,1},1)\rightarrow Y^1_0=pt$ and $...\rightarrow X^1_{\QQ,2} \rightarrow X^1_{\QQ,1}=K(H_{1,1},1)\rightarrow X^1_0=pt$ of $Y_{\QQ}$ and $X_{\QQ}$ respectively. At each stage,  $X^n_{\QQ,s}\rightarrow X^n_{\QQ,s-1}$ (or $Y^n_{\QQ,s}\rightarrow Y^n_{\QQ,s-1}$, resp.) is a fibration with fiber $K(H_{n,s},n)$ (or $K(G_{n,s},n)$, resp.), and each $H_{n,s}$ (or $G_{n,s}$, resp.) is a finite-dimensional vector space over $\QQ_p$ (or $\QQ$, resp.) dual to $I^n(M(n,s))/I^n(M(n,s-1))$ ($I^n(N(n,s))/I^n(N(n,s-1))$, resp.).

The isomorphism $M(1,1)\rightarrow N(1,1)\otimes_{\QQ} \QQ_p$ induces $G_{1,1}\hookrightarrow G_{1,1}\otimes_{\QQ} \QQ_p\xrightarrow{\sim} H_{1,1}$ and hence a map $K(G_{1,1},1)\rightarrow K(H_{1,1},1)$. Assume inductively that we have constructed a map $f_{n,s}:Y^n_{\QQ,s}\rightarrow X^n_{\QQ,s}$ such that, for every $q$, the induced map $\pi_q(Y^n_{\QQ,s})\otimes_{\QQ}\QQ_p\rightarrow \pi_q(X^n_{\QQ,s})$ is an isomorphism. The isomorphism of elementary extensions $(M(n,s)\subset M(n,s+1))\cong (N(n,s)\subset N(n,s+1))\otimes_{\QQ}\QQ_p$ induces an isomorphism $\alpha_{n,s+1}: G_{n,s+1}\otimes_{\QQ} \QQ_p\rightarrow H_{n,s+1}$. To extend $f_{n,s}$ to the next stage, it suffices to prove that the following diagram commutes up to homotopy, where $k^{\QQ}_Y$ and $k^{\QQ}_X$ are $k$-invariants of $Y_{\QQ}$ and $X_{\QQ}$, respectively.
\[
\begin{tikzcd}
 Y^n_{\QQ,s} \arrow[r,"f_{n,s}"] \arrow[d,"k_Y^{\QQ}"] & X^n_{\QQ,s} \arrow[d,"k_X^{\QQ}"] \\
K(G_{n,s+1},n+1) \arrow[r,"\alpha_{n,s+1}"] & K(H_{n,s+1},n+1)   
\end{tikzcd}
\]
Equivalently, it is enough to show that $[k^{\QQ}_X\circ f_{n,s}]=[\alpha_{n,s+1}\circ k^{\QQ}_Y]\in H^{n+1}(Y^n_{\QQ,s};H_{n,s+1})$.   The fibration  $X^n_{\QQ,s+1}\rightarrow X^n_{\QQ,s}\xrightarrow{k^{\QQ}_X}K(H_{n,s+1},n+1)$ is obtained by rationalizing a stage $X^n_{s'+1}\rightarrow X^n_{s'}\xrightarrow{k_{X}} K(A_{s'},n+1)$ in a refined principal Postnikov tower of $X$, where $A_{n,s'+1}$ is a finitely generated $\widehat{\ZZ}_p$-module and $H_{n,s+1}=A_{n,s'+1}\otimes_{\widehat{\ZZ}_p}\QQ_p$. The indices $s$ and $s'$ need not agree, since torsion stages disappear after tensoring with $\QQ_p$. By Lemma \ref{Lem: local spaces have an integral lifting}, each $Y^n_{\QQ,s}$ is the rationalization of a nilpotent finite type CW complex $Y^n_s$. Under the preceding identifications, there is a natural isomorphism $H^{n+1}(Y^n_{\QQ,s};H_{n,s+1})\cong (H^{n+1}(Y^n_s;\widehat{\ZZ}_p)\otimes_{\widehat{\ZZ}_p} \QQ_p)\otimes_{\widehat{\QQ}_p} (A_{n,s'+1}\otimes_{\widehat{\ZZ}_p} \QQ_p)$.

Choose a basis $\{x_1,...,x_r\}$ of $H^{n}(K(A_{n,s'+1},n);\widehat{\ZZ}_p)\otimes_{\widehat{\ZZ}_p} \QQ_p\cong \homo_{\widehat{\ZZ}_p}(A_{n,s'+1},\QQ_p)=\homo_{\QQ_p}(H_{n,s+1},\QQ_p)$ (Lemma \ref{Lemma: Computation of Eilenberg-Maclane spaces}). Let $\{x^{\vee}_1,...,x^{\vee}_r\}$ denote the dual basis of $H_{n,s+1}=A_{n,s'+1}\otimes_{\widehat{\ZZ}}\QQ_p$. Let $y_1,...,y_r\in H^{n+1}(X^n_{s'};\widehat{\ZZ}_p)\otimes_{\widehat{\ZZ}_p} \QQ_p$ be the transgressions of $x_1,...,x_r$. By Corollary \ref{Cor: Extension of Minimal Model for Fibrations}, $\{y_1,...,y_r\}$ is a basis of $I^{n}(M(n,s+1))/I^{n}(M(n,s))$. Corollary \ref{Cor: k-invariant computation} therefore identifies $[k^{\QQ}_X\circ f_{n,s}]\in (H^{n+1}(Y^n_s;\widehat{\ZZ}_p)\otimes_{\widehat{\ZZ}_p} \QQ_p)\otimes_{\widehat{\QQ}_p} (A_{n,s'+1}\otimes_{\widehat{\ZZ}_p} \QQ_p)$ with $(f_{n,s})^*y_1\otimes x^{\vee}_1+... + (f_{n,s})^*y_r\otimes x^{\vee}_r$.

On the other hand, $\{\psi(x_1),...,\psi(x_r)\}$ is a basis of $(I^{n}N(n,s+1)/I^{n}N(n,s))\otimes_{\QQ} \QQ_p$. Considering the quasi-isomorphisms $N(n,s+1)\rightarrow  C^*(Y^n_{\QQ,s+1};\QQ)\rightarrow C^*(Y^n_{s+1};\QQ)$, $(f_{n,s})^*x_1,...,(f_{n,s})^*x_r$ are the transgressions of a basis of $\homo_{\QQ}(G_{n,s+1},\QQ_p)=H^{n+1}(K(G_{n,s+1},n+1);\QQ_p)$. Under the identification $\homo_{\QQ}(G_{n,s+1},\QQ_p)=H^{n+1}(K(G_{n,s+1},n+1);\QQ_p)$, this basis is dual to the basis $\{(\alpha_{n,s+1})^{-1}(x^{\vee}_1),...,(\alpha_{n,s+1})^{-1}(x^{\vee}_r)\}$ of $G_{n,s+1}\otimes_{\QQ} \QQ_p$. By \cite{MGriffiths-Morgan-rational-homotopy}*{Corollary 12.3}, $k^{\QQ}_Y\otimes_{\QQ} \QQ_p\in H^{n+1}(Y^n_{\QQ,s};G_{n,s+1}\otimes_{\QQ} \QQ_p)\cong (H^{n+1}(Y^n_{s};\widehat{\ZZ}_p)\otimes_{\widehat{\ZZ}_p} \QQ_p)\otimes_{\QQ_p} (G_{n,s+1}\otimes_{\QQ} \QQ_p)$ is $(f_{n,s})^*x_1\otimes (\alpha_{n,s+1})^{-1}(x^{\vee}_1)+...+ (f_{n,s})^*x_r\otimes (\alpha_{n,s+1})^{-1}(x^{\vee}_r)$. Then $[\alpha_{n,s+1}\circ k^{\QQ}_y]\in (H^{n+1}(Y^n_s;\widehat{\ZZ}_p)\otimes_{\widehat{\ZZ}_p} \QQ_p)\otimes_{\widehat{\QQ}_p} (A_{n,s'+1}\otimes_{\widehat{\ZZ}_p} \QQ_p)$ is $(f_{n,s})^*y_1\otimes x^{\vee}_1+... + (f_{n,s})^*y_r\otimes x^{\vee}_r$. This completes the proof.
\end{proof}

\begin{lemma}\label{Lem: sullivan's p-adic completion of finite CW complex is good}
Let $Z$ be a CW complex with finitely many cells in each dimension, and let $\alpha_p:Z\rightarrow Z^{\wedge,S}_{p}$ be Sullivan's $p$-completion. Then $\alpha_p$ induces isomorphisms $H^*(Z^{\wedge,S}_{p};\ZZ/p^a)\rightarrow H^*(Z;\ZZ/p^a)$ and $H^*(Z^{\wedge,S}_{p};\widehat{\ZZ}_p)\rightarrow H^*(Z;\widehat{\ZZ}_p)$.
\end{lemma}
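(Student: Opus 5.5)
The plan is to identify Sullivan's $p$-completion with the homotopy limit of the Artin–Mazur pro-$p$ completion, and then reduce the statement to the comparison results of Lemma \ref{Lem: quasi-isomorphisms between homotopy limit and the pro space}.

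Let $Z\rightarrow \{Y_j\}$ be Artin–Mazur's pro-$p$ completion (\cite{Artin-Mazur-etale-homotopy}*{Theorem 3.4}). Each $Y_j$ is $p$-finite, and $Z^{\wedge,S}_p$ is the homotopy limit of $\{Y_j\}$ (\cite{Sullivan-MIT-notes}*{p.~55, Corollary}). Thus $\alpha_p$ factors as the zig-zag $Z\rightarrow \{Y_j\}\rightarrow\{(RY)_j\}\leftarrow Z^{\wedge,S}_p$.

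\textbf{Step 1: $\ZZ/p$ coefficients.} The defining property of the Artin–Mazur pro-$p$ completion is that $Z\rightarrow\{Y_j\}$ induces an isomorphism $\varinjlim_j H^*(Y_j;\ZZ/p)\cong H^*(Z;\ZZ/p)$ (\cite{Artin-Mazur-etale-homotopy}*{Theorem 4.3}, using that the $p$-profinite completion is built to be $\ZZ/p$-cohomologically correct). A five-lemma induction along the Bockstein sequences $0\to\ZZ/p\to\ZZ/p^{a}\to\ZZ/p^{a-1}\to0$ then extends this to $\ZZ/p^a$ coefficients.

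\textbf{Step 2: passing to the homotopy limit $Z^{\wedge,S}_p$.} I would show that $\varinjlim_j H^*(Y_j;\ZZ/p^a)\rightarrow H^*(Z^{\wedge,S}_p;\ZZ/p^a)$ is an isomorphism. This is the content of Lemma \ref{Lem: quasi-isomorphisms between homotopy limit and the pro space}(3), which requires that $Z^{\wedge,S}_p$ be $p$-complete, i.e.\ that $Z^{\wedge,S}_p\rightarrow(Z^{\wedge,S}_p)^{\wedge,S}_p$ be an equivalence. I expect this to be the \textbf{main obstacle}, since $Z$ need not be nilpotent and Lemma \ref{Lem: nilpotent $p$-complete finite types are good} does not apply directly.

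The finiteness hypothesis enters here: because $Z$ has finitely many cells in each dimension, $H^q(Z;\ZZ/p)$ is finite for every $q$. By Step 1, $\varinjlim_j H^q(Y_j;\ZZ/p)$ is then finite for every $q$. With this finiteness, I would invoke \cite{Morel-p-adic-spaces}*{Theorem 2.4.1} (as in the remark following Lemma \ref{Lem: quasi-isomorphisms between homotopy limit and the pro space}) to conclude that $\{Y_j\}\rightarrow\{(RY)_j\}\leftarrow Z^{\wedge,S}_p$ are $p$-adic weak equivalences. In particular they induce isomorphisms on $\ZZ/p$-cohomology, and hence on $\ZZ/p^a$-cohomology by Bockstein again.

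If Morel's theorem is not directly applicable, the fallback is Sullivan's idempotence of $p$-completion for spaces with finite mod $p$ cohomology in each degree (\cite{Sullivan-MIT-notes}*{Chapter III}). That idempotence would give the goodness needed to invoke Lemma \ref{Lem: quasi-isomorphisms between homotopy limit and the pro space}(3) directly.

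\textbf{Step 3: $\widehat{\ZZ}_p$ coefficients.} Since all groups $H^q(-;\ZZ/p^a)$ involved are finite, their $\varprojlim^1_a$ terms vanish. The Milnor exact sequence, as in Lemma \ref{Lem: quasi-isomorphisms between homotopy limit and the pro space}(4), gives $H^*(-;\widehat{\ZZ}_p)\cong\varprojlim_a H^*(-;\ZZ/p^a)$ both for $Z$ and for $Z^{\wedge,S}_p$. Naturality of this identification in $\alpha_p$, together with Step 2, yields the $\widehat{\ZZ}_p$-isomorphism.
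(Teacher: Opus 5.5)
\paragraph{The gap is Step~2.} Steps~1 and~3 are fine, and Step~3 is exactly how the paper finishes. Step~2 does not work as written. Morel's Theorem~2.4.1, as used throughout this paper, is a detection criterion: a map whose induced map on (colimit) mod~$p$ cohomology is an isomorphism is a $p$-adic weak equivalence. To conclude that $\{(RY)_j\}\leftarrow Z^{\wedge,S}_p$ is a $p$-adic weak equivalence, you would therefore already need $\varinjlim_j H^*(Y_j;\ZZ/p)\rightarrow H^*(Z^{\wedge,S}_p;\ZZ/p)$ to be an isomorphism, which is the statement you are proving. This is why the remark after Lemma~\ref{Lem: quasi-isomorphisms between homotopy limit and the pro space} assumes that $X\rightarrow X^{\wedge,S}_p$ is a homotopy equivalence before invoking Morel.

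Finiteness of $\varinjlim_j H^q(Y_j;\ZZ/p)$ cannot replace that hypothesis. Take $Z=K(\pi,1)$ with $\pi$ Serre $p$-good, e.g.\ free of finite rank, and write $\widehat{\pi}=\varprojlim_j\pi_1(Y_j)$. The colimit computes the continuous cohomology of $\widehat{\pi}$ (Proposition~\ref{Fact: relation between continuous cohomology and usual cohomology for the system}). The homotopy limit, by contrast, is $K(\widehat{\pi},1)$ with $\widehat{\pi}$ regarded as a discrete group. So your Step~2 is precisely a Serre $p$-goodness statement for the abstract group $\widehat{\pi}$. That is a property of the fundamental group, not a consequence of cohomological finiteness, and the paper establishes it only in the nilpotent case (Lemma~\ref{Lem: relation between continuous and discrete cohomology for nilpotent pro-p groups}).

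\paragraph{Where the content actually lies.} All of the content sits in your fallback, an external theorem of Sullivan. The paper uses it in its most direct form: \cite{Sullivan-1970-unpublished-notes}*{Proposition 3.8} gives $H^*(Z^{\wedge,S}_p;\ZZ/p^a)\cong H^*(Z;\ZZ/p^a)$ outright. After that only the Milnor sequence is needed, and finiteness is not even required there: once the $\ZZ/p^a$-maps are isomorphisms for all $a$, the five lemma on the two Milnor sequences suffices. With this citation, your Artin--Mazur detour and the appeal to Lemma~\ref{Lem: quasi-isomorphisms between homotopy limit and the pro space}(3) become unnecessary. Routing the argument through idempotence only adds a step, since idempotence is normally deduced from this cohomology isomorphism. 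Whichever form you cite, check its hypotheses on $\pi_1(Z)$ against the non-nilpotent case you yourself flagged, because, as explained above, that is exactly where the difficulty lives.
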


\begin{proof}
The isomorphism $H^*(Z^{\wedge,S}_{p};\ZZ/p^a)\cong H^*(Z;\ZZ/p^a)$ follows from \cite{Sullivan-1970-unpublished-notes}*{Proposition 3.8}. Passing to the inverse limit over $a$ and applying the Milnor exact sequence gives the isomorphism $H^*(Z^{\wedge,S}_{p};\widehat{\ZZ}_p)\cong H^*(Z;\widehat{\ZZ}_p)$.
\end{proof}

\begin{corollary}\label{Cor: integral lifting of simply-connected p-complete spaces}
Let $X$ be a simply-connected, $p$-complete finite type CW complex. Let $S^*(X;\widehat{\ZZ}_p)\otimes_{\widehat{\ZZ}_p} \QQ_p$ be the $E_{\infty}$-algebra of singular cochains, and let $C^*(X;\widehat{\ZZ}_p)\otimes_{\widehat{\ZZ}_p} \QQ_p$ be its CDGA rectification. Let $M\rightarrow C^*(X;\widehat{\ZZ}_p)\otimes_{\widehat{\ZZ}_p}\QQ_p$ be a minimal model. 
\begin{enumerate}[leftmargin=0.25in]
    \item  The following statements are equivalent.
    \begin{enumerate}[leftmargin=0.27in]
        \item $X$ is Sullivan's $p$-completion of a CW complex with finitely many cells in each dimension.
        \item $M$ is isomorphic to the scalar extension of a minimal CDGA over $\QQ$.
        \item $S^*(X;\widehat{\ZZ}_p)\otimes_{\widehat{\ZZ}_p} \QQ_p$ is quasi-isomorphic to the scalar extension of an $E_{\infty}$-algebra over $\QQ$. 
    \end{enumerate}
    \item The following statements are equivalent.
    \begin{enumerate}[leftmargin=0.27in]
        \item $X$ is Sullivan's $p$-completion of a finite CW complex.
        \item $M$ is isomorphic to the scalar extension of a minimal CDGA over $\QQ$, and $H^{q}(X;\ZZ/p)=0$ for sufficiently large $q$'s.
        \item $S^*(X;\widehat{\ZZ}_p)\otimes_{\widehat{\ZZ}_p} \QQ_p$ is quasi-isomorphic to the scalar extension of an $E_{\infty}$-algebra over $\QQ$, and $H^{q}(X;\ZZ/p)=0$ for sufficiently large $q$'s.
    \end{enumerate}
\end{enumerate}
\end{corollary}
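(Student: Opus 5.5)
Part (1) should follow almost directly from Theorem \ref{Cor: minimal model lifting is space lifting}. Part (2) will come from (1) together with a cell-structure argument that uses the vanishing of mod $p$ cohomology. In both parts, (b)$\Leftrightarrow$(c) holds for the same reason as in Theorem \ref{Cor: minimal model lifting is space lifting}: minimal models are determined by, and determine, quasi-isomorphism classes of $E_\infty$-algebras over fields of characteristic zero, via rectification. So the work is in (a)$\Leftrightarrow$(b).

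For (1)(a)$\Rightarrow$(b), let $Z$ have finitely many cells in each dimension and satisfy $Z^{\wedge,S}_p\simeq X$. Since $X$ is simply connected, Sullivan's completion kills $\pi_1(Z)$ at $p$. The most convenient route avoids passing to a nilpotent replacement of $Z$. Lemma \ref{Lem: sullivan's p-adic completion of finite CW complex is good} shows that $\alpha_p$ induces an isomorphism on $\widehat{\ZZ}_p$-cohomology, and compatibility with cup products and the $E_\infty$-structure gives a quasi-isomorphism $S^*(X;\widehat{\ZZ}_p)\otimes\QQ_p\simeq S^*(Z;\widehat{\ZZ}_p)\otimes\QQ_p$. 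Because $Z$ has finitely many cells in each dimension, the cellular cochain complex computes both theories, so $S^*(Z;\widehat{\ZZ}_p)\otimes_{\widehat{\ZZ}_p}\QQ_p\simeq S^*(Z;\QQ)\otimes_{\QQ}\QQ_p$ as $E_\infty$-algebras. The comparison is the natural map $S^*(Z;\QQ)\otimes\QQ_p\to S^*(Z;\QQ_p)\leftarrow S^*(Z;\widehat{\ZZ}_p)\otimes\QQ_p$, both legs of which are quasi-isomorphisms by finiteness of cells, exactly as $i_1,i_2$ in the proof of Theorem \ref{Cor: minimal model lifting is space lifting}. This gives (c), and hence (b).

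For (1)(b)$\Rightarrow$(a), I apply Theorem \ref{Cor: minimal model lifting is space lifting}: $X$ is the $p$-completion of a nilpotent finite type CW complex $Y$, and $Y$ may be taken simply connected. I expect this can be arranged in the construction there, because $M$ has no degree-one generators. Alternatively, the fundamental group of $Y$ is finite of order prime to $p$, and passing to the universal cover does not change the $p$-completion. A simply-connected finite type space has a CW model with finitely many cells in each dimension, by the standard homology-decomposition/minimal cell structure argument, and Sullivan's completion agrees with the Bousfield--Kan completion in the nilpotent finite type case.

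For (2), the implication (a)$\Rightarrow$(b) follows from (1) together with Lemma \ref{Lem: sullivan's p-adic completion of finite CW complex is good}: $H^q(X;\ZZ/p)\cong H^q(Z;\ZZ/p)=0$ above $\dim Z$. For (b)$\Rightarrow$(a), part (1) gives a simply-connected $Z$ of finite type with $Z^{\wedge}_p\simeq X$, and $H^q(Z;\ZZ/p)=0$ for $q>N$. This is \textbf{the main obstacle}, because $Z$ itself may have infinite rational or prime-to-$p$ homology in high degrees. My first attempt is to localize. By Lemma \ref{Lem: local spaces have an integral lifting} and the arithmetic square, the $p$-local space $Z_{(p)}$ has $H_*(Z_{(p)};\ZZ_{(p)})$ finitely generated, and by the universal coefficient theorem it is free above degree $N+1$ and vanishes mod $p$. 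Hence these groups vanish above $N+1$, since a finitely generated $\ZZ_{(p)}$-module $A$ with $A/p=0$ and no $p$-torsion is zero. Next, I construct a finite complex $W$ with $W_{(p)}\simeq Z_{(p)}$ by building a minimal $p$-local cell structure and realizing each $\ZZ_{(p)}$-cell attachment by an integral attaching map, clearing prime-to-$p$ denominators. This is the Wall-type finiteness step for simply-connected $p$-local spaces with finitely generated homology; I expect it to produce a finite complex of dimension at most $N+2$. Then $W^{\wedge}_p\simeq Z^{\wedge}_p\simeq X$, which gives (a).
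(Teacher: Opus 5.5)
Your part (1) and (2)(a)$\Rightarrow$(2)(b) agree with the paper. Comparing through $S^*(Z;\QQ_p)$ rather than $S^*(Z;\ZZ)\otimes_{\ZZ}\QQ_p$ changes nothing. Your universal-cover fallback for simple connectivity is valid, though the paper simply uses the simply-connected complex $Z'$ already produced in the proof of Theorem~\ref{Cor: minimal model lifting is space lifting}.

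The genuine divergence is in (2)(b)$\Rightarrow$(2)(a), and the paper's argument there is shorter than yours.
\begin{itemize}
\item \textbf{The paper's route.} It never localizes. It takes an integral homology decomposition $Y_2\subset Y_3\subset\cdots$ of $Z'$ (Hatcher, Theorem 4H.3) and keeps the stage $Y_N$. This stage has $H_i(Y_N;\ZZ)\cong H_i(Z';\ZZ)$ for $i\le N$ and no homology above $N$. Since $H_q(Z';\ZZ/p)=0$ for $q>N$, universal coefficients show that $H_N(Z';\ZZ)$ has no $p$-torsion and that $H_q(Z';\ZZ)\otimes\ZZ/p=0$ for $q>N$. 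Hence $Y_N\to Z'$ is a mod $p$ homology isomorphism. Then $X$ is the $p$-completion of $Y_N$, which is homotopy equivalent to a finite complex.
\item \textbf{Your route.} You pass to $Z_{(p)}$ and then need a separate realization step: a finite $W$ with $W_{(p)}\simeq Z_{(p)}$, built by clearing prime-to-$p$ denominators in a $p$-local cell structure. That step is sound. Attaching along $\alpha$ rather than $\alpha/m$, with $m$ prime to $p$, leaves the $p$-local cofiber unchanged. Your homological bounds also make a minimal $p$-local cell structure finite. But you only sketch it, and it rebuilds by hand what integral truncation gives at once, since the paper's $Y_N\to Z'$ is itself a $\ZZ_{(p)}$-homology equivalence.
\item \textbf{Misdiagnosed obstacle.} $Z$ cannot carry rational homology above degree $N$. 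A free summand of the finitely generated group $H_q(Z;\ZZ)$ would survive in $H_q(Z;\ZZ/p)$. Your own computation gives $H_q(Z;\ZZ_{(p)})=0$ for all $q>N$, not merely $q>N+1$. Finite generation here is just $H_*(Z;\ZZ)\otimes\ZZ_{(p)}$, so neither the arithmetic square nor Lemma~\ref{Lem: local spaces have an integral lifting} is needed.
\end{itemize}
Only prime-to-$p$ torsion sits above degree $N$, and the natural move is to discard it by truncating integrally.
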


\begin{proof}
The equivalences $(1)(b)\Leftrightarrow (1)(c)$ and $(2)(b)\Leftrightarrow (2)(c)$ are clear.

For ``$(1)(b)\Rightarrow (1)(a)$'', the proof of Theorem \ref{Cor: minimal model lifting is space lifting} constructs a simply-connected finite-type CW complex $Z'$ whose Sullivan's $p$-completion is $X$. By \cite{Weinberger-stratified-spaces}*{p.~19, Proposition}, $Z'$ is homotopy equivalent to a CW complex with finitely many cells in each dimension. For ``$(1)(a)\Rightarrow (1)(c)$'', let $Z$ be a CW complex with finitely many cells in each dimension together with a $p$-completion map $Z\rightarrow X$. There are natural quasi-isomorphisms $S^*(Z;\widehat{\ZZ}_p)\otimes_{\widehat{\ZZ}_p}\QQ_p\leftarrow S^*(Z;\ZZ)\otimes_{\ZZ} \QQ_p \rightarrow S^*(Z;\QQ)\otimes_{\QQ}\QQ_p$. By Lemma \ref{Lem: sullivan's p-adic completion of finite CW complex is good}, $S^*(X;\widehat{\ZZ}_p)\rightarrow S^*(Z;\widehat{\ZZ}_p)$ is a quasi-isomorphism. These quasi-isomorphisms show that $S^*(X;\widehat{\ZZ}_p)\otimes_{\widehat{\ZZ}_p}\QQ_p$ and $ S^*(Z;\QQ)\otimes_{\QQ}\QQ_p$ are quasi-isomorphic.

The same argument for ``$(1)(a)\Rightarrow (1)(c)$'' also shows ``$(2)(a)\Rightarrow (2)(c)$''. For ``$(2)(b)\Rightarrow (2)(a)$'', the proof of Theorem \ref{Cor: minimal model lifting is space lifting} constructs a simply-connected finite type CW complex $Z'$ whose Sullivan's $p$-completion is $X$. By the condition of item (2)(b), we may assume that there exists an integer $N$ such that $H_i(X;\ZZ/p)=0$ for $i>N$. \cite{Hatcher-algebraic-topology}*{Theorem 4H.3} gives us a homology decomposition $f:Y\rightarrow Z'$, where $f$ is a homotopy equivalence and $Y$ has an exhaustive tower of simply-connected subcomplexes $Y_2\subset Y_3\subset ...$ such that, for every $n$, $H_i(Y_n;\ZZ)\cong H_i(Z';\ZZ)$ for $i\leq n$ and $H_i(Y_n;\ZZ)=0$ for $i>n$. Then the restriction $f_N:Y_N\rightarrow Z'$ of $f$ induces an isomorphism $H_i(Y_N;\ZZ/p)\rightarrow H_i(Z';\ZZ/p)$ for all $i$. Equivalently, $f_N$ is a $p$-adic weak equivalence. Thus, $X$ is Sullivan's $p$-completion of $Y_N$. \cite{Weinberger-stratified-spaces}*{p.~19, Proposition} proves that $Y_N$ is homotopy equivalent to a finite CW complex.
\end{proof}

\begin{theorem}\label{Thm: realization of a minimal model}
Let $M$ be a minimal CDGA over $\QQ_p$ such that every $M^n$ is finite-dimensional. Then there exists a nilpotent $p$-complete finite type CW complex $X$ and a CDGA quasi-isomorphism $M\rightarrow C^*(X;\widehat{\ZZ}_p)\otimes_{\widehat{\ZZ}_p} \QQ_p$. Moreover, if $M$ is simply-connected, then $X$ may also be chosen to be simply-connected.
\end{theorem}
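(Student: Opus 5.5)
We build $X$ stage by stage along a tower of elementary extensions of $M$, realizing each stage by a principal fibration with fiber $K(\widehat{\ZZ}_p^{r},n)$. The $k$-invariants are chosen so that Corollary \ref{Cor: Extension of Minimal Model for Fibrations} and Corollary \ref{Cor: k-invariant computation} reproduce the given differentials.

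\textbf{The tower.} Since each $M^n$ is finite-dimensional, $M$ admits an exhaustive tower
\[
\QQ_p=M(1,0)\subset M(1,1)\subset\cdots\subset M(1,t_1)=M(2,0)\subset M(2,1)\subset\cdots
\]
in which each step is an elementary extension by a finite-dimensional space $V_{n,s}$ concentrated in degree $n$. Degree-$1$ generators are ordered by the nilpotence filtration; when $M$ is simply connected there are no degree-$1$ generators.

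\textbf{Inductive construction.} Suppose we have built a nilpotent $p$-complete finite type space $X^n_s$ and a quasi-isomorphism $\phi_{n,s}:M(n,s)\to C^*(X^n_s;\widehat{\ZZ}_p)\otimes_{\widehat{\ZZ}_p}\QQ_p$. Choose a basis $x_1,\dots,x_r$ of $V_{n,s+1}$. Then $dx_i\in M(n,s)^{n+1}$ are cocycles, and $\phi_{n,s}$ sends them to classes $y_i\in H^{n+1}(X^n_s;\widehat{\ZZ}_p)\otimes_{\widehat{\ZZ}_p}\QQ_p$.

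Since $H^{n+1}(X^n_s;\widehat{\ZZ}_p)$ is finitely generated (finite type) and $\QQ_p=\widehat{\ZZ}_p[1/p]$, there is an $N$ such that every $p^Ny_i$ lifts to an integral class $\tilde y_i\in H^{n+1}(X^n_s;\widehat{\ZZ}_p)$. Rescaling the basis $x_i\mapsto p^Nx_i$ does not change $M$, so we may assume $y_i$ is the image of $\tilde y_i$.

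Let $G=\widehat{\ZZ}_p^{r}$ with basis $x_1^\vee,\dots,x_r^\vee$. Define the $k$-invariant
\[
K=\sum_i \tilde y_i\otimes x_i^\vee\in H^{n+1}(X^n_s;G),
\]
and let $X^n_{s+1}$ be the homotopy fiber of $K:X^n_s\to K(G,n+1)$. This gives a principal fibration with fiber $K(G,n)$, so $\pi_1(X^n_s)$ acts trivially on the fiber cohomology. The fiber is cohomologically $p$-complete finite type. By Lemma \ref{Lemma: Computation of Eilenberg-Maclane spaces}, its cohomology with $\QQ_p$ coefficients is $\bigwedge^*V$ with $V$ in degree $n$.

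\textbf{Transgression and the extension step.} In the universal fibration, the fundamental class transgresses, so by naturality (Corollary \ref{Cor: k-invariant computation}) every degree-$n$ fiber class is transgressive and the transgression of $x_i$ is $y_i$. Corollary \ref{Cor: Extension of Minimal Model for Fibrations}, applied with $A=M(n,s)$ and the cocycles $z_i=dx_i$, then gives a quasi-isomorphism $M(n,s+1)\to C^*(X^n_{s+1};\widehat{\ZZ}_p)\otimes_{\widehat{\ZZ}_p}\QQ_p$ extending $\phi_{n,s}$.

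The new space is still nilpotent and $p$-complete, because principal fibrations with $p$-complete fiber and base preserve both properties. It is still finite type, because $\pi_n$ gains a finitely generated $\widehat{\ZZ}_p$-module with nilpotent action.

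\textbf{Degree $1$.} Here the fiber is $K(\widehat{\ZZ}_p^r,1)$, and the fibration must be a central extension of a nilpotent pro-$p$ group. The $k$-invariant is a class in $H^2(X^1_s;\widehat{\ZZ}_p^r)$; since $X^1_s$ is a $K(\pi,1)$ with $\pi$ a finitely generated nilpotent pro-$p$ group, this is continuous $H^2$ by Lemma \ref{Lem: relation between continuous and discrete cohomology for nilpotent pro-p groups}. The resulting extension group is again a topologically finitely generated nilpotent pro-$p$ group.

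\textbf{Passing to the limit.} Set $X=\operatorname{holim}X^n_s$. In each fixed degree only finitely many stages contribute to homotopy and cohomology, because the stages $X^n_s$ with $n>q+1$ do not change $H^{\le q}$. Hence each $\pi_q(X)$ is finitely generated, and $H^q(X;\widehat{\ZZ}_p)$ stabilizes. The colimit of the maps $\phi_{n,s}$ is therefore a quasi-isomorphism $M=\bigcup M(n,s)\to C^*(X;\widehat{\ZZ}_p)\otimes_{\widehat{\ZZ}_p}\QQ_p$, using naturality of rectification and a Milnor/$\varprojlim^1$ argument as in Lemma \ref{Lem: quasi-isomorphisms between homotopy limit and the pro space}; the $\varprojlim^1$ terms vanish since the groups stabilize. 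Finally, replace $X$ by a CW approximation. If $M$ has no degree-$1$ generators, the tower starts with $n\ge2$, so $X$ is simply connected.

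\textbf{Main obstacle.} The expected hardest point is the integrality step in the degree-$1$ and twisted situations, together with the limit argument. The integrality step is handled by rescaling generators, which is harmless because the tower is finite-dimensional at each stage. For the limit, the key is that finite type forces stabilization degreewise, so the rectified cochains of the limit are quasi-isomorphic to the colimit of the stages.
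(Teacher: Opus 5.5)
Your proposal is correct and follows essentially the same route as the paper: you realize the tower of elementary extensions of $M$ by a refined principal Postnikov tower whose $k$-invariants are the classes of $dx_i$, rescaled to be integral. You then extend the quasi-isomorphism stage by stage through the fibration/elementary-extension comparison and pass to the homotopy limit. You make explicit several points the paper leaves implicit, namely the use of Corollary \ref{Cor: Extension of Minimal Model for Fibrations} with $z_i=dx_i$, the fact that the degree-$1$ central extensions stay topologically finitely generated nilpotent pro-$p$ via Serre goodness, and degreewise stabilization in the limit.
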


\begin{proof}
By Theorem \ref{Fact: Lifting of minimal models}, $M$ admits an exhaustive tower $\QQ_p=M(1,0)\subset M(1,1)\subset ...\subset M(1,s_1)=M(2,0)\subset M(2,1) \subset ...$ of elementary extensions of subalgebras by finite-dimensional vector spaces.
Corresponding to this tower, we inductively construct a Postnikov tower of $...\rightarrow X^1_2\rightarrow X^1_1=K(G_{1,1},1)=X^1_0=pt$ together with compatible quasi-isomorphisms $M(n,s)\rightarrow C^*(X^n_s;\widehat{\ZZ}_p)\otimes_{\widehat{\ZZ}_p} \QQ_p$. At each stage, the relevant homotopy group $G_{n,s}$ is a finitely generated $\widehat{\ZZ}_p$-module. Once the construction is complete, let $X$ be the homotopy limit of the resulting Postnikov tower, and the compatible quasi-isomorphisms then induce the required quasi-isomorphism $M\rightarrow C^*(X;\widehat{\ZZ}_p)\otimes_{\widehat{\ZZ}_p} \QQ_p$.

For each stage $(n,s)$, choose a finitely generated free $\widehat{\ZZ}_p$-module $G_{n,s}$ and a vector-space isomorphism $G_{n,s}\otimes_{\widehat{\ZZ}_p}\QQ_p\rightarrow \homo_{\QQ_p}(I^n(M(n,s))/I^n(M(n,s-1)),\QQ_p)$. At the first stage, the chosen isomorphism induces $I^1(M(1,1))\cong \homo_{\widehat{\ZZ}_p}(G_{1,1},\QQ_p)$ and Lemma \ref{Lemma: Computation of Eilenberg-Maclane spaces} gives a quasi-isomorphism $M(1,1)=\bigwedge^* I^1(M(1,1))\rightarrow C^*(X^1_1;\widehat{\ZZ}_p)\otimes_{\widehat{\ZZ}_p} \QQ_p$. Assume inductively that we have constructed $X^n_s$ together with a quasi-isomorphism $M(n,s)\rightarrow C^*(X^n_s;\widehat{\ZZ}_p)\otimes_{\widehat{\ZZ}_p} \QQ_p$. By Corollary \ref{Cor: k-invariant computation} and Theorem \ref{Thm: minimal model computes rational homotopy groups}, the differential of the new degree-$n$ generators of $M(n,s+1)$ determines a class $k\in H^{n+1}(X^n_s;G_{n,s+1})\otimes_{\widehat{\ZZ}_p} \QQ_p$. After scaling the class $k$ by some element of $\widehat{\ZZ}_p$, we may assume that $k\in H^{n+1}(X^n_s;G_{n,s+1})$. Let $X^{n}_{s+1}$ be the homotopy fiber of $k:^n_sX\rightarrow K(G_{n,s+1},n+1)$. Lemma \ref{Lemma: fibration and elementary extension} gives a quasi-isomorphism $M(n,s+1)\rightarrow C^*(X^{n}_{s+1};\widehat{\ZZ}_p)\otimes_{\widehat{\ZZ}_p} \QQ_p$ extending the quasi-isomorphism $M(n,s)\rightarrow C^*(X^{n}_s;\widehat{\ZZ}_p)\otimes_{\widehat{\ZZ}_p} \QQ_p$.
\end{proof}

\begin{example}\label{Example: p-complete space which is not a finite type space}
Let $f_1,...,f_s\in \QQ_p[x_1,...,x_n]$ be homogeneous polynomials of degree $n$. Assume that there are no matrices $A\in GL(n,\QQ_p)$ and $B\in GL(s,\QQ_p)$  such that the transformed sequence $B\cdot (f_1(A\cdot (x_1,...,x_n)),...,f_s(A\cdot (x_1,...,x_n)))$ consists entirely of polynomials with coefficients in $\QQ$. For example, one may take a homogeneous cubic polynomial $f(x_1,x_2,x_3)\in \QQ_p[x_1,x_2,x_3]$ defining an elliptic curve whose $j$-invariant lies in $\QQ_p-\QQ$.

Let $M$ be a minimal model over $\QQ_p$ with generators $x_1,...,x_n$ in degree $2r$ and generators $y_1,...,y_s$ in degree $2rn-1$. Define the differential by $dy_j=f_j(x_1,...,x_n)$. 
There exists an inclusion of minimal CDGA's $\phi:M\hookrightarrow N$ such that $\phi^*:H^i(M)\rightarrow H^i(N)$ is an isomorphism for $i\leq 2rn$ and $H^i(N)=0$ for $i>2rn$. By Theorem \ref{Thm: realization of a minimal model}, there exists a simply-connected $p$-complete finite type space $X$ such that $N$ is a minimal model of $C^*(X;\widehat{\ZZ}_p)\otimes_{\widehat{\ZZ}_p} \QQ_p$. 

We inductively construct a simply-connected $p$-complete finite type space $X_{m}$ together with $f_m:X_m\rightarrow X$, for every $m> 2rn$, such that $H^i(X_m;\widehat{\ZZ}_p)=0$ for $2rn<i\leq m$, $(f_m)^*:H^i(X;\widehat{\ZZ}_p)\rightarrow H^i(X_m;\widehat{\ZZ}_p)$ is an isomorphism for $i\leq 2rn$, and $(f_m)^*:H^*(X;\widehat{\ZZ}_p)\otimes_{\widehat{\ZZ}_p}\QQ_p\rightarrow H^*(X_m;\widehat{\ZZ}_p)\otimes_{\widehat{\ZZ}_p}\QQ_p$ is an isomorphism. Begin the induction by setting $X_{2rn}=X$. Suppose inductively that we have constructed $X_{m-1}$ together with $f_{m-1}:X_{m-1}\rightarrow X$ satisfying the requested conditions. Since $H^m(X_{m-1};\widehat{\ZZ}_p)\otimes_{\widehat{\ZZ}_p}\QQ_p=0$, the finitely generated $\widehat{\ZZ}_p$-module $A_m:=H^m(X_{m-1};\widehat{\ZZ}_p)$ is a finite $p$-group. Choose a decomposition $A_m\cong \bigoplus_{j}\ZZ/p^{e_j}\cdot a_j$. Let $\beta_{e_j}:H^{m-1}(X_{m-1};\ZZ/p^{e_j})\rightarrow H^m(X_{m-1};\widehat{\ZZ}_p)$ be the Bockstein homomorphism associated to the homomorphism  $\widehat{\ZZ}_p\xrightarrow{p^{e_j}} \widehat{\ZZ}_p$. Then there exists a class $c_j\in H^{m-1}(X_{m-1};\ZZ/p^{e_j})$ such that $\beta_{e_j}(c_j)=a_j$. Then the collection of classes $\{c_j\}_j$ represents a map $k_m:X_{m-1}\rightarrow K(A_m,m-1)$. Let $X_m$ be the homotopy fiber of $k_m$, and let $p_m:X_m\rightarrow X_{m-1}$ be the canonical map. Define $f_m$ to be $f_{m-1}\circ p_m$.

Since $m-2\geq 2$, $H^{m-1}(K(A_m,m-2);\widehat{\ZZ}_p)\cong A_m$, and $H^q(K(A_m,m-2);\widehat{\ZZ}_p)=0$ for $0<q<m-1$ and for $q=m$. The Leray-Serre spectral sequence for the fibration $K(A_m,m-2)\rightarrow X_{m}\xrightarrow{p_m} X_{m-1}$ computes that $(p_m)^*:H^q(X_{m-1};\widehat{\ZZ}_p)\rightarrow H^q(X_m;\widehat{\ZZ}_p)$ is an isomorphism for $q<m$, that $H^m(X_m;\widehat{\ZZ}_p)=0$, and that $(p_m)^*:H^*(X_{m-1};\widehat{\ZZ}_p)\otimes_{\widehat{\ZZ}_p}\QQ_p\rightarrow H^*(X_m;\widehat{\ZZ}_p)\otimes_{\widehat{\ZZ}_p}\QQ_p$ is an isomorphism. Therefore, the space $X_m$ and the map $f_m$ satisfy all the inductive requirements.

Let $Y$ be the homotopy limit of $\{X_m\}$. Then $Y$ is a simply-connected, $p$-complete finite type space such that $H^i(Y;\widehat{\ZZ}_p)=0$ for $i>2rn$, and $N$ is the minimal model of $C^*(Y;\widehat{\ZZ}_p)\otimes_{\widehat{\ZZ}_p} \QQ_p$. 
By Corollary \ref{Cor: integral lifting of simply-connected p-complete spaces}, $Y$ is not Sullivan's $p$-completion of any finite CW complex.\qed
\end{example}

\subsection{Finite CW Realization of \'Etale Homotopy Types}\,

This subsection addresses Question \ref{Question: finite CW complex for varieties}: when the $\ell$-completion of the \'etale homotopy type of an algebraic variety is $\ell$-adic weak equivalent to the $\ell$-completion of a finite CW complex? \cite{Hu-Zhang-formal-manifold-structures} proves that the $\ell$-adic \'etale homotopy groups of an $\ell$-adically simply-connected variety in characteristic $p\geq 0$ are finitely generated $\widehat{\ZZ}_{\ell}$-modules. Nevertheless, this finite type condition does not guarantee finite realization (Example \ref{Example: p-complete space which is not a finite type space}). For varieties, the criterion is simpler than the rational descent condition on the cochain level in Subsection \ref{subsection: finite CW complex realization of nilpotent spaces}: it is enough to impose a rational descent condition on the cohomology algebra. 

Let $\{Y_i\}$ be a pro-system of pointed connected simplicial sets. Let $\widehat{(\cdot)}$ denote the functor which assigns to a set the pro-system of all its finite quotient sets. The functor $\widehat{(\cdot)}$ extends to simplicial sets and then to pro simplicial sets. The canonical map $\{Y_i\}\rightarrow \{Y_i\}^{\wedge}$ preserves the chosen base points. The target is therefore a pro system of pointed simplicial finite sets.

\begin{lemma}\label{Lem: naive profinite completion of simplicial sets}
The canonical map $\{Y_i\}\rightarrow \{Y_i\}^{\wedge}=\{Z_j\}$ is a $p$-adic weak equivalence.
\end{lemma}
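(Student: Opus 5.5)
To prove the lemma, I would show that the canonical map $\{Y_i\}\rightarrow \{Z_j\}$ induces an isomorphism
\[
\varinjlim_j H^*(Z_j;\ZZ/p)\rightarrow \varinjlim_i H^*(Y_i;\ZZ/p).
\]
In the pro-setting (Artin--Mazur, Morel), this isomorphism is what a $p$-adic weak equivalence means. The problem then reduces to a levelwise statement. First, since $\widehat{(\cdot)}$ is applied to each $Y_i$ separately and the resulting pro-systems are reindexed over the pairs $(i,\text{finite quotient})$, one has $\{Y_i\}^{\wedge}=\{(Y_i)^{\wedge}\}_i$. The colimit over $j$ can therefore be computed as $\varinjlim_i\varinjlim_{Y_i\twoheadrightarrow F} H^*(F;\ZZ/p)$, where $F$ runs over the simplicial finite quotients of $Y_i$. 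It then suffices to prove that, for a single simplicial set $Y$, the map $\varinjlim_{F} H^*(F;\ZZ/p)\rightarrow H^*(Y;\ZZ/p)$ is an isomorphism. I would also verify this with $\ZZ/p^a$ coefficients, so that the $\widehat{\ZZ}_p$-cochains defined above are compared via the Milnor sequence argument of Lemma \ref{Lem: quasi-isomorphisms between homotopy limit and the pro space}.

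The key step is to prove the isomorphism at the cochain level. Normalized cochains with values in the finite ring $\ZZ/p^a$ are simply functions $Y_n\rightarrow \ZZ/p^a$ in each degree. Any such function factors through a finite quotient set of $Y_n$. Next, I need a simplicial finite quotient $F$ of $Y$ through which a given finite collection of cochains factors. For this I would use the standard fact that finite quotients of a simplicial set form a cofiltered system, and that for a fixed equivalence relation of finite index on a finite range of degrees one can generate the smallest simplicial equivalence relation compatible with the faces and degeneracies. The subtle point is that the generated relation must still have finite index in every degree. Truncating (taking a coskeleton of a truncation) handles this: a simplicial finite set $\mathrm{cosk}_N$ of the finite truncated quotient works, and cohomology through degree $N-1$ is unaffected. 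With this in hand, $\varinjlim_F S^*(F;\ZZ/p^a)\rightarrow S^*(Y;\ZZ/p^a)$ is surjective. It is also injective, because each $Y\rightarrow F$ is surjective in every degree, so pullback of functions is injective. Hence the map is an isomorphism of cochain complexes, and since filtered colimits are exact, it induces an isomorphism on cohomology.

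The step I expect to be the main obstacle is this factorization through a simplicial (rather than degreewise) finite quotient, compatibly in all degrees at once. My first attempt would be the coskeletal construction just described, together with cofinality of such quotients in the system $\widehat{Y}$. Once the levelwise isomorphism is established, passing to the colimit over $i$ concludes the argument. I would note that basepoints play no role, because the map preserves them automatically.
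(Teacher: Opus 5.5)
Your reduction is the paper's: by Morel's Theorem 2.4.1 it suffices to show that $\varinjlim_j H^*(Z_j;\ZZ/p)\to\varinjlim_i H^*(Y_i;\ZZ/p)$ is an isomorphism. The paper then simply cites the proof of Isaksen's Proposition 9.4 for this isomorphism. You instead prove it directly. You reduce to a single simplicial set $Y$ and show that $\varinjlim_F S^*(F;\ZZ/p^a)\to S^*(Y;\ZZ/p^a)$ is already an isomorphism of cochain complexes. It is injective because each $Y\to F$ is degreewise surjective, and surjective once every cochain factors through some $F$; exactness of filtered colimits then finishes. This gives a self-contained argument and the route is sound. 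However, the mechanism you give for the factorization step does not work as stated.

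As written it goes the wrong way in two places.
\begin{itemize}
\item The simplicial equivalence relation \emph{generated} by the fibre relation $E$ of a cochain $f\colon Y_n\to\ZZ/p^a$ is coarser than $E$ in degree $n$. It must contain $(\theta^*x,\theta^*y)$ for every non-identity $\theta\colon[n]\to[n]$ and every $(x,y)\in E$, so $f$ need not factor through it.
\item $\mathrm{cosk}_N Q$ of a truncated quotient $Q$ is in general not a quotient of $Y$, because $Y_m\to(\mathrm{cosk}_N Q)_m$ need not be surjective for $m>N$. So it is not an object of $\widehat{Y}$, and your injectivity argument does not apply to it.
\end{itemize}

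What you need is the \emph{largest} simplicial relation inside $E$. Given finitely many cochains in degrees $\le N$, with fibre relations $E_k$ on $Y_k$, put
\[
R_m=\{(x,y)\in Y_m\times Y_m:\ (\theta^*x,\theta^*y)\in E_k\ \text{for all}\ \theta\colon[k]\to[m],\ k\le N\}.
\]
This relation has the required properties:
\begin{itemize}
\item Each $R_m$ has finite index, because there are only finitely many such $\theta$.
\item $R$ is closed under all simplicial operators, because $\theta^*\phi^*=(\phi\theta)^*$.
\item $R_k\subseteq E_k$ for $k\le N$, by taking $\theta=\mathrm{id}$.
\end{itemize}
Hence $Y/R$ is an object of $\widehat{Y}$ through which the given cochains factor. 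It coincides with the image of $Y\to\mathrm{cosk}_N(Y_{\le N}/R_{\le N})$. So your coskeleton idea works once you pass to the image, and you need no comparison of cohomology below degree $N$.
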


\begin{proof}
By \cite{Morel-p-adic-spaces}*{Theorem 2.4.1}, it suffices to prove that $\varinjlim_j H^*(Z_j;\ZZ/p)\rightarrow \varinjlim_i H^*(Y_i;\ZZ/p)$ is an isomorphism. This is established in the proof of \cite{Isaksen-model-category-of-pro-spaces}*{Proposition 9.4}.
\end{proof}

Recall from \cite{Bousfield-Kan-homotopy-limits-completions-localizations}*{Chapter I} the Bousfield-Kan's $\FF_p$-completion $(\FF_p)_{\infty} X$ of any simplicial set $X$. It is the homotopy limit of the tower $\{(\FF_p)_{n} X\}_n$. Let $\{Y_i\}^{\wedge}_p$ be the pro-system of simplicial sets $\{(\FF_p)_{n} Z_j\}_{n,j}$.

\begin{lemma}\label{Lem: definition of pro-p completion of a pro simplicial set}
Each space $(\FF_p)_{n} Z_j$ in the pro system $\{Y_i\}^{\wedge}_p$ is a $p$-finite space and the canonical map $\{Y_i\}\rightarrow \{Y_i\}^{\wedge}_p$ is Artin-Mazur's pro-$p$ completion of pro-spaces. 
\end{lemma}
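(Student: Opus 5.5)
The lemma has two assertions: each $(\FF_p)_n Z_j$ is $p$-finite, and $\{Y_i\}\to\{Y_i\}^{\wedge}_p$ has the universal property of Artin--Mazur's pro-$p$ completion. I will prove them in that order.

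\textbf{Step 1: $p$-finiteness.} Each $Z_j$ is a simplicial set with finitely many simplices in each degree. Bousfield--Kan's tower $(\FF_p)_n Z_j$ is built from the cosimplicial resolution $\FF_p^{\bullet}Z_j$, and its $n$-th stage is a finite iterated principal fibration with fibers $K(V,m)$. Each $V$ is an $\FF_p$-vector space; it is finite because it is a subquotient of $\FF_p^{\bullet}Z_j$ in a fixed degree, which is finite-dimensional since $Z_j$ has finitely many simplices in each degree. Hence every homotopy group of $(\FF_p)_n Z_j$ is a finite $p$-group. (The fundamental group is a finite $p$-group, being an iterated extension of finite elementary abelian $p$-groups.) I would cite \cite{Bousfield-Kan-homotopy-limits-completions-localizations}*{Chapter I, III} for the fact that $(\FF_p)_n$ of a space is $\FF_p$-nilpotent of finite height, with the layers described above. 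The one subtlety is to arrange degreewise finiteness, if needed after passing to the Postnikov truncations that Artin--Mazur use anyway.

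\textbf{Step 2: universal property.} Artin--Mazur's pro-$p$ completion $\{Y_i\}\to\{W_k\}$ (\cite{Artin-Mazur-etale-homotopy}*{Theorem 3.4}) is characterized, in the pro-homotopy category, as the universal map to a pro-system of $p$-finite spaces. By \cite{Morel-p-adic-spaces}*{Theorem 2.4.1}, equivalently, it is the pro-$p$-finite target receiving a map that is an isomorphism on $\varinjlim H^*(-;\ZZ/p)$.

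So it suffices to show two things:
\begin{enumerate}[leftmargin=0.25in]
\item $\{Y_i\}\to\{(\FF_p)_nZ_j\}$ induces an isomorphism on $\varinjlim H^*(-;\ZZ/p)$;
\item any pro-system of $p$-finite spaces is $p$-complete, so maps out of it are detected by mod-$p$ cohomology.
\end{enumerate}
For (1), Lemma \ref{Lem: naive profinite completion of simplicial sets} reduces the problem to $\{Z_j\}\to\{(\FF_p)_nZ_j\}$. For a fixed $Z_j$, Bousfield--Kan show that the tower $\{(\FF_p)_nZ_j\}_n$ is pro-isomorphic on mod-$p$ homology to $Z_j$ itself; this is the ``$\FF_p$-goodness at the pro level'', \cite{Bousfield-Kan-homotopy-limits-completions-localizations}*{III.6}, which holds for every space as a pro-statement. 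Dually, $\varinjlim_n H^*((\FF_p)_nZ_j;\ZZ/p)\cong H^*(Z_j;\ZZ/p)$. Passing to the colimit over $j$ gives (1). Then the universal property of $\{W_k\}$ yields a map $\{W_k\}\to\{(\FF_p)_nZ_j\}$ that is a mod-$p$ cohomology isomorphism between pro-$p$-finite systems, hence a pro-isomorphism by \cite{Artin-Mazur-etale-homotopy}*{Theorem 4.3}.

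\textbf{Main obstacle.} I expect the hardest step to be the pro-level mod-$p$ homology isomorphism $\{H_*((\FF_p)_nZ_j;\FF_p)\}_n\cong H_*(Z_j;\FF_p)$. This is stronger than the statement about the limit $(\FF_p)_\infty Z_j$, since $Z_j$ need not be nilpotent. I would first try to extract it directly from Bousfield--Kan's pro-isomorphism lemma (the ``tower lemma'' giving $\{\FF_p(\FF_p)_nX\}\to\FF_pX$ as a pro-homotopy equivalence). Failing that, I would fall back on the proof of \cite{Isaksen-model-category-of-pro-spaces}*{Proposition 9.4}, which handles the analogous comparison.
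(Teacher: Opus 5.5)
Your argument is correct, but it takes a more hands-on route than the paper.

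\textbf{How the paper does it.} The paper settles $p$-finiteness by citing Bousfield--Kan (Chapter I, 2.3). It gets the Artin--Mazur identification in one line, by combining the naive-profinite-completion lemma with Friedlander's Proposition 6.10. That proposition packages the comparison between the levelwise Bousfield--Kan tower on simplicial finite sets and Artin--Mazur's pro-$p$ completion.

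\textbf{How your route differs.} You rebuild that packaged comparison from its ingredients:
\begin{itemize}
\item Bousfield--Kan's pro-isomorphism $H_*(X;\FF_p)\cong\{H_*((\FF_p)_nX;\FF_p)\}_n$, valid for arbitrary (non-nilpotent) $X$;
\item dualizing it and taking the colimit over $j$ to get the isomorphism on $\varinjlim H^*(-;\ZZ/p)$;
\item converting that isomorphism into the identification via the Artin--Mazur universal property and Morel's cohomological criterion.
\end{itemize}
You correctly single out the pro-level homology statement as the crux. It is what makes the non-nilpotence of the $Z_j$ harmless, since $(\FF_p)_\infty Z_j$ alone would not suffice. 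Your version makes this mechanism visible; the paper's buys brevity.

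\textbf{One correction in Step 1.} The fibers of $\mathrm{Tot}_s\to\mathrm{Tot}_{s-1}$ are GEMs $\prod_m K(V_m,m)$ with finite $V_m$, possibly nonzero for infinitely many $m$. They are not finitely many $K(V,m)$'s. So $(\FF_p)_nZ_j$ has finite but possibly unbounded homotopy. This is fine for the paper's definition of $p$-finite, which only asks each $\pi_i$ to be a finite $p$-group. It does mean the match with Artin--Mazur's completion, which uses spaces with finitely many nonzero homotopy groups, holds up to $\natural$-isomorphism (Postnikov truncation), as you hint. The ``subtlety'' you name should be bounded homotopy, not degreewise finiteness.
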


\begin{proof}
Since each $Z_j$ is a simplicial finite set, the first assertion follows from \cite{Bousfield-Kan-homotopy-limits-completions-localizations}*{Chapter I, 2.3} and the second assertion follows from Lemma \ref{Lem: naive profinite completion of simplicial sets} and \cite{Friedlander-etale-homotopy}*{Proposition 6.10}. 
\end{proof}

\begin{proposition}\label{Prop: liftings of simply connected p-complete finite type pro-spaces}
Let $\{Y_i\}$ be a pro-system of pointed connected simplicial sets, with $\pi_1(\{Y_i\})^{\wedge}_p=0$. Then 
\begin{enumerate}[leftmargin=0.25in]
    \item $\{Y_i\}$ is $p$-adic weak equivalent to a CW complex with finitely many cells in each dimension if and only if $\varinjlim_i H^q(Y_i;\ZZ/p)$ is finite for every $q$, and $S^*(\{Y_i\};\widehat{\ZZ}_p)\otimes_{\widehat{\ZZ}_p} \QQ_p$ is quasi-isomorphic to the scalar extension of an $E_{\infty}$-algebra over $\QQ$.
    \item $\{Y_i\}$ is $p$-adic weak equivalent to a finite CW complex if and only if $\varinjlim_i H^q(Y_i;\ZZ/p)$ is finite for every $q$ and vanishes for sufficiently large $q$'s, and $S^*(\{Y_i\};\widehat{\ZZ}_p)\otimes_{\widehat{\ZZ}_p} \QQ_p$  is quasi-isomorphic to the scalar extension of an $E_{\infty}$-algebra over $\QQ$.
\end{enumerate}
\end{proposition}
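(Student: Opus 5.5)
The plan is to reduce to Corollary \ref{Cor: integral lifting of simply-connected p-complete spaces} by replacing the pro-system with an honest $p$-complete space. First I replace $\{Y_i\}$ by its pro-$p$ completion $\{Y_i\}^{\wedge}_p=\{(\FF_p)_nZ_j\}$. By Lemma \ref{Lem: naive profinite completion of simplicial sets} and Lemma \ref{Lem: definition of pro-p completion of a pro simplicial set}, this is a pro-system of pointed connected $p$-finite spaces, and it is $p$-adic weak equivalent to $\{Y_i\}$. In particular, $\varinjlim H^q(-;\ZZ/p)$ is unchanged. By the Milnor sequence argument of Lemma \ref{Lem: quasi-isomorphisms between homotopy limit and the pro space}(2), $S^*(-;\widehat{\ZZ}_p)$ is also unchanged up to quasi-isomorphism. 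So both sides of each equivalence are invariant, and I may assume every $Y_i$ is $p$-finite. Let $X$ be the homotopy limit. By Lemma \ref{Lem: quasi-isomorphisms between homotopy limit and the pro space}(1), $\pi_1(X)=\varprojlim\pi_1(Y_i)=\pi_1(\{Y_i\})^{\wedge}_p=0$, so $X$ is simply connected.

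\textbf{The ``if'' direction.} Assume $\varinjlim H^q(Y_i;\ZZ/p)$ is finite for every $q$. I then show that $X$ is a simply-connected $p$-complete finite type space. Finiteness of mod $p$ cohomology of the pro-system, combined with simple connectivity, should give, via a mod-$p$ Serre class (Hurewicz) argument run through the pro-homotopy groups, that each $\varprojlim_i\pi_n(Y_i)$ is a finitely generated $\widehat{\ZZ}_p$-module. I expect this step to be the main obstacle. I would first try to prove it by induction on the Postnikov tower: the lowest nonvanishing pro-homotopy group is dual to $\varinjlim H^n(Y_i;\ZZ/p^a)$, which is finite, and the fibre cohomology is then controlled by the spectral sequence of Lemma \ref{Lemma: Serre spectral sequence}, applied levelwise and passed to colimits.

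Once $X$ is nilpotent $p$-complete finite type, Lemma \ref{Lem: nilpotent $p$-complete finite types are good} shows that $X\to X^{\wedge,S}_p$ is an equivalence. Lemma \ref{Lem: quasi-isomorphisms between homotopy limit and the pro space}(3) then makes $S^*(\{Y_i\};\widehat{\ZZ}_p)\simeq S^*(X;\widehat{\ZZ}_p)$ as $E_\infty$-algebras. The Remark following that lemma gives that $\{Y_i\}\to X$ is a $p$-adic weak equivalence, and in particular that $H^q(X;\ZZ/p)\cong\varinjlim H^q(Y_i;\ZZ/p)$. The $E_\infty$ descent hypothesis now transfers to $X$. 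Corollary \ref{Cor: integral lifting of simply-connected p-complete spaces}, parts (1)(c)$\Rightarrow$(1)(a) and (2)(c)$\Rightarrow$(2)(a), produces a CW complex $Z$, with finitely many cells in each dimension (respectively finite), such that $X=Z^{\wedge,S}_p$. By Lemma \ref{Lem: sullivan's p-adic completion of finite CW complex is good}, $Z\to X$ is a mod $p$ cohomology isomorphism. Hence $\{Y_i\}$ is $p$-adic weak equivalent to $Z$.

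\textbf{The ``only if'' direction.} Suppose $\{Y_i\}$ is $p$-adic weak equivalent to such a $Z$. Then $\varinjlim H^q(Y_i;\ZZ/p)\cong H^q(Z;\ZZ/p)$, which is finite, and it vanishes in large degrees when $Z$ is finite. Through the zigzag of $E_\infty$ quasi-isomorphisms, $S^*(\{Y_i\};\widehat{\ZZ}_p)\otimes\QQ_p$ is quasi-isomorphic to $S^*(Z;\widehat{\ZZ}_p)\otimes\QQ_p$. This in turn is quasi-isomorphic to $S^*(Z;\QQ)\otimes_{\QQ}\QQ_p$, exactly as in the proof of (1)(a)$\Rightarrow$(1)(c) of Corollary \ref{Cor: integral lifting of simply-connected p-complete spaces}; that argument uses only that $Z$ has finitely many cells in each dimension.

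The remaining care is to check that a $p$-adic weak equivalence of pro-spaces induces a quasi-isomorphism on $S^*(-;\widehat{\ZZ}_p)$. This follows from the $\ZZ/p^a$ statement by Bockstein induction and the Milnor sequence, as in Lemma \ref{Lem: quasi-isomorphisms between homotopy limit and the pro space}(4).
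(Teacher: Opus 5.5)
Your proposal is correct and follows the paper's proof. Both replace $\{Y_i\}$ by its pro-$p$ completion and pass to the homotopy limit. Both transfer the cochains and the mod-$p$ cohomology via Lemma \ref{Lem: quasi-isomorphisms between homotopy limit and the pro space}, the goodness lemma and \cite{Morel-p-adic-spaces}*{Theorem 2.4.1}, and both conclude with Corollary \ref{Cor: integral lifting of simply-connected p-complete spaces}. Your ``only if'' direction is a little more direct than the paper's, which routes through the homotopy limit and the Corollary.

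The step you flag as the main obstacle, that the homotopy limit is $p$-complete finite type, is not proved in the paper; it is imported from \cite{Hu-Zhang-formal-manifold-structures}*{Theorem 1.5}. If you want to reprove it, Lemma \ref{Lemma: Serre spectral sequence} is not the right tool, since it assumes the fibre is already of finite type and uses $\widehat{\ZZ}_p\otimes k$ coefficients. You would instead need a pro-Hurewicz theorem and a mod-$p$ Serre-class argument that deduces finiteness of the fibre from that of the base and the total space.
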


\begin{proof}
By \cite{Artin-Mazur-etale-homotopy}*{Theorem 4.3}, we may replace $\{Y_i\}$ by $\{Y_i\}^{\wedge}_p$ without loss of generality. Under this assumption, each $Y_i$ is $p$-finite. Let $Y_p$ be the homotopy limit of $\{Y_i\}$. We prove  only item (1) since item (2) follows from an analogous argument.

\textbf{``If'' part.}  
By \cite{Hu-Zhang-formal-manifold-structures}*{Theorem 1.5}, $Y_p$ is $p$-complete finite type. Using Lemma \ref{Lem: quasi-isomorphisms between homotopy limit and the pro space} and Lemma \ref{Lem: nilpotent $p$-complete finite types are good}, $S^*(\{Y_i\};\widehat{\ZZ}_p)\otimes_{\widehat{\ZZ}_p} \QQ_p$ is quasi-isomorphic to $S^*(Y_p;\widehat{\ZZ}_p)\otimes_{\widehat{\ZZ}_p} \QQ_p$, and $\varinjlim_i H^*(Y_i;\ZZ/p)\cong H^*(Y_p;\ZZ/p)$. By \cite{Morel-p-adic-spaces}*{Theorem 2.4.1}, the canonical map $Y_p\rightarrow \{Y_i\}$ is a $p$-adic weak equivalence. Corollary \ref{Cor: integral lifting of simply-connected p-complete spaces} shows that $Y_p$, and hence $\{Y_i\}$ is $p$-adic weak equivalent to a CW complex with finitely many cells in each dimension.

\textbf{``Only if'' part.}
By \cite{Artin-Mazur-etale-homotopy}*{Theorem 4.3}, $\varinjlim_i H^q(Y_i;\ZZ/p)$ is finite for every $q$. Then $Y_p$ is $p$-complete finite type (\cite{Hu-Zhang-formal-manifold-structures}*{Theorem 1.5}). Analogously to the ``if'' part, $S^*(\{Y_i\};\widehat{\ZZ}_p)\otimes_{\widehat{\ZZ}_p} \QQ_p$ is quasi-isomorphic to $S^*(Y_p;\widehat{\ZZ}_p)\otimes_{\widehat{\ZZ}_p} \QQ_p$ and the canonical map $Y_p\rightarrow \{Y_i\}$ is a $p$-adic weak equivalence. Corollary \ref{Cor: integral lifting of simply-connected p-complete spaces} applied to $Y_p$ now gives the required rational descent condition.
\end{proof}

The same argument proves the following nilpotent version of Proposition \ref{Prop: liftings of simply connected p-complete finite type pro-spaces}.

\begin{proposition}\label{Prop: liftings of nilpotent p-complete finite type pro-spaces}
Let $\{Y_i\}$ be a pro-system of pointed connected simplicial sets, such that $\{Y_i\}^{\wedge}_p$ is nilpotent $p$-complete finite type. Then
$\{Y_i\}$ is $p$-adic weak equivalent to a nilpotent finite type CW complex if and only if  $S^*(\{Y_i\};\widehat{\ZZ}_p)\otimes_{\widehat{\ZZ}_p} \QQ_p$ is quasi-isomorphic to the scalar extension of an $E_{\infty}$-algebra over $\QQ$.
\end{proposition}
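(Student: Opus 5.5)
The plan is to follow the proof of Proposition \ref{Prop: liftings of simply connected p-complete finite type pro-spaces}(1) essentially verbatim, substituting Theorem \ref{Cor: minimal model lifting is space lifting} for Corollary \ref{Cor: integral lifting of simply-connected p-complete spaces}. By \cite{Artin-Mazur-etale-homotopy}*{Theorem 4.3} and Lemma \ref{Lem: definition of pro-p completion of a pro simplicial set}, the map $\{Y_i\}\rightarrow\{Y_i\}^{\wedge}_p$ induces an isomorphism on $\varinjlim_i H^*(-;\ZZ/p)$. It therefore induces quasi-isomorphisms of the $E_\infty$-algebras $S^*(-;\widehat{\ZZ}_p)$ by the Milnor sequence argument of Lemma \ref{Lem: quasi-isomorphisms between homotopy limit and the pro space}(2), and hence also after tensoring with $\QQ_p$. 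This lets me replace $\{Y_i\}$ by $\{Y_i\}^{\wedge}_p$, so each $Y_i$ is $p$-finite. Let $Y_p$ be the homotopy limit. By hypothesis $Y_p$ is a nilpotent $p$-complete finite type space.

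Next I use Lemma \ref{Lem: nilpotent $p$-complete finite types are good} to see that Sullivan's $p$-completion of $Y_p$ is an equivalence. Lemma \ref{Lem: quasi-isomorphisms between homotopy limit and the pro space}(2),(3) then gives a zigzag of $E_\infty$-quasi-isomorphisms between $S^*(\{Y_i\};\widehat{\ZZ}_p)$ and $S^*(Y_p;\widehat{\ZZ}_p)$, and likewise with $\ZZ/p$-coefficients. In particular $\varinjlim_i H^*(Y_i;\ZZ/p)\cong H^*(Y_p;\ZZ/p)$. These groups are finite in each degree because $Y_p$ is nilpotent with finitely generated $\widehat{\ZZ}_p$ homotopy; this is checked by induction up a refined Postnikov tower using the Serre spectral sequence and the finiteness of $H^*(K(A,n);\ZZ/p)$. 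By \cite{Morel-p-adic-spaces}*{Theorem 2.4.1} (see the Remark after Lemma \ref{Lem: quasi-isomorphisms between homotopy limit and the pro space}), $Y_p\rightarrow\{Y_i\}$ is a $p$-adic weak equivalence. So the question reduces to the same question for the single space $Y_p$, and the cochain condition is unchanged under the replacement.

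For the ``if'' direction, Theorem \ref{Cor: minimal model lifting is space lifting} ((3)$\Rightarrow$(1)) gives a nilpotent finite type CW complex $Z$ whose $p$-completion is $Y_p$. The completion map $Z\rightarrow Y_p$ is a mod $p$ cohomology isomorphism, hence a $p$-adic weak equivalence, and composing with $Y_p\rightarrow\{Y_i\}$ gives the claim.

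For the ``only if'' direction, suppose $Z$ is a nilpotent finite type CW complex that is $p$-adically equivalent to $\{Y_i\}$, hence to $Y_p$. I would then argue that $Y_p$ is the $p$-completion of $Z$; this is the main obstacle. Since $Y_p$ is $p$-complete in Sullivan's sense (Lemma \ref{Lem: nilpotent $p$-complete finite types are good}) and $Z^{\wedge,S}_p$ is the homotopy limit of the Artin--Mazur pro-$p$ completion of $Z$, my first attempt would be to identify both $Y_p$ and $Z^{\wedge,S}_p$ with the homotopy limit of the common pro-$p$ completion. The justification is that a $p$-adic weak equivalence induces an isomorphism of Artin--Mazur pro-$p$ completions by \cite{Artin-Mazur-etale-homotopy}*{Theorem 4.3}, and homotopy limits of isomorphic pro-systems agree. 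Once this identification is in place, Theorem \ref{Cor: minimal model lifting is space lifting} ((1)$\Rightarrow$(3)) yields the rational descent of $S^*(Y_p;\widehat{\ZZ}_p)\otimes_{\widehat{\ZZ}_p}\QQ_p$, which transfers to $\{Y_i\}$ through the quasi-isomorphisms above.
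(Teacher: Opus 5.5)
Your proposal is correct and follows the paper's route. The paper proves this statement by rerunning the argument for Proposition~\ref{Prop: liftings of simply connected p-complete finite type pro-spaces}: replace $\{Y_i\}$ by $\{Y_i\}^{\wedge}_p$, pass to the homotopy limit $Y_p$ via Lemma~\ref{Lem: quasi-isomorphisms between homotopy limit and the pro space}, and invoke Theorem~\ref{Cor: minimal model lifting is space lifting} in place of Corollary~\ref{Cor: integral lifting of simply-connected p-complete spaces}. Your extra step, identifying $Y_p$ with $Z^{\wedge,S}_p$ through their common Artin--Mazur pro-$p$ completion, makes explicit a point that the paper's ``only if'' direction leaves implicit.
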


Let $X$ be a pointed, connected, locally Noetherian scheme. Friedlander (\cite{Friedlander-etale-homotopy}*{Definition 4.4}) constructed a pro system $X_{\et}$ of pointed connected simplicial sets, called the \textbf{\'etale homotopy type} of $X$.

Proposition \ref{Prop: liftings of simply connected p-complete finite type pro-spaces} and Proposition \ref{Prop: liftings of nilpotent p-complete finite type pro-spaces} immediately give the following realization theorem.

\begin{theorem}\label{Thm: finite CW complex realizations of a locally noetherian scheme}
Let $X$ be a pointed, connected, locally Noetherian scheme, and let $p$ be a prime number. 
\begin{enumerate}[leftmargin=0.25in]
    \item If $(X_{\et})^{\wedge}_p$ is nilpotent $p$-complete finite type, then $X_{\et}$ is $p$-adic weak equivalent to a nilpotent finite type CW complex if and only if $S^*(X_{\et};\widehat{\ZZ}_p)\otimes_{\widehat{\ZZ}_p} \QQ_p$ is quasi-isomorphic to the scalar extension of an $E_{\infty}$-algebra over $\QQ$.
    \item If $\pi_1^{\et}(X)^{\wedge}_p=0$, then 
    \begin{enumerate}[leftmargin=0.27in]
        \item $X_{\et}$ is $p$-adic weak equivalent to a CW complex with finitely many cells in each dimension if and only if $H^q_{\et}(X;\ZZ/p)$ is finite for every $q$, and $S^*(X_{\et};\widehat{\ZZ}_p)\otimes_{\widehat{\ZZ}_p} \QQ_p$ is quasi-isomorphic to the scalar extension of an $E_{\infty}$-algebra over $\QQ$;
        \item $X_{\et}$ is $p$-adic weak equivalent to a finite CW complex if and only if $H^q_{\et}(X;\ZZ/p)$ is finite for every $q$ and vanishes for all sufficiently large $q$, and $S^*(X_{\et};\widehat{\ZZ}_p)\otimes_{\widehat{\ZZ}_p} \QQ_p$ is quasi-isomorphic to the scalar extension of an $E_{\infty}$-algebra over $\QQ$.
    \end{enumerate}
\end{enumerate}
\end{theorem}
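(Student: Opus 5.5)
The plan is to specialize Proposition \ref{Prop: liftings of simply connected p-complete finite type pro-spaces} and Proposition \ref{Prop: liftings of nilpotent p-complete finite type pro-spaces} to the pro-system $\{Y_i\}=X_{\et}$, which is a pro-system of pointed connected simplicial sets by Friedlander's construction. The only real work is translating the hypotheses and conditions stated in terms of $X_{\et}$ and \'etale cohomology into the form those propositions require.

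\textbf{Item (1).} Here the hypothesis is that $(X_{\et})^{\wedge}_p$ is nilpotent $p$-complete finite type. Since $(X_{\et})^{\wedge}_p$ is, by Lemma \ref{Lem: definition of pro-p completion of a pro simplicial set}, exactly the pro-system $\{Y_i\}^{\wedge}_p$ of that proposition, Proposition \ref{Prop: liftings of nilpotent p-complete finite type pro-spaces} applies verbatim. The cochain algebra $S^*(X_{\et};\widehat{\ZZ}_p)$ is by definition $S^*(\{Y_i\};\widehat{\ZZ}_p)$, so the two descent conditions coincide and item (1) follows.

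\textbf{Item (2).} First I identify $\pi_1(\{Y_i\})^{\wedge}_p$ with $\pi_1^{\et}(X)^{\wedge}_p$. This holds because the pro-fundamental group of $X_{\et}$ recovers the \'etale fundamental group (Artin--Mazur, Friedlander), and its pro-$p$ completion is $\pi_1^{\et}(X)^{\wedge}_p$. Second, I use the standard comparison $\varinjlim_i H^q(Y_i;\ZZ/p)\cong H^q_{\et}(X;\ZZ/p)$ for locally Noetherian $X$ (\cite{Friedlander-etale-homotopy}, via Artin--Mazur's Verdier hypercovering comparison with constant coefficients). With these two identifications, the finiteness and vanishing hypotheses of Proposition \ref{Prop: liftings of simply connected p-complete finite type pro-spaces}(1) and (2) become precisely the conditions in items (2)(a) and (2)(b), which gives both equivalences.

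The main point requiring care is this cohomology comparison and the fundamental group identification, both for pointed connected locally Noetherian schemes. I would cite them directly rather than reprove them. One caveat concerns the case where the pro-$p$ completion is nontrivial but nilpotent: that case is handled entirely by item (1), and needs no cohomological finiteness hypothesis beyond the finite type assumption.
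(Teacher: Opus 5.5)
Your proposal is correct and follows the paper's proof: the paper simply states that Propositions \ref{Prop: liftings of simply connected p-complete finite type pro-spaces} and \ref{Prop: liftings of nilpotent p-complete finite type pro-spaces}, applied to the pro-system $X_{\et}$, immediately give the theorem. Your version additionally spells out the identifications the paper leaves implicit, namely $\pi_1(X_{\et})^{\wedge}_p\cong\pi_1^{\et}(X)^{\wedge}_p$ and $\varinjlim_i H^q(Y_i;\ZZ/p)\cong H^q_{\et}(X;\ZZ/p)$ for locally Noetherian $X$, which are exactly what is needed to match the hypotheses of those propositions.
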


\begin{corollary}\label{Cor: finite CW complex for positive haract3eristic variety}
Let $X$ be a connected variety over a separably closed field $k$ of characteristic $p\geq 0$, and let $\ell$ be a prime number. Assume that $(\pi_1^{\et}X)^{\wedge}_{\ell}=0$.
\begin{enumerate}[leftmargin=0.25in]
    \item If either $p\neq \ell$ or $X$ is proper over $k$, then $X_{\et}$ is $\ell$-adic weak equivalent to a finite CW complex if and only if $S^*(X_{\et};\widehat{\ZZ}_{\ell})\otimes_{\widehat{\ZZ}_{\ell}} \QQ_{\ell}$ is quasi-isomorphic to the scalar extension of an $E_{\infty}$-algebra over $\QQ$.
    \item If $X$ is smooth proper over $k$, then $X_{\et}$ is $\ell$-adic weak equivalent to a finite CW complex if and only if $H^*_{\et}(X;\QQ_{\ell})$ is isomorphic to the scalar extension of a graded $\QQ$-algebra. 
\end{enumerate}
\end{corollary}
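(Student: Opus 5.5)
The plan is to deduce both items from Theorem \ref{Thm: finite CW complex realizations of a locally noetherian scheme}(2) with $p$ replaced by $\ell$. The main input to supply is the finiteness of $H^q_{\et}(X;\ZZ/\ell)$ in every degree and its vanishing in large degrees. For item (2) we must also replace the cochain-level descent condition by a condition on the cohomology algebra; this uses formality (Theorem \ref{Thm: formality of varieties}).

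\textbf{Item (1).} Under either hypothesis, $p\neq\ell$ or $X$ proper, the groups $H^q_{\et}(X;\ZZ/\ell)$ are finite and vanish for $q>2\dim X$. For $\ell\neq p$ this follows from finiteness of étale cohomology for varieties over separably closed fields (SGA $4\frac12$, Th.\ finitude) and cohomological dimension $\le 2\dim X$. For $\ell=p$ with $X$ proper we use finiteness of $H^q(X,\ZZ/p)$ for proper schemes (via Artin--Schreier and coherent cohomology), with vanishing above $\dim X$. Since $(\pi_1^{\et}X)^{\wedge}_\ell=0$, Theorem \ref{Thm: finite CW complex realizations of a locally noetherian scheme}(2)(b) applies, and the cohomological conditions there are automatically satisfied. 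Item (1) follows.

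\textbf{Item (2).} First, if $X_{\et}$ is $\ell$-adically equivalent to a finite CW complex $Z$, then
\[
H^*_{\et}(X;\QQ_\ell)\cong H^*(Z;\widehat{\ZZ}_\ell)\otimes\QQ_\ell\cong H^*(Z;\QQ)\otimes_\QQ\QQ_\ell
\]
as graded algebras, using Lemma \ref{Lem: quasi-isomorphisms between homotopy limit and the pro space} and Lemma \ref{Lem: sullivan's p-adic completion of finite CW complex is good}. Conversely, suppose $H^*_{\et}(X;\QQ_\ell)\cong H\otimes_\QQ\QQ_\ell$ for a graded $\QQ$-algebra $H$. By Theorem \ref{Thm: formality of varieties}, the CDGA $C^*(X_{\et};\widehat{\ZZ}_\ell)\otimes\QQ_\ell$ (identified with the homotopy limit model via Lemma \ref{Lem: quasi-isomorphisms between homotopy limit and the pro space} and Lemma \ref{Lem: nilpotent $p$-complete finite types are good}, using finite type from \cite{Hu-Zhang-formal-manifold-structures}) is quasi-isomorphic to $H^*_{\et}(X;\QQ_\ell)$ with zero differential, hence to $H\otimes_\QQ\QQ_\ell$. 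This CDGA is the scalar extension of the CDGA $H$ with zero differential, and therefore of an $E_\infty$-algebra over $\QQ$. Item (1) then gives the finite CW realization.

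The main obstacle is the formality step. We need the quasi-isomorphism to hold for the specific rectified cochain model used in Theorem \ref{Thm: finite CW complex realizations of a locally noetherian scheme}, and the algebra isomorphism to be a graded-commutative algebra isomorphism. Both should follow from the naturality of rectification and the identification of minimal models along the zig-zag $\{X_i\}\leftarrow\{(RX)_i\}\rightarrow X$. The finiteness input for $\ell=p$ is the other point requiring a citation, but it is standard.
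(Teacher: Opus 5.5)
Your proposal is correct and follows essentially the same route as the paper. Item (1) comes from checking the finiteness and vanishing hypotheses of Theorem \ref{Thm: finite CW complex realizations of a locally noetherian scheme}(2)(b), and item (2) from combining item (1) with Theorem \ref{Thm: formality of varieties}. The differences are only in details: you justify finiteness of $H^q_{\et}(X;\ZZ/\ell)$ directly via Th.\ finitude and Artin--Schreier where the paper cites Milne and Hu--Zhang, and you prove the ``only if'' half of item (2) from the cohomology of a finite complex rather than through item (1).
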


\begin{proof}
Item (2) follows from item (1) and Theorem \ref{Thm: formality of varieties}. For item (1), note that \cite[VI, Theorem 1.1]{milneLEC} shows that $H^q_{\et}(X;\mathbb{Z}/\ell)=0$ for $q> 2 \mathrm{dim}(X)$ and \cite{Hu-Zhang-formal-manifold-structures}*{Corollary 1.4} shows that $(X_{\et})^{\wedge}_{\ell}$ is $\ell$-complete finite type. Now item (1) follows from Theorem \ref{Thm: finite CW complex realizations of a locally noetherian scheme}.
\end{proof}

\subsection{Weights and Formality}\label{Subsec: Formality}\,

In this subsection, we prove the $\QQ_{p}$-formality for a smooth proper variety in characteristic $p$, using Deligne's idea in \cite{Deligne-Weil-II}*{Section 5}. Then we study the upper and lower bounds for the Frobenius weights on \'etale homotopy groups. 

Let $p$ be a prime number and let $q=p^{r}$ for some $r\geq 1$.  An algebraic number $\alpha$ is said to have \textbf{$q$-weight} $n$ if $|\alpha|=q^{\frac{n}{2}}$ for every embedding $\QQ(\alpha)\hookrightarrow \CC$. Lemma \ref{Lemma: rigidity of automorphisms of a minimal model} and the purity theorem for Frobenius acting on \'etale cohomology (\cite{Deligne-Weil-II}\cite{Chiarellotto-le-Stum-on-purity-of-crystalline-cohomology}\cite{Milne-values-of-zeta-functions-of-varieties-over-finite-fields}) give the following result.

\begin{lemma}\label{Lem: Frobenius weight on minimal model}
Let $\FF_q$ be the finite field of $q$ elements of characteristic $p>0$, and let $\ell$ be a prime number. Let $X$ be a connected smooth proper variety over $\FF_{q}$. Let $M$ be a minimal model of $C^*((X_{\overline{\FF}_q})^{\wedge}_{\et,\ell};\widehat{\ZZ}_{\ell})\otimes_{\widehat{\ZZ}_{\ell}} \QQ_{\ell}$, and let $\phi_M:M\rightarrow M$ be an isomorphism induced by the geometric Frobenius on $X$. Then
each eigenvalue $\alpha$ of $\phi_M$ on $M$ is an algebraic number of integral $q$-weight. In particular, the $q$-weights of eigenvalues of $\phi_M$ make $M$ a weighted CDGA.
\end{lemma}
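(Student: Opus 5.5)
The plan is to build $M$ inductively along its exhaustive tower of elementary extensions, $\QQ_{\ell}=M(1,0)\subset M(1,1)\subset\cdots$, and to arrange at each stage that $\phi_M$ preserves the subalgebra and acts on the new generators with eigenvalues of integral $q$-weight. The weight grading on $M$ is then read off from a splitting of $M$ into generalized eigenspaces. The geometric Frobenius $F$ acts on $(X_{\overline{\FF}_q})^{\wedge}_{\et,\ell}$, and therefore on $C^*((X_{\overline{\FF}_q})^{\wedge}_{\et,\ell};\widehat{\ZZ}_{\ell})\otimes_{\widehat{\ZZ}_{\ell}}\QQ_{\ell}$ by naturality of rectification. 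By Theorem \ref{Fact: Lifting of minimal models}, $F$ lifts to an automorphism $\phi_M$ of $M$ that is unique up to homotopy. Lemma \ref{Lemma: rigidity of automorphisms of a minimal model} presumably says that a homotopy class of automorphisms contains a representative preserving a chosen tower, and that the induced map on indecomposables is determined by the homotopy class. I will use it in this form.

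\textbf{Step 1: the base of the induction.} The $1$-minimal part and the degree-$1$ generators come from $H^1$. The first stage $V_1=H^1$ is Frobenius-equivariantly $H^1_{\et}(X_{\overline{\FF}_q};\QQ_{\ell})$. By purity (Deligne for $\ell\neq p$; Chiarellotto–le Stum and Milne via crystalline or $p$-adic comparison for $\ell=p$), each eigenvalue on $H^i$ is an algebraic number of $q$-weight exactly $i$. For $\ell=p$ the $\QQ_p$-\'etale cohomology is only the slope-zero part of crystalline cohomology, but its eigenvalues are still among those on $H^i_{\crys}$, so they are pure of weight $i$.

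\textbf{Step 2: the inductive step.} Suppose the eigenvalues of $\phi_M$ on $M(n,s)$ all have integral weight. The new generators $V$ of the extension $M(n,s)\subset M(n,s+1)$ map injectively and $\phi_M$-equivariantly, through the differential, into a space built from two pieces:
\begin{enumerate}
\item $H^{n+1}(M(n,s))$, which is a subquotient of $M(n,s)$, so by induction its eigenvalues are algebraic of integral weight;
\item a comparison with $H^{*}(X)$.
\end{enumerate}
Concretely, the minimal-model construction gives generators equal to $\Ker(H^{n+1}(M(n,s))\to H^{n+1}(X))\oplus\Coker(H^n(M(n,s))\to H^n(X))$. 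Both maps are Frobenius-equivariant. The cokernel term is a subquotient of $H^n_{\et}$, hence pure of weight $n$. The kernel term is a subquotient of $H^{n+1}(M(n,s))$, which has integral weights by induction. Since $V$ is $\phi_M$-stable, choosing the extension data equivariantly, $\phi_M$ acts on $M(n,s+1)=M(n,s)\otimes\bigwedge V$ with eigenvalues that are products of eigenvalues on $M(n,s)$ and on $V$. These are again algebraic of integral weight, because weights add under products. This closes the induction, and $M=\bigcup M(n,s)$ is finite-dimensional in each degree, with $M^n$ finite-dimensional by the finite-type results.

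\textbf{Step 3: the weighted CDGA structure.} Decompose $M^n=\bigoplus_w M^n_w$, where $M^n_w$ is the sum of generalized eigenspaces with eigenvalues of weight $w$. Multiplication is $\phi_M$-equivariant, and eigenvalues multiply, so $M_w\cdot M_{w'}\subset M_{w+w'}$. The differential commutes with $\phi_M$, so $d$ preserves each $M_w$. This makes $M$ a weighted CDGA.

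The main obstacle is Step 2: making $\phi_M$ an honest automorphism that preserves the tower and acts semisimply enough on each new $V$. I would first use Lemma \ref{Lemma: rigidity of automorphisms of a minimal model} to replace $\phi_M$ by a tower-preserving representative. I would then pass to generalized eigenspaces rather than eigenspaces, which avoids any semisimplicity assumption.
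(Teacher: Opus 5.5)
Your route is essentially the paper's. There the lemma is read off directly from Lemma \ref{Lemma: rigidity of automorphisms of a minimal model} combined with purity: that lemma gives a homotopic representative of $\phi_M$ preserving an exhaustive tower of finitely generated sub-CDGA's, whose eigenvalues are therefore products of eigenvalues on $H^*(M)\cong H^*_{\et}(X_{\overline{\FF}_q};\QQ_{\ell})$. Your Step 2 simply re-derives that lemma's ``in particular'' clause via the kernel/cokernel description of the new generators.

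There are three corrections to make:
\begin{enumerate}
\item The lemma produces one specific tower; it does not make an arbitrary chosen tower stable. So either run Step 2 on the lemma's tower, or check separately that your $M(n,s)$ are $\phi_M$-stable.
\item The lemma does not assert that the action on indecomposables is a homotopy invariant. That assertion fails when $\pi_1\neq 0$ (Example \ref{Example: rigid invariants}), but you never actually need it.
\item More importantly, $M^n$ is in general infinite-dimensional when $\pi_1\neq 0$. For example, for a curve of genus $\geq 2$ and $\ell\neq p$, $M^1$ is dual to the infinite-dimensional Mal'cev Lie algebra of $\pi_1$. So in Step 3 you cannot appeal to finite-dimensionality of $M^n$. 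Instead, form the weight decomposition on each finite-dimensional $\phi_M$-stable piece $M_i^n$ and glue; the finitely generated tower supplies exactly this local finiteness.
\end{enumerate}
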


\begin{corollary}\label{Cor: Formality for any varieties over finite fields}
Let $\FF_q$ be the finite field of characteristic $p>0$, and let $\ell$ be a prime number. Let $X$ be a connected smooth proper variety over $\FF_{q}$. Then the pro-$\ell$ completion of the \'etale homotopy type of $X_{\overline{\FF}_q}$ is $\QQ_{\ell}$-formal.    
\end{corollary}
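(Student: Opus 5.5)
The plan is Deligne's ``purity implies formality'' argument, run on the weighted minimal model supplied by Lemma \ref{Lem: Frobenius weight on minimal model}. Let $M$ be a minimal model of $C^*((X_{\overline{\FF}_q})^{\wedge}_{\et,\ell};\widehat{\ZZ}_{\ell})\otimes_{\widehat{\ZZ}_{\ell}}\QQ_{\ell}$ with Frobenius automorphism $\phi_M$. After passing to a finite extension of $\QQ_\ell$ if necessary, I would decompose $M=\bigoplus_w M_{(w)}$ into generalized eigenspaces grouped by $q$-weight. Since $\phi_M$ is a CDGA automorphism, the grading is multiplicative and preserved by $d$, because $d$ commutes with $\phi_M$. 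Formality can be checked after a field extension and then descended by the standard argument (formality of minimal models is insensitive to extension of the ground field in characteristic zero). Alternatively, grouping by weight, rather than by individual eigenvalue, is already Galois-stable over $\QQ_\ell$, so the decomposition exists over $\QQ_\ell$ itself.

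Next, purity enters. By the purity theorem (Deligne for $\ell\neq p$; Chiarellotto--Le Stum/Milne, via crystalline comparison, for $\ell=p$), $H^n(M)\cong H^n_{\et}(X_{\overline{\FF}_q};\QQ_\ell)$ is pure of weight $n$. I then want the key inequality: every generator in $I^n(M)$ has weight $\geq n$. I would prove it by induction along the tower of elementary extensions. The $1$-minimal stage is seeded by $H^1$, which has weight $1$. Each new generator $v$ has $dv$ equal to a cocycle of $M(n,s)^{n+1}$ whose class dies in $H^{n+1}$. Since weights of products add and lower generators have weight $\geq$ degree, $dv$ has weight $\geq n+1$, and one can choose the new generators to have weights $\geq n+1$ as well. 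Hence $M$ splits as $M=\bigoplus_{w\ge n} M^n_{(w)}$.

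Now define the subspace $C$ of ``weight $=$ degree'' elements, $C^n=M^n_{(n)}$, and the subspace $N^n=\bigoplus_{w>n}M^n_{(w)}$. Because generators have weight $\geq$ degree, $N$ is an ideal. Moreover, $d(C^n)\subset M^{n+1}_{(n)}=0$, so $C$ consists of cocycles and $C$ is a subalgebra. Purity gives $H^n(M)=H^n(M)_{(n)}$. Thus the inclusion $C\to M$ and the quotient map $M\to M/N\cong C$ exhibit the familiar ``$dd^c$-type'' splitting: $H(C)=C\to H(M)$ is surjective, the quotient $M\to C$ with zero differential is a CDGA map, and it induces an isomorphism on cohomology because weight-$n$ parts of $H^n$ are exactly represented in $C$ and everything of weight $>n$ is acyclic. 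This gives the zigzag $M\to (H^*(M),0)$, which proves formality.

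The main obstacle is the weight inequality for generators, and in particular making sure $\phi_M$ is semisimple enough, or that generalized eigenspaces suffice, and handling the $\ell=p$ case, where Lemma \ref{Lem: Frobenius weight on minimal model} already provides integrality of the weights. I would first attempt the inductive proof of ``weight $\geq$ degree'' directly, using the exactness of taking weight-$w$ generalized eigenspaces.
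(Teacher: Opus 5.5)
\textbf{The gap is in the final formality step.} Up to that point your route is the paper's: the Frobenius weight grading on $M$ from Lemma \ref{Lem: Frobenius weight on minimal model}, purity of $H^*$, the inequality ``weight $\geq$ degree'' with $d(M^{n,n})=0$ (Lemma \ref{Lem: weight and cohomological degree in a weighted minimal CDGA}), and then purity implies formality.

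Your $N=\bigoplus_{w>n}M^n_{(w)}$ is an ideal, but it is not closed under $d$. An element $x\in M^n$ of weight $n+1$ has $dx\in M^{n+1}_{(n+1)}=C^{n+1}$, and this need not vanish. So $M/N$ is not a quotient CDGA, and the projection $\pi:M\to (C,0)$ is not a chain map.

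Concretely, for $\mathbb{P}^1$ the minimal model is $\bigwedge(a,x)$ with $|a|=2$, $|x|=3$, $dx=a^2$, and weights $2$ and $4$. Then $\pi(dx)=a^2\neq 0=d_C\pi(x)$. For the same reason $(C,0)$ cannot be quasi-isomorphic to $M$: $C$ contains exact elements such as $a^2$. You observe this yourself when you say that $C\to H^*(M)$ is only surjective, but then you treat $C$ as if it were $H^*(M)$.

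\textbf{The repair.} Quotient by the differential ideal generated by $N$, namely $N+dN=N\oplus (C\cap dM)$. Here $C\cap dM=d(M^{n}_{(n+1)})$, and it is an ideal of $C$ because $C$ consists of cocycles. This gives a CDGA map
$M\to M/(N+dN)\cong C/(C\cap dM)\cong (H^*(M),0)$.
Purity lets every class be represented in $C$, so this map induces the identity on cohomology.

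Equivalently, send each weight-homogeneous generator whose weight equals its degree to its cohomology class, and every other generator to $0$. This is exactly the map $\rho$ in Proposition \ref{Prop: purity implies formality}. There the troublesome case $w=n+1$ is handled by $\rho(dx)=[dx]=0$, using that every monomial of $dx$ is a product of generators whose weight equals their degree. With this correction your argument goes through.
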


\begin{proof}
By \cite{Deligne-Weil-II}\cite{Chiarellotto-le-Stum-on-purity-of-crystalline-cohomology}\cite{Milne-values-of-zeta-functions-of-varieties-over-finite-fields}\cite{Jannsen-weights}*{Section 2-3}, the geometric Frobenius action on $H^n_{\et}(X_{\overline{\FF}_q};\QQ_{\ell})\cong H^n((X_{\overline{\FF}_q})^{\wedge}_{\et,\ell};\widehat{\ZZ}_{\ell})\otimes_{\widehat{\ZZ}_{\ell}}\QQ_{\ell}$ is of pure weight $n$. Lemma \ref{Lem: Frobenius weight on minimal model} equips the minimal model $M$ of $C^*((X_{\overline{\FF}_q})^{\wedge}_{\et,\ell};\widehat{\ZZ}_{\ell})\otimes_{\widehat{\ZZ}_{\ell}} \QQ_{\ell}$ with a compatible weight grading. Proposition \ref{Prop: purity implies formality} shows that $M$ is $\QQ_{\ell}$-formal.
\end{proof}

The case $\ell\neq p$ in the following theorem is essentially due to Deligne-Griffiths-Morgan-Sullivan (\cite{Deligne-Griffiths-Morgan-Sullivan-formality}) and Deligne (\cite{Deligne-Weil-II}*{Corollary 5.3.7}).

\begin{theorem}\label{Thm: formality of varieties}
Let $k$ be a separably closed field of characteristic $p\geq 0$, and let $\ell$ be a prime. Let $X$ be a connected smooth proper variety over $k$. Then the pro-$\ell$ completion of the \'etale homotopy type of $X$ is $\QQ_{\ell}$-formal.
\end{theorem}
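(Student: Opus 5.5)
The strategy is a standard spreading-out argument that reduces Theorem \ref{Thm: formality of varieties} to Corollary \ref{Cor: Formality for any varieties over finite fields} in positive characteristic. In characteristic $0$ the plan is to reduce to $\CC$ and use the Deligne--Griffiths--Morgan--Sullivan theorem, or alternatively to specialize to a finite field when $\ell$ differs from the residue characteristic.

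\textbf{Step 1 (invariance under change of separably closed field).} First I would show that $\QQ_\ell$-formality of $(X_{\et})^{\wedge}_\ell$ does not change under extension of separably closed fields $k\subset k'$, and also does not change when passing from the perfection or algebraic closure. For $\ell\neq p$, the map $(X_{k'})_{\et}\to X_{\et}$ is an $\ell$-adic weak equivalence by smooth and proper base change. The resulting $p$-adic weak equivalence gives a quasi-isomorphism of $S^*(-;\widehat{\ZZ}_\ell)\otimes\QQ_\ell$ (Lemma \ref{Lem: quasi-isomorphisms between homotopy limit and the pro space}), and hence isomorphic minimal models. For $\ell=p$, I would use the same argument with the base change result for $\ZZ/p$-\'etale cohomology of proper varieties in characteristic $p$, namely invariance under extension of algebraically closed fields, which one gets from the Artin--Schreier sequence and coherent base change. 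Purely inseparable extensions do not change the \'etale site. The upshot is that formality for $X$ is equivalent to formality for $X_{\bar k}$.

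\textbf{Step 2 (spreading out in characteristic $p>0$).} Next I would write $X$ as defined over a finitely generated subfield $k_0\subset\bar k$. I would choose a smooth proper model $\mathcal X\to S$ over an integral scheme $S$ of finite type over $\FF_p$ with generic point $\Spec k_0$, and pick a closed point $s\in S$ with residue field $\FF_q$.

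For $\ell\neq p$, smooth and proper base change along a strict henselization gives $\ell$-adic weak equivalences between $(\mathcal X_{\bar s})_{\et}$ and $(\mathcal X_{\bar\eta})_{\et}$ on pro-$\ell$ completions, using the comparison of Artin--Mazur/Friedlander. Then Corollary \ref{Cor: Formality for any varieties over finite fields} applied to $\mathcal X_s$ over $\FF_q$ gives formality of $X_{\bar k}$.

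\textbf{The case $\ell=p$ is the main obstacle}, because $p$-adic cohomology is not locally constant in smooth proper families (the $p$-rank can jump). Here I would not specialize at all. Instead I would run Deligne's weight argument directly over the function field. I would choose a Frobenius-type operator: since $k_0$ is finitely generated over $\FF_p$, I could try to use the geometric Frobenius of a spreading-out over a finite field $\FF_q$ in which $S$ is defined, acting on the generic geometric fibre through the absolute $q$-Frobenius $F_{X/\FF_q}$. That endomorphism acts on $(X_{\bar k})^{\wedge}_{\et,p}$ compatibly, and hence on the minimal model by Theorem \ref{Fact: Lifting of minimal models}. It acts on $H^n_{\et}(X_{\bar k};\QQ_p)$, which is the slope-$0$ part of crystalline cohomology, and its eigenvalues are $p$-adic units of complex absolute value $q^{n/2}$, by purity of crystalline cohomology (Chiarellotto--Le Stum) combined with the unit-root part. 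If the $\ell=p$ purity needed in Corollary \ref{Cor: Formality for any varieties over finite fields} really requires a finite base field, I would instead use the fact that $H^*_{\et}(X_{\bar k};\ZZ/p)$, with its cup product, is determined by the unit-root F-crystal. I would then reduce to the locus of $S$ where the $p$-rank (hence $\dim H^n_{\et}(-;\QQ_p)$) is generic and constant. On that open stratum the sheaves $R^n f_*\ZZ_p$ are lisse, so specialization gives a $p$-adic weak equivalence on the simply connected part. Then Corollary \ref{Cor: Formality for any varieties over finite fields} applies at a closed point of the stratum.

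\textbf{Step 3 (characteristic $0$).} For $k$ of characteristic $0$, Step 1 reduces to $k=\bar k$, and then to $\CC$ via a finitely generated subfield embedded in $\CC$. The Artin comparison theorem identifies $(X_{\et})^{\wedge}_\ell$ with $(X(\CC))^{\wedge}_\ell$. The realization $S^*(X(\CC);\widehat{\ZZ}_\ell)\otimes\QQ_\ell\simeq S^*(X(\CC);\QQ)\otimes_{\QQ}\QQ_\ell$, as in the proof of Corollary \ref{Cor: integral lifting of simply-connected p-complete spaces}, then shows that the $\QQ_\ell$-minimal model is the scalar extension of the rational one. That rational model is formal by Deligne--Griffiths--Morgan--Sullivan, and formality is preserved under base change.
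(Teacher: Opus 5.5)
Your treatment of characteristic $0$ and of $\ell\neq p$ is fine; it is the Deligne--Griffiths--Morgan--Sullivan/Deligne argument that the paper only cites. The gap is in the case $\ell=p$, which is the only case the paper actually proves.

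\textbf{Your first route for $\ell=p$ fails.} When the finitely generated field $k_0$ has positive transcendence degree over $\FF_p$, $X_{\bar k}$ has no Frobenius endomorphism over $\bar k$. The $q$-power Frobenius of $\mathcal X$ lies over the Frobenius of $S$, which does not fix $k_0$. On $X_{\bar k}$ it therefore only induces the absolute Frobenius, and the absolute Frobenius acts as the identity on étale cohomology and on the étale homotopy type, so it carries no weight information. For the same reason, the $\sigma$-semilinear crystalline Frobenius over $W(\bar k)$ has no eigenvalues to which purity applies.

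\textbf{Your fallback is justified incorrectly.} Constancy of $\dim H^n_{\et}(-;\QQ_p)$ does not control $p$-torsion, so it does not make $R^nf_*\ZZ/p^a$ lisse. For instance, for Enriques surfaces in characteristic $2$, $H^1_{\et}(-;\FF_2)$ is $\FF_2$ for singular members and $0$ for supersingular ones, while all $\QQ_2$-cohomology is that of a point. Consequently, specialization at a closed point of your constant-$p$-rank locus need not be a $p$-adic equivalence.

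\textbf{How the fallback can be repaired.} By the proper finiteness theorem, which holds for $p$-torsion coefficients, the sheaves $R^nf_*\FF_p$ are constructible. Hence they are locally constant on some dense open $U\subset S$. For a closed point $s\in U$, the maps
$H^*(\mathcal X_{\bar s};\ZZ/p^a)\cong H^*(\mathcal X\times_S\Spec\mathcal O^{sh}_{S,\bar s};\ZZ/p^a)\to H^*(\mathcal X_{\bar\eta};\ZZ/p^a)$
are isomorphisms for $a=1$, hence for all $a$ by the five lemma. This zigzag of étale homotopy types then yields quasi-isomorphic $E_\infty$-algebras $S^*(-;\widehat{\ZZ}_p)\otimes_{\widehat{\ZZ}_p}\QQ_p$. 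No simple-connectivity hypothesis is needed, so your restriction to ``the simply connected part'' should be dropped, since the theorem has no $\pi_1$ assumption. Corollary \ref{Cor: Formality for any varieties over finite fields} applied at $s$ then finishes the argument.

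\textbf{What the paper does instead.} It never specializes the homotopy type. It chooses a closed point $s$ at which $H^r_{\et}(\chi_{\overline F};\QQ_p)\cong H^r_{\et}(\chi_{\overline{k_s}};\QQ_p)$ Galois-equivariantly (Gros--Suwa, Jannsen). It then takes a lift $\phi\in G_s\subset\Gal(\overline F/F)$ of the Frobenius of $\overline{k_s}$ from the decomposition group. Because $\phi$ fixes $F$, it genuinely acts on $\chi_{\overline F}$, hence on its $\QQ_p$-minimal model, and $(\phi^{-1})^*$ is pure of weight $r$ on $H^r$. Lemma \ref{Lemma: rigidity of automorphisms of a minimal model} turns this into a compatible weight grading on the minimal model. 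Proposition \ref{Prop: purity implies formality} then gives formality of $(\chi_{\overline F})^{\wedge}_{\et,p}$, and invariance under extension of algebraically closed fields transfers this to $X$.

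This decomposition-group Frobenius lift is exactly the idea missing from your first route. The two approaches trade off as follows: the paper's needs only a cohomology-level, $\QQ_p$-coefficient comparison, but reruns the weight argument on the generic fibre. The repaired specialization route needs lisseness at the torsion level, but then reduces directly to the finite-field corollary.
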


\begin{proof}
We only prove the case $\ell=p$ here. There exists a commutative ring $S$ which is finite type over $\ZZ$ and a smooth proper scheme $\chi$ over $S$ such that the fraction field $F$ of $S$ is contained in $k$ and $\chi_{k}=X$. For a closed point $s$ of $\Spec(S)$, let $k_s$ be its residue field. By Zariski's lemma, $k_s$ is a finite field of characteristic $p$. Choose a closed point $s\in \Spec(S)$. Let $G_s\subset \Gal(\overline{F}/F)$ be the decomposition group associated with $s$. Choose a lifting $\phi\in G_s$ of the Frobenius automorphism of $\overline{k_s}$. 

\cite{Gros-Suwa}*{Chapter II, Theorem 2.1}\cite{Jannsen-weights}*{(3.2)} shows that there exists a closed point $s\in \Spec(S)$ such that, for every $r$, there is a Galois-equivariant isomorphism between $H^r_{\et}(\chi_{\overline{F}};\QQ_{p})$ and  $H^r_{\et}(\chi_{\overline{k_s}};\QQ_{p})$. Thus, each eigenvalue of $(\phi^{-1})^*$ on $H^r_{\et}(\chi_{\overline{F}};\QQ_{p})$ is an algebraic number of pure weight $r$. The action of $\phi^{-1}$ on $\chi_{\overline{F}}$ induces an automorphism $\sigma$ of a minimal model $M_{\chi,\overline{F}}$ of $C^*((\chi_{\overline{F}})^{\wedge}_{\et,p};\widehat{\ZZ}_p)\otimes_{\widehat{\ZZ}_p}\QQ_p$. By Lemma \ref{Lemma: rigidity of automorphisms of a minimal model}, the automorphism $\sigma$ makes $M_{\chi,\overline{F}}$ a weighted minimal CDGA compatible with the Frobenius weights. Proposition \ref{Prop: purity implies formality}, together with the weight purity of $H^r_{\et}(\chi_{\overline{F}};\QQ_{p})$ for all $r$, shows that $(\chi_{\overline{F}})^{\wedge}_{\et,p}$ is $\QQ_p$-formal. \cite{Artin-Mazur-etale-homotopy}*{Corollary 12.12} gives a canonical homotopy equivalence between  $(\chi_{\overline{F}})^{\wedge}_{\et,p}$ and $(\chi_{\overline{k}})^{\wedge}_{\et,p}=(X_{\overline{k}})^{\wedge}_{\et,p}$. This completes the proof.
\end{proof}

We now extend the discussion of Frobenius weights on \'etale cohomology to \'etale homotopy groups using $\QQ_{\ell}$-minimal models.

\begin{definition}[\cite{Jannsen-weights}*{Section 2}]\label{Def: weights of representations of Galois groups}
Let $k$ be a field finitely generated over its prime field, and let $\ell$ be a prime. Let $V$ be a finite-dimensional continuous $\QQ_{\ell}$-representation of $\Gal(\overline{k}/k)$.
\begin{enumerate}[leftmargin=0.25in]
    \item First suppose that $k$ is a finite field with $q$ elements. The representation $V$ is called \textbf{pure of weight $n$} if each eigenvalue $\alpha$ of the geometric Frobenius action $\phi$, which is the inverse of the arithmetic Frobenius action, is an algebraic number of $q$-weight $n$. $V$ is called \textbf{mixed of weights $n_1,...,n_r$} if its eigenvalues of $\phi$ are algebraic numbers of $q$-weights among $n_1,...,n_r$.
    \item For a general finitely generated field $k$, $V$ is called \textbf{pure of weight $n$} if there exists an integral scheme $S$ of finite type over $\ZZ$ with function field $k$ such that, for any closed point $s\in \Spec(S)$, the inertia group $I_s\subset \Gal(\overline{k}/k)$ acts trivially on $V$ and the restricted representation $V$ over $\Gal(\overline{k_s}/k_s)=G_s/I_s$ is pure of weight $n$, where $G_s$ is the decomposition group of $s$ and $k_s$ is the residue field of $s$. The notion of being \textbf{mixed of weights $n_1,...,n_r$} is defined analogously.
\end{enumerate}
\end{definition}

\begin{theorem}\label{Thm: weights on etale homotopy groups}
Let $k$ be a field of characteristic $p\geq 0$ that is finitely generated over its prime field, and let $\ell$ be a prime number. Let $X$ be a connected smooth proper variety over $k$, equipped with a $k$-rational point. Assume that $\pi^{\et}_1(X_{\overline{k}})^{\wedge}_{\ell}=0$. Then, for every $n\geq 2$, the continuous representation $\pi_n((X_{\overline{k}})^{\wedge}_{\et,\ell})\otimes_{\widehat{\ZZ}_{\ell}}\QQ_{\ell}$  of $\Gal(\overline{k}/k)$ is mixed of weights lying in the interval $[n,2^{n-1}]$. Moreover, the Whitehead product is $\Gal(\overline{k}/k)$-equivariant and additive with respect to the weight grading.
\end{theorem}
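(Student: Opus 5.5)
The plan is to transport the problem to the $\QQ_\ell$-minimal model and read off weights from the purity of cohomology, then pass to homotopy groups via the pairing of Theorem \ref{Thm: minimal model computes rational homotopy groups} (in its pro-space form, Corollary \ref{Cor: duality between minimal model and homotopy groups for pro spaces}).

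\textbf{Reduction to a finite field.} As in the proof of Theorem \ref{Thm: formality of varieties}, spread $X$ (with its rational point) out to a smooth proper scheme $\chi$ over an integral $S$ of finite type over $\ZZ$ with function field $k$. For a closed point $s$ with $\ell$ invertible on $S$, or in the case $\ell=p$ at a point chosen via \cite{Gros-Suwa}\cite{Jannsen-weights}, specialization identifies $(X_{\overline{k}})^{\wedge}_{\et,\ell}$ with $(\chi_{\overline{k_s}})^{\wedge}_{\et,\ell}$. This identification uses \cite{Artin-Mazur-etale-homotopy}*{Corollary 12.12} and the smooth--proper base change, and it is equivariant for $G_s\to\Gal(\overline{k_s}/k_s)$. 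In particular the inertia group $I_s$ acts trivially on the homotopy groups. So it suffices to treat $k=\FF_q$ with the geometric Frobenius $\phi$. By Lemma \ref{Lem: Frobenius weight on minimal model}, $\phi$ induces an automorphism $\phi_M$ of the minimal model $M$. Using Lemma \ref{Lemma: rigidity of automorphisms of a minimal model}, we may take $M=\bigwedge^*V$ with $V$ $\phi_M$-stable, and $M$ becomes a weighted CDGA with $d$ preserving weights. By Corollary \ref{Cor: duality between minimal model and homotopy groups for pro spaces}(1), $V^n=I^n(M)$ is dual, Frobenius-equivariantly, to $\pi_n\otimes\QQ_\ell$. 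The pairing is natural, and the action on $\pi_n$ is its dual, so weights correspond; it is convenient to state the bounds for $V^n$.

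\textbf{Weight bounds by induction on $n$.} Since $X$ is $\ell$-adically simply connected, $V^1=0$, and $V^2\cong H^2$ is pure of weight $2$. For $n\ge3$, I will show that $V^n$ has weights in $[n,2^{n-1}]$. Split $V^n=V^n_{c}\oplus V^n_{d}$ equivariantly, using the generalized eigenspace decomposition. Here $V^n_{c}$ consists of the $d$-closed generators, which inject into $H^n$ modulo decomposables, so their weights equal $n$ by purity. On $V^n_d$ the differential $d$ is injective into decomposables $(\bigwedge^{\ge2}V)^{n+1}$, and it preserves weights. A decomposable element in degree $n+1$ is a sum of products $v_1\cdots v_j$ with $j\ge2$, $\deg v_i\ge2$, and $\sum\deg v_i=n+1$, so each $\deg v_i\le n-1$. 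By induction each factor has weight in $[\deg v_i,2^{\deg v_i-1}]$. The lower bound is then at least $n+1>n$; the statement only needs $\ge n$, and a tighter lower bound is not required. For the upper bound, maximize $\sum 2^{d_i-1}$ subject to $\sum d_i=n+1$, $d_i\ge2$, $j\ge2$. By convexity the maximum occurs at the extreme split $(n-1,2)$, giving $2^{n-2}+2\le2^{n-1}$ for $n\ge3$. Hence every eigenvalue weight of $V^n$ lies in $[n,2^{n-1}]$, and integrality of the weights is given by Lemma \ref{Lem: Frobenius weight on minimal model}.

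\textbf{Whitehead products.} Equivariance follows from naturality of the Whitehead product under the Galois action on the pro-space. Additivity of weights comes from Corollary \ref{Cor: duality between minimal model and homotopy groups for pro spaces}(2): the bracket is dual to the quadratic part of $d$, and $d$ preserves the weight grading. Hence the dual bracket sends weight $a\otimes b$ to weight $a+b$. The sign convention here, homotopy weight equal to negative cohomological weight, has to be fixed once and matched with the statement.

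\textbf{Main obstacle.} The delicate step is the splitting $V^n=V^n_c\oplus V^n_d$ as Frobenius-stable subspaces, together with the weight grading on $M$ itself. For this we need $\phi_M$ to be semisimplifiable compatibly with $d$, i.e.\ a genuine weight grading rather than just generalized eigenspaces. I would take the weight decomposition into generalized eigenspaces of $\phi_M$ grouped by absolute value, which is canonical and respected by $d$ and products, so semisimplicity is unnecessary. The other point needing care is the $\ell=p$ specialization, where one must cite the same comparison used in Theorem \ref{Thm: formality of varieties}.
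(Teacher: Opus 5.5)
Your proposal is correct. Its numerical core is the same as the paper's, but you run it in a different CDGA.

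\textbf{The core estimate.} New closed generators have weight $n$ by purity. The remaining generators are detected by $d$ in degree-$(n+1)$ decomposables, whose weights are bounded by sums of $2^{d_i-1}$.

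\textbf{The paper's route.} After reducing to $\FF_q$, the paper first applies purity $\Rightarrow$ formality in its weighted form (Proposition~\ref{Prop: purity implies formality} and Remark~\ref{Rmk: formality uses a weighted CDGA morphism}), together with weighted uniqueness of minimal models. This transfers the weights on $I^n(M)$ to a weighted minimal model $N$ of $(H^*_{\et},d=0)$. It then builds $N$ inductively with $V_n=\Ker(H^{n+1}(N(n-1))\to H^{n+1})\oplus\Coker(H^n(N(n-1))\to H^n)$ and bounds the kernel part inside $N(n-1)^{n+1}$.

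\textbf{Your route.} You stay in $M$ and split $V^n=V^n_c\oplus V^n_d$. The closed generators inject into $H^n(M)$ because $dM$ is decomposable. On the complement, $d$ is injective. This is the same cokernel/kernel dichotomy, but it needs only minimality and purity of $H^*(M)$. No formality and no weighted lifting/uniqueness (Lemma~\ref{Lem: weighted liftings of weighted minimal model}) are used.

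\textbf{What each route buys.} Your argument is an upper-bound companion to Lemma~\ref{Lem: weight and cohomological degree in a weighted minimal CDGA} and applies to any weighted minimal CDGA with pure cohomology. Your convexity count also gives the sharper bound $2^{n-2}+2$ for $n\geq 3$. The paper's detour, in exchange, shows that the weights on $\pi_*\otimes\QQ_\ell$ are computed from the graded ring $H^*_{\et}$ alone.

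\textbf{Two small repairs.}

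First, Lemma~\ref{Lemma: rigidity of automorphisms of a minimal model} does not produce a $\phi_M$-stable generating space $V$. For non-semisimple $\phi_M$, a stable complement to the decomposables need not exist. What you need, and what your last paragraph effectively uses, is a weight-graded complement. It exists because the decomposables form a weight-graded subspace once $M$ is weighted via Lemma~\ref{Lem: Frobenius weight on minimal model}. Then take $V^n_c=V^n\cap\Ker d$ and any weight-graded complement $V^n_d$.

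Second, for $\ell=p$, smooth--proper base change does not identify $(X_{\overline k})^{\wedge}_{\et,p}$ with the homotopy type of a special fibre. The proof of Theorem~\ref{Thm: formality of varieties} proceeds differently:
\begin{enumerate}
\item a Frobenius lift $\phi\in G_s$ acts on the minimal model of $(\chi_{\overline F})^{\wedge}_{\et,p}$;
\item purity comes from the Galois-equivariant comparison of $\QQ_p$-cohomology;
\item the result transfers to $X_{\overline k}$ by Corollary 12.12 of \cite{Artin-Mazur-etale-homotopy}.
\end{enumerate}
Your induction consumes only purity of $H^*(M)$ under that automorphism, so it runs unchanged in this setting.
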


\begin{proof}
Let $M$ be a minimal model of $C^*((X_{\overline{k}})^{\wedge}_{\et,\ell};\widehat{\ZZ}_{\ell})\otimes_{\widehat{\ZZ}_{\ell}}\QQ_{\ell}$. Since $X$ is smooth and proper, there exists a commutative ring $S$ which is finite type over $\ZZ$ and a smooth proper scheme $\chi$ over $S$ such that the fraction field of $S$ is $k$ and $\chi_{k}=X$. Moreover, \cite{Gros-Suwa}*{Chapter II, Theorem 2.1}\cite{Jannsen-weights}*{(3.2)} shows that there exists a nonempty open subset $U$ of $\Spec(S)$ such that, for every closed point $s\in U$ and every $r$, there is a Galois-equivariant isomorphism between $H^r_{\et}(\chi_{\overline{k}};\QQ_{\ell})$ and  $H^r_{\et}(\chi_{\overline{k_s}};\QQ_{\ell})$, where $k_s$ is the residue field of $s$. As argued in the proof of Theorem \ref{Thm: formality of varieties}, by Definition \ref{Def: weights of representations of Galois groups} and \cite{Artin-Mazur-etale-homotopy}*{Corollary 12.12, Corollary 12.13}, it suffices to prove the assertion after specializing to a finite field. Now we assume that $k$ is finite.

By Corollary \ref{Cor: duality between minimal model and homotopy groups for pro spaces} and Proposition \ref{Prop: continuity of Galois action on automorphism group of minimal model}, it suffices to prove that the geometric Frobenius weights on $I^{n}(M)$ lie between $n$ and $2^{n-1}$ for any $n$. By Lemma \ref{Lem: Frobenius weight on minimal model} and Remark \ref{Rmk: formality uses a weighted CDGA morphism}, it suffices to construct a weighted minimal model $N$ for the cohomology algebra $H^*_{\et}(X;\QQ_{\ell})$, equipped with the zero differential, such that the Frobenius weights of $I^n(N)$ lie between $n$ and $2^{n-1}$. Write $H^* $ for the CDGA $(H^*_{\et}(X;\QQ_{\ell}),d=0)$ equipped with the zero differential.

Recall the inductive construction of a minimal model for $H^*$. Set $V_2=H^2$ and $N(2)=\bigwedge^* V_2$, and let $N(2)\rightarrow H^*$ be the CDGA morphism extending the identity on $H^2$. Since $H^2$ is pure of weight $2$, the same is true for $V_2$. Suppose inductively that, for some $n\geq 3$, we have constructed a $(n-1)$-minimal model $N(n-1)\rightarrow H^*$ and a graded vector subspace $\bigoplus_{i=2}^{n-1} V_{i}$ of $N(n-1)$ such that $N(n-1)=\bigwedge^* (\bigoplus_{i=2}^{n-1} V_{i})$ and the weights of each $V_i$ lie between $i-1$ and $2^{i-2}$. To construct an $n$-minimal model of $H^*$, we set $V_n=\Ker(H^{n+1}(N(n-1))\rightarrow H^{n+1})\oplus  \Coker(H^{n}(N(n-1))\rightarrow H^{n})$. Then we build up an elementary extension $N(n)$ of $N(n-1)$ by $V_n$, together with a CDGA morphism $N(n)\rightarrow H^*$. Every weight of $V_n$ is at least $n$ and the largest possible weight $w$ in $V_n$ is bounded above by the largest weight occurring in $H^{n+1}(N(n-1))$. 

We now show that $w\leq 2^{n-1}$. Consider a monomial of total cohomological degree $n+1$ of $N(n-1)$, whose factors have degrees $2\leq q_1\leq ...\leq q_s$. Since $2^{a-1}+2^{b-1}\leq 2^{a+b-2}$ for all $a,b\geq 2$, repeated application of this inequality gives $2^{q_1-1}+...+2^{q_s-1}\leq 2^{n-1}$. Therefore, the inductive hypothesis for the weights of $N(n-1)$ implies that $w\leq 2^{n-1}$. 

For the ``moreover'' part, the composition $\Gal(\overline{k}/k)\rightarrow \Aut_h(M)\rightarrow \bigoplus_n \End(I^n(M))$ gives a Galois action on $I^n(M)$. Furthermore, the map $I^n(M)\rightarrow \sum_{s+t=n,s<t}I^s(M)\otimes I^t(M)$ induced by the quadratic part of the differential of $M$ is Galois-equivariant. Then the ``moreover'' part
follows from Corollary \ref{Cor: duality between minimal model and homotopy groups for pro spaces}.
\end{proof}

\subsection{Continuity of Galois Representations in Homotopy Classes of Automorphisms of Minimal Models}\,

In this subsection, we provide an affirmative answer to Deligne's question in \cite{Deligne-Weil-II}*{(5.3.9.1)}.

\begin{question}\label{Question: Deligne 1}
Let $k$ be a field of characteristic $p\geq 0$, and let $\ell\neq p$ be a prime number. Let $X$ be a connected smooth proper variety over $k$. Let $M$ be a minimal model of $C^*((X_{\overline{k}})^{\wedge}_{\et,\ell};\widehat{\ZZ}_{\ell})\otimes_{\widehat{\ZZ}_{\ell}}\QQ_{\ell}$.
Is the natural homomorphism $\Gal(\overline{k}/k)\rightarrow \Aut_h(M)$ continuous?
\end{question}

We use the results in \cite{Pridham-Galois-actions-on-homotopy-groups-algebraic-varieties} to answer this question. We first recall some terminology from \cite{Pridham-Galois-actions-on-homotopy-groups-algebraic-varieties}.

Let $\Gamma$ be a profinite group. The \textbf{pro-algebraic completion} of $\Gamma$ consists of a pro-algebraic group $\Gamma^{\mathrm{alg}}$ over $\QQ_{\ell}$, together with a continuous homomorphism $\Gamma\rightarrow \Gamma^{\mathrm{alg}}(\QQ_{\ell})$, satisfying the following universal property: for every pro-algebraic group $H$ over $\QQ_{\ell}$, each continuous homomorphism $\Gamma\rightarrow H(\QQ_{\ell})$ uniquely factors through $\Gamma^{\mathrm{alg}}(\QQ_{\ell})$ (\cite{Pridham-Galois-actions-on-homotopy-groups-algebraic-varieties}*{p.~29, Definition 3.1}). This definition is analogous to that of continuous Mal'cev completion. The image of $\Gamma\rightarrow \Gamma^{\mathrm{alg}}(\QQ_{\ell})$ is Zariski dense. Pridham also defines the \textbf{relative Mal'cev completion} of a pro-groupoid $\Gamma'$ with respect to an algebraic representation of $\Gamma'$, which unifies both pro-algebraic completion and continuous Mal'cev completion (\cite{Pridham-Galois-actions-on-homotopy-groups-algebraic-varieties}*{p.~29, Definition 3.3}).

Let $(Z,z)=\{(Z_j,z_j)\}$ be a pro system of pointed connected simplicial sets, whose $0$-th level sets form a constant pro system. Define $H^n(Z;\QQ_{\ell})=(\varprojlim_a\varinjlim_j H^n(Z_j;\ZZ/\ell^{a}))\otimes_{\widehat{\ZZ}_{\ell}} \QQ_{\ell}$. \cite{Pridham-Galois-actions-on-homotopy-groups-algebraic-varieties}*{p.~34, Definition 3.23} defines the \textbf{unipotent $\QQ_{\ell}$-Mal'cev homotopy type} $Z^{\mathrm{Mal}}$ of $Z$. We briefly review its construction as follows. For simplicity, assume that each $Z_j$ has a single vertex. Let $(G,\overline{W})$ denote the pair of Dwyer-Kan loop and  delooping functors between simplicial sets and simplicial groupoids. $\pi_0(Z^{\mathrm{Mal}})$ is the continuous Mal'cev $\QQ_{\ell}$-completion of $\pi_0(G(Z))^{\wedge}_{\ell}$. In simplicial degree $n$, the pro-unipotent group $(Z^{\mathrm{Mal}})_n$ is the relative Mal'cev completion of $(G(Z)_{n})^{\wedge}_{\ell}$ with respect to the homomorphism $(G(Z)_{n})^{\wedge}_{\ell}\rightarrow \pi_0(G(Z))^{\wedge}_{\ell}$ induced by the face maps.

When $H^n(Z;\QQ_{\ell})$ is finite-dimensional for every $n$, \cite{Pridham-Galois-actions-on-homotopy-groups-algebraic-varieties}*{p.~42, Definition 4.3 and Lemma 4.4} defines the pro-algebraic group $\mathrm{RAut}(Z^{\mathrm{Mal}})$ over $\QQ_{\ell}$. Roughly speaking, when each $Z_j$ has a single vertex, $\mathrm{RAut}(Z^{\mathrm{Mal}})$ is the pro-algebraic group of automorphisms of the simplicial pro-nilpotent Lie algebra $\Lie(Z^{\mathrm{Mal}})$.
 
Say that the profinite group \textbf{$\Gamma$ acts on $(Z,z)$ continuously} if $(Z,z)$ admits a representative pro system $\{(Z_i,z_i)\}$,  together with a pro-system $\{U_i\}$ of open normal subgroups of $\Gamma$, such that $\Gamma/U_i$ acts on each $Z_i$ while fixing $z_i$ and all transition morphisms in $\{(Z_i,z_i)\}$ are equivariant. Given such a continuous action, the naturality of $(-)^{\mathrm{Mal}}$ yields a group homomorphism $\phi: \Gamma\rightarrow \Aut(Z^{\mathrm{Mal}})\rightarrow \Aut_h(Z^{\mathrm{Mal}})\cong \mathrm{RAut}(Z^{\mathrm{Mal}})(\QQ_{\ell})$, where $\Aut_h(Z^{\mathrm{Mal}})$ is the automorphism group of $Z^{\mathrm{Mal}}$ in the homotopy category of simplicial pro-unipotent groups over $\QQ_{\ell}$ and the isomorphism $\Aut_h(Z^{\mathrm{Mal}})\cong \mathrm{RAut}(Z^{\mathrm{Mal}})(\QQ_{\ell})$ is given by \cite{Pridham-Galois-actions-on-homotopy-groups-algebraic-varieties}*{p.~42, the paragraph above Lemma 4.4}. The following result shows that $\phi$ is continuous.

\begin{lemma}\label{Lem: continuity on relative algebraic homotopy type}
With the same notation as above, suppose that $\Gamma$ acts on $(Z,z)$ continuously. Further assume that $H^n(Z;\QQ_{\ell})$ is finite-dimensional and carries a continuous $\Gamma$-representation for every $n$. Then there exists a morphism of pro-algebraic groups $\Phi:\Gamma^{\mathrm{alg}}\rightarrow \mathrm{RAut}(Z^{\mathrm{Mal}})$ such that the homomorphism $\phi: \Gamma\rightarrow \mathrm{RAut}(Z^{\mathrm{Mal}})(\QQ_{\ell})$ factors as $\Gamma\rightarrow \Gamma^{\mathrm{alg}}(\QQ_{\ell})\xrightarrow{\Phi(\QQ_{\ell})} \mathrm{RAut}(Z^{\mathrm{Mal}})(\QQ_{\ell})$. In particular, $\phi$ is continuous.
\end{lemma}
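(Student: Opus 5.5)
The plan is to build $\Phi$ from Pridham's machinery, using the fact that $\mathrm{RAut}(Z^{\mathrm{Mal}})$ is a pro-algebraic group whose structure is controlled by cohomology. First, by \cite{Pridham-Galois-actions-on-homotopy-groups-algebraic-varieties}*{Lemma 4.4}, $\mathrm{RAut}(Z^{\mathrm{Mal}})$ is an extension
\[
1\rightarrow U\rightarrow \mathrm{RAut}(Z^{\mathrm{Mal}})\rightarrow R\rightarrow 1,
\]
where $U$ is pro-unipotent. Here $R$ is a pro-algebraic subgroup of $\prod_n GL(H^n(Z;\QQ_{\ell}))$, possibly the reductive or Levi-type quotient acting on the cohomology, or on the abelianized homotopy data. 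Each $H^n(Z;\QQ_\ell)$ is finite-dimensional and carries a continuous $\Gamma$-representation. By the universal property of $\Gamma^{\mathrm{alg}}$, the composite $\Gamma\rightarrow R(\QQ_\ell)\subset\prod_n GL(H^n)(\QQ_\ell)$ is therefore continuous and factors through a morphism $\Gamma^{\mathrm{alg}}\rightarrow R$.

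The main step is lifting this factorization to $\mathrm{RAut}(Z^{\mathrm{Mal}})$ itself. I would use continuity of the action on $(Z,z)$ directly. Choose the representative $\{(Z_i,z_i)\}$ with $\Gamma/U_i$ acting on $Z_i$. Then $\mathrm{RAut}(Z^{\mathrm{Mal}})$ is an inverse limit of algebraic quotients $\mathrm{RAut}_m$, obtained for instance by truncating the simplicial Lie algebra $\Lie(Z^{\mathrm{Mal}})$ in lower central and Postnikov degree. It then suffices to show that each composite $\Gamma\rightarrow \mathrm{RAut}_m(\QQ_\ell)$ is continuous for the $\ell$-adic topology, because continuous homomorphisms into $\QQ_\ell$-points of algebraic groups factor uniquely through $\Gamma^{\mathrm{alg}}$. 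Compatibility over $m$ then assembles the morphisms into $\Phi$, and the universal property makes them agree.

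For continuity at a finite stage $m$, write $\Gamma\rightarrow\mathrm{RAut}_m(\QQ_\ell)$ as an extension of the continuous map to $R_m(\QQ_\ell)$ by data in $U_m(\QQ_\ell)$. Since $U_m$ is unipotent, it has a finite filtration with vector-group quotients. The $\Gamma$-action on each quotient is computed from $\varprojlim_a\varinjlim_j$ of $\ZZ/\ell^a$-cochain data on the $Z_j$, where an open subgroup acts trivially at each finite level $a$. The resulting map is then a limit of locally constant maps into a $\widehat{\ZZ}_\ell$-lattice, and so is continuous. Concretely, I would show that $\Gamma$ stabilizes a $\widehat{\ZZ}_\ell$-integral form of $\mathrm{RAut}_m$, namely the automorphisms of the truncated $\widehat{\ZZ}_\ell$-Mal'cev model. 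The kernel of the action modulo $\ell^a$ is then open, which gives continuity into $\mathrm{RAut}_m(\widehat{\ZZ}_\ell)\subset \mathrm{RAut}_m(\QQ_\ell)$.

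I expect the main obstacle to be this integrality step. One must produce a $\Gamma$-stable integral structure on the truncated homotopy automorphism group that is compatible with Pridham's identification $\Aut_h(Z^{\mathrm{Mal}})\cong\mathrm{RAut}(Z^{\mathrm{Mal}})(\QQ_\ell)$. I would try first to take the relative Mal'cev completion of $(G(Z)_n)^{\wedge}_\ell$ itself, which is an $\ell$-adic analytic object on which $\Gamma$ acts through finite quotients levelwise, and to define the topology on $\mathrm{RAut}_m(\QQ_\ell)$ through it. The final \emph{in particular} is then immediate: $\Gamma\rightarrow\Gamma^{\mathrm{alg}}(\QQ_\ell)$ is continuous, and $\Phi(\QQ_\ell)$ is continuous as a morphism of pro-algebraic groups on $\QQ_\ell$-points.
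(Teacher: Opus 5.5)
Your reduction is logically sound. If you knew that each composite $\Gamma\to\mathrm{RAut}_m(\QQ_\ell)$ is continuous, the universal property of $\Gamma^{\mathrm{alg}}$ would give compatible morphisms that assemble to $\Phi$. But that continuity is the whole content of the lemma, and your argument for it does not go through.

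Continuity of the image in $R$ says nothing about the unipotent part. For example, a nonzero abstract homomorphism $\widehat{\ZZ}\to\mathbb{G}_a(\QQ_\ell)$ that kills $1$ is already discontinuous. So everything rests on the integrality step, which you leave open, and as sketched it fails in two places.

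First, there is no evident $\Gamma$-stable $\widehat{\ZZ}_\ell$-form of $\mathrm{RAut}_m$. Mal'cev completions and minimal models exist only over $\QQ_\ell$. The strict actions you actually have, on $\varprojlim_a\varinjlim_j S^*(Z_j;\ZZ/\ell^a)$ or on $(G(Z)_n)^{\wedge}_\ell$, are smooth only elementwise. Each element at level $a$ is fixed by an open subgroup, but the kernel of the action on the whole infinite object modulo $\ell^a$ is an infinite intersection of open subgroups and need not be open.

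Second, points of $\mathrm{RAut}_m(\QQ_\ell)$ are homotopy classes, i.e.\ a quotient of a very large group of strict automorphisms. You give no mechanism showing that the topology coming from the strict action maps continuously onto the algebraic $\ell$-adic topology of that quotient. This is exactly the compatibility with $\Aut_h(Z^{\mathrm{Mal}})\cong\mathrm{RAut}(Z^{\mathrm{Mal}})(\QQ_\ell)$ that you yourself flag as the obstacle.

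The missing idea is to construct $\Phi$ algebraically, so that continuity is a consequence rather than an input. The paper does this with the Borel construction. The continuous action and the fixed base point give split fibrations of simplicial groups $G(Z_i)(z_i,z_i)\rtimes\Gamma/U_i\to\Gamma/U_i$, hence a pro-system of fibrations $Z\to E\to B\Gamma$ with a section.

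Take $R=(\Gamma^{\mathrm{alg}})^{\mathrm{red}}$ and $\rho:\pi_1(E)\to\Gamma\to R(\QQ_\ell)$. The continuity hypothesis on the finite-dimensional $H^n(Z;\QQ_\ell)$ is exactly what makes the monodromy factor through $\Gamma^{\mathrm{alg}}$. So Pridham's Theorem 3.32 applies and identifies $Z^{\mathrm{Mal}}$, up to homotopy, with the levelwise kernel $U$ of $G(E)^{(R,\rho),\mathrm{Mal}}\to G(B\Gamma)^{R,\mathrm{Mal}}$.

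The section turns this into a semidirect product, so conjugation is a morphism of pro-algebraic groups. It descends to $\Phi:\pi_0(G(B\Gamma)^{R,\mathrm{Mal}})=\Gamma^{R,\mathrm{Mal}}\cong\Gamma^{\mathrm{alg}}\to\mathrm{RAut}(U)\cong\mathrm{RAut}(Z^{\mathrm{Mal}})$. Functoriality of relative Mal'cev completion on the split extension $G(Z)\rtimes\Gamma$ then shows that $\phi$ factors through $\Phi(\QQ_\ell)$.
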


\begin{proof}
For every $i$, the natural projection $q_i:G(Z_i)(z_i,z_i)\rtimes \Gamma/U_i\rightarrow \Gamma/U_i$ is a fibration of simplicial groups, together with a section $\gamma\rightarrow (1,\gamma)$, where $\Gamma/U_i$ is regarded as a constant simplicial group. Applying the functor $\overline{W}$ to $q_i$ gives a Kan fibration of simplicial sets, whose fiber over the unique vertex of $\overline{W}(\Gamma/U_i)$ is homotopy equivalent to $Z_i$ and the fibration $\overline{W}(q_i)$ admits a section. This is the simplicial model of the Borel fibration associated to the group action. We thereby obtain a pro system of Kan fibrations $Z\rightarrow E\rightarrow B\Gamma$, together with a section $s_0:B\Gamma\rightarrow E$ induced by the fixed point $z$.

Let $R:=(\Gamma^{\mathrm{alg}})^{\mathrm{red}}$ be the reductive quotient of $\Gamma^{\mathrm{alg}}$. Define $\rho:\pi_1(E)\rightarrow R(\QQ_{\ell})$ to be the composition $\pi_1(E)=\pi_1(Z)\rtimes \Gamma\rightarrow \Gamma\rightarrow \Gamma^{\mathrm{alg}}(\QQ_{\ell})\rightarrow R(\QQ_{\ell})$, where the first arrow is the projection. Note that the image $\rho(\pi_1(Z))=0$. Moreover, the hypotheses imply that, for every finite-dimensional $\QQ_{\ell}$-vector space $V$, the monodromy action of $\pi_1(B\Gamma)=\Gamma$ on $H^n(Z;V)$ factors through $\Gamma^{\mathrm{alg}}=\Gamma^{R,\mathrm{Mal}}=\pi_1(G(B\Gamma)^{R,\mathrm{Mal}})$. \cite{Pridham-Galois-actions-on-homotopy-groups-algebraic-varieties}*{Theorem 3.32} implies that $Z^{\mathrm{Mal}}$ is the homotopy fiber of $G(E)^{(R,\rho),\mathrm{Mal}}\rightarrow G(B\Gamma)^{R,\mathrm{Mal}}$. Equivalently, there is a pro system of levelwise exact sequences of simplicial pro-algebraic groups $1\rightarrow U\rightarrow G(E)^{(R,\rho),\mathrm{Mal}}\rightarrow G(B\Gamma)^{R,\mathrm{Mal}}\rightarrow 1$, where $U$ is homotopic to $Z^{\mathrm{Mal}}$. 

The section $s_0:B\Gamma\rightarrow E$ induces a section $s:G(B\Gamma)^{R,\mathrm{Mal}}\rightarrow G(E)^{(R,\rho),\mathrm{Mal}}$. Consequently, there is an isomorphism $G(E)^{(R,\rho),\mathrm{Mal}}\cong U\rtimes G(B\Gamma)^{R,\mathrm{Mal}}$. Then this defines a morphism of pro-algebraic groups $G(B\Gamma)^{R,\mathrm{Mal}}\times U\rightarrow U$ given by $(q,u)\rightarrow s(q)us(q)^{-1}$. This conjugation action induces a natural homomorphism $(G(B\Gamma)^{R,\mathrm{Mal}})_0\rightarrow \mathrm{RAut}(U)$. This homomorphism descends to $\Phi:\pi_0(G(B\Gamma)^{R,\mathrm{Mal}})=\Gamma^{R,\mathrm{Mal}}\cong \Gamma^{\mathrm{alg}}\rightarrow \mathrm{RAut}(U)\cong \mathrm{RAut}(Z^{\mathrm{Mal}})$. 

The relative Mal'cev-completion functor $(-)^{\mathrm{Mal}}$ preserves the multiplication, projection, section, and conjugation maps of the original split extension $G(Z)\rightarrow G(Z)\rtimes \Gamma\rightarrow \Gamma$. Therefore, for every $q\in \Gamma$, conjugation by the image of $q$ in $\mathrm{RAut}(Z^{\mathrm{Mal}})(\QQ_{\ell})$ is exactly the Mal'cev completion of the original automorphism of $Z$ induced by $q$. This proves the asserted factorization of $\phi$ and completes the proof.
\end{proof}

The following proposition answers Question \ref{Question: Deligne 1}.

\begin{proposition}\label{Prop: continuity of Galois action on automorphism group of minimal model}
Let $k$ be a field of characteristic $p\geq 0$, and let $\ell$ be a prime number. Let $X$ be a connected smooth proper variety over $k$, equipped with a $k$-rational point. Let $M$ be a minimal model of $C^*((X_{\overline{k}})^{\wedge}_{\et,\ell};\widehat{\ZZ}_{\ell})\otimes_{\widehat{\ZZ}_{\ell}}\QQ_{\ell}$. Then the natural homomorphism $\Gal(\overline{k}/k)\rightarrow \Aut_h(M)$ induced by the continuous $\Gal(\overline{k}/k)$-action on $(X_{\overline{k}})^{\wedge}_{\et,\ell}$ is continuous, where $\Aut_h(M)$ is equipped with the $\ell$-adic topology arising from its pro-algebraic group structure over $\QQ_{\ell}$.  
\end{proposition}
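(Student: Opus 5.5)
We deduce the proposition from Lemma \ref{Lem: continuity on relative algebraic homotopy type}, applied with $\Gamma=\Gal(\overline{k}/k)$ and $Z=(X_{\overline{k}})^{\wedge}_{\et,\ell}$. The comparison map between the Mal'cev homotopy type and the minimal model then transports continuity to $\Aut_h(M)$.

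First we check the hypotheses of the lemma. The $k$-rational point gives a geometric point of $X_{\overline{k}}$ fixed by $\Gamma$. Using Friedlander's rigid hypercovering model for $X_{\et}$ together with the Artin--Mazur pro-$\ell$ completion (Lemma \ref{Lem: definition of pro-p completion of a pro simplicial set}), we obtain a representative pro-system $\{(Z_i,z_i)\}$ of pointed $\ell$-finite simplicial sets. It carries compatible actions of finite quotients $\Gamma/U_i$ that fix the base points. Each $Z_i$ is replaced by a reduced (one-vertex) model functorially so that the actions persist. Since $X$ is proper, $H^n(Z;\QQ_\ell)\cong H^n_{\et}(X_{\overline{k}};\QQ_\ell)$ is finite-dimensional, and the Galois action on it is the usual continuous one. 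The lemma therefore gives a morphism of pro-algebraic groups $\Phi:\Gamma^{\mathrm{alg}}\to \mathrm{RAut}(Z^{\mathrm{Mal}})$ through which $\phi$ factors, and $\phi$ is continuous for the $\ell$-adic topology on $\mathrm{RAut}(Z^{\mathrm{Mal}})(\QQ_\ell)$.

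Second, we identify $\mathrm{RAut}(Z^{\mathrm{Mal}})(\QQ_\ell)\cong \Aut_h(Z^{\mathrm{Mal}})$ with $\Aut_h(M)$, compatibly with the $\Gamma$-actions and with the pro-algebraic structures. For this we use Pridham's comparison between the unipotent Mal'cev homotopy type and the Sullivan-type model. The cochain algebra of $Z^{\mathrm{Mal}}$, namely the Chevalley--Eilenberg complex of $\Lie(Z^{\mathrm{Mal}})$ with the simplicial direction totalized, is quasi-isomorphic to $C^*(\{Z_i\};\widehat{\ZZ}_\ell)\otimes\QQ_\ell$, and hence has minimal model $M$. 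This is the pro-space analogue of Corollary \ref{Cor: duality between minimal model and homotopy groups for pro spaces} and Theorem \ref{Thm: 1-minimal model is Malcev completion}. Homotopy classes of automorphisms on the two sides then correspond, by the uniqueness of minimal models up to homotopy (Theorem \ref{Fact: Lifting of minimal models}). Naturality of both constructions makes this bijection $\Gamma$-equivariant.

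For the topology, we write $\Aut_h(M)=\varprojlim_n \Aut_h(M(n))$, where $M(n)$ is the subalgebra generated in degrees $\le n$ together with the $1$-minimal tower. Each $M(n)$ has finite-dimensional generating spaces in each stage of the tower. By Sullivan's theorem each $\Aut_h(M(n))$ is (pro-)algebraic, and the comparison of the previous paragraph is algebraic level by level. The main obstacle is exactly this step: showing that the comparison $\mathrm{RAut}(Z^{\mathrm{Mal}})\cong \Aut_h(M)$ is an isomorphism of pro-algebraic groups, not merely of abstract groups, and in particular handling the non-simply-connected part through $\pi_1$. I would try first to reduce to truncations, where both sides are finite-type algebraic groups acting on finite-dimensional spaces $I^{\le n}(M)$ and on $\Lie$ of the truncated $Z^{\mathrm{Mal}}$, and to compare the defining polynomial equations there. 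Once this is established, $\Gamma\to\Aut_h(M)$ factors through $\Gamma^{\mathrm{alg}}(\QQ_\ell)$ via a morphism of pro-algebraic groups, and is therefore continuous.
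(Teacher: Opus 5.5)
Your first step is the paper's: Lemma~\ref{Lem: continuity on relative algebraic homotopy type} gives continuity of $\Gal(\overline{k}/k)\to\mathrm{RAut}(Z^{\mathrm{Mal}})(\QQ_\ell)$. The paper meets the vertex-set hypothesis by Pridham's observation that the vertices of the \'etale homotopy type form the constant pro-set of geometric points, rather than by passing to one-vertex models, but either works.

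The gap is in the transport to $\Aut_h(M)$. Uniqueness of minimal models (Theorem~\ref{Fact: Lifting of minimal models}) only matches automorphisms of a CDGA in its homotopy category with homotopy classes of automorphisms of its minimal model. It does not show that automorphisms of $Z^{\mathrm{Mal}}$ in the homotopy category of simplicial pro-unipotent groups correspond bijectively to homotopy automorphisms of its cochain algebra. That is a full-faithfulness (duality) statement for the cochain functor. It must also be combined with a comparison between your totalized Chevalley--Eilenberg model and $C^*(\{Z_i\};\widehat{\ZZ}_\ell)\otimes_{\widehat{\ZZ}_\ell}\QQ_\ell$ made in a homotopy theory of algebras, not just of complexes; a totalized cosimplicial CDGA is a priori only $E_\infty$.

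The paper supplies exactly this chain, with $Y=(X_{\overline{k}})_{\et}$:
\begin{enumerate}
\item Pridham's theorem identifies $\mathrm{RAut}(Y^{\mathrm{Mal}})(\QQ_\ell)\cong\Aut_h(Y^{\mathrm{Mal}})$ with $\Aut_h(C^\bullet(Y;\QQ_\ell))$ in the homotopy category of cosimplicial commutative $\QQ_\ell$-algebras.
\item Lemma~\ref{Lem: Quillen equivalence of normalized cochain complex functor into Eilenberg-Zilber algebras} and Proposition~\ref{Prop: embedding of non-negatively graded E_infinity algebras to unbounded E_infinity algebras} carry this to $\Aut_h(S^*(Y;\widehat{\ZZ}_\ell)\otimes_{\widehat{\ZZ}_\ell}\QQ_\ell)$ in the homotopy category of $E_\infty$-algebras.
\item Rectification (Theorem~\ref{Fact: rectification}) carries that to $\Aut_h(M)$.
\end{enumerate}

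The second gap is the one you flag yourself. You never establish that the comparison is an isomorphism of pro-algebraic groups, and without that nothing transfers. An abstract isomorphism $\mathrm{RAut}(Z^{\mathrm{Mal}})(\QQ_\ell)\cong\Aut_h(M)$ need not be a homeomorphism for the $\ell$-adic topologies.

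Your fallback, comparing defining equations on truncations, is not yet an argument. Moreover, for nontrivial $\pi_1$ the space $I^1(M)$ can be infinite-dimensional. Then the truncations $M(n)$ are not finitely generated, and Sullivan's algebraicity theorem does not apply to them directly.

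The paper obtains the algebraic compatibility from the canonicity of every identification in the chain above. It concludes $\mathrm{RAut}(Y^{\mathrm{Mal}})\cong\Aut_h(M)$ as pro-algebraic groups, after which continuity is immediate from the lemma. In your route you would need the corresponding naturality, for instance that your comparison commutes with base change along $\QQ_\ell\to A$ for every $\QQ_\ell$-algebra $A$. That would make it an isomorphism of group-valued functors.
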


\begin{proof}  
As explained in \cite{Pridham-Galois-actions-on-homotopy-groups-algebraic-varieties}*{p.~32, Example 3.18}, the sets of vertices of the simplicial sets representing $Y:=(X_{\overline{k}})_{\et}$ form the constant pro set of geometric points of $X_{\overline{k}}$. Lemma \ref{Lem: continuity on relative algebraic homotopy type} therefore shows that the canonical homomorphism $\Gal(\overline{k}/k)\rightarrow \mathrm{RAut}(Y^{\mathrm{Mal}})(\QQ_{\ell})$ is continuous. Let $\{Y_j\}$ be a pro simplicial set representing $Y$, and let $C^{\bullet}(Y;\QQ_{\ell}):=(\varprojlim_{a}\varinjlim_{j}C^{\bullet}(Y_j;\ZZ/\ell^{a}))\otimes_{\widehat{\ZZ}_{\ell}}\QQ_{\ell}$, which is a cosimplicial commutative $\QQ_{\ell}$-algebra. By \cite{Pridham-etale-fundamental-group-of-smooth-variety}*{Definition 2.61 and Theorem 2.74}, $\mathrm{RAut}(Y^{\mathrm{Mal}})(\QQ_{\ell})\cong \Aut_h(Y^{\mathrm{Mal}})$ is canonically isomorphic to $\Aut_h(C^{\bullet}(Y;\QQ_{\ell}))$, where $\Aut_h(C^{\bullet}(Y;\QQ_{\ell}))$ denotes the automorphism group of $C^{\bullet}(Y;\QQ_{\ell})$ in the homotopy category of cosimplicial commutative $\QQ_{\ell}$-algebras. Lemma \ref{Lem: Quillen equivalence of normalized cochain complex functor into Eilenberg-Zilber algebras} and Proposition \ref{Prop: embedding of non-negatively graded E_infinity algebras to unbounded E_infinity algebras} give a canonical isomorphism $\Aut_h(C^{\bullet}(Y;\QQ_{\ell}))\cong \Aut_h(S^{*}(Y;\widehat{\ZZ}_{\ell})\otimes_{\widehat{\ZZ}_{\ell}}\QQ_{\ell})$ induced by the normalized simplicial cochain complex functor, where $\Aut_h(S^{*}(Y;\widehat{\ZZ}_{\ell})\otimes_{\widehat{\ZZ}_{\ell}}\QQ_{\ell})$ denotes the automorphism group of $S^{*}(Y;\widehat{\ZZ}_{\ell})\otimes_{\widehat{\ZZ}_{\ell}}\QQ_{\ell}$ in the homotopy category of $E_{\infty}$-algebras. By Theorem \ref{Fact: rectification}, the rectification functor induces a canonical isomorphism $\Aut_h(S^{*}(Y;\widehat{\ZZ}_{\ell})\otimes_{\widehat{\ZZ}_{\ell}}\QQ_{\ell})\cong \Aut_h(M)$. Combining these identifications gives a canonical isomorphism $\mathrm{RAut}(Y^{\mathrm{Mal}})\cong \Aut_h(M)$ of pro-algebraic groups over $\QQ_{\ell}$. It follows that the homomorphism $\Gal(\overline{k}/k)\rightarrow \Aut_h(M)$ is continuous.
\end{proof}

\subsection{$p$-adic Galois Representations on \'Etale Homotopy Groups}\label{SubSec: Galois representation}\,

In this subsection, we study the Galois representation on the \'etale homotopy groups of $p$-adic varieties, with the known results from $p$-adic Hodge theory.

We now apply Proposition \ref{Prop: continuity of Galois action on automorphism group of minimal model} to study Galois representations on the \'etale homotopy groups of $p$-adic varieties. The notions concerning Galois representations in $p$-adic Hodge theory are recalled in Appendix \ref{Appendix: p-adic Hodge theory}.

\begin{definition}\label{Def: successively de Rham}
Let $K/\QQ_p$ be a finite field extension.
A continuous finite-dimensional $\QQ_p$-representation $V$ of $\Gal(\overline{K}/K)$ is called \textbf{successively de Rham} if it admits a finite $\Gal(\overline{K}/K)$-stable filtration $0=F^0V\subset F^1V\subset ...\subset F^mV=V$ whose graded pieces $\Gr^i_FV=F^iV/F^{i-1}V$ are de Rham. A \textbf{successively crystalline representation} is defined in the same way, with ``de Rham'' replaced by ``crystalline''.
\end{definition}

\begin{lemma}\label{Lem: Closure of algebraic operations of de Rham representations}
Successively de Rham representations are closed under taking subquotients, tensor products, duals and extensions. The same closure properties hold for successively crystalline representations.
\end{lemma}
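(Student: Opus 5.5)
The plan is to reduce everything to standard facts about de Rham and crystalline representations: these are closed under subquotients, tensor products, duals, and direct sums (Fontaine's theory, recalled in Appendix \ref{Appendix: p-adic Hodge theory}). Throughout, fix $G=\Gal(\overline{K}/K)$ and let $\mathcal{C}$ denote either the class of de Rham or the class of crystalline representations. The argument only uses that $\mathcal{C}$ is closed under subquotients, tensor products and duals, so it treats both cases simultaneously.

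\emph{Subquotients.} Let $V$ carry a $G$-stable filtration $F^\bullet V$ with $\Gr^i_F V\in\mathcal{C}$, and let $W\subset V$ be a subrepresentation. Set $F^iW=F^iW\cap W$. Then $\Gr^i W=(F^iV\cap W)/(F^{i-1}V\cap W)$ injects into $\Gr^i_FV$, so it is a subrepresentation of a member of $\mathcal{C}$ and hence lies in $\mathcal{C}$. For a quotient $V/W$, take the image filtration. Its graded pieces are $F^iV/(F^{i-1}V+F^iV\cap W)$, which are quotients of $\Gr^i_FV$. A subquotient is then handled by combining the two cases.

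\emph{Duals and extensions.} For the dual, use $F^iV^\vee=(V/F^{m-i}V)^\vee$, the annihilator of $F^{m-i}V$. Its graded pieces are $(\Gr^{m-i+1}_FV)^\vee$, which lie in $\mathcal{C}$. For an extension $0\to V'\to V\to V''\to 0$ with $V',V''$ successively in $\mathcal{C}$, concatenate the two filtrations: take the filtration of $V'$, followed by the preimages in $V$ of the filtration of $V''$. The new graded pieces are exactly the old ones, so this case is purely formal.

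\emph{Tensor products.} This is the only step needing care. Given filtrations on $V$ and $W$, put the convolution filtration on $V\otimes W$,
\[
F^n(V\otimes W)=\sum_{i+j=n}F^iV\otimes F^jW .
\]
Over a field there is a canonical isomorphism
\[
\Gr^n(V\otimes W)\cong\bigoplus_{i+j=n}\Gr^iV\otimes\Gr^jW .
\]
To prove it, choose $\QQ_p$-linear splittings $V\cong\bigoplus_i\Gr^iV$ and $W\cong\bigoplus_j\Gr^jW$. These splittings need not be $G$-equivariant, but they are used only to compute dimensions and to show that the natural $G$-equivariant map $\bigoplus_{i+j=n}\Gr^iV\otimes\Gr^jW\to\Gr^n(V\otimes W)$ is bijective. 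Since $\mathcal{C}$ is closed under tensor products and direct sums, each graded piece lies in $\mathcal{C}$.

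The only point requiring verification beyond bookkeeping is that this natural map is well defined, equivariant and bijective. The main obstacle is therefore this linear-algebra identity together with the input that $\mathcal{C}$ is closed under $\otimes$ and subquotients; the latter is standard from the admissibility formalism for the period rings $B_{\dR}$ and $B_{\crys}$.
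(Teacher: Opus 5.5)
Your proof is correct and follows essentially the same route as the paper: intersected and image filtrations for subquotients, concatenation of filtrations for extensions, the convolution filtration $\sum_{i+j\le r}F^iU\otimes G^jW$ with graded pieces $\bigoplus_{i+j=r}\Gr^i\otimes\Gr^j$ for tensor products, and annihilator filtrations for duals. One typo to fix: the induced filtration on the subrepresentation should be $F^iV\cap W$, not $F^iW\cap W$.
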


Since the proof of Lemma \ref{Lem: Closure of algebraic operations of de Rham representations} is routine and tedious, we leave it at the end of Appendix \ref{Appendix: p-adic Hodge theory}.

\begin{lemma}\label{Lem: successive de Rham on indecomposables implies the property for cohomology}
Let $K/\QQ_p$ be a finite field extension.
Let $M$ be a finitely generated minimal CDGA over $\QQ_p$. Suppose that there is  a continuous homomorphism $\Gal(\overline{K}/K)\rightarrow \Aut_h(M)$ such that each indecomposable space $I^qM$ is a successively de Rham representation of $\Gal(\overline{K}/K)$. Then, for every $q$, $H^q(M)$ is a successively de Rham representation of $\Gal(\overline{K}/K)$. The analogous statement also holds with ``successively de Rham'' replaced by ``successively crystalline''.
\end{lemma}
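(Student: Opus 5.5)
The plan is to filter $M$ by word length in its generators, so that the associated graded becomes a free graded-commutative algebra on the indecomposables. Only the associated graded pieces need to be seen as Galois representations; the closure properties of Lemma \ref{Lem: Closure of algebraic operations of de Rham representations} should then propagate up to $H^q(M)$.

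First I make the Galois action concrete. The homomorphism $\Gal(\overline{K}/K)\rightarrow\Aut_h(M)$ only gives automorphisms up to homotopy, but two homotopic automorphisms of a minimal CDGA induce the same map on $H^*(M)$ and on $I^*(M)=M^+/(M^+\cdot M^+)$. Hence $H^q(M)$ and $I^q(M)$ are genuine representations of $\Gal(\overline{K}/K)$. Continuity on $H^q(M)$ follows because $\Aut_h(M)\rightarrow \mathrm{GL}(H^q(M))$ is a morphism of (pro-)algebraic groups, hence continuous for the $p$-adic topology.

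To pass from $I^*(M)$ to $H^*(M)$, I use the pro-algebraic group $\Aut_h(M)$, which by Sullivan is algebraic since $M$ is finitely generated. Its action on the finite-dimensional, degreewise truncated quotients $M/(M^+)^{r}$ is not well defined up to homotopy, so I work instead with a homotopy-invariant filtration. Consider the filtration of $M^q$ by powers of the augmentation ideal, $F_r=(M^+)^r\cap M^q$. Choosing a genuine automorphism representative $g$ of each homotopy class, $g$ preserves $F_r$, and $\Gr_r M^q\cong (\Sym^r I^*(M))^q$ (graded-symmetric power) canonically. The induced action on $\Gr_r$ is therefore independent of the representative and of the choice of generators. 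By Lemma \ref{Lem: Closure of algebraic operations of de Rham representations}, tensor products and subquotients (symmetric powers being quotients of tensor powers, as we are in characteristic $0$) of successively de Rham representations are successively de Rham. So each $\Gr_r M^q$ is successively de Rham. Since this filtration of $M^q$ is finite (in fixed degree $q$ only finitely many words occur, given finite generation and degrees $\ge 1$ with the nilpotent minimality condition; degree-$1$ generators are finite in number), $M^q$ is an iterated extension and thus successively de Rham.

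The main obstacle is that $M^q$ itself is not a representation: only homotopy classes act, and different representatives act differently on $M^q$. I would resolve this by using Sullivan's result that $\Aut_h(M)$ is a quotient of the algebraic group $\Aut(M)$ by a unipotent normal subgroup of automorphisms homotopic to the identity. Since the claim concerns only $H^q$, I would instead filter $H^q(M)$ directly, setting $F_rH^q=\Im(H^q(F_r)\rightarrow H^q(M))$. Homotopic maps agree on $H^q(M)$, and a homotopy $M\rightarrow M\otimes\Lambda(t,dt)$ preserves word-length filtrations, so homotopic maps induce equal maps on the $E_1$/$E_\infty$ pages of the word-length spectral sequence. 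Hence $F_\bullet H^q$ is a $\Gal$-stable filtration. Its graded pieces are $E_\infty^{r,q-r}$, which are subquotients of $E_1=\Gr_r M^q$. Each $E_1$-term carries a well-defined Galois action and is successively de Rham by the argument above, and the differentials are equivariant. Therefore each $E_\infty$-term is successively de Rham by closure under subquotients, and $H^q(M)$ is successively de Rham by closure under extensions. The crystalline case is identical, replacing the closure lemma's de Rham statement by its crystalline one.
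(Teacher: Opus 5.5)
Your final argument is the paper's proof. You filter $M$ by powers of $J=M^{>0}$, use $d(J^r)\subset J^{r+1}$ and $\Sym^r(J/J^2)\cong J^r/J^{r+1}$ to express $E_1$ through the indecomposables, take the $E_\infty$-terms as subquotients, and reassemble $H^q(M)$ by extensions. The intermediate claim that $M^q$ is an iterated extension is unnecessary, as you notice yourself.

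One justification you give is false, however. As soon as $M$ has degree-$1$ generators, homotopic automorphisms of a minimal CDGA need not agree on $I^*(M)$, and a homotopy $M\to M\otimes\Lambda(t,dt)$ need not preserve the word-length filtration; this is the content of Example~\ref{Example: rigid invariants}. For instance, take $M=\bigwedge(x,y,z)$ with $|x|=|y|=|z|=1$ and $dz=xy$. For $c\neq 0$, the automorphism fixing $x,y$ and sending $z\mapsto z+cy$ is homotopic to the identity via $H(x)=x+c\,dt$, $H(y)=y$, $H(z)=z+cty$. Yet it acts nontrivially on $I^1(M)=(J/J^2)^1\subset E_1$, and $H(x)\notin J\otimes\Lambda(t,dt)$. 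In general a homotopy derivation only satisfies $\theta(J^r)\subset J^{r-1}$, so homotopic representatives agree from $E_2$ onward but not on $E_1$.

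The fix is to take well-definedness from the hypothesis rather than from rigidity. An automorphism acts on $E_1^r\cong\Sym^r(I^*M)$ through its action on $I^*M$. So once the lemma's hypothesis makes each $I^qM$ a representation, $E_1$ is one too, and all later differentials are equivariant. When $M$ has no degree-$1$ generators, which is the only case used in Theorem~\ref{Thm: de Rham represenations for etale homotopy groups}, your rigidity claims are in fact true, since then $H(J)\subset J\otimes\Lambda(t,dt)$. The paper's proof is silent on this point and implicitly relies on the same hypothesis.
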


\begin{remark}
By Example \ref{Example: rigid invariants}, the action of automorphisms of $M$ on $I^qM$ only depends on homotopy classes. Then it is legitimate to discuss the Galois representation on  $I^qM$.
\end{remark}

\begin{proof}
We prove only the case ``successively de Rham'' since the same proof works for ``successively crystalline''. 
Let $J=M^{>0}$ be the augmentation ideal of $M$, and let $J^r$ denote its $r$-th power.  Since $M$ is minimal, $d(J^r)\subset J^{r+1}$ for every $r$. Moreover, the natural map $\mu_r:\Sym^r(J/J^2)\rightarrow J^r/J^{r+1}$ induced by multiplication of $M$ is an isomorphism. Since $I^qM=(J/J^2)^q$ is successively de Rham for every $q$, Lemma \ref{Lem: Closure of algebraic operations of de Rham representations} and the isomorphism $\mu_r$ show that each term $E^{r,q}_1=(J^r/J^{r+1})^q$ on the $E_1$-page of the spectral sequence associated to the filtration $M=J^0\supset J^1\supset J^2\supset ...$  is successively de Rham. Each term on the $E_{\infty}$-page
is a subquotient of a term on the $E_1$-page and is therefore successively de Rham by Lemma \ref{Lem: Closure of algebraic operations of de Rham representations}.  The filtration induced on each $H^q(M)$ has the corresponding $E_{\infty}$-page terms as its graded pieces. By Lemma \ref{Lem: Closure of algebraic operations of de Rham representations}, $H^q(M)$ is successively de Rham.
\end{proof}

\begin{theorem}\label{Thm: de Rham represenations for etale homotopy groups}
Let $K/\QQ_p$ be a finite field extension, and let $\mathcal{O}_{K}$ denote its ring of integers. Let $X$ be a connected smooth proper adic space over $\mathrm{Spa}(K,\OO_K)$, equipped with a $\mathrm{Spa}(K,\OO_K)$-point. 
\begin{enumerate}[leftmargin=0.25in]
    \item The Lie algebra of the continuous Mal'cev $\QQ_p$-completion of $\pi^{\et}_1(X_{\overline{K}})^{\wedge}_{p}/\Gamma^n$ is a successively de Rham representation of $\Gal(\overline{K}/K)$ for every $n$, and the Lie bracket respects the $\Gal(\overline{K}/K)$-action, where $\Gamma^*$ denotes the lower central series.
    \item If $\pi^{\et}_1(X_{\overline{K}})^{\wedge}_{p}=0$, then $\pi_n((X_{\overline{K}})^{\wedge}_{\et,p})\otimes_{\widehat{\ZZ}_{p}}\QQ_{p}$ is a successively de Rham representation of $\Gal(\overline{K}/K)$ for every $n\geq 2$.
\end{enumerate}
\end{theorem}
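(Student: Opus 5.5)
The strategy is to transport the known fact that $H^q_{\et}(X_{\overline K};\QQ_p)$ is de Rham (for smooth proper rigid spaces, by Scholze's $p$-adic Hodge theory) up the minimal model. The tool is an inverse to Lemma \ref{Lem: successive de Rham on indecomposables implies the property for cohomology}: we show that if cohomology is successively de Rham, then so are the indecomposables. Then we read off homotopy groups and the Mal'cev Lie algebra from the duality results, namely Corollary \ref{Cor: duality between minimal model and homotopy groups for pro spaces} and Theorem \ref{Thm: 1-minimal model is Malcev completion}.

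First, Galois acts on $M$ up to homotopy, and the induced action on each $I^nM$ (and on the tower of subalgebras built by the canonical inductive construction) is well defined (Example \ref{Example: rigid invariants}). The argument of Proposition \ref{Prop: continuity of Galois action on automorphism group of minimal model}, in its version for $\ell=p$ with a $K$-point, shows that this action is continuous. Scholze's comparison theorem makes each $H^q$ de Rham, and the construction of the minimal model is canonical. In the inductive construction recalled before Theorem \ref{Thm: weights on etale homotopy groups}, the generators are
\[
V_n=\Ker\bigl(H^{n+1}(M(n-1))\to H^{n+1}\bigr)\oplus\Coker\bigl(H^n(M(n-1))\to H^n\bigr),
\]
which is Galois-equivariant up to a choice of splitting. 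More precisely, $I^n(M(n))$ carries a Galois-stable filtration whose pieces are this kernel and this cokernel.

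The induction runs as follows. Assume every $I^i(M(n-1))$ is successively de Rham. By Lemma \ref{Lem: successive de Rham on indecomposables implies the property for cohomology}, applied to the finitely generated algebra $M(n-1)$ (or to a truncation in the relevant degrees), $H^{n+1}(M(n-1))$ is successively de Rham. The kernel and cokernel above are then successively de Rham by Lemma \ref{Lem: Closure of algebraic operations of de Rham representations}, since $H^*$ is de Rham. Hence $I^n(M)$ is successively de Rham, and by closure under extensions, duals and subquotients, so is its dual $L(n)$.

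In the case $\pi_1=0$, Corollary \ref{Cor: duality between minimal model and homotopy groups for pro spaces}(1) identifies $L(n)$ Galois-equivariantly with $\pi_n\otimes\QQ_p$. The identification is equivariant because the pairing is natural in the pro-space. This proves (2).

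For (1) we run the same induction with the $1$-minimal model. Here $V_s=\Ker\bigl(H^2(M(1,s-1))\to H^2\bigr)$, which is successively de Rham by the same argument, using that $M(1,s-1)$ is finitely generated. The corollary following Theorem \ref{Thm: 1-minimal model is Malcev completion} identifies the dual of $M(1,n-1)^1$, compatibly and naturally, with $\Lie(\pi\widehat\otimes\QQ_p)/\Gamma^n\cong\Lie((\pi/\Gamma^n)\widehat\otimes\QQ_p)$. So this Lie algebra is successively de Rham. The bracket is dual to the differential $M^1\to M^2$, and the differential is equivariant, so Galois acts by Lie automorphisms.

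I expect the main obstacle to be making the Galois action on the subquotients genuinely equivariant. Galois acts on $M$ only up to homotopy, so I must check two things. First, the filtration $M(n-1)\subset M(n)$ and the kernel/cokernel description are stable under homotopy automorphisms. Second, the induced maps on $H^*(M(n-1))$ and on $V_n$ are independent of the homotopy. My first attempt is to use that homotopic maps between minimal algebras agree on indecomposables and on cohomology of sub-algebras generated in degrees below $n$. A second, technical point is the input for $\ell=p$ on adic spaces: the continuity statement of Proposition \ref{Prop: continuity of Galois action on automorphism group of minimal model} and the identification of $H^q((X_{\overline K})^\wedge_{\et,p};\widehat\ZZ_p)\otimes\QQ_p$ with de Rham étale cohomology both need justification in the rigid setting. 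For this I would cite Scholze's finiteness and comparison results together with Pridham's framework.
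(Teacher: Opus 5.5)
Your argument for item (2) is the paper's. It uses the short exact sequence $0\to\Coker(H^n(M(n-1))\to H^n(M))\to I^nM(n)\to\Ker(H^{n+1}(M(n-1))\to H^{n+1}(M))\to 0$, the induction through Lemma~\ref{Lem: successive de Rham on indecomposables implies the property for cohomology}, and the duality of Corollary~\ref{Cor: duality between minimal model and homotopy groups for pro spaces}. It is legitimate there because $\pi_1^{\et}(X_{\overline K})^\wedge_p=0$ forces $H^1=0$, so $M$ is simply connected and its indecomposables are rigid.

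Item (1) is where your argument breaks. You assert that the homotopy action of $\Gal(\overline K/K)$ on each $I^nM$ is well defined by Example~\ref{Example: rigid invariants}, but that example says the opposite outside the simply-connected case. For a $1$-minimal model this matters. If $\theta$ is a homotopy derivation from $f$ to $g$, then for $x\in M^1$ one has $(g-f)(x)=\theta(dx)$. Since $\theta$ on $M^1$ takes values in $M^0=\QQ_p$, this is nonzero in general. Dually, the automorphism of $M(1,s-1)$ induced by an inner automorphism $\exp(\mathrm{ad}\,u)$ of $L/\Gamma^s$ is homotopic to the identity, yet acts nontrivially on $M(1,s-1)^1$.

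Hence $I^1(M(1,s-1))=M(1,s-1)^1$ is not a Galois representation via $\Aut_h$. So Lemma~\ref{Lem: successive de Rham on indecomposables implies the property for cohomology} has no meaningful input in your induction. The same problem undermines your claim that the bracket is equivariant ``because the differential is''. Only the graded pieces $M(1,s)^1/M(1,s-1)^1$ carry a well-defined action, since there $\theta(dx)\in M(1,s-1)^1$.

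The missing idea is to use the genuine action. The given $\mathrm{Spa}(K,\OO_K)$-point yields an honest action of $\Gal(\overline K/K)$ on $\pi=\pi_1^{\et}(X_{\overline K})^\wedge_p$ by group automorphisms. Hence Galois acts on $L=\Lie(\pi\widehat\otimes\QQ_p)$ by Lie automorphisms preserving each $\Gamma^rL$, and the bracket is equivariant for free. As in the paper, iterated brackets then give an equivariant surjection $W^{\otimes r}\twoheadrightarrow\Gamma^rL/\Gamma^{r+1}L$, where $W=L/\Gamma^2L$ and $W^\vee\cong H^1_{\cont}(\pi;\QQ_p)\cong H^1_{\et}(X_{\overline K};\QQ_p)$ is de Rham. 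So each graded piece is actually de Rham, and $L/\Gamma^n$ is successively de Rham by closure under extensions. This bypasses the $1$-minimal model entirely.

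If you want to keep your route, you must let Galois act honestly on the Chevalley--Eilenberg complex of $L/\Gamma^s$. You would then have to prove that the identification of its $H^2$ with $H^2_{\cont}(\pi/\Gamma^s;\QQ_p)\otimes$-data, and the map to $H^2_{\et}(X_{\overline K};\QQ_p)$, are Galois-equivariant. That is extra work the paper does not need.
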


\begin{proof}
\textbf{Item (1).} Let $W$ denote $\Lie((\pi^{\et}_1(X_{\overline{K}})^{\wedge}_{p}/\Gamma^2)\widehat{\otimes}\QQ_p)$. The natural $\Gal(\overline{K}/K)$-equivariant isomorphisms $W^{\vee}\cong H^1_{\cont}(\pi^{\et}_1(X_{\overline{K}})^{\wedge}_{p};\QQ_{p})\cong H^1_{\et}(X_{\overline{K};\QQ_p})$ and Proposition \ref{Fact: properties of de Rahm crystalline representations} shows that $W$ is de Rham. Iterated Lie brackets give a natural $\Gal(\overline{K}/K)$-equivariant surjection $W^{\otimes r}\twoheadrightarrow 
\frac{\Lie((\pi^{\et}_1(X_{\overline{K}})^{\wedge}_{p}/\Gamma^r)\widehat{\otimes}\QQ_p)}{\Lie((\pi^{\et}_1(X_{\overline{K}})^{\wedge}_{p}/\Gamma^{r+1})\widehat{\otimes}\QQ_p)}$. Then item (1) follows from an inductive argument together with Proposition\ref{Fact: properties of de Rahm crystalline representations}.

\textbf{Item (2).} Let $M$ be a minimal model of $C^*((X_{\overline{K}})^{\wedge}_{\et,p};\widehat{\ZZ}_{p})\otimes_{\widehat{\ZZ}_{p}}\QQ_{p}$, and let $M(n)$ be the subalgebra generated by elements of degree at most $n$. Applying the argument of Proposition \ref{Prop: continuity of Galois action on automorphism group of minimal model} to adic spaces, $\Gal(\overline{K}/K)\rightarrow \Aut_h(M)$ is continuous. By Proposition \ref{Prop: homotopy automorphism group of a minimal CDGA is the inverse limit of smaller pieces}, restriction induces a continuous homomorphism $\Gal(\overline{K}/K)\rightarrow \Aut_h(M(n))$ for every $n$. By Example \ref{Example: rigid invariants}, each indecomposable space $I^nM=I^nM(n)$ is a continuous representation of $\Gal(\overline{K}/K)$. By Corollary \ref{Cor: duality between minimal model and homotopy groups for pro spaces}, it suffices to show that each $I^nM(n)$ is successively de Rham.

The assertion is immediately true for $n=1$ since $I^1M(1)=0$. Inductively suppose that, for every $q<n$, $I^qM(q)$ is  successively de Rham, and that $H^s(M(q))$ is successively de Rham  for every $s$. 

Let $Z^{n+1}(M(n-1))$ and $Z^nM(n)$ be the subspaces of degree $n+1$ cocycles of $M(n-1)$ and degree $n$ cocycles of $M(n)$, respectively. The differential induces a canonical map $\alpha: I^nM(n)=\frac{M(n)^n}{M(n-1)^{>0}\cdot M(n-1)^{>0}}\xrightarrow{d} \frac{Z^{n+1}(M(n-1))}{d(M(n-1)^{>0}\cdot M(n-1)^{>0})}\rightarrow H^{n+1}(M(n-1))$. Since every map in this composition commutes with automorphisms of $M$, this is a $\Gal(\overline{K}/K)$-equivariant map. By the inductive construction process of minimal models, $\alpha$ factors through $I^nM(n)\twoheadrightarrow \Ker(H^{n+1}(M(n-1))\rightarrow H^{n+1}(M))$. 

On the other hand, the natural inclusion $Z^nM=Z^nM(n)\hookrightarrow M(n)$ induces $Z^nM\rightarrow I^nM(n)$. This map descends to $\beta: H^n(M)\rightarrow I^nM(n)$. Since the inclusion $Z^nM\hookrightarrow M(n)$ commutes with automorphisms of $M$, $\beta$ is a $\Gal(\overline{K}/K)$-equivariant map. 
The inductive construction process of minimal models shows that $\beta$ factors through $\Coker(H^n(M(n-1))\rightarrow H^n(M))\hookrightarrow I^nM(n)$. We therefore obtain a short exact sequence of continuous $\Gal(\overline{K}/K)$-representations 
\[
0\rightarrow \Coker(H^n(M(n-1))\rightarrow H^n(M))\rightarrow I^nM(n)\rightarrow \Ker(H^{n+1}(M(n-1))\rightarrow H^{n+1}(M))\rightarrow 0
\]

By Lemma \ref{Lem: Closure of algebraic operations of de Rham representations} and the inductive assumption, $I^nM(n)$ is a successively de Rham representation. Lemma \ref{Lem: successive de Rham on indecomposables implies the property for cohomology} then implies that $H^q(M(n))$ is successively de Rham for every $q$. This completes the induction and proves item (2). 
\end{proof}

The following result follows by the same proof as Theorem \ref{Thm: de Rham represenations for etale homotopy groups}.

\begin{theorem}\label{Thm: crystalline represenations for etale homotopy groups}
Let $K$ be a finite field extension of $\QQ_{p}$, and let $\mathcal{O}_{K}$ denote its ring of integers. Let $\chi$ be a connected smooth proper scheme over $\mathcal{O}_K$. Assume that $\chi_{K}$ has a $K$-rational point. 
\begin{enumerate}[leftmargin=0.25in]
    \item The Lie algebra of the continuous Mal'cev $\QQ_p$-completion of $\pi^{\et}_1(\chi_{\overline{K}})^{\wedge}_{p}/\Gamma^n$ is a successively crystalline representation of $\Gal(\overline{K}/K)$ for every $n$, and the Lie bracket respects the $\Gal(\overline{K}/K)$-action.
    \item If $\pi^{\et}_1(\chi_{\overline{K}})^{\wedge}_{p}=0$, then $\pi_n((\chi_{\overline{K}})^{\wedge}_{\et,p})\otimes_{\widehat{\ZZ}_{p}}\QQ_{p}$ is a successively crystalline representation of $\Gal(\overline{K}/K)$, for every $n\geq 2$.
\end{enumerate}
\end{theorem}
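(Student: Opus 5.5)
The plan is to copy the proof of Theorem \ref{Thm: de Rham represenations for etale homotopy groups} verbatim, with ``de Rham'' replaced by ``crystalline''. The only new input is on the cohomological side: for a smooth proper scheme $\chi$ over $\OO_K$, each $H^n_{\et}(\chi_{\overline{K}};\QQ_p)$ is crystalline by the crystalline comparison theorem (Proposition \ref{Fact: properties of de Rahm crystalline representations}). The closure properties we need are already available. Lemma \ref{Lem: Closure of algebraic operations of de Rham representations} holds for successively crystalline representations, and Lemma \ref{Lem: successive de Rham on indecomposables implies the property for cohomology} holds in its crystalline form.

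\textbf{Item (1).} Set $\pi=\pi^{\et}_1(\chi_{\overline{K}})^{\wedge}_p$ and $W=\Lie((\pi/\Gamma^2)\widehat{\otimes}\QQ_p)$. The Galois action on $\pi$ is continuous because the $K$-rational point fixes a base point. Hence $W^\vee\cong H^1_{\cont}(\pi;\QQ_p)\cong H^1_{\et}(\chi_{\overline{K}};\QQ_p)$ Galois-equivariantly, and so $W$ is crystalline.

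By the Corollary following Theorem \ref{Thm: 1-minimal model is Malcev completion}, the graded pieces of the lower central series of $\Lie(\pi\widehat{\otimes}\QQ_p)$ are Galois-equivariant quotients of $W^{\otimes r}$ via iterated brackets. Induction on $n$ then shows that $\Lie((\pi/\Gamma^n)\widehat{\otimes}\QQ_p)$ is an iterated extension of crystalline quotients of tensor powers, hence successively crystalline. Functoriality of Mal'cev completion gives Galois-equivariance of the bracket.

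\textbf{Item (2).} Let $M$ be a minimal model of $C^*((\chi_{\overline{K}})^{\wedge}_{\et,p};\widehat{\ZZ}_p)\otimes_{\widehat{\ZZ}_p}\QQ_p$. Proposition \ref{Prop: continuity of Galois action on automorphism group of minimal model} gives a continuous homomorphism $\Gal(\overline{K}/K)\to\Aut_h(M)$; its hypotheses are met since $\chi_K$ is a smooth proper variety with a rational point. Restriction then gives continuous actions on $\Aut_h(M(n))$ and on each $I^nM(n)$. By Corollary \ref{Cor: duality between minimal model and homotopy groups for pro spaces}, it suffices to show that $I^nM(n)$ is successively crystalline.

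We induct on $n$, carrying along the statement that all $H^s(M(q))$ are successively crystalline for $q<n$. The base case is $I^1M(1)=0$, so $M(1)=\QQ_p$. For the inductive step we use the Galois-equivariant short exact sequence
\[
0\to\Coker(H^n(M(n-1))\to H^n(M))\to I^nM(n)\to\Ker(H^{n+1}(M(n-1))\to H^{n+1}(M))\to 0
\]
constructed in the proof of Theorem \ref{Thm: de Rham represenations for etale homotopy groups}. Here $H^*(M)\cong H^*_{\et}(\chi_{\overline{K}};\QQ_p)$ is crystalline, and $H^*(M(n-1))$ is successively crystalline by induction. Closure under subquotients and extensions then shows that $I^nM(n)$ is successively crystalline. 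The crystalline form of Lemma \ref{Lem: successive de Rham on indecomposables implies the property for cohomology} propagates this to $H^*(M(n))$, which closes the induction.

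The main obstacle is the first step rather than the formal induction. We must make sure that the Galois action on $H^*_{\et}(\chi_{\overline{K}};\QQ_p)$ used in the exact sequence is the one induced through $\Aut_h(M)$. We must also make sure the crystalline comparison (Faltings, Tsuji) applies under exactly the stated hypothesis of good reduction. I would check the first point by composing the identifications in the proof of Proposition \ref{Prop: continuity of Galois action on automorphism group of minimal model}, all of which are natural in automorphisms of the pro-space.
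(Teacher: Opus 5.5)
Your proposal is correct and matches the paper's proof: the paper establishes this theorem by repeating the proof of Theorem \ref{Thm: de Rham represenations for etale homotopy groups} with ``crystalline'' in place of ``de Rham'', and the only new input is that $H^n_{\et}(\chi_{\overline{K}};\QQ_p)$ is crystalline when $\chi$ has good reduction. One small citation slip: that input is the Faltings comparison theorem stated in the appendix on $p$-adic Galois representations, whereas Proposition \ref{Fact: properties of de Rahm crystalline representations} only supplies the closure properties.
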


\subsection{Mal'cev Completions of \'Etale Fundamental Groups}\,

In this subsection, we study the $\ell$-adic \'etale fundamental group of varieties in positive characteristic. Our main result is an \'etale analogue of \cite{Morgan-hodge-theory}*{Corollary 10.4}.

Let $k$ be a field of characteristic zero. Let $M$ be a finite type $1$-minimal CDGA over $k$. Recall that $M$ admits an exhaustive tower of subalgebras $k=M(0)\subset M(1)\subset M(2)\subset ...$, where $M(1)=\bigwedge^* V(1)$ and each $M(n+1)$ is an elementary extension of $M(n)$ by $V(n+1)$ with $V(n+1)\cong \Ker(H^2(M(n))\twoheadrightarrow H^2(M))$. As in Theorem \ref{Fact: nilpotent Lie algebra induced from minimal model}, let $0\twoheadleftarrow L_1\twoheadleftarrow L_2\twoheadleftarrow ...$ denote the corresponding tower of nilpotent Lie algebras dual to $0\subset V(1)\subset V(2)\subset ...$. The inductive construction of $M(n)$ shows that each central extension $L_{n+1}\twoheadrightarrow L_n$ is non-splitting. Thus, the nilpotency class of each $L_n$ is exactly $n$. For a Lie algebra $L$, let $\Gamma^*L$ denote its lower central series.

\begin{lemma}\label{Lem: Correspondence between Lie algebra cohomology and minimal model}
Each $M(n)$ is canonically isomorphic to the Chevalley-Eilenberg cochain complex of $L_n$. In particular, $H^*(M(n))$ is isomorphic to the Lie algebra cohomology $H^*(L_n;k)$.
\end{lemma}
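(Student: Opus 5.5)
I will prove the lemma by induction on $n$, constructing at each stage an explicit CDGA isomorphism
\[
\theta_n:M(n)\rightarrow CE^*(L_n):=\Big(\textstyle\bigwedge^* L_n^{\vee},\ d_{CE}\Big),
\]
where $d_{CE}$ on $L_n^{\vee}$ is the transpose of the bracket $\bigwedge^2L_n\rightarrow L_n$, extended as a derivation. By Theorem \ref{Fact: nilpotent Lie algebra induced from minimal model}, $L_n$ is defined as the dual of $M(n)^1=V(1)\oplus\cdots\oplus V(n)$, and its bracket is dual to $d|_{M(n)^1}:M(n)^1\rightarrow \bigwedge^2 M(n)^1$. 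Since $M(n)$ is $1$-minimal, it is generated freely as a graded-commutative algebra by its degree-$1$ part, so $M(n)=\bigwedge^*(M(n)^1)$. The canonical identification $M(n)^1=(L_n)^{\vee}$ then extends uniquely to an algebra isomorphism $\theta_n:\bigwedge^* M(n)^1\rightarrow\bigwedge^* L_n^{\vee}$.

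The first step is to check that $\theta_n$ is a chain map. Both differentials are derivations of degree $1$ on free graded-commutative algebras generated in degree $1$, so it suffices to compare them on generators. On $M(n)^1$, the differential $d$ takes values in $M(n)^2=\bigwedge^2 M(n)^1$; because all generators have degree $1$, this differential is purely quadratic. By the definition of the bracket on $L_n$, its transpose is exactly $d_{CE}$ on $L_n^{\vee}$. Hence $\theta_n\circ d=d_{CE}\circ\theta_n$ on generators, and therefore everywhere. Along the way I will note that $d^2=0$ on $M(n)$ is equivalent to the Jacobi identity for $L_n$; this fact is the content of Theorem \ref{Fact: nilpotent Lie algebra induced from minimal model}, so the construction is consistent.

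The second step is canonicity and compatibility with the tower. The inclusion $M(n)\subset M(n+1)$ is dual to the surjection $L_{n+1}\twoheadrightarrow L_n$, and $\theta_{n+1}$ restricts to $\theta_n$. So the isomorphisms are natural and involve no choices beyond the tower itself. Taking cohomology gives $H^*(M(n))\cong H^*(CE^*(L_n))=H^*(L_n;k)$, which is the ``in particular'' claim, since for a finite-dimensional Lie algebra with trivial coefficients Lie algebra cohomology is computed by the Chevalley--Eilenberg complex.

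The only point requiring care, and where I expect the main obstacle, is sign conventions. I must confirm that the duality in Theorem \ref{Fact: nilpotent Lie algebra induced from minimal model} identifies $\bigwedge^2(L^{\vee})$ with $(\bigwedge^2 L)^{\vee}$ using the same normalization as the Chevalley--Eilenberg differential, namely $d\xi(x,y)=-\xi([x,y])$. If the normalization is off by a sign or by a factor of $2$, I will rescale the identification $M(n)^1\cong L_n^{\vee}$ by $-1$, or adjust the pairing on $\bigwedge^2$. Such a rescaling does not affect the existence of the isomorphism or its canonicity.
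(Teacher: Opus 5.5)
Your proposal is correct and follows essentially the same route as the paper: identify $M(n)=\bigwedge^*(V(1)\oplus\cdots\oplus V(n))$ with the underlying algebra $\bigwedge^* L_n^{\vee}$ of the Chevalley--Eilenberg complex, then note that both differentials are derivations determined on degree-$1$ generators, where the bracket of $L_n$ is by definition dual to $d\colon M(n)^1\to M(n)^2$. Your extra remarks on compatibility with the tower and on sign normalization are harmless refinements of the paper's two-line argument.
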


\begin{proof}
By construction, $M(n)\cong \bigwedge^* V(n)$. Thus, it is the underlying graded algebra of the Chevalley-Eilenberg cochain complex of $L_n$. Since the Lie bracket of $L_n$ is dual to $d: M(n)^1\rightarrow M(n)^2$ (Theorem \ref{Fact: nilpotent Lie algebra induced from minimal model}), the differentials also agree.
\end{proof}

\begin{lemma}\label{Lem: More precise statement of Lie algebra tower of a 1-minimal model}
Let $F$ be the free Lie algebra generated by the vector space $L_1$. Then, for each $n$, there is a surjective Lie-algebra map $\phi_n:F\twoheadrightarrow L_n$ that is compatible with the tower $0\twoheadleftarrow L_1\twoheadleftarrow L_2\twoheadleftarrow ...$ and satisfies $\Gamma^{n+1}F\subset \Ker(\phi_n)$.
\end{lemma}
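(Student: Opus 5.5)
We construct the maps $\phi_n$ by induction on $n$, using the universal property of the free Lie algebra $F$ on the vector space $L_1$. For $n=1$, let $\phi_1:F\twoheadrightarrow L_1$ be the Lie map extending the identity on $L_1$. Since $M(1)=\bigwedge^*V(1)$ has zero differential, $L_1$ is abelian. Hence $\phi_1$ kills all brackets, so $\Gamma^2F\subset\Ker(\phi_1)$.

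For the inductive step, suppose $\phi_n:F\twoheadrightarrow L_n$ has been constructed with $\Gamma^{n+1}F\subset\Ker(\phi_n)$. Let $\pi:L_{n+1}\twoheadrightarrow L_n$ be the projection. Dualizing the elementary extension $M(n)\subset M(n+1)$ by $V(n+1)$ shows that $\pi$ is a central extension with kernel $V(n+1)^\vee$. Choose any linear lift $s:L_1\to L_{n+1}$ of the composite $L_1\to L_n$; this is a lift of the restriction of $\phi_n$ to the generators $L_1\subset F$. By freeness, $s$ extends to a Lie map $\phi_{n+1}:F\to L_{n+1}$ with $\pi\circ\phi_{n+1}=\phi_n$, because both sides agree on generators. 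Compatibility with the tower then holds by construction.

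The main point is surjectivity of $\phi_{n+1}$, and we argue it as follows. The image $\Im(\phi_{n+1})$ is a subalgebra mapping onto $L_n$. By centrality, $\Gamma^2L_{n+1}=[L_{n+1},L_{n+1}]$ equals $[\Im\phi_{n+1},\Im\phi_{n+1}]\subset\Im\phi_{n+1}$. So it suffices to show that $V(n+1)^\vee\subset\Gamma^2L_{n+1}$, equivalently that $L_{n+1}/\Gamma^2L_{n+1}\to L_n/\Gamma^2L_n$ is an isomorphism.

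Dually, this says that $H^1(M(n+1))\to H^1(M(n))$ is an isomorphism, which holds because both groups equal $V(1)=H^1(M)$ in the inductive construction. More concretely, the new generators $x\in V(n+1)$ have $dx\neq 0$: the differential $d$ sends $V(n+1)$ injectively onto $\Ker(H^2(M(n))\to H^2(M))$. Hence no nonzero element of $V(n+1)$ is a cocycle. Since $(L/\Gamma^2L)^\vee$ is the space of degree-$1$ cocycles of the Chevalley--Eilenberg complex (Lemma \ref{Lem: Correspondence between Lie algebra cohomology and minimal model}), the kernel $V(n+1)^\vee$ lies in $\Gamma^2L_{n+1}$, as required.

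Finally, since $\Gamma^{n+1}F\subset\Ker\phi_n$, the map $\phi_{n+1}$ sends $\Gamma^{n+1}F$ into the central kernel $V(n+1)^\vee$. Therefore $\phi_{n+1}(\Gamma^{n+2}F)=[\phi_{n+1}(F),\phi_{n+1}(\Gamma^{n+1}F)]=0$, which gives $\Gamma^{n+2}F\subset\Ker(\phi_{n+1})$.

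The expected obstacle is the surjectivity step, namely checking carefully that the dual of the quadratic differential identifies $V(n+1)^\vee$ with a subspace of brackets. If needed, this can be verified directly by pairing: each nonzero $x\in V(n+1)$ has nonzero $dx\in\bigwedge^2V(n)$, and therefore pairs nontrivially with some bracket $[a,b]$ of lifts.
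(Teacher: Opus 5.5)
Your argument is correct and is essentially the paper's proof. You lift $\phi_n$ through the central extension $L_{n+1}\twoheadrightarrow L_n$ using freeness of $F$, and get surjectivity from $\Ker(L_{n+1}\twoheadrightarrow L_n)\subset[L_{n+1},L_{n+1}]=[\Im\phi_{n+1},\Im\phi_{n+1}]$. Two differences are cosmetic: you make that inclusion into $\Gamma^2L_{n+1}$ explicit, where the paper relies on its earlier non-splitting remark, and you deduce $\Gamma^{n+2}F\subset\Ker\phi_{n+1}$ from centrality rather than from the nilpotency class of $L_{n+1}$.

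One caution on justification. The input you need is injectivity of $V(n+1)\rightarrow H^2(M(n))$ on cohomology, equivalently $H^1(M(n))\cong H^1(M(n+1))$. This is what excludes cocycles of the form $x+y$ with $0\neq x\in V(n+1)$ and $y\in M(n)^1$. The weaker fact that $dx\neq 0$ for each nonzero $x\in V(n+1)$, and likewise your fallback pairing check, would not suffice.
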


\begin{proof}
Let $K_{n+1}=\Ker(L_{n+1}\twoheadrightarrow L_n)$. Suppose inductively that we have constructed $\phi_n:F\twoheadrightarrow L_n$. Since $F$ is free, $\phi_n$ lifts to a Lie-algebra map $\phi_{n+1}:F\rightarrow L_{n+1}$. Since $0\rightarrow K_{n+1}\rightarrow L_{n+1}\rightarrow L_{n}\rightarrow 0$ is a central extension, $K_{n+1}\subset [\phi_{n+1}(F)+K_{n+1},\phi_{n+1}(F)+K_{n+1}]=[\phi_{n+1}(F),\phi_{n+1}(F)]\subset \phi_{n+1}(F)$. Then $\phi_{n+1}$ is surjective. Since $\Gamma^{n+1}L_n=0$, $\phi_n(\Gamma^{n+1}F)\subset \Gamma^{n+1}L_n=0$. Thus, $\Gamma^{n+1}F\subset \Ker(\phi_n)$.
\end{proof}

Suppose now that the $1$-minimal CDGA $M$ is equipped with a weighted grading. This weight grading induces a compatible weight grading on each Lie algebra $L_n$ in the dual tower. The next lemma generalizes \cite{Morgan-hodge-theory}*{Theorem 9.4}.

\begin{lemma}\label{Lem: weighted minimal model and Lie algebra tower}
If the weights of $H^{1}(M)$ are at least $1$ and the weights of $H^2(M)$ are at most $q$, where $q\geq 2$, then there exists a homogeneous ideal $I$ of $F$, generated by elements of weights $-2,-3,...,-q$, such that the tower $0\twoheadleftarrow L_1\twoheadleftarrow L_2\twoheadleftarrow ...$ is isomorphic to the lower-central-series tower $0\twoheadleftarrow (F/I)/\Gamma^{2}\twoheadleftarrow (F/I)/\Gamma^{3}\twoheadleftarrow ...$ of $F/I$. 
\end{lemma}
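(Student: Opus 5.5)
The plan is to follow Morgan's argument: make the construction of Lemma \ref{Lem: More precise statement of Lie algebra tower of a 1-minimal model} weight-homogeneous, then identify the kernels using the weight bounds. Since $M$ is weighted, we may choose every $V(n)$ to be a weight-graded subspace of $M(n)^1$ and the differential to be weight-preserving. Dually, $L_n$ is graded by negative weights, and the weight of $x\in L_n$ is minus the weight of the dual generator. In particular $L_1=H^1(M)^\vee$ has weights $\leq -1$. Give $F$ the grading induced from $L_1$ and choose each lift $\phi_{n+1}$ homogeneous; this is possible because a homogeneous basis of $L_1$ can be sent to homogeneous lifts. Put $I=\bigcap_n \Ker(\phi_n)$ in the pro-nilpotent (completed) sense, or more precisely $I_n=\Ker(\phi_n)$ with $I=\varprojlim$. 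By construction $I$ is homogeneous, and $F/I$ with its lower central series quotients recovers the tower. Here one uses that $\Gamma^{n+1}F\subset\Ker\phi_n$, and that $\Ker(\phi_n)$ modulo $\Gamma^{n+1}F$ equals $(I+\Gamma^{n+1}F)/\Gamma^{n+1}F$; the latter holds because each $L_{n+1}\to L_n$ has kernel exactly $\Gamma^{n}L_{n+1}$, by the non-splitting remark together with surjectivity from $F$.

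The main content is locating the generators of $I$. The key identification is
\[
K_{n+1}^\vee\cong V(n+1)\cong \Ker\bigl(H^2(M(n))\to H^2(M)\bigr)\subset H^2(M(n))=H^2(L_n;k)
\]
by Lemma \ref{Lem: Correspondence between Lie algebra cohomology and minimal model}. Working with $F_n:=F/\Gamma^{n+1}F$ and $R_n:=\Ker(F_n\to L_n)$, the Hopf-type formula gives $H_2(L_n)\cong (R_n\cap[F_n,F_n])/[F_n,R_n]$ modulo terms coming from $\Gamma^{n+1}$. Hence the new relations needed at stage $n+1$, namely minimal generators of $I$ modulo $[F,I]$ in bracket length $n+1$, inject dually into $H^2(L_n)$, and more specifically into the part of $H^2(L_n)$ that survives to $H^2(M)$. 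Concretely, $I/([F,I]+\text{decomposables})$ is dual to $\Ker$ or to the image of $H^2(M(\infty))=H^2(M)$, i.e.\ dual to a subspace of $H^2(M)$; this is the standard fact that a $1$-minimal model presents $H^2$ of the completed Lie algebra. Thus minimal homogeneous generators of $I$ have weights in $-(\text{weights of }H^2(M))$, which gives weights $\geq -q$. On the other hand, every element of $I$ lies in $[F,F]$, because $L_1=F/[F,F]$. Since $L_1$ has weights $\leq -1$, elements of $[F,F]$ have weights $\leq -2$. Together these place the generator weights in $\{-2,\dots,-q\}$.

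I expect the main obstacle to be this middle step: showing rigorously that a minimal homogeneous generating set of $I$ corresponds injectively to a subspace of $H^2(M)$ compatibly with weights. I would handle it by induction on $n$ with the five-term exact sequence for $0\to K_{n+1}\to L_{n+1}\to L_n\to 0$. Its transgression $K_{n+1}^\vee\to H^2(L_n)$ is the inclusion of $V(n+1)$. A new generator of $I$ of bracket length $n+1$ then corresponds to a class of $H^2(L_n)$ coming from $H^2(F_n)$-relations not killed later, i.e.\ mapping nontrivially to $H^2(M)$. Since every map in sight is weight-preserving, the generator's weight is the negative of a weight occurring in $H^2(M)$.
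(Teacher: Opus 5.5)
Your argument is correct, but it runs in the opposite direction from the paper's. Write $J_n=\Ker(\phi_n)$. The paper first shows that $V(n)$ has weights $\geq n$. It deduces that the pieces $(J_n)^{-r}$, $2\leq r\leq q$, stabilize for $n\geq q$, and \emph{defines} $I$ as the ideal generated by these stable pieces. Only then does it prove $J_n=I+\Gamma^{n+1}F$, by showing that $J_n$ is generated by $I$ together with a complement $W_n$ of $J_{n+1}$. You instead take $I=\bigcap_n J_n$ (in $F$ itself; no completion is needed) and establish $J_n=I+\Gamma^{n+1}F$ first. You then read off the generator weights from $I/[F,I]\cong H^2(M)^\vee$ and graded Nakayama. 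The two ideals coincide in the end. Your route skips the weight bound on $V(n)$ and the stabilization induction, and it gives the sharper count: $\dim H^2(M)^r$ minimal generators in weight $-r$. The paper's route stays entirely at finite stages.

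Two steps need tightening.

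First, the kernel: it is $\Gamma^{n+1}L_{n+1}$, not $\Gamma^{n}L_{n+1}$. Moreover, non-splitting alone only gives $K_{n+1}\subset[L_{n+1},L_{n+1}]$. The equality $K_{n+1}=\Gamma^{n+1}L_{n+1}$ uses that $V(n+1)$ is the \emph{whole} kernel of $H^2(M(n))\to H^2(M)$. Via $H^2(L_n)\cong(J_n/[F,J_n])^\vee$, this reads $J_{n+1}=J_m+[F,J_n]$ for all $m>n$. Induction from $J_1=[F,F]$ then gives $J_n=J_m+\Gamma^{n+1}F$ for all $m\geq n$. Since $(\Gamma^{m+1}F)^w=0$ for $m\geq -w$, $(J_m)^w$ is constant for $m\geq -w$, and this yields $J_n=I+\Gamma^{n+1}F$.

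Second, the middle step. Your sketch uses Hopf for the non-free $F_n$, five-term sequences, and ``bracket length''; it should say ``weight'', since $L_1$ need not be pure of weight $-1$. It is simpler to apply Hopf's formula to the free algebra $F$ directly. Because $(\Gamma^{n+1}F)^w=0$ for $w\geq -n$, the identity $J_n=I+\Gamma^{n+1}F$ gives $(J_n/[F,J_n])^w=(I/[F,I])^w$ for $w\geq -n$, compatibly with the transition maps. Hence $H^2(M)^r=\varinjlim_n H^2(M(n))^r\cong\bigl((I/[F,I])^{-r}\bigr)^\vee$. Since $I\subset[F,F]$ lives in weights $\leq -2$, graded Nakayama finishes the proof.
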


\begin{proof}
Since $V(1)\cong H^1(M)$, the weights of $V(1)$ are at least $1$.
Every monomial in a representative cocycle for a class in $\Ker(H^2(M(n))\rightarrow H^2(M))$ is a product of an element of $V(n)$ and an element of $\bigoplus_{i=1}^{n-1} V(i)$. It follows inductively that every weight-homogeneous element of $V(n)$ has weight at least $n$. In particular, the weights of $\Ker(H^2(M(n))\twoheadrightarrow H^2(M))\cong V(n+1)$ are at least $n+1$. Moreover, once $n\geq q$, the subspace of $H^2(M(n))$ generated by classes of weights $2,...,q$ is independent of $n$. 

Set $J_n=\Ker(\phi_n:F\twoheadrightarrow L_n)$. Let $I_n\subset J_n$ denote the homogeneous sub-ideal generated by elements of weights $-2,...,-q$, and let $(J_n)^r$ denote the homogeneous subspace of weight $r$. Since $L_1$ is abelian and the projection $L_n\rightarrow L_1$ is surjective, we have $J_n\subset [F,F]$. Since every generator of $F$ has weight at most $-1$, every element of $J_n$ has weight at most $-2$ and $([J_n,F])^{-2}=0$. Hence $(\frac{J_{n}}{[J_{n},F]})^{-2}=(J_n)^{-2}$.

The Lie algebra cohomology theory identifies $H^2(F/J_n;k)=H^2(L_n;k)$ with the dual of $J_n/[J_n,F]$. By Lemma \ref{Lem: Correspondence between Lie algebra cohomology and minimal model}, there is a natural isomorphism  $H^2(L_n;k)\cong H^2(M(n))$. Thus, the natural map $(\frac{J_{n+1}}{[J_{n+1},F]})^{-r}\rightarrow (\frac{J_{n}}{[J_{n},F]})^{-r}$ is an isomorphism for $2\leq r\leq q$ and $n\geq q$. Then we have $(J_n)^{-2}=(J_{n+1})^{-2}$ since $(\frac{J_{t}}{[J_{t},F]})^{-2}=(J_t)^{-2}$. Suppose inductively that $(J_{n+1})^{-s}=(J_{n})^{-s}$ for every $2\leq s<r$, where $r\leq q$ and $n\geq q$. Since $([J_n,F])^{-r}=\sum_{a+b=r,a\geq 2, b\geq 1}[(J_n)^{-a},F^{-b}]$, the inductive hypothesis gives $([J_n,F])^{-r}=([J_{n+1},F])^{-r}$. Now let $x\in (J_n)^{-r}$. The isomorphism $(\frac{J_{n+1}}{[J_{n+1},F]})^{-r}\rightarrow (\frac{J_{n}}{[J_{n},F]})^{-r}$ shows that there exists $y\in (J_{n+1})^{-r}$ such that $x-y\in ([J_n,F])^{-r}=([J_{n+1},F])^{-r}$. Then $x\in J_{n+1}$. It follows that $(J_n)^{-r}=(J_{n+1})^{-r}$.

Then we obtain a stabilization $I_q=I_{q+1}=...$. Let $I$ denote this stable homogeneous ideal. Choose a weight-homogeneous subspace $W_n\subset J_n$ that projects onto $J_n/J_{n+1}$. Since $L_{n+1}\twoheadrightarrow L_n$ is a central extension, $J_n/J_{n+1}$ is central in $F/J_{n+1}$. Thus, $[J_n,F]\subset J_{n+1}$. This gives an exact sequence $0\rightarrow J_{n+1}/[J_n,F]\rightarrow J_n/[J_n,F]\rightarrow J_n/J_{n+1}\rightarrow 0$. Under the natural duality $(J_n/[J_n,F])^{\vee}\cong H^2(M(n))$, $J_n/J_{n+1}$ is dual to $V(n+1)\cong \Ker(H^2(M(n))\rightarrow H^2(M))$. Since $H^2(M)$ has weights at most $q$ and $V(n+1)$ has weights at least $n+1>q$, $V(n+1)$ is precisely the part of $H^2(M(n))$ of weights greater than $q$. It follows that $J_{n+1}/[J_n,F]$ consists precisely of the components of weights $-2,...,-q$, whose representatives generate $I$. Then $J_n=I+W_n+[J_n,F]$. Let $K_n$ denote the ideal generated by $I+W_n$. Suppose inductively that $(J_n)^{-d}\subset K_n$. Let $x\in (J_n)^{-d}$. Then $x=k+\sum_i[y_i,z_i]$ for some $k\in (K_n)^{-d}$, $y_i\in (J_n)^{-a_i}$ and $z_i \in F^{b_i}$, where $a_i+b_i=d$. Since every $b_i\geq 1$, we have $a_i<d$. The inductive hypothesis implies that $y_i\in K_n$. Therefore, $x\in K_n$. This proves that $J_n$ is generated, as an ideal of $F$, by $I$ and $W_n$.

Since each central extension $L_{n+1}\twoheadrightarrow L_n$ is non-splitting, $J_n/J_{n+1}\cong \Gamma^{n+1}(L_{n+1})=\Gamma^{n+1}(F/J_{n+1})$. 
Under this identification, $\Gamma^{n+1}F\subset J_{n}$ projects onto  $J_n/J_{n+1}$. The previous paragraph implies that $J_n=I+\Gamma^{n+1}(F)$. Hence, for $n\geq q$, $L_n\cong F/(I+\Gamma^{n+1}(F))\cong (F/I)/\Gamma^{n+1}(F/I)$; for $n<q$, $L_n\cong (F/I)/\Gamma^{n+1}(F/I)$ since $L_n\cong L_q/\Gamma^{n+1}(L_q)$.
\end{proof} 

The \textbf{pro-nilpotent completion} of a Lie algebra $L$ is  $\widehat{L}:=\varprojlim_r L/\Gamma^r L$. The following result answers the fundamental aspect of Question \ref{Question: main question}.

\begin{theorem}\label{Thm: fundamental group of algebraic varieties}
Let $k$ be a separably closed field of characteristic $p\geq 0$, and let $\ell$ be a prime number. Let $X$ be a connected smooth variety over $k$. 
\begin{enumerate}[leftmargin=0.25in]
    \item If $X$ is proper, then the Lie algebra of the continuous Mal'cev $\QQ_{\ell}$-completion of $\pi_1^{\et}(X)^{\wedge}_{\ell}$ is the pro-nilpotent completion of a free Lie algebra on a finite-dimensional $\QQ_{\ell}$-vector space modulo an ideal generated by elements of bracket length $2$. In particular, $\pi_1^{\et}(X)^{\wedge}_{\ell}\widehat{\otimes }\QQ_{\ell}$ is determined by $(\pi_1^{\et}(X)^{\wedge}_{\ell}/\Gamma^3)\widehat{\otimes}\QQ_{\ell}$.
    \item If $\ell\neq p$ and $k$ is the algebraic closure of a finite field, then the Lie algebra of the continuous Mal'cev $\QQ_{\ell}$-completion of  $\pi_1^{\et}(X)^{\wedge}_{\ell}$ is the pro-nilpotent completion of a free Lie algebra on a finite-dimensional $\QQ_{\ell}$-vector space modulo an ideal generated by elements of bracket lengths $2$, $3$ and $4$. In particular, $\pi_1^{\et}(X)^{\wedge}_{\ell}\widehat{\otimes }\QQ_{\ell}$ is determined by $(\pi_1^{\et}(X)^{\wedge}_{\ell}/\Gamma^5)\widehat{\otimes}\QQ_{\ell}$.
\end{enumerate}
\end{theorem}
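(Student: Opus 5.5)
The plan is to reduce both items to Lemma \ref{Lem: weighted minimal model and Lie algebra tower}, applied to the $1$-minimal model of the pro-$\ell$ étale homotopy type. Set $\pi=\pi_1^{\et}(X)^{\wedge}_{\ell}$. Then $\pi$ is a topologically finitely generated pro-$\ell$ group. Its $\ZZ/\ell$-cohomology in degree $1$ is finite (for $\ell=p$ and $X$ proper by properness; otherwise by finiteness of étale cohomology with $\ell$-torsion coefficients). By Theorem \ref{Thm: 1-minimal model is Malcev completion}(2), applied to the pro-$p$-finite system $(X_{\et})^{\wedge}_{\ell}$, the pro-nilpotent Lie algebra $\Lie(\pi\widehat{\otimes}\QQ_\ell)$ is dual to the $1$-minimal model $M(1)$ of $C^*((X)^{\wedge}_{\et,\ell};\widehat{\ZZ}_\ell)\otimes\QQ_\ell$. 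By the Corollary following that theorem, the dual tower is the lower-central-series tower. It therefore suffices to show that the tower $L_1\twoheadleftarrow L_2\twoheadleftarrow\cdots$ dual to $M(1)$ is the lower central series tower of $F/I$, with $I$ generated in bracket lengths $2$ (resp.\ $2,3,4$). The pro-nilpotent completion of $F/I$ is then $\varprojlim_n L_n$. The "in particular" assertions follow because $F$ and the relations only involve $L_2$ (resp.\ $L_4$), which by the Corollary are the Lie algebras of $(\pi/\Gamma^3)\widehat{\otimes}\QQ_\ell$ (resp.\ $(\pi/\Gamma^5)\widehat{\otimes}\QQ_\ell$).

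\textbf{Item (1).} By Theorem \ref{Thm: formality of varieties}, the cochain algebra is $\QQ_\ell$-formal, so $M(1)$ is the $1$-minimal model of $(H^*_{\et}(X;\QQ_\ell),0)$. I will put an artificial weight grading on $H^*$, assigning weight $n$ to $H^n$. This is a legitimate weighted CDGA with zero differential, and the inductive construction of the $1$-minimal model can be carried out weight-homogeneously. Then $H^1$ has weight $1$ and $H^2$ has weight $2$, so Lemma \ref{Lem: weighted minimal model and Lie algebra tower} with $q=2$ gives $I$ generated in weight $-2$. Since weight $-r$ in $F$ (generators of weight $-1$) is exactly bracket length $r$, this gives quadratic relations.

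\textbf{Item (2).} Here $X$ need not be proper. I would spread $X$ out over some $\FF_q$, which is possible since $k=\overline{\FF}_q$. Geometric Frobenius acts on the pro-system and hence, by Proposition \ref{Prop: continuity of Galois action on automorphism group of minimal model} and the rigidity Lemma \ref{Lemma: rigidity of automorphisms of a minimal model}, as an automorphism of $M(1)$. Its eigenvalues are Weil numbers of integral weight, so $M(1)$ becomes a weighted CDGA, as in Lemma \ref{Lem: Frobenius weight on minimal model}. I will then invoke Deligne's Weil II bounds for smooth, not necessarily proper, varieties: $H^n_{\et}(X;\QQ_\ell)$ has weights in $[n,2n]$. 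So $H^1$ has weights in $[1,2]$ and $H^2$ has weights in $[2,4]$. Lemma \ref{Lem: weighted minimal model and Lie algebra tower} with $q=4$ yields relations of weights $-2,-3,-4$.

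\textbf{Main obstacle.} Weight and bracket length no longer coincide, since $F$ is generated in weights $-1$ and $-2$. I will handle this as in Morgan: a relation of weight $-r$ is a combination of brackets of generators whose weights sum to $-r$. Because every generator has weight at most $-1$, such a bracket has length at most $r\le 4$, and $I\subset[F,F]$ forces length at least $2$. Hence the relations can be written with bracket lengths $2,3,4$. One subtlety I expect is that Lemma \ref{Lem: weighted minimal model and Lie algebra tower} is phrased with integer weights, whereas the Frobenius weights are only absolute values of eigenvalues. I would split $M(1)$ into generalized eigenspaces grouped by $q$-weight, which gives a grading preserved by $d$ and multiplication. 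A further check is that $F$, defined on $L_1=H^1(M)^\vee$, carries weights $-1,-2$, consistent with the lemma's hypothesis that the weights of $H^1$ are at least $1$.
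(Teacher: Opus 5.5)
Your proposal is correct. Item (2) is essentially the paper's argument, which the paper only calls ``analogous''. You descend $X$ to some $\FF_q$, use Weil~II to get Frobenius weights in $[1,2]$ on $H^1$ and in $[2,4]$ on $H^2$, and apply Lemma~\ref{Lem: weighted minimal model and Lie algebra tower} with $q=4$. Your translation from weights to bracket lengths (generators of weights $-1,-2$, relations lying in $[F,F]$) and your grouping of generalized eigenspaces by $q$-weight are correct details that the paper leaves implicit.

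For item (1) you take a genuinely different route. The paper applies Lemma~\ref{Lem: weighted minimal model and Lie algebra tower} directly to Frobenius weights. It spreads $X$ out over a finitely generated $\ZZ$-algebra and specializes to a closed point. It then treats two cases separately and defers characteristic $0$ to Morgan:
\begin{itemize}
\item for $\ell\neq p$, Artin--Mazur specialization to $\chi_{\overline{k}_s}$ plus Deligne's purity;
\item for $\ell=p$, Gros--Suwa's Galois-equivariant specialization of $\QQ_p$-cohomology, a Frobenius lift in a decomposition group, and Artin--Mazur to pass from $\chi_{\overline F}$ back to $X$.
\end{itemize}
You instead feed in the already-established Theorem~\ref{Thm: formality of varieties}, so that $M(1)$ is the $1$-minimal model of $(H^*_{\et}(X;\QQ_\ell),0)$. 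You then give that CDGA the tautological grading ``weight $=$ degree''. Now $q=2$ and weight is exactly minus bracket length.

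This is the classical ``$1$-formality implies quadratic presentation'' argument. It is uniform in $p$ and $\ell$ and needs no Frobenius action on $M(1)$. On the other hand, it rests on the formality theorem, whose proof uses the same specialization-and-purity input. It also cannot reach item (2), since open varieties need not be formal; the paper's route treats both items in parallel.

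Two minor remarks:
\begin{itemize}
\item Lemma~\ref{Lemma: rigidity of automorphisms of a minimal model} is stated for finite type minimal CDGA's. Apply it to the full minimal model and restrict to the degree-$1$-generated subalgebra, which every automorphism preserves, or argue directly up the canonical tower of $M(1)$.
\item In item (2) the relations are inhomogeneous in bracket length, so recovering $I$ from $L_4$ in the ``in particular'' clause genuinely uses the weight grading. Concretely, $I$ is generated by the weight $-2,-3,-4$ components of $\Ker(F\to L_4)$; the paper passes over this point too.
\end{itemize}
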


\begin{proof}
We prove item (1) in detail, since the proof of item (2) is analogous. The characteristic $0$ case is essentially \cite{Morgan-hodge-theory}*{Corollary 10.4}. We may therefore assume that $p>0$.
There exists a commutative ring $S$ which is finite type over $\ZZ$ and a smooth proper scheme $\chi$ over $S$ such that the fraction field $F$ of $S$ is contained in $k$ and $\chi_{k}=X$. For a closed point $s\in \Spec(S)$, let $k_s$ denote its residue field.

Suppose first that $\ell\neq p$. \cite{Artin-Mazur-etale-homotopy}*{Corollary 12.13} gives a homotopy equivalence $X^{\wedge}_{\et,\ell}\simeq (\chi_{\overline{k}_s})^{\wedge}_{\et,\ell}$. By the purity of Frobenius weights of $H^*_{\et}(\chi_{\overline{k}_s};\QQ_{\ell})$ (\cite{Deligne-Weil-II}), this theorem follows from Lemma \ref{Lem: weighted minimal model and Lie algebra tower}.

Now suppose that $\ell=p$. We apply an argument analogous to the proof of Theorem \ref{Thm: formality of varieties}. We retain the notation introduced in that proof. By \cite{Gros-Suwa}*{Chapter II, Theorem 2.1}\cite{Jannsen-weights}*{(3.2)}, one can choose a closed point $s\in \Spec(S)$ such that, for every $r$, there is a Galois-equivariant isomorphism between $H^r_{\et}(\chi_{\overline{F}};\QQ_{p})$ and  $H^r_{\et}(\chi_{\overline{k_s}};\QQ_{p})$. Choose a lifting $\phi\in G_s$ of the arithmetic Frobenius of $k_s$. Then each eigenvalue of the lift  $(\phi^{-1})^*$ of the Frobenius on $H^r_{\et}(\chi_{\overline{F}};\QQ_{p})$ is an algebraic number of pure weight $r$. Applying Lemma \ref{Lem: weighted minimal model and Lie algebra tower} proves the theorem for $\chi_{\overline{F}}$. By \cite{Artin-Mazur-etale-homotopy}*{Corollary 12.12}, $(\chi_{\overline{F}})^{\wedge}_{\et,p}\simeq (\chi_{k})^{\wedge}_{\et,p}=(X_{\et})^{\wedge}_p$. Then this theorem holds for $X$. 
\end{proof}

\appendix

\section{Commutative Differential Graded Algebras}

In this appendix, we recall minimal models of commutative differential graded algebras over fields of characteristic zero, including the Lie algebra structures dual to their differentials. Our main references are \cite{Deligne-Griffiths-Morgan-Sullivan-formality}\cite{MGriffiths-Morgan-rational-homotopy}\cite{Morgan-hodge-theory}\cite{Sullivan-rational-homotopy}. We also prove some results of homotopy classes of automorphisms of a minimal model, which is used in the main context.

\begin{definition}
A \textbf{commutative differential graded algebra (CDGA)} over $k$ is a graded associative $k$-algebra $C^*=\bigoplus_{s\geq 0}C^s$ equipped with a differential $d$ of degree $1$ satisfying the following axioms for all homogeneous $x,y\in C^*$:
\begin{enumerate}[leftmargin=0.25in]
    \item $x\cdot y=(-1)^{|x|\cdot |y|}y\cdot x$,
    \item $d(x\cdot y)=dx\cdot y+(-1)^{|x|}x\cdot dy$, 
    \item $d^2=0$,
\end{enumerate}
where $|x|$ denotes the degree of $x$.
\end{definition}

\begin{definition}\label{Def: finite type CDGA}
A CDGA $C^*$ is \textbf{connected} if $C^0=k$. It is \textbf{homologically connected} if $H^0(C^*)=k$. It is \textbf{simply-connected} if $H^0(C^*)=k$ and $H^1(C^*)=0$. It is \textbf{finite type} if $H^i(C^*)$ is finite-dimensional for every $i$.
\end{definition}

\begin{definition}
Let $C^*$ be a connected CDGA. Its \textbf{augmentation ideal} is $C^{>0}:=\bigoplus_{s>0} C^s$. Its \textbf{indecomposable space} is the quotient $I^*(C^*)=C^{>0}/(C^{>0}\cdot C^{>0})$.
\end{definition}

\begin{definition}
Let $V^*$ be a graded $k$-vector space. Write $V^{ev}:=\bigoplus_{i} V^{2i}$ and $V^{odd}:=\bigoplus_i V^{2i+1}$. The \textbf{free graded algebra} generated by $V$ is $\bigwedge^*V:=k[V^{ev}]\otimes_k \bigwedge^* V^{odd}$, equipped with its natural grading.
\end{definition}

\begin{definition}
Let $(C^*,d_{C})$ be a CDGA, and let $V$ be a graded space over $k$. An \textbf{elementary extension of $C^*$} by $V$ is a CDGA of the form $(C^*\otimes_{k} \bigwedge^* V,d)$ satisfying $d\vert_{C}=d_{C}$ and $d(V)\subset C^*$.
\end{definition}

\begin{definition}[\cite{MGriffiths-Morgan-rational-homotopy}*{p.~97, Definition; Proposition 10.2}]\label{Def: minimal CDGA}
Let $C^*$ be a connected CDGA, and let $C^*(n)$ be the subalgebra generated by homogeneous elements of degree at most $n$. The CDGA $C^*$ is \textbf{minimal} if it is free and, for each $n$, there exists a tower of CDGA's $C(n-1)=C(n,0)\subset C(n,1)\subset C(n,2)\subset ...$ satisfying the following conditions: 
\begin{enumerate}[leftmargin=0.27in]
    \item $\bigcup_{i}C(n,i)=C(n)$;
    \item each $C(n,i)$ is an elementary extension of $C(n,i-1)$ by a graded vector space $V(n,i)$ concentrated in degree $n$.
\end{enumerate}
If $n$ is the smallest number such that $C^*=C(n)$ and $C^*$ is minimal, then the CDGA $C^*$ is called an \textbf{$n$-minimal CDGA}. 
\end{definition}

The following lemma follows immediately from Definition \ref{Def: minimal CDGA}.

\begin{lemma}
Keep the notation of Definition \ref{Def: minimal CDGA}. If $C^*$ is minimal, then the composition $\bigoplus_i V(n,i)\hookrightarrow C^n\twoheadrightarrow I^n(C)$ is an isomorphism of vector spaces.
\end{lemma}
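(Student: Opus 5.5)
The statement to prove is the last lemma: if $C^*$ is minimal, then the composite $\bigoplus_i V(n,i)\hookrightarrow C^n\twoheadrightarrow I^n(C)$ is an isomorphism. I will compute $C^n$ and the decomposables in degree $n$ directly from the towers in Definition \ref{Def: minimal CDGA}.

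\textbf{Step 1: the underlying algebra of $C$.} Iterating the elementary extensions over all $n$ and $i$, and using the exhaustiveness condition $\bigcup_i C(n,i)=C(n)$, I get an isomorphism of graded algebras
\[
C\cong \bigwedge^*\Bigl(\bigoplus_{m,i}V(m,i)\Bigr),
\]
where each $V(m,i)$ sits in degree $m$. The differential is irrelevant for this step; only the algebra structure matters. Put $W_m:=\bigoplus_i V(m,i)$. Since $C$ is connected, no generators occur in degree $0$, so $m\geq 1$.

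\textbf{Step 2: split degree $n$ into generators and decomposables.} In a free graded-commutative algebra on generators of positive degree, every monomial is either a single generator or a product of at least two generators of positive degree. Hence the degree-$n$ part decomposes as
\[
C^n=W_n\oplus \bigl(C^{>0}\cdot C^{>0}\bigr)^n .
\]
Here the second summand is spanned by monomials of length at least two. The sum is direct because the algebra is free, so the monomial basis separates word length.

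\textbf{Step 3: conclude.} The augmentation ideal is spanned by monomials of length at least one, and $C^{>0}\cdot C^{>0}$ is spanned by monomials of length at least two. Therefore $I^n(C)=C^n/(C^{>0}\cdot C^{>0})^n$ is identified with $W_n$. The composite $W_n\hookrightarrow C^n\twoheadrightarrow I^n(C)$ is the inclusion of a complement of the kernel of the quotient map, so it is an isomorphism.

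The only point needing care is Step 1: I must check that the union of the towers for all $n$ gives the free algebra on all the $V(m,i)$ at once. This holds by induction on $n$, since each $C(n)$ is free on $\bigoplus_{m\leq n}W_m$, and the union over $n$ is all of $C$.
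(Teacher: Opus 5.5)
Your proof is correct and is essentially the paper's argument. The paper only asserts that the lemma follows immediately from Definition \ref{Def: minimal CDGA}; you supply the details. The towers exhibit $C$, via the multiplication map and compatibly with the inclusions $V(m,i)\subset C$, as the free graded-commutative algebra on $\bigoplus_{m,i}V(m,i)$. Consequently $(C^{>0}\cdot C^{>0})^n$ is exactly the word-length $\geq 2$ part of $C^n$, and $\bigoplus_i V(n,i)$ is a complement to it.
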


\begin{definition}\label{Def: n-minmal model}
An \textbf{$n$-minimal model} of a CDGA $C^*$ consists of a minimal CDGA $M(n)$ together with a morphism $\phi_n:M(n)\rightarrow C$ satisfying the following conditions:
\begin{enumerate}[leftmargin=0.25in]
    \item The CDGA $M(n)$ is generated by homogeneous elements of degree at most $n$;
    \item The induced map $(\phi_n)^*:H^i(M(n))\rightarrow H^i(C)$ is an isomorphism for $i\leq n$ and is injective for $i=n+1$.
\end{enumerate}
When $n=\infty$, the pair $(M(\infty),\phi_{\infty})$ is called a minimal model of $C$.
\end{definition}

\begin{definition}
A \textbf{homotopy} between two CDGA morphisms $f,g:C\rightarrow D$ is a CDGA morphism $H:C\rightarrow D\otimes (t,dt)$ such that $H\vert_{t=0}=f$ and $H\vert_{t=1}=g$, where $t$ has degree $0$ and $dt$ has degree $1$.
\end{definition}

\begin{theorem}[\cite{Morgan-hodge-theory}*{Theorem 5.6},\cite{Bousfield-Gugenheim-rational-homotopy}*{Proposition 7.7, Proposition 7.8}]\label{Fact: Lifting of minimal models}
Let $C^*$ be a homologically connected CDGA. Then
\begin{enumerate}[leftmargin=0.25in]
    \item $C^*$ has an $n$-minimal model $\phi_n:M(n)\rightarrow C$ for any $1\leq n\leq \infty$;
    \item for any two $n$-minimal models $\phi_n:M(n)\rightarrow C$ and $\phi'_n:M(n)'\rightarrow C$, there exists a CDGA isomorphism $I:M(n)\rightarrow M(n)'$, unique up to homotopy, such that $\phi_n$ and $\phi'_n\circ I$ are homotopic;
    \item moreover, if $C^*$ is finite type, then an $n$-minimal model $M(n)$ has a tower of elementary extensions, as in Definition \ref{Def: minimal CDGA}, by finite-dimensional vector spaces.
\end{enumerate}
\end{theorem}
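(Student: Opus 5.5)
Existence, uniqueness and the finite-type tower are three standard statements; I would prove them separately, following Sullivan's inductive construction (as in the references cited).

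\textbf{Existence.} I build $M(n)$ degree by degree, and within each degree by a countable sequence of elementary extensions. Start with $M(1,1)=\bigwedge^* H^1(C)$. Lift a basis of $H^1(C)$ to cocycles in $C^1$; this defines $\phi$ on generators, and since $\bigwedge^*$ is free graded-commutative, it extends to a CDGA map. At a stage $M(n,i)$ with $\phi_{n,i}:M(n,i)\to C$, set
\[
V(n,i+1)=\Ker\bigl(H^{n+1}(M(n,i))\to H^{n+1}(C)\bigr),
\]
placed in degree $n$. Choose representing cocycles $y_j\in M(n,i)^{n+1}$ and define $d x_j=y_j$. Since $\phi(y_j)$ is exact, write $\phi(y_j)=d c_j$ and set $\phi(x_j)=c_j$. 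When passing from degree $n-1$ to degree $n$, first adjoin $\Coker(H^n(M(n-1))\to H^n(C))$ in degree $n$ with zero differential, mapped to chosen representing cocycles; this makes $H^n$ surjective. Then iterate the kernel-killing step and let $M(n)=\bigcup_i M(n,i)$. The map on $H^{n+1}$ is injective on the union, because any class in the kernel already lives at some finite stage and is killed at the next. Each added generator $x_j$ has $dx_j$ decomposable, of degree $n+1$ in the ideal generated by $M(n,i)^{>0}$. This holds because $M^{\le n-1}$ lives in degrees $\geq1$ and $y_j$ lies in degree $n+1\ge 2$; the only indecomposable degree-$(n+1)$ elements would be generators of degree $n+1$, and none exist yet. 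So the union is minimal in the sense of Definition \ref{Def: minimal CDGA}. Adding degree-$n$ generators does not change $H^{i}$ for $i<n$. It is also injective into $H^n$: the newly created degree-$n$ cocycles are cohomologous to combinations of the added cocycle generators in the cokernel summand, and the $x_j$ contribute no new cocycles beyond those combinations. I will check this via the spectral sequence of the elementary extension, as in Lemma \ref{Lemma: fibration and elementary extension}. For $n=\infty$, take the union over all $n$.

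\textbf{Uniqueness.} The key tool is the lifting lemma: given a minimal (Sullivan) $M$, a quasi-isomorphism (or surjective quasi-isomorphism) $\psi:A\to C$, and $f:M\to C$, there is a lift $g:M\to A$ with $\psi g\simeq f$, unique up to homotopy. I prove it by induction over the elementary extensions, using the path object $C\otimes(t,dt)$ and the fact that the obstruction to extending over a generator $x$ with $dx=y$ is a class in $H^{|x|+1}$ of the relative cone, which vanishes because $\psi$ is a quasi-isomorphism. For $n<\infty$ one only has isomorphism up to degree $n$ and injectivity in degree $n+1$, and the obstruction argument uses exactly this range, since generators have degree $\le n$. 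Applying the lemma both ways gives maps $I:M(n)\to M(n)'$ and $J:M(n)'\to M(n)$ whose composites are homotopic to the identity. A self-map of a minimal CDGA that is homotopic to the identity induces the identity on indecomposables, so by an induction on the tower it is an isomorphism. Hence $I$ is an isomorphism.

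\textbf{Finite type.} If every $H^i(C)$ is finite-dimensional, then each cokernel adjoined at the start of degree $n$ is finite-dimensional. Each kernel $V(n,i+1)$ is a subspace of $H^{n+1}(M(n,i))$, which is finite-dimensional once $M(n,i)$ is finitely generated in degrees $\le n$; the exception is degree $1$, where $\bigwedge^*$ of finite-dimensional degree-$1$ space is finite in each degree. So each step adds a finite-dimensional space, which gives (3).

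\textbf{Main obstacle.} The subtle point is the homotopy-uniqueness and obstruction theory in the truncated case $n<\infty$, in particular ensuring that the obstruction classes land exactly in the range where $\phi_n$ is an isomorphism or injective. I would follow the argument in \cite{Morgan-hodge-theory} and \cite{Bousfield-Gugenheim-rational-homotopy} and cite it rather than redo it.
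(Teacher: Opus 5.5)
Your existence construction and obstruction-theoretic lifting lemma are the standard argument of the references the paper cites; the paper itself gives no proof. However, the last step of your uniqueness argument is false in the generality of the statement, which includes non-simply-connected models (for example the $1$-minimal models used throughout the paper).

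\textbf{Rigidity of indecomposables.} You assert that a self-map of a minimal CDGA homotopic to the identity induces the identity on indecomposables. This holds only when $M$ is simply connected; the failure in general is exactly the point recorded in the paper as Example~\ref{Example: rigid invariants}. Concretely, let $M=\bigwedge^*(x,z,y)$ with $|x|=1$, $|z|=|y|=2$, $dx=dz=0$, $dy=xz$, and fix $0\neq c\in k$. Setting $H(x)=x+c\,dt$, $H(z)=z$, $H(y)=y+c\,tz$ defines a CDGA map $M\to M\otimes(t,dt)$, since $dH(y)=xz+c\,dt\,z=H(x)H(z)$. This is a homotopy from $\mathrm{id}_M$ to the automorphism fixing $x,z$ with $y\mapsto y+cz$, which is not the identity on $I^2(M)$.

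The mechanism: write $g-\mathrm{id}=d\theta+\theta d$ as in Lemma~\ref{Lem: equivalence of two homotopies for maps between minimal CDGA's}. Then $\theta$ can send a degree-$1$ generator $a$ to a nonzero scalar, so a monomial $ab$ in $dv$ contributes the indecomposable term $\theta(a)g(b)$.

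The conclusion that $I$ is an isomorphism is still true, but it needs another justification. Two options:
\begin{enumerate}
\item Note that $\phi'_n\circ I\simeq\phi_n$ makes $I$ an isomorphism on $H^{\le n}$ and injective on $H^{n+1}$. Then prove, without rigidity, that such a map of $n$-minimal CDGAs is an isomorphism (Theorem~\ref{Fact: Uniqueness of minimal models}).
\item Keep your route and prove the correct weaker statement. For $v\in V(n,i)$, $d\theta(v)$ is decomposable. The only indecomposable terms of $\theta(dv)$ are $\theta(a)g(b)$ or $\pm a\,\theta(b)$, with one factor of degree $1$ and the other a degree-$n$ generator from an earlier stage $V(n,j)$, $j<i$. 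Hence $g$ preserves the filtration of $I^n(M)$ by stages and is the identity on its graded pieces. So $g$ is bijective on indecomposables, and therefore an automorphism.
\end{enumerate}

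\textbf{Finite type.} Your argument for (3) assumes $M(n,i)$ is finitely generated, which fails without simple connectivity. Already $M(2,0)=M(1)$ can have infinitely many degree-$1$ generators: for $C$ a finite-type model of $S^1\vee S^1$ there is a nonzero stage for every step of the lower central series of the free Lie algebra on two generators. So nothing in your argument bounds $\dim H^{n+1}(M(n,i))$ for $n\geq 2$, and a single kernel-killing step may adjoin an infinite-dimensional space.

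The repair is a countability argument. Inductively, each $M(n,i)$ is countably generated, so each adjoined kernel is countable-dimensional. Each $dv$ involves only finitely many earlier generators. You can therefore re-enumerate the degree-$n$ generators as an $\omega$-sequence in which every differential involves only predecessors. This gives a tower of elementary extensions by finite-dimensional (even one-dimensional) spaces.
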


\begin{theorem}[\cite{Morgan-hodge-theory}*{Proposition 5.9}]\label{Fact: Uniqueness of minimal models}
Let $M$ and $M'$ be $n$-minimal CDGA's, where $1\leq n\leq \infty$. If a CDGA morphism $I:M\rightarrow M'$ induces an isomorphism $H^i(M)\rightarrow H^i(M')$ for every $i
\leq n$ and an injection $H^{n+1}(M)\rightarrow H^{n+1}(M')$, then $I$ is an isomorphism.
\end{theorem}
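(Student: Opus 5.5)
The plan is to apply Theorem \ref{Fact: Lifting of minimal models} (or its analogue for morphisms) to $I$ itself and reduce to a statement about indecomposables. Since both $M$ and $M'$ are $n$-minimal, and $I$ satisfies the cohomological conditions of Definition \ref{Def: n-minmal model}, the composite $I:M\to M'$ makes $M$ an $n$-minimal model of $M'$. The identity $M'\to M'$ is another $n$-minimal model. Part (2) of Theorem \ref{Fact: Lifting of minimal models} then supplies an isomorphism $J:M\to M'$ with $I\simeq J$.

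So it suffices to show that a CDGA morphism between minimal CDGA's which is homotopic to an isomorphism is itself an isomorphism. I would argue this on indecomposables. A homotopy $H:M\to M'\otimes\bigwedge(t,dt)$ induces the same map $I^*(M)\to I^*(M')$ at $t=0$ and $t=1$: for a generator $v$, write $H(v)=\sum f_i(t)a_i+\sum g_j(t)dt\, b_j$. The closed-form relation $dH(v)=H(dv)$, together with the fact that minimality forces $d$ to land in decomposables, shows that the linear (indecomposable) parts of $H(v)|_{t=0}$ and $H(v)|_{t=1}$ differ by the indecomposable part of $d(\int b)$. This vanishes in $I^*(M')$, since $d(M')\subset M'^{>0}\cdot M'^{>0}$ by minimality (cf. the nilpotence used in Definition \ref{Def: minimal CDGA}). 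Hence $I$ and $J$ induce the same isomorphism $I^*(M)\cong I^*(M')$.

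The final step is the standard fact that a morphism of free, connected graded algebras $\bigwedge V\to\bigwedge V'$ inducing an isomorphism on indecomposables $V\to V'$ is an isomorphism. I would prove it by induction on degree and on the tower $C(n,i)$: surjectivity follows because every element of $\bigwedge V'$ in a given degree is a polynomial in generators of that degree or lower, and these are hit modulo decomposables of lower-degree generators. Injectivity follows from a filtration by word length: the associated graded map is $\Sym(V)\to\Sym(V')$ induced by an isomorphism, which is injective, and the word-length filtration is exhaustive and finite in each degree (using connectivity, $V^0=0$).

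The main obstacle is the degree-$1$ part when $n=1$ or $M$ has degree-$1$ generators. There the word-length filtration is not finite in each degree, since products of degree-$1$ elements are bounded only by anticommutativity, and the homotopy-invariance argument needs the nilpotent tower. I would handle it by working within each finite stage $C(1,i)$ of the tower, where the filtration terminates because $\bigwedge$ of a finite-dimensional odd space is finite. Then I would pass to the union $\bigcup_i C(1,i)$, checking compatibility of $I$ with the towers via the induction on $i$ in Definition \ref{Def: minimal CDGA}.
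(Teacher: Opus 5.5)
Your second step fails: once degree-$1$ generators are present, homotopic morphisms between minimal CDGA's need not induce the same map on indecomposables. The paper records this in Example~\ref{Example: rigid invariants}, citing Morgan's correction.

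A concrete counterexample: let $M=\bigwedge(x,y,z)$ with $|x|=|y|=|z|=1$, $dx=dy=0$, $dz=xy$. Put $H(x)=x+dt$, $H(y)=y$, $H(z)=z+ty$. Since $dH(z)=xy+dt\cdot y=H(x)H(y)$, this is a homotopy from $\mathrm{id}_M$ to the automorphism $z\mapsto z+y$, which acts non-trivially on $I^1(M)$.

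The error in your computation is a dropped term. Write $H(w)=H_0(w)(t)+dt\cdot H_1(w)(t)$. Then $dH(v)=H(dv)$ gives $H_0(v)(1)-H_0(v)(0)=d\int_0^1 H_1(v)+\int_0^1 (H(dv))_{dt}$, and you kept only the first term. The second term need not be decomposable. If $w_1w_2$ is a monomial of $dv$ with $|w_2|=1$, then $H_1(w_2)\in (M')^0[t]=k[t]$ is a scalar polynomial. So $(H(w_1w_2))_{dt}$ contains $\pm H_1(w_2)H_0(w_1)$, which can be indecomposable; in the example it is $y$.

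Your proposed remedy does not touch this. The counterexample is itself a finite stage $C(1,2)$, and an arbitrary $I$ need not respect chosen towers anyway. The non-finiteness of the word-length filtration you worry about does not occur either: all generators have positive degree, so word length is bounded by degree. Your argument does work when $M$ has no degree-$1$ generators. Your last step (a morphism of free connected algebras that is bijective on indecomposables is an isomorphism) is correct.

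The first step also carries no weight. A map homotopic to an isomorphism is a quasi-isomorphism, so you have only reduced to a special case of the statement. Invoking Theorem~\ref{Fact: Lifting of minimal models}(2) is also close to circular: its usual proof lifts a comparison map and then applies exactly this uniqueness statement to see it is an isomorphism.

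The missing idea is to replace homotopy invariance by filtrations that every CDGA morphism preserves. For each degree $m$, set $F_0=M(m-1)$. Let $F_{s+1}$ be the subalgebra generated by $F_s$ and $\{v\in M^m: dv\in F_s\}$; for $m=1$ this gives $F_1=\bigwedge Z^1(M)$ with $Z^1(M)=H^1(M)$. These filtrations have the following properties:
\begin{enumerate}
\item By the tower condition of Definition~\ref{Def: minimal CDGA}, $\bigcup_s F_s=M(m)$.
\item By induction on $s$, $I(F_s)\subset F'_s$.
\item The map $v\mapsto [dv]$ gives a natural exact sequence $0\to\Coker(H^m(F_0)\to H^m(M))\to F_1^m/F_0^m\to\Ker(H^{m+1}(F_0)\to H^{m+1}(M))\to 0$.
\item For $s\geq 1$ it gives a natural isomorphism $F^m_{s+1}/F^m_s\cong\Ker(H^{m+1}(F_s)\to H^{m+1}(M))$.
\end{enumerate}
Now induct on $m$ and $s$, using the isomorphisms on $H^{\leq n}$, the injection on $H^{n+1}$, the five lemma and your last step. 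This shows $I\colon F_s\to F'_s$ is an isomorphism for all $s$ and all $m\leq n$, hence $I$ is an isomorphism.

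Alternatively, you can keep your homotopy argument and prove only the correct weaker claim. Namely, a homotopy changes the map on indecomposables by a term that lowers this canonical filtration. Then the maps induced by $I$ and by the isomorphism $J$ on $I^*$ differ by a unipotent factor, so $I$ is also bijective on indecomposables.
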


Let $M$ be a finite type minimal CDGA. By Theorem \ref{Fact: Lifting of minimal models}, $M$ has a tower $k=M(1,0)\subset M(1,1)\subset M(1,2)\subset ...\subset M(2,0)\subset  M(2,1)\subset ...$ of elementary extensions by finite-dimensional vector spaces $\{V(n,i)\}$, where $M(n+1,0)=M(n)=\bigcup_i M(n,i)$. Let $L(n,i)$ denote the dual of $\bigoplus_{1\leq j\leq i}V(n,j)$, and let $L(n)$ denote the dual of $\bigoplus_{j}V(n,j)$. The restriction of the differential $d:M^n\rightarrow M^{n+1}$ to $\bigoplus_j V(n,j)$ is a polynomial on $\bigoplus_{i,m<n}V(m,i)$. Its quadratic term induces brackets $[\cdot,\cdot]:L(m)\otimes_k L(n)\rightarrow L(m+n-1)$ and $[\cdot,\cdot]:L(1,i)\otimes L(1,i)\rightarrow L(1,i)$.

\begin{theorem}[\cite{Morgan-hodge-theory}*{5.11}\cite{MGriffiths-Morgan-rational-homotopy}*{Theorem 10.3}]\label{Fact: nilpotent Lie algebra induced from minimal model}
Let $M$ be a finite type minimal CDGA with the tower in the previous paragraph.
\begin{enumerate}[leftmargin=0.25in]
    \item The bracket makes $(L(*-1),[\cdot,\cdot])$ a graded Lie algebra.
    \item For every $i$, $(L(1,i),[\cdot,\cdot])$ is a finite-dimensional nilpotent Lie algebra. Moreover, the dual tower to $k=M(1,0)\subset M(1,1)\subset M(1,2)\subset ...$ is the tower of central extensions of Lie algebras $0 \twoheadleftarrow L(1,1)\twoheadleftarrow L(1,2)\twoheadleftarrow ...$.
\end{enumerate}
\end{theorem}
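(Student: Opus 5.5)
We prove the two items in order. Item (2) will follow from item (1) by restricting to the tower $M(1,i)$.

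\textbf{Item (1).} The plan is to dualize the differential directly. Write $V=\bigoplus_{n,i}V(n,i)$, so that $M=\bigwedge^*V$ and $I^*(M)\cong V$. Decompose the differential as $d=d_2+d_{\geq 3}$ according to word length in $V$. Since $M$ is minimal (decomposable differential with $d(V)\subset \bigwedge^{\geq 2}V$), the identity $d^2=0$ restricted to generators gives, in word-length three, the equation $d_2\circ d_2=0$ on $V$. Here $d_2$ is extended to $\bigwedge^2V$ as a derivation.

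Next, dualize $d_2|_V:V\to \bigwedge^2 V$. Finite type gives finite-dimensionality in each degree, so the transpose $(\bigwedge^2V)^\vee\to V^\vee$ is defined. After the degree shift $L(*-1)$, this transpose becomes a bracket of degree $0$. The Koszul sign conventions for $\bigwedge^2$ of a graded space translate graded commutativity into graded antisymmetry of the bracket. The identity $d_2^2=0$ on $V\to\bigwedge^3V$ then dualizes to the graded Jacobi identity. This is the standard Chevalley--Eilenberg correspondence, and it is the route of \cite{MGriffiths-Morgan-rational-homotopy}*{Theorem 10.3}. The only genuine work is sign bookkeeping under the shift, which is the main obstacle. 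I would fix the sign convention by checking the dual pairing on the generators $\langle x\wedge y,\ a\otimes b\rangle$ and verifying Jacobi on three homogeneous elements.

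\textbf{Item (2).} Restrict to $M(1,i)=\bigwedge^*(V(1,1)\oplus\dots\oplus V(1,i))$, which consists entirely of degree-$1$ generators. For degree reasons $d$ maps degree $1$ to $\bigwedge^2$ of degree-$1$ elements, so $d=d_2$ exactly on $M(1,i)$. Hence $M(1,i)$ is literally the Chevalley--Eilenberg complex of $L(1,i)$, and the Lie structure comes from item (1). Finite-dimensionality follows from Theorem \ref{Fact: Lifting of minimal models}(3).

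For nilpotency and the central extension structure, use the filtration by the stage $j$ of $V(1,j)$. The condition $d(V(1,j))\subset M(1,j-1)$ means the bracket dual to $d|_{V(1,j)}$ takes values only in $L(1,i)$ components indexed by smaller stages. Dually, the kernel $V(1,i)^\vee$ of $L(1,i)\twoheadrightarrow L(1,i-1)$ satisfies $[V(1,i)^\vee,L(1,i)]=0$, because no differential of any generator involves $V(1,i)$. So the kernel is central. The projection is a Lie map because $M(1,i-1)\subset M(1,i)$ is a sub-CDGA.

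Iterating, $L(1,i)$ is an $i$-fold iterated central extension, hence nilpotent. The compatibility of the maps in the tower is immediate from the nested inclusions.
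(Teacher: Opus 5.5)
Your proposal is correct and is the standard argument behind the results the paper cites; the paper gives no proof of its own, only the references to Morgan and Griffiths--Morgan. The argument is that decomposability of $d$ forces $d_2\circ d_2=0$, which dualizes to graded antisymmetry and Jacobi, and the Hirsch-extension tower $M(1,\cdot)$ dualizes to a tower of central extensions, hence nilpotency.

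One minor correction: in this paper ``finite type'' only means finite-dimensional cohomology, so $\bigoplus_j V(n,j)$ can be infinite-dimensional (e.g.\ $M^1$ for a free group). Nothing breaks, because $\langle [a,b],v\rangle=\pm\langle a\otimes b,d_2v\rangle$ is defined pointwise ($d_2v$ is a finite sum). Also, decomposability of $d$ is not part of the paper's definition of minimal; it follows from the tower condition for degree reasons.
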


\begin{definition}[\cite{Morgan-corrections}]
An invariant of a minimal CDGA is \textbf{rigid} if homotopic maps induce the identical map on the invariant. 
\end{definition}

\begin{example}[\cite{Morgan-corrections}]\label{Example: rigid invariants}
The indecomposable space of a minimal CDGA $M$ is generally not rigid, unless $M$ is simply-connected. 
\end{example}

\begin{definition}
Let $f,g:A\rightarrow B$ be CDGA morphisms.
\begin{enumerate}[leftmargin=0.25in]
    \item A degree $-1$ map $\theta:A\rightarrow B$ is an \textbf{$(f,g)$-derivation} if $\theta(xy)=\theta(x)g(y)+(-1)^{|x|}f(x)\theta(y)$ for all homogeneous elements $x,y\in A$.
    \item Assume that $A$ is minimal. A degree $-1$ $(f,g)$-derivation $\theta:A\rightarrow B$ is a \textbf{homotopy derivation} from $f$ to $g$ if $g-f =d\theta+\theta d$.
\end{enumerate}
\end{definition}

\begin{lemma}\label{Lem: equivalence of two homotopies for maps between minimal CDGA's}
Let $f,g:A\rightarrow B$ be CDGA morphisms, and assume that $A$ is minimal. Then $f$ is homotopic to $g$ if and only if there exists a homotopy derivation $\theta:A\rightarrow B$ from $f$ to $g$.
\end{lemma}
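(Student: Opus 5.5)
We prove the two implications separately, using the standard path object $B\otimes_k\bigwedge(t,dt)$ together with the minimality of $A$. Fix a tower of elementary extensions for $A$ as in Definition \ref{Def: minimal CDGA}, and order its generators as $v_1,v_2,\dots$ so that $dv_j$ lies in the subalgebra generated by $v_1,\dots,v_{j-1}$. This well-ordering is what allows inductive constructions on $A$.

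\textbf{Homotopy gives a homotopy derivation.} Let $H:A\rightarrow B\otimes\bigwedge(t,dt)$ be a homotopy from $f$ to $g$. For each $x$, write $H(x)=\sum_i b_i(x)t^i+\sum_i c_i(x)t^i\,dt$. Define
\[\theta(x)=\pm\int_0^1 H(x)=\pm\sum_i \frac{c_i(x)}{i+1},\]
with the usual sign. The fiberwise Stokes formula $\int_0^1 d+d\int_0^1=(\mathrm{ev}_1-\mathrm{ev}_0)$ gives $g-f=d\theta+\theta d$. However, $\theta$ is in general only a derivation with respect to the family $H_t$, not an $(f,g)$-derivation. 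To fix this, we do not integrate on all of $A$. Instead, we use $\theta$ only on the generators $v_j$, and define $\theta'$ on all of $A$ as the unique $(f,g)$-derivation with $\theta'(v_j)$ to be chosen. Then we correct $\theta'(v_j)$ inductively so that $(g-f-d\theta'-\theta'd)(v_j)=0$.

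The key computation is the following. The map $D:=g-f-d\theta'-\theta'd$ is itself an $(f,g)$-derivation of degree $0$, so it vanishes on $A$ as soon as it vanishes on the generators. At step $j$, $D(v_j)$ is a cocycle, because $dD(v_j)=D(dv_j)=0$ by induction. Its class is the difference of the classes of $g-f$ and $\theta'd$ applied to $v_j$. Comparing with the honest homotopy $H$ restricted to the subalgebra $A_{<j}$ shows that this class vanishes. Hence $D(v_j)=d\beta$ for some $\beta$, and we replace $\theta'(v_j)$ by $\theta'(v_j)+\beta$ (up to sign). I expect the exactness of $D(v_j)$ to be the main obstacle. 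To handle it, I would prove a slightly stronger inductive claim: the homotopy $H$, restricted to $A_{<j}$, can be modified so that it is ``linear in $\theta'$'', i.e. $H(x)=f(x)+t(g-f)(x)+\dots$ with the $dt$-part determined by $\theta'$. The relevant exactness then comes from the extension of $H$ over $v_j$.

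\textbf{Homotopy derivation gives a homotopy.} Given $\theta$, we build $H$ generator by generator. On generators, set
\[H(v_j)=f(v_j)+t\big(g(v_j)-f(v_j)\big)+dt\cdot\theta(v_j)+(\text{higher corrections in }t),\]
and extend multiplicatively. The requirement $dH(v_j)=H(dv_j)$ is an obstruction living in a cocycle of $B\otimes\bigwedge(t,dt)$ that vanishes at $t=0,1$. Since the ideal $\Ker(\mathrm{ev}_0)\cap\Ker(\mathrm{ev}_1)$ is acyclic, every such obstruction is exact. Consequently, we can correct $H(v_j)$ by adding an element of this ideal, which leaves the endpoints $f$ and $g$ unchanged. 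Here the $(f,g)$-derivation identity for $\theta$ guarantees that the obstruction is a cocycle. This direction is the standard Sullivan lifting argument, and I expect it to be routine.
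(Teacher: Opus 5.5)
Both directions have a genuine gap, and a single example exposes both. Let $A=\bigwedge(a,b)$ with $|a|=2$, $|b|=3$, $da=0$, $db=a^2$. Let $B=H^*(S^1\times S^2;k)$ with zero differential, generated by $c$ in degree $1$ and $e$ in degree $2$. Let $f=g$ be given by $a\mapsto e$, $b\mapsto 0$.

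\textbf{The ``only if'' direction.} Nothing you keep track of forces $D(v_j)$ to be exact. The primitive $\beta$ added at each earlier step is determined only up to a cocycle, and your inductive hypothesis records only that $D$ vanishes on $A_{<j}$. In the example, take $H$ constant. Since $D(a)=0$, the choice $\beta=c$ is allowed, which gives $\theta'(a)=c$. Then $D(b)=-\theta'(a^2)=-2ce$ for every value of $\theta'(b)$, and $2ce\neq 0$ in $B$. So the induction is stuck, although $f\simeq g$. Your proposed strengthening does not help either. The homotopy on $\bigwedge(a)$ that is ``linear in $\theta'$'' is $a\mapsto e+c\,dt$, and it does not extend over $b$ with fixed endpoints, even though the constant homotopy does. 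Indeed, any $H(b)$ vanishing at $t=0,1$ has $\int_0^1 dH(b)=0$, whereas $\int_0^1(e+c\,dt)^2=\int_0^1 2ce\,dt=\pm 2ce$. The missing idea, which is exactly the paper's argument, is to run the induction in the path object. Construct a degree $-1$ $(f,H)$-derivation $\eta:A\to B\otimes\bigwedge(t,dt)$ with $\eta|_{t=0}=0$ and $H-f=d\eta+\eta d$; both sides are $(f,H)$-derivations, so it suffices to check this on generators. At $v_j$, the element $H(v_j)-f(v_j)-\eta(dv_j)$ is a cocycle in $\Ker(\mathrm{ev}_0)$, which \emph{is} acyclic. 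Hence a primitive $\eta(v_j)\in\Ker(\mathrm{ev}_0)$ exists with no obstruction. Evaluating at $t=1$ then gives an $(f,g)$-derivation $\theta=\eta|_{t=1}$ with $g-f=d\theta+\theta d$.

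\textbf{The ``if'' direction.} Here your key claim is false: $\Ker(\mathrm{ev}_0)\cap\Ker(\mathrm{ev}_1)\subset B\otimes\bigwedge(t,dt)$ is not acyclic. The short exact sequence $0\to\Ker(\mathrm{ev}_0)\cap\Ker(\mathrm{ev}_1)\to B\otimes\bigwedge(t,dt)\to B\oplus B\to 0$ shows that its degree-$n$ cohomology is $H^{n-1}(B)$, detected by $\int_0^1$. Your exactness argument never uses $\theta$, so it would prove that every homotopy on $A_{<j}$ with endpoints $f,g$ extends over $v_j$. The example refutes this. The map with $\theta(a)=c$ is a homotopy derivation on $\bigwedge(a)$, and your ansatz turns it into $H(a)=e+c\,dt$. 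Its obstruction over $b$ is, up to sign, the non-exact cocycle $2ce\,dt$ of that ideal. Of course this $\theta$ does not extend to all of $A$, and that is precisely the information your argument never uses. Note also that the identity for $\theta$ is not what makes the obstruction a cocycle; that is automatic.

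The vanishing of the obstruction has to come from $g-f=d\theta+\theta d$, through an invariant linking $H|_{A_{<j}}$ with $\theta|_{A_{<j}}$. The paper again uses $\eta$, now with $\eta|_{t=1}=\theta$. Set $\eta(v_j)=t\,\theta(v_j)$ and $H(v_j)=f(v_j)+d\eta(v_j)+\eta(dv_j)$. The inductive identity $H-f=d\eta+\eta d$ on $A_{<j}$ then gives $dH(v_j)=H(dv_j)$ with no obstruction. Moreover $H|_{t=1}(v_j)=f(v_j)+d\theta(v_j)+\theta(dv_j)=g(v_j)$. This is consistent with your ansatz: your ``higher corrections'' are precisely $\eta(dv_j)-t\,\theta(dv_j)$.
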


\begin{proof}
Since $A$ is minimal, there is a graded vector subspace $V$ and a well-ordered basis $\{x_i\}_{i\in I}$ of $V$, such that $A=\bigwedge^*(V)$, and, for each $i$,  $d(x_i)$ is a combination of smaller $x_j$'s.

\textbf{``Only if'' part.} Let $H:A\rightarrow B\otimes (t,dt)$ be a homotopy from $f$ to $g$. We inductively construct a degree $-1$ $(f,H)$-derivation $\eta:A\rightarrow B\otimes (t,dt)$ such that $H-f=d\eta+\eta d$ and $\eta\vert_{t=0}=0$. Once such an $\eta$ has been constructed, the evaluation $\theta=\eta\vert_{t=1}$ is a degree $-1$ $(f,g)$-derivation we need.

Suppose inductively that we have constructed such an $\eta$ on the subalgebra $A_{<i}=\bigwedge^* \langle x_{j}:j< i\rangle$. Then $d(H(x_i)-f(x_i)-\eta(dx_i))=H(dx_i)-f(dx_i)-d\eta(dx_i)-\eta(d^2x_i)=0$. Furthermore, the evaluation at $t=0$ of $H(x_i)-f(x_i)-\eta(dx_i)$ is $0$. Since $\Ker(ev_0:B\otimes (t,dt)\rightarrow B)$ is acyclic, there exists an element $y_i$ in this kernel such that $dy_i=H(x_i)-f(x_i)-\eta(dx_i)$. Define $\eta(x_i)=y_i$, and extend $\eta$ to the subalgebra $A_{\leq i}$ as a degree $-1$ $(f,H)$-derivation. Continuing inductively over the well-ordered basis constructs the desired derivation $\eta$.

\textbf{``If'' part.} Let $\theta:A\rightarrow B$ be a degree $-1$ $(f,g)$-derivation such that $g-f=d\theta+\theta d$. Suppose inductively that we have constructed a homotopy $H:A_{<i}\rightarrow B$ from $f\vert_{A_{<i}}$ to $g\vert_{A_{<i}}$, together with a degree $-1$ $(f,H)$-derivation $\eta:A_{<i}\rightarrow B\otimes (t,dt)$, such that $\eta\vert_{t=0}=0$ and $\eta\vert_{t=1}=\theta$. Define $\eta(x_i):=t\theta(x_i)$ and $H(x_i):=f(x_i)+d\eta(x_i)+\eta(dx_i)$. Then $\eta\vert_{t=0}(x_i)=0$, $\eta\vert_{t=1}(x_i)=\theta(x_i)$, $H\vert_{t=0}(x_i)=f(x_i)$, and $H\vert_{t=1}(x_i)=g(x_i)$. Extend $\eta$ to $A_{\leq i}$ as a degree $-1$ $(f,H)$-derivation, and extend $H$ to $A_{\leq i}$ as a CDGA morphism. Proceeding inductively yields a homotopy from $f$ to $g$.
\end{proof}

\begin{lemma}\label{Lem: rectification of homotopic automorphisms for elementary extensions}
Let $A$ be a minimal CDGA, and let $B$ be an elementary extension of $A$ by a graded vector space $V_n$ concentrated in degree $n$. Suppose that $\alpha$ and $\beta$ are automorphisms of $A$ and $B$, respectively, such that $\beta\vert_{A}$ is homotopic to $\alpha$. Then there exists an automorphism $\beta'$ of $B$ such that $\beta'$ is homotopic to $\beta$ and $\beta'\vert_{A}=\alpha$. 
\end{lemma}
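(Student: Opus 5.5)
The idea is to modify $\beta$ only on the new generators $V_n$, using the homotopy between $\beta\vert_A$ and $\alpha$ to correct their images. By Lemma \ref{Lem: equivalence of two homotopies for maps between minimal CDGA's}, since $A$ is minimal, there is a homotopy derivation $\theta:A\rightarrow A$ from $\beta\vert_A$ to $\alpha$. Here I regard $\beta\vert_A$ as a map $A\rightarrow B$, and $\theta$ as a degree $-1$ $(\beta\vert_A,\alpha)$-derivation $A\rightarrow B$ with
\[
\alpha-\beta\vert_A=d\theta+\theta d .
\]
I first note that $\beta(A)\subset A$ need not hold a priori. However, $\alpha$ lands in $A$, and I only need the construction below to produce an automorphism of $B$.

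Next I define $\beta'$. Set $\beta'\vert_A=\alpha$, and for $v\in V_n$ set $\beta'(v):=\beta(v)+\theta(dv)$; note $dv\in A$, so $\theta(dv)$ makes sense and has degree $n$. Then $\beta'$ extends uniquely to an algebra map on $B=A\otimes\bigwedge^*V_n$, since $B$ is free over $A$. To see that it is a chain map on generators, compute
\[
d\beta'(v)=d\beta(v)+d\theta(dv)=\beta(dv)+\bigl(\alpha(dv)-\beta(dv)-\theta(d^2v)\bigr)=\alpha(dv)=\beta'(dv).
\]
So $\beta'$ is a CDGA endomorphism of $B$ with $\beta'\vert_A=\alpha$.

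For bijectivity, I filter $B$ by word length in $V_n$ (equivalently by the $\bigwedge^jV_n$ degree). On associated graded pieces, $\beta'$ acts as $\alpha\otimes \bar\beta$, where $\bar\beta$ is the map induced by $\beta$ on $B/(A^{>0}B)$ restricted to $V_n$. The correction $\theta(dv)\in A$ lies in lower filtration, so it does not affect this. Because $\beta$ is an automorphism of $B$ preserving $A$ up to this filtration argument, $\bar\beta$ is invertible on $V_n$. Hence $\beta'$ is bijective on the associated graded, and therefore bijective, since the filtration is finite in each degree.

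Finally, I show $\beta'\simeq\beta$ by extending $\theta$ to a homotopy derivation $\Theta:B\rightarrow B$ from $\beta$ to $\beta'$, and then applying Lemma \ref{Lem: equivalence of two homotopies for maps between minimal CDGA's}; this needs $B$ to be minimal, which holds as an elementary extension of a minimal algebra. Set $\Theta\vert_A=\theta$ and $\Theta(v)=0$ for $v\in V_n$, extended as a $(\beta,\beta')$-derivation. On $A$ the identity $\beta'-\beta=d\Theta+\Theta d$ holds by the choice of $\theta$. On $v$ it reads $\beta'(v)-\beta(v)=\theta(dv)=d\Theta(v)+\Theta(dv)$, which holds. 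The identity then propagates to products by the derivation rule, because both sides are $(\beta,\beta')$-derivations.

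The main obstacle is the bookkeeping around $\theta$, and there are two points to settle. First, I must check that Lemma \ref{Lem: equivalence of two homotopies for maps between minimal CDGA's} gives a derivation relative to exactly the right pair of maps. I would apply it to $\beta\vert_A,\alpha:A\rightarrow B$, with $\alpha$ composed with the inclusion $A\subset B$; the derivation then lands in $B$, which is fine. Second, I must verify that $\bar\beta$ is invertible. I would argue this by noting that $\beta$ and $\beta^{-1}$ both induce maps on $I^n(B)$ and on $I^n(A)$. Then $\beta$ induces an automorphism of $I^n(B)/I^n(A)\cong V_n$, provided $\beta(A)\subset A$ modulo decomposables. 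This holds in degrees below $n$ automatically. In degree $n$ it follows from homotopy-invariance of the image of $I^n(A)$; if that fails, I would instead prove invertibility of $\beta'$ via Theorem \ref{Fact: Uniqueness of minimal models}, using that $\beta'$ is homotopic to $\beta$ and hence a quasi-isomorphism.
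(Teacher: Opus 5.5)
Your construction of $\beta'$ is sound. The chain-map check only uses $d\theta(dv)=\alpha(dv)-\beta(dv)$, i.e.\ the homotopy relation evaluated on the elements $dv$, and never the Leibniz rule for $\theta$.

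The gap is the step $\beta'\simeq\beta$. You extend $\theta$ to $\Theta$ on $B$ by $\Theta(v)=0$ ``as a $(\beta,\beta')$-derivation''. When $\beta\neq\beta'$, however, a two-sided rule $\Theta(xy)=\Theta(x)\beta'(y)+(-1)^{|x|}\beta(x)\Theta(y)$ is compatible with $xy=(-1)^{|x||y|}yx$ only if $\Theta(x)\,(\beta'-\beta)(y)=(-1)^{|x|}(\beta'-\beta)(x)\,\Theta(y)$ for all $x,y$. For $x=a\in A$ and $y=v$ this forces $\theta(a)\,\theta(dv)=0$ for every $a\in A$, and nothing guarantees that. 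Concretely, take $A=\bigwedge(a,b)$ with $|a|=|b|=1$ and $d=0$, $B=A\otimes\bigwedge(v)$ with $dv=ab$, $\alpha=\beta=\mathrm{id}$, and the legitimate homotopy derivation $\theta=\partial/\partial a$. Then $\theta(dv)=b$ and $\theta(a)\theta(dv)=b\neq 0$. (The same incompatibility makes a two-sided $\theta$ on $A$ itself problematic when $\beta|_A\neq\alpha$.)

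So $\Theta$ need not exist, and the homotopy is not proved. It also cannot be recovered from the chain-homotopy identity alone, since that identity does not determine the homotopy class of $\beta'$. For example, in $\bigwedge(a,b,e)\otimes\bigwedge(v)$ with all generators of degree $1$ and $dv=ab$, the map fixing $a,b,e$ and sending $v\mapsto v+e$ satisfies it but is not homotopic to the identity.

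Your fallback for bijectivity rests on this homotopy. It also needs minimality of $B$, which is not automatic for an elementary extension (e.g.\ if $dv$ is a generator of $A$ of degree $n+1$). Your filtration argument, for its part, tacitly assumes $\theta(dv)\in A$ and $\beta(A)\subset A$.

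The repair is the paper's argument, which avoids derivations entirely. The paper reads $\beta|_A$ as a self-map of $A$ and takes an honest homotopy $H_A:A\to A\otimes(t,dt)$ from $\beta|_A$ to $\alpha$. It extends this to $H:B\to B\otimes(t,dt)$ by $H(v)=\beta(v)+x_v$, with $x_v\in\Ker(ev_0)$ and $dx_v=H_A(dv)-d\beta(v)$. Such $x_v$ exists because $\Ker(ev_0)$ is acyclic; explicitly $x_v=\int_0^tH_A(dv)$ up to sign. Then $\beta':=H|_{t=1}$ restricts to $\alpha$ and is homotopic to $\beta$ by construction. It is an automorphism because it is homotopic to one: the paper cites Griffiths--Morgan for this, or, when $B$ is minimal, it is a quasi-isomorphism and hence an isomorphism by Theorem~\ref{Fact: Uniqueness of minimal models}.

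Your formula is exactly this construction with $\theta(dv):=\int_0^1H_A(dv)$. In that setting $\beta$ preserves $A$, so $\beta|_A$ is an automorphism, being a quasi-isomorphism of the minimal algebra $A$. Also $\theta(dv)\in A$, so your filtration argument gives bijectivity without needing minimality of $B$.
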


\begin{proof}
Let $H_A:A\rightarrow A\otimes (t,dt)$ be a homotopy from $\beta\vert_{A}$ to $\alpha$. Composing with the natural inclusion $A\otimes (t,dt)\hookrightarrow B\otimes (t,dt)$, we may regard $H_A$ as a map into $B\otimes (t,dt)$. If $H_A$ extends to $H:B\rightarrow B\otimes (t,dt)$ satisfying $H{\vert}_{t=0}=\beta$, then $H\vert_{t=1}$ is an automorphism of $B$ by \cite{MGriffiths-Morgan-rational-homotopy}*{Lemma 11.7}. Choose a basis $\{v_i\}_{i\in I}$ of $V$. Since  $\Ker(ev_0:B\otimes (t,dt)\rightarrow B)$ is acyclic, there exists $x_i\in \Ker(ev_0)$ such that $dx_i=H_A(dv_i)-d\beta(v_i)$. Define $H(v_i):=\beta(v_i)+x_i$. Then $dH(v_i)=H_A(dv_i)$, so the assignment is compatible with the differential. Therefore, these assignments extend uniquely to a CDGA morphism $H:B\rightarrow B\otimes (t,dt)$. Then $H\vert_{t=0}=\beta$ and $H\vert_{A}=H_A$. By construction, $\beta':=H\vert_{t=1}$ satisfies the requirement of the lemma.
\end{proof}

For a CDGA $C$, let $\Aut(C)$ denote the group of automorphisms of $C$, and let $\Aut_h(C)$ denote the group of homotopy classes of automorphisms of $C$.

\begin{proposition}\label{Prop: homotopy automorphism group of a minimal CDGA is the inverse limit of smaller pieces}
Let $M$ be a minimal CDGA over $k$ with $M^n$ finite-dimensional for every $n$. Let $M(n)$ be the subalgebra of $M$ generated by elements of degree at most $n$. Then 
\begin{enumerate}[leftmargin=0.25in]
    \item the restriction induces homomorphisms $\Aut_h(M)\rightarrow \Aut_h(M(n))$ and $\Aut_h(M(m))\rightarrow \Aut_h(M(n))$ for every $m\geq n$;
    \item the homomorphisms in item (1) induce an isomorphism $\Aut_h(M)\rightarrow \varprojlim_n \Aut_h(M(n))$.
\end{enumerate}
\end{proposition}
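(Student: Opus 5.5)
For item (1), I will first check that restriction is well defined on homotopy classes. Because $M$ is minimal, the differential of a generator of degree $\le n$ is a polynomial in generators of degree $\le n$. Hence $M(n)$ is a sub-CDGA, and any automorphism $\beta$ of $M$ preserves it: $\beta$ preserves degree, and $M(n)$ is exactly the subalgebra generated by $M^{\le n}$. By \cite{MGriffiths-Morgan-rational-homotopy}*{Lemma 11.7}, or by a degree-wise argument using finite-dimensionality of each $M^q$, $\beta|_{M(n)}$ is again an automorphism. If $H:M\to M\otimes(t,dt)$ is a homotopy between $\beta$ and $\beta'$, then $H$ maps $M(n)$ into $M(n)\otimes(t,dt)$ for the same reason, since $H$ preserves total degree and $M(n)$ is generated in degrees $\le n$. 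So $H|_{M(n)}$ is a homotopy of the restrictions. This gives homomorphisms $\Aut_h(M)\to\Aut_h(M(n))$, and likewise $\Aut_h(M(m))\to\Aut_h(M(n))$, compatible with one another.

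For item (2), I will prove injectivity and surjectivity separately.

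\textbf{Surjectivity.} Take a compatible family $([\alpha_n])_n$. I will build honest automorphisms $\beta_n$ of $M(n)$ with $[\beta_n]=[\alpha_n]$ and $\beta_{n+1}|_{M(n)}=\beta_n$. Start with $\beta_1=\alpha_1$. Suppose $\beta_n$ has been built. Then $\alpha_{n+1}|_{M(n)}$ is homotopic to $\alpha_n$, hence to $\beta_n$. Now $M(n+1)$ is obtained from $M(n)$ by the tower of elementary extensions of Definition \ref{Def: minimal CDGA}; since $M^{n+1}$ is finite-dimensional, this can be done as a single elementary extension by $V(n+1)$, all of whose new generators have differentials in $M(n)$. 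Lemma \ref{Lem: rectification of homotopic automorphisms for elementary extensions} then gives $\beta_{n+1}\simeq\alpha_{n+1}$ with $\beta_{n+1}|_{M(n)}=\beta_n$. The union $\beta=\bigcup\beta_n$ is an automorphism of $M$ mapping to the given family.

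\textbf{Injectivity.} This is the main obstacle. Suppose $\beta$ is an automorphism of $M$ whose restriction to each $M(n)$ is homotopic to the identity. I must produce a single homotopy on all of $M$, and the homotopies on different $M(n)$ need not be compatible. I will work with homotopy derivations, using Lemma \ref{Lem: equivalence of two homotopies for maps between minimal CDGA's}. Such a $\theta$ is determined by its values on generators. On a generator $x$ of degree $q$, $\theta(x)$ lies in $M^{q-1}$, and both $M^{q-1}$ and $\beta(x)$ are determined inside $M(q)$. The plan is then to build compatible homotopies $H_n$ inductively, extending $H_n$ to $M(n+1)$. This requires the homotopy class relative to $M(n)$ to be adjustable. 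I will use the fact that the set of homotopies on $M(n+1)$ restricting to homotopies on $M(n)$ carries an action of the homotopies of homotopies. More precisely, I will show that any homotopy $K$ on $M(n+1)$ from $\mathrm{id}$ to $\beta$ can be modified, by concatenating with a self-homotopy of $\mathrm{id}$, so that its restriction agrees with $H_n$ up to a homotopy rel endpoints. The extension argument of Lemma \ref{Lem: rectification of homotopic automorphisms for elementary extensions}, applied over $(t,dt)$-coefficients together with the acyclicity of $\Ker(ev_0)$, then lets me replace $K$ by an honest extension of $H_n$.

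If that bookkeeping becomes too heavy, I will fall back on a Mittag-Leffler argument. The set of homotopy classes of homotopies on $M(n)$ forms a torsor under a group built from finite-dimensional pieces; by the finiteness of the $M^q$ these are algebraic groups. The inverse limit of nonempty torsors under algebraic groups is nonempty, and a compatible family in that limit assembles into a homotopy on $M$.
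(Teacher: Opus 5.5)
Your item (1) and the surjectivity half of item (2) follow the paper. One inaccuracy there: $M(n+1)$ is generally not a single elementary extension of $M(n)$ with all new differentials in $M(n)$. With a degree-one generator $x$ you can have degree-$(n+1)$ generators $y_1,y_2$ with $dy_2=xy_1$. Finite-dimensionality of $M^{n+1}$ only makes the intermediate tower of Definition~\ref{Def: minimal CDGA} finite. The repair is easy: run the homotopy-extension step from the proof of Lemma~\ref{Lem: rectification of homotopic automorphisms for elementary extensions} generator by generator through that finite tower, with values in $M(n+1)\otimes(t,dt)$. That step only needs each new generator's differential to lie in the previous stage.

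The genuine gap is injectivity. Your main plan rests on the claim that any homotopy $K$ on $M(n+1)$ from $\mathrm{id}$ to $\beta$ can be corrected by a self-homotopy of the identity so that $K|_{M(n)}$ becomes homotopic rel endpoints to a previously chosen $H_n$. This amounts to surjectivity, up to homotopy, of restriction of self-homotopies from $M(n+1)$ to $M(n)$, and that fails.

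Take $M=\bigwedge(a,b,v)$ with $|a|=2$, $|b|=3$, $|v|=4$, $da=db=0$, $dv=ab$, and $\beta=\mathrm{id}$. On $M(3)=\bigwedge(a,b)$, the derivation with $\theta(a)=0$, $\theta(b)=a$ satisfies $d\theta+\theta d=0$, so it gives a homotopy $H_3$ from $\mathrm{id}$ to $\mathrm{id}$. Now let $\theta'$ be any homotopy derivation of $M(4)$ from $\mathrm{id}$ to $\mathrm{id}$. Then $\theta'(a)\in M^1=0$, $\theta'(b)=\mu a$ and $\theta'(v)\in M^3=kb$. Since $db=0$, the condition at $v$ reads $\mu a^2=\theta'(dv)=-d\theta'(v)=0$, which forces $\mu=0$. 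Topologically, $M(3)$ models $X_3=K(\QQ,2)\times K(\QQ,3)$, and $\theta$ is the nonzero class $a\in H^2(X_3;\pi_3X_3)\cong\pi_1(\mathrm{map}(X_3,X_3),\mathrm{id})$; its lift is obstructed by $a^2\neq0$ in $H^4(M)$.

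So if you happen to pick $H_3$, no homotopy on $M(4)$ restricts to it, even up to homotopy rel endpoints, and your induction is stuck. In general a choice can extend for several stages and then fail. Greedy stage-by-stage extension cannot work; the choice has to be made globally.

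Your fallback points in the right direction but, as stated, does not close the gap, for two reasons.
\begin{itemize}
\item An inverse limit of nonempty torsors under algebraic groups can be empty on $k$-points. Over $k=\QQ$, take $T_n=\mathbb{G}_m$ with transition maps $x\mapsto q_nx^2$ for distinct primes $q_n$: a compatible sequence would need $v_{q_m}(x_1)\neq0$ for every $m$. So you need a stabilization-of-images property, which you have not established.
\item A compatible family of homotopy \emph{classes} of homotopies does not by itself assemble into an honest homotopy on $M$.
\end{itemize}

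The paper avoids both problems by applying Mittag-Leffler to honest homotopy derivations. If $x_1,\dots,x_r$ is a homogeneous basis of the generators of degree $\le n$, then $\theta\mapsto(\theta(x_i))_i$ identifies the set $H(n)$ of homotopy derivations of $M(n)$ from $f|_{M(n)}$ to $\mathrm{id}$ with a nonempty affine subspace of the finite-dimensional space $\prod_i M^{|x_i|-1}$. This works because $g-f=d\theta+\theta d$ is linear in $\theta$ and only has to be checked on generators. The restrictions $H(m)\to H(n)$ are affine, so their images form a descending chain of nonempty affine subspaces, which stabilizes for dimension reasons. The stable images surject onto one another, so you can choose compatible $\theta_n$. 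Their union is a homotopy derivation on $M$, hence a homotopy $f\simeq\mathrm{id}_M$ by Lemma~\ref{Lem: equivalence of two homotopies for maps between minimal CDGA's}.

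You already had the key observation that $\theta(x)$ lies in the finite-dimensional space $M^{|x|-1}$. The missing step is to run the stabilization argument on these affine spaces rather than extending homotopies one stage at a time.
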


\begin{proof}
\textbf{Item (1).} It suffices to prove that the restriction map $\Aut_h(M)\rightarrow \Aut_h(M(n))$ is well defined. We only need to show that, for any automorphism $f:M\rightarrow M$ together with a homotopy $H:M\rightarrow M\otimes (t,dt)$ from $f$ to $Id_M$, $f\vert_{M(n)}$ is homotopic to $Id_{M(n)}$. This follows immediately from the inclusion $H(M(n))\subset M(n)\otimes (t,dt)$.

\textbf{Item (2).} We first prove surjectivity. Let $([f_n])_n$ be an element of $\varprojlim_n \Aut_h(M(n))$. By Lemma \ref{Lem: rectification of homotopic automorphisms for elementary extensions} we may inductively construct automorphisms $g_n:M(n)\rightarrow M(n)$ such that $(g_n)\vert_{M(n-1)}=g_{n-1}$ and $g_{n}$ is homotopic to $f_n$. Then the union of $g_n$ defines an automorphism of $M$.

To prove injectivity, let $f$ be an automorphism of $M$ whose restriction to each $M(n)$ is homotopic to $Id_{M(n)}$. Let $H(n)$ denote the set of all homotopy derivations of $M(n)$ from $f\vert_{M(n)}$ to $Id_{M(n)}$. By Lemma \ref{Lem: equivalence of two homotopies for maps between minimal CDGA's}, $H(n)\neq \emptyset$. Choose a graded vector subspace $V$ of $M$ such that $M\cong \bigwedge^* V$. Then each $V^n$ is finite-dimensional. 

Choose a homogeneous basis $\{x_1,...,x_r\}$ of $V^{\leq n}$. Evaluation on $\{x_1,...,x_r\}$ identifies $H(n)$ with an affine subspace of $\prod_{i=1}^r M^{|x_i|-1}$. For every $m\geq n$, the restriction induces a map $H(m)\rightarrow H(n)$. Let $L_{m,n}\subset H(n)$ denote the image of $H(m)\rightarrow H(n)$. Then each $L_{m,n}$ is an affine subspace of $H(n)$. Since each $M^n$ is finite-dimensional, $H(n)$ is finite-dimensional. Thus, the descending sequence $L_{n,n}\supset L_{n+1,n}\supset L_{n+2,n} \supset...$ eventually stabilizes. Let $L_{\infty,n}$ denote this stable value.

Since the restriction $L_{\infty,n+1}\rightarrow L_{\infty,n}$ is surjective, we may inductively choose $\theta_n\in L_{\infty,n}$ such that $\theta_{n+1}\vert_{M(n)}=\theta_n$ for any $n$. The compatible family $\{\theta_n\}_n$ defines a homotopy derivation $\theta:M\rightarrow M$ from $f$ to $Id_{M}$. This proves injectivity and completes the proof.
\end{proof}

The following result is a direct consequence of Proposition \ref{Prop: homotopy automorphism group of a minimal CDGA is the inverse limit of smaller pieces} and \cite{Sullivan-rational-homotopy}*{Theorem 6.1}.

\begin{corollary}
Let $M$ be a minimal CDGA with $M^n$ finite-dimensional for every $n$. Then     $\Aut_h(M)$ is a pro-algebraic group over $k$.
\end{corollary}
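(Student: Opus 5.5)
The statement to prove is the last corollary: for a minimal CDGA $M$ with each $M^n$ finite-dimensional, $\Aut_h(M)$ is a pro-algebraic group over $k$. The plan is to write $\Aut_h(M)$ as an inverse limit of algebraic groups. Proposition \ref{Prop: homotopy automorphism group of a minimal CDGA is the inverse limit of smaller pieces} already gives an isomorphism $\Aut_h(M)\cong\varprojlim_n \Aut_h(M(n))$, with transition maps induced by restriction. Two things then remain: each $\Aut_h(M(n))$ must be an affine algebraic group over $k$, and each restriction map $\Aut_h(M(m))\rightarrow\Aut_h(M(n))$ must be a morphism of algebraic groups.

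\textbf{Step 1: each $M(n)$ is finitely generated.} The algebra $M(n)$ is free on $V^{\leq n}$, where $M\cong\bigwedge^*V$. Since $V^j$ is a subquotient of $M^j$, each $V^j$ is finite-dimensional, and so $V^{\leq n}$ is finite-dimensional. Hence \cite{Sullivan-rational-homotopy}*{Theorem 6.1} applies. Concretely, $\Aut(M(n))$ is the Zariski closed subgroup of $\prod_{j\leq n}\homo(V^j,M(n)^j)$ consisting of assignments that commute with $d$ and induce invertible maps on indecomposables. The homotopy relation is the orbit relation for the action of the unipotent group generated by $\exp(d\theta+\theta d)$, where $\theta$ ranges over homotopy derivations; Lemma \ref{Lem: equivalence of two homotopies for maps between minimal CDGA's} identifies this with the homotopy relation. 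Sullivan's theorem then says that the quotient $\Aut_h(M(n))=\Aut(M(n))/\mathcal I_n$, with $\mathcal I_n$ the normal subgroup of automorphisms homotopic to the identity, is an affine algebraic group.

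\textbf{Step 2: restriction is algebraic.} The map $\Aut(M(m))\rightarrow\Aut(M(n))$, $f\mapsto f\vert_{M(n)}$, is given by a coordinate projection, so it is a morphism of algebraic groups. It carries $\mathcal I_m$ into $\mathcal I_n$ by item (1) of the Proposition. It therefore descends to a homomorphism of the algebraic quotients, which is algebraic by the universal property of quotients of algebraic groups.

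\textbf{Step 3: the limit.} The transition maps are surjective by the lifting in Lemma \ref{Lem: rectification of homotopic automorphisms for elementary extensions}, although this is not strictly needed. So $\{\Aut_h(M(n))\}_n$ is an inverse system of affine algebraic groups, and its limit $\Aut_h(M)$ is a pro-algebraic group. The main obstacle is Step 1: checking that $\mathcal I_n$ is a Zariski closed normal subgroup, so that the quotient is representable. This is exactly the content cited from Sullivan, and I would invoke it rather than reprove it.
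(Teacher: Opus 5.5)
Your argument is correct and is the paper's proof: the paper obtains the corollary directly from the isomorphism $\Aut_h(M)\cong\varprojlim_n\Aut_h(M(n))$ of Proposition \ref{Prop: homotopy automorphism group of a minimal CDGA is the inverse limit of smaller pieces}, together with \cite{Sullivan-rational-homotopy}*{Theorem 6.1} applied to the finitely generated $M(n)$, which is exactly your Steps 1--3. Drop the aside on surjectivity. Lemma \ref{Lem: rectification of homotopic automorphisms for elementary extensions} only adjusts an automorphism of the extension that already exists, so it cannot make $\Aut_h(M(m))\to\Aut_h(M(n))$ surjective, and surjectivity in fact fails: with $x,y$ in degree $2$ and $dz=x^2$, the swap $x\leftrightarrow y$ of $M(2)$ does not extend. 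As you say, though, an inverse limit of algebraic groups is pro-algebraic without it.
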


\begin{lemma}\label{Lemma: rigidity of automorphisms of a minimal model}
Let $M$ be a finite type minimal CDGA. Then there exists an exhaustive tower $k=M_0\subset M_1\subset ...$ of finitely generated sub-CDGA's of $M$ such that every automorphism $\sigma$ of $M$ is homotopic to an automorphism which restricts to an automorphism of each $M_i$. In particular, every eigenvalue of $\sigma$ is a product of eigenvalues of the induced automorphism $\sigma^*$ on $H^*(M)$. 
\end{lemma}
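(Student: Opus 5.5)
The natural tower to use is $M_i:=M(n,i)$ from Theorem \ref{Fact: Lifting of minimal models}(3), i.e.\ the tower of elementary extensions by finite-dimensional spaces, reindexed as a single sequence. The difficulty is that an arbitrary automorphism $\sigma$ preserves each $M(n)$, because it preserves degree, but it need not preserve the intermediate stages $M(n,i)$. So I would take a more canonical tower inside each degree. For fixed $n$, define $M(n,0)=M(n-1)$ and let $M(n,i)$ be generated by $M(n-1)$ together with those degree-$n$ elements $x$ with $dx\in M(n,i-1)$. This is intrinsic, because $d$ commutes with $\sigma$, so every automorphism preserves it. Since $M^n$ is finite-dimensional when $M$ is of finite type, each $M(n,i)$ is finitely generated. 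The sequence in $i$ stabilizes at $M(n)$ by minimality, using the well-ordered basis with $d x_i$ a polynomial in the earlier $x_j$. After a finite-dimensional truncation, $M(n)$ itself is finitely generated in degrees $\le n$.

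So I would take $M_j$ to be the $j$-th term of the concatenated sequence $M(1,1)\subset M(1,2)\subset\cdots\subset M(2,\cdot)\subset\cdots$. Each term is preserved by \emph{every} automorphism, so no homotopy adjustment is needed at all, and one can simply take $\sigma$ itself. One catch is the degree-$1$ part in the non-simply-connected case, where $M(1)$ might be infinite-dimensional. There finite type means $M^1$ is finite-dimensional, by the standing assumption of the Appendix, so the degree-$1$ stage is also finite. If finiteness fails, I would fall back on Proposition \ref{Prop: homotopy automorphism group of a minimal CDGA is the inverse limit of smaller pieces} and Lemma \ref{Lem: rectification of homotopic automorphisms for elementary extensions}. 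These adjust $\sigma$ within its homotopy class inductively so that it restricts to each finitely generated stage, and the result is the homotopic automorphism of the statement.

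For the eigenvalue claim, I would argue by induction along the tower. Suppose $\sigma$ preserves $M_{j-1}\subset M_j=M_{j-1}\otimes\bigwedge^* V$. Then the eigenvalues of $\sigma$ on $M_j$ are products of eigenvalues on $M_{j-1}$ and on $V\cong M_j^{\mathrm{gen}}/M_{j-1}$, so it suffices to control $V$. Using the exact sequence from the construction of minimal models (as in the proof of Theorem \ref{Thm: de Rham represenations for etale homotopy groups}), $V$ sits in an equivariant extension of $\Ker(H^{n+1}(M_{j-1})\to H^{n+1}(M))$ by $\Coker(H^{n}(M_{j-1})\to H^n(M))$. The first piece is a subquotient of $M_{j-1}$, so by induction its eigenvalues are products of eigenvalues on $H^*(M)$. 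The second piece is a quotient of $H^n(M)$. Eigenvalues on $\bigwedge^*V$ are then products of these. Finally, eigenvalues on $M=\bigcup M_j$ are the union over all $j$.

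The main obstacle is making the Ker/Coker description valid for the intrinsic tower rather than for the tower produced by the standard construction. I would handle this by proving a general statement: for any $\sigma$-stable elementary extension inside a minimal model, the generators map equivariantly into relative cohomology $H^{n+1}(M_{j},M_{j-1})$ and into $H^n(M)$. The eigenvalue bookkeeping then only needs a $\sigma$-stable filtration whose graded pieces are subquotients of tensor powers of $H^*(M)$.
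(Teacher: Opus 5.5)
Your argument depends on the claim that finite type forces every $M^n$ to be finite-dimensional, and that claim is false here. By Definition~\ref{Def: finite type CDGA}, finite type only means $\dim H^i(M)<\infty$. There is no standing assumption beyond that: Proposition~\ref{Prop: homotopy automorphism group of a minimal CDGA is the inverse limit of smaller pieces} has to add ``$M^n$ finite-dimensional'' as an explicit hypothesis. The lemma is used for non-simply-connected models, where $M^1$ can be infinite-dimensional; the $1$-minimal model of a curve of genus $\geq 2$ is an example. The problem is also not confined to degree one. One can write the minimal model of $S^1\vee S^2$ with $M^1=\langle x\rangle$, degree-$2$ generators $y_0,y_1,\dots$ satisfying $dy_0=0$ and $dy_{i+1}=xy_i$, and a degree-$3$ generator $z$ with $dz=y_0^2$. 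Here your degree-$2$ stages never stabilize and $M(2)$ is not finitely generated, so your concatenated sequence is neither $\mathbb{N}$-indexed nor made of finitely generated algebras.

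More seriously, no better choice of tower can rescue the claim that an automorphism-stable tower makes homotopies unnecessary. For each $N$, the map sending $z\mapsto z+xy_N=z+dy_{N+1}$ and fixing $x$ and the $y_i$ is an automorphism of $M(2)\otimes\bigwedge(z)$. It is homotopic to the identity, via the homotopy derivation $\theta(z)=y_{N+1}$. Applying Lemma~\ref{Lem: rectification of homotopic automorphisms for elementary extensions} step by step along the tower extends it to an automorphism of $M$. Hence any sub-CDGA that contains $z$ and is stable under all automorphisms contains every $xy_N$, so it is not finitely generated. The ``homotopic to'' in the statement is therefore indispensable.

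Your fallback does not supply it. Proposition~\ref{Prop: homotopy automorphism group of a minimal CDGA is the inverse limit of smaller pieces} needs $\dim M^n<\infty$. Lemma~\ref{Lem: rectification of homotopic automorphisms for elementary extensions} only rectifies once you already know that $\sigma$ restricted to a stage is homotopic to an automorphism \emph{of that stage}, and that is exactly what has to be proved.

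The missing idea is the paper's cohomological choice of tower, combined with obstruction theory. Start from $f_n\colon N_n\to M$ with $N_n$ finitely generated and $f_n$ surjective on $H^{\le n}$. The paper first adjoins closed generators $C_{n+1}$ spanning a complement of the image of $H^{n+1}(N_n)$ in $H^{n+1}(M)$. It then kills the kernels $\Ker(H^q(N'_{n+1})\to H^q(M))$ for $q\le n+1$; these are finite-dimensional because $N'_{n+1}$ is finitely generated. Let $\phi$ be any automorphism of $M$. The Griffiths--Morgan obstructions to extending $\psi_n$, and the homotopy $\phi f_n\simeq f_n\psi_n$, across both steps are coboundaries. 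This follows from $d(C_{n+1})=0$ and from the surjectivity of $H^{q+1}(N'_{n+1})\to H^{q+1}(M)$. The union of the $N_n$ is identified with $M$, and the union of the $\psi_n$ is the required automorphism homotopic to $\phi$. Only $\dim H^*(M)<\infty$ is used.

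What you wrote is correct, and even homotopy-free, in two cases. The first is when every $M^n$ is finite-dimensional, for example in the simply connected or nilpotent finite-type case. The second is when $M$ is $1$-minimal, since your degree-$1$ stages are finite-dimensional modulo $Z^1(M)\cong H^1(M)$. Your eigenvalue induction, via the equivariant extension of the $\Ker$ term by the $\Coker$ term, is fine for any $\sigma$-stable tower of elementary extensions.
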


\begin{proof}
We construct a tower of minimal CDGA's $0=N_0\subset N_1\subset N_2\subset ...$, together with compatible inclusions of $N_n\hookrightarrow M$, satisfying the following properties: the union of these inclusions is an isomorphism; each pair $N_i\subset N_{i+1}$ is an elementary extension by a finite-dimensional graded vector space; and every automorphism $\phi$ of $M$ lifts to automorphisms $\psi_n$ of $N_n$ such that the left square of the following diagram commutes and the right square commutes up to homotopy.
\[
\begin{tikzcd}
    N_{n} \arrow[r,hook] \arrow[d,"\psi_{n}"] & N_{n+1} \arrow[r] \arrow[d,"\psi_{n+1}"] & M \arrow[d,"\phi"] \\
    N_{n} \arrow[r,hook] & N_{n+1} \arrow[r] & M
\end{tikzcd}
\]

Suppose inductively that we have constructed $f_n:N_n\hookrightarrow M$ such that $N_n$ is generated by elements of degree at most $n$, and the induced map  $H^q(N^n)\rightarrow H^q(M)$ is surjective for every $q\leq n$, and the compatibility properties stated above hold.

Let $\phi$ be an automorphism of $M$. By the inductive hypothesis, there exists an automorphism $\psi_n$ of $N_n$ and a homotopy $H_n$ from $\phi\circ f_n$ to $f_n\circ \psi_n$. Choose a linear complement $C_{n+1}\subset H^{n+1}(M)$ to the image of $H^{n+1}(N_n)\rightarrow H^{n+1}(M)$. There is an elementary extension $N'_{n+1}$ of $N_n$ by $C_{n+1}$ such that $d(C_{n+1})=0$, together with a morphism $g_{n+1}:N'_{n+1}\rightarrow M$ extending $f_n:N_{n}\rightarrow M$ and inducing a surjection $H^{n+1}(N'_{n+1})\rightarrow H^{n+1}(M)$. 
By \cite{MGriffiths-Morgan-rational-homotopy}*{Proposition 11.1}, the obstruction to extending $\psi_n$ to an automorphism $\alpha_{n+1}:N'_{n+1}\rightarrow N'_{n+1}$  and extending $H_n$ to a homotopy $H'_{n+1}$ from $\phi\circ g_{n+1}$ to  $g_{n+1}\circ \alpha_{n+1}$ is represented by a cocycle $Q\in \homo_k(C_{n+1},(N'_{n+1})^{n+2}\oplus M^{n+1})$ given by $Q(v)=(0,g_{n+1}(v))$. Define $P:C_{n+1}\rightarrow (N'_{n+1})^{n+1}\oplus M^{n}$ by $P(v)=(v,0)$. Since $d(P)=Q$, the obstruction vanishes. Hence, $\alpha_{n+1}$ and $H_{n+1}'$ exist.

Now define the graded vector space $V_{n+1}:=\bigoplus_{q=1}^{n+1}\Ker(H^q(N'_{n+1})\rightarrow H^q(M))$. There is an elementary extension $N_{n+1}$ of $N'_{n+1}$ by $V_{n+1}$ , together with a morphism $f_{n+1}:N_{n+1}\rightarrow M$ extending $g_{n+1}$ and inducing a surjection $H^{q}(N_{n+1})\rightarrow H^{q}(M)$ for every $q\leq n+1$. We show that the obstruction to extending $\alpha_{n+1}$ to an automorphism $\psi_{n+1}:N_{n+1}\rightarrow N_{n+1}$ and extending $H'_{n+1}$ to a homotopy $H_{n+1}$ from $\phi\circ f_{n+1}$ to  $f_{n+1}\circ \psi_{n+1}$ vanishes. It is enough to consider the case in which $V_{n+1}$ is concentrated in a single degree, say $q$. 
The obstruction is represented by a cocycle $Q'\in \homo_k(V_{n+1},(N_{n+1})^{n+2}\oplus M^{n+1})$ given by $Q'(v)=(f_{n+1}(dv),f_{n+1}(v)+\int_{0}^1( H'_{n+1}\circ \alpha_{n+1}^{-1})(dv))$ for every $v\in V_{n+1}$. By the definition of $H'_{n+1}$, $-d(\int_{0}^1(H'_{n+1}\circ \alpha^{-1}_{n+1})(v))=\int_{0}^1(H'_{n+1}\circ \alpha^{-1}_{n+1})(dv)+f_{n+1}(dv)-(\phi\circ f_{n+1}\circ \alpha_{n+1}^{-1})(dv)$. Since $H^{q+1}(N'_{n+1})\rightarrow H^{q+1}(M)$ is surjective, there exists $\beta_v\in N'_{n+1}$ and $\gamma_v\in M^{n}$ such that $f_{n+1}(\beta_v)=d(\int_{0}^1(H'_{n+1}\circ \alpha^{-1}_{n+1})(v))+d\gamma_v$. It follows that $d(\alpha^{-1}_{n+1}(v)-\alpha^{-1}_{n+1}(\beta_v),-\gamma_v)=Q'(v)$ for every $v\in V_{n+1}$. Choose a basis $x_1,...,x_r$ of $V_{n+1}$, and define $P':V_{n+1}\rightarrow (N_{n+1})^{n+1}\oplus M^{n}$ by $P'(x_i)=(\alpha^{-1}_{n+1}(x_i)-\alpha_{n+1}^{-1}(\beta_{x_i}),-\gamma_{x_i})$. Then $dP'=Q'$, so the obstruction vanishes. This completes the proof.
\end{proof}

\begin{corollary}\label{Cor: automorphism of 1-minimal CDGA to automorphisms of its filtration}
Retain the same notation of Lemma \ref{Lemma: rigidity of automorphisms of a minimal model}. Suppose, in addition, that $M$ is a $1$-minimal CDGA. Then the exhaustive tower of sub-CDGA's in Lemma \ref{Lemma: rigidity of automorphisms of a minimal model} has the following properties: if an automorphism $f$ of $M$ is homotopic to $Id_{M}$ and preserves each $M_n$, then its restriction $f_n$ to $M_n$ is homotopic to $Id_{M_n}$ for every $n$. In particular, the restriction induces a canonical isomorphism $\Aut_h(M)\rightarrow \varprojlim_n\Aut_h(M_n)$. Moreover, $\Aut_h(M)$ is a pro-algebraic group over $k$.
\end{corollary}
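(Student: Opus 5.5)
We use the tower $k=M_0\subset M_1\subset\cdots$ built in the proof of Lemma \ref{Lemma: rigidity of automorphisms of a minimal model}. Each $M_n$ is finitely generated, and each step $M_n\subset M_{n+1}$ is an elementary extension by a finite-dimensional space in degree $1$. The proof has three steps.

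\textbf{Step 1: $f\simeq Id_M$ implies $f_n\simeq Id_{M_n}$.} By Lemma \ref{Lem: equivalence of two homotopies for maps between minimal CDGA's}, $f\simeq Id_M$ is equivalent to the existence of a homotopy derivation $\theta:M\to M$ from $f$ to $Id_M$. Since $M$ is $1$-minimal, $M$ is generated in degree $1$, so $\theta$ is concentrated on generators of degree $1$ and takes values in $M^0=k$. Thus $\theta$ is determined by a linear functional on the degree-$1$ generators, and the homotopy relation reads $Id-f=d\theta+\theta d$ on $M^1$.

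We want to restrict $\theta$ to $M_n$. The key observation is that $\theta$ sends $M_n^1$ into $k\subset M_n$. Because $f$ preserves $M_n$, the restriction $\theta|_{M_n}$ is an $(f_n,Id)$-derivation $M_n\to M_n$, and the identity $Id-f_n=d\theta+\theta d$ holds on $M_n$, since both sides are computed inside $M_n$. Applying Lemma \ref{Lem: equivalence of two homotopies for maps between minimal CDGA's} to $M_n$, which is minimal, gives $f_n\simeq Id_{M_n}$.

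One point needs care here. $\theta$ is an $(f,Id)$-derivation, so on products it involves $f$, and $f$ preserves $M_n$; this is fine. We must also check that $M_n$ is minimal with degree-$1$ generators, and that the derivation formulas on products of generators only involve values of $\theta$ on elements of $M_n$. This holds because $M_n$ is a free subalgebra on a subspace of generators.

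\textbf{Step 2: the canonical isomorphism.} The map $\Aut_h(M)\to\varprojlim_n\Aut_h(M_n)$ is defined by sending $[\sigma]$ to the tower of restrictions of a representative preserving every $M_n$; such a representative exists by Lemma \ref{Lemma: rigidity of automorphisms of a minimal model}.

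\emph{Well-definedness.} Suppose $\sigma,\sigma'$ are two tower-preserving representatives of the same class. Then $\sigma'\sigma^{-1}$ is tower-preserving and homotopic to $Id_M$, so Step 1 shows the restrictions agree in each $\Aut_h(M_n)$.

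\emph{Surjectivity.} Given a compatible family $([g_n])$, use Lemma \ref{Lem: rectification of homotopic automorphisms for elementary extensions} inductively along the elementary extensions $M_n\subset M_{n+1}$. This rectifies the family to strictly compatible automorphisms $g_n$, whose union is an automorphism of $M$.

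\emph{Injectivity.} Argue as in the proof of Proposition \ref{Prop: homotopy automorphism group of a minimal CDGA is the inverse limit of smaller pieces}. Let $H(n)$ be the set of homotopy derivations on $M_n$ from $f_n$ to $Id_{M_n}$. Each $H(n)$ is a nonempty finite-dimensional affine space, because $M_n$ has finitely many generators and $M_n^0=k$. The images of the restriction maps $H(m)\to H(n)$ stabilize, so Mittag-Leffler yields a compatible family of derivations, which glue to a homotopy derivation on $M$.

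\textbf{Step 3: pro-algebraicity.} Each $M_n$ is finitely generated, so $\Aut_h(M_n)$ is an algebraic group over $k$ by \cite{Sullivan-rational-homotopy}*{Theorem 6.1}. We then check that the restriction maps $\Aut_h(M_{n+1})\to\Aut_h(M_n)$ are morphisms of algebraic groups. They are induced by restricting automorphisms, which is algebraic on $\Aut(M_{n+1})$; the obstacle is to show this descends to the quotients by homotopy. With that, $\Aut_h(M)$ is an inverse limit of algebraic groups, hence pro-algebraic.

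The main obstacle is Step 1. It hinges on $1$-minimality making $\theta$ take values in $k$, and the argument fails for general minimal algebras, where the indecomposables are not rigid (Example \ref{Example: rigid invariants}).
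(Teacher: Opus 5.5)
Your proposal is essentially the paper's proof. Step~1 is exactly the paper's observation that $\theta(M^1)\subset M^0=k$ and $f(M_n)\subset M_n$ force $\theta(M_n)\subset M_n$ via the derivation rule, and Steps~2--3 unpack the paper's appeal to Proposition~\ref{Prop: homotopy automorphism group of a minimal CDGA is the inverse limit of smaller pieces} (rectification for surjectivity, stabilizing affine spaces of homotopy derivations for injectivity) and to Sullivan's Theorem~6.1.

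The descent you leave open in Step~3 follows from your Step~1 applied to $M_{n+1}$, together with the universal property of the algebraic quotient $\Aut(M_{n+1})\to\Aut_h(M_{n+1})$. Note also that, like the paper, you tacitly need each $M_n$ to be generated by $M_n^1$. This holds for the canonical degree-one tower of a $1$-minimal CDGA, but is not guaranteed by the construction in Lemma~\ref{Lemma: rigidity of automorphisms of a minimal model}, which can adjoin closed generators of degree $n+1$.
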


\begin{proof}
Let $\theta$ be a degree $-1$ $(f,Id_M)$-derivation of $M$. Then, for any $v_1,...,v_r\in M^1$, $\theta(v_1\cdot...\cdot v_r)$ is a sum of products involving $f(v_i)$, $v_i$ and $\theta(v_i)$. Thus, $\theta(M_n)\subset M_n$. This proves that each $f_n$ is homotopic to $Id_{M_n}$. Then the restriction induces a well-defined homomorphism $\Aut_h(M)\rightarrow \Aut_h(M_n)$. Proposition \ref{Prop: homotopy automorphism group of a minimal CDGA is the inverse limit of smaller pieces} now implies that $\Aut_h(M)\rightarrow \varprojlim_n\Aut_h(M_n)$ is an isomorphism. \cite{Sullivan-rational-homotopy}*{Theorem 6.1} shows that $\Aut_h(M)$ is a pro-algebraic group over $k$.
\end{proof}

\section{$E_{\infty}$-algebras over a Field of Characteristic Zero}

In this appendix, we briefly review the theory of $E_{\infty}$-algebras from \cite{Hinich-Model-Structure-on-homotopy-algbras}\cite{Kriz-May-operads-algebras-modules-motives}\cite{May-operads-sheaf-cohomology} and the rectification of $E_{\infty}$-algebras over fields of characteristic zero to CDGA's. Let $R$ be a commutative ring, and let $\mathbf{Ch}(R)$ be the category of cochain complexes of $R$-modules.

\begin{definition}
An operad $\mathcal{O}$ in $\mathbf{Ch}(R)$ is an \textbf{$E_{\infty}$-operad} if each $\mathcal{O}(n)$ is acyclic and $\mathcal{O}(n)$ is a free $R[\Sigma_n]$-module for every $n$.
\end{definition}

For an operad $\mathcal{O}$, let $\mathbf{Alg}(\mathcal{O})$ be the category of $\mathcal{O}$-algebras.

\begin{theorem}[\cite{Hinich-Model-Structure-on-homotopy-algbras}*{Theorems 4.1.1, 4.6.4, 4.7.4 and the paragraph above 4.2.1}\cite{White-E-infinity-algebra-characteristic-zero}*{Corollary 4.10}]\label{Fact: rectification}
Let $\mathcal{COM}_R$, defined by $\{\mathcal{COM}_R(n)=R\}$, denote the commutative operad over $R$, and let $\mathcal{E}$ be an $E_{\infty}$-operad over $R$. Let $f:\mathcal{E}\rightarrow \mathcal{COM}_R$ be a quasi-isomorphism of operads. Assume $\QQ\subset R$.
\begin{enumerate}[leftmargin=0.25in]
    \item The categories $\mathbf{Alg}(\mathcal{COM}_R)$ and $\mathbf{Alg}(\mathcal{E})$ of CDGA's over $R$ and $\mathcal{E}$-algebras, respectively, are closed model categories in which the weak equivalences are quasi-isomorphisms and the fibrations are degree-wise surjections;
    \item The following induced pair of adjoint functors forms a Quillen equivalence; \[
    f^*:\mathbf{Alg}(\mathcal{E})\rightleftarrows \mathbf{Alg}(\mathcal{COM}_R):f_*
    \]
    \item In particular, if $A$ is an $\mathcal{E}$-algebra with the cofibrant replacement $QA\rightarrow A$, then both maps in the resulting zig-zag $A\leftarrow QA\rightarrow f_*f^*(QA)=f^*(QA)$ of $\mathcal{E}$-algebras are quasi-isomorphisms.
\end{enumerate}
\end{theorem}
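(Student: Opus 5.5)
The statement to prove is Theorem~\ref{Fact: rectification}. The plan is to deduce it from the general homotopy theory of algebras over operads in characteristic zero, following Hinich. The organizing observation is that when $\QQ\subset R$, every operad $\mathcal{O}$ in $\mathbf{Ch}(R)$ is $\Sigma$-split: the symmetrization maps $\frac{1}{n!}\sum_{\sigma\in\Sigma_n}\sigma$ give functorial splittings of coinvariants into invariants. Hence the free algebra functor $\bigoplus_n \mathcal{O}(n)\otimes_{\Sigma_n}V^{\otimes n}$ sends quasi-isomorphisms of complexes to quasi-isomorphisms, and sends acyclic cofibrations to quasi-isomorphisms.

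For item (1), I will transfer the projective model structure on $\mathbf{Ch}(R)$ (weak equivalences the quasi-isomorphisms, fibrations the degreewise surjections) along the free/forgetful adjunction $\mathbf{Ch}(R)\rightleftarrows\mathbf{Alg}(\mathcal{O})$, applied with $\mathcal{O}=\mathcal{E}$ and with $\mathcal{O}=\mathcal{COM}_R$. The standard transfer criterion reduces to two checks.
\begin{enumerate}
\item Existence of functorial path objects. For this I will use $A\otimes\Omega(t,dt)$, which is an $\mathcal{O}$-algebra because $\Omega(t,dt)=R[t,dt]$ is a CDGA and the symmetric monoidal structure lets $\mathcal{O}$-algebras be tensored with commutative algebras. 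Every object is fibrant, since the forgetful functor preserves surjections.
\item The acyclicity condition: a pushout of a free map $\mathcal{O}(V)\to\mathcal{O}(V\oplus C)$, with $C$ contractible, along any algebra map $\mathcal{O}(V)\to A$ is a quasi-isomorphism.
\end{enumerate}

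The second check is the main obstacle. To handle it, I will filter the pushout $A\amalg_{\mathcal{O}(V)}\mathcal{O}(V\oplus C)$ by word length in $C$. The associated graded pieces have the form $\mathcal{O}_A(n)\otimes_{\Sigma_n}C^{\otimes n}$, where $\mathcal{O}_A$ is the enveloping operad. Because $C$ is contractible and $n!$ is invertible, taking $\Sigma_n$-coinvariants preserves acyclicity, so each piece with $n\geq1$ is acyclic. This is exactly where the hypothesis $\QQ\subset R$ is used, and it is essentially Hinich's Theorem 4.1.1. For unbounded complexes I will need the filtration to be exhaustive and bounded below in each step, which is why I will phrase the argument with sequential colimits of pushouts.

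For item (2), I will first note that $f_*$ preserves fibrations and weak equivalences, since both are detected in $\mathbf{Ch}(R)$; so $(f^*,f_*)$ is a Quillen pair. To show it is a Quillen equivalence, it then suffices to show that for cofibrant $A$ the unit $A\to f_*f^*A$ is a quasi-isomorphism. Every cofibrant object is a retract of a cell algebra, built by transfinitely attaching free cells. Both $f^*$ and the forgetful functor commute with these colimits, and the same word-length filtration applies. So I reduce to the case of free algebras, where the unit is $\bigoplus_n\mathcal{E}(n)\otimes_{\Sigma_n}V^{\otimes n}\to\bigoplus_n V^{\otimes n}_{\Sigma_n}$. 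This map is a quasi-isomorphism because $\mathcal{E}(n)\to R$ is a quasi-isomorphism of $\Sigma_n$-complexes and coinvariants are exact in characteristic zero. Passage through pushouts is handled by comparing filtration quotients, again using exactness of coinvariants.

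Finally, item (3) follows directly. The map $QA\to A$ is a weak equivalence by the definition of cofibrant replacement, and $QA\to f_*f^*QA$ is the derived unit at a cofibrant object, which is a quasi-isomorphism by the argument for item (2). The identification $f_*f^*(QA)=f^*(QA)$ is just the fact that $f_*$ is restriction along $f$, so it does not change the underlying complex.
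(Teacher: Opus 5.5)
The paper gives no proof of this statement; it only cites Hinich and White. Your outline is the standard Hinich-style argument, and items (1) and (3) are fine as sketched. Two small remarks on (1): once all objects are fibrant, Quillen's path-object argument already implies the acyclicity check. Also, $\QQ\subset R$ is first needed to make $A\to A\otimes\Omega(t,dt)$ a quasi-isomorphism (Poincar\'e lemma). The gap is in the cell induction for item (2).

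Attach a cell, $A'=A\amalg_{\mathcal{E}(S^{m-1})}\mathcal{E}(D^m)$ with new generator $x$. The word-length filtrations on $A'$ and on $f^*A'$ have quotients
\[
\mathrm{gr}_kA'\cong\mathcal{E}_A(k)\otimes_{\Sigma_k}(Rx)^{\otimes k},\qquad \mathrm{gr}_kf^*A'\cong f^*A\otimes_{\Sigma_k}(Rx)^{\otimes k}.
\]
The comparison map between them is induced by the map of enveloping operads $\mathcal{E}_A(k)\to\mathcal{COM}_{f^*A}(k)=f^*A$.

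Your inductive hypothesis, that $A\to f_*f^*A$ is a quasi-isomorphism, is only the case $k=0$, since $\mathcal{E}_A(0)=A$. It gives no control for $k\geq 1$; already $\mathcal{E}_A(1)$ is the enveloping algebra of $A$. So ``comparing filtration quotients'' does not follow from what you carry. Nor does the pushout step reduce to the free case, because these quotients are not free algebras.

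The missing idea is to strengthen the induction. Prove that $\mathcal{E}_A(k)\to f^*A$ is a $\Sigma_k$-equivariant quasi-isomorphism for every $k\geq 0$ and every cell algebra $A$:
\begin{itemize}
\item \emph{Base case.} For the initial algebra, $\mathcal{E}_{\mathcal{E}(0)}=\mathcal{E}$, so the claim is exactly the hypothesis on $f$.
\item \emph{Cell attachment.} Filter $\mathcal{E}_{A'}(k)$ by $x$-word length. The quotients are $\mathcal{E}_A(k+j)\otimes_{\Sigma_j}(Rx)^{\otimes j}$. These compare by the hypothesis in arity $k+j$ together with exactness of $\Sigma_j$-coinvariants in characteristic zero.
\item \emph{Transfinite composites and retracts.} Enveloping operads commute with filtered colimits of algebras, and a retract of a quasi-isomorphism is a quasi-isomorphism.
\end{itemize}
Specializing to $k=0$ then gives the unit. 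Equivalently, you can carry the statement for all coproducts $A\amalg\mathcal{E}(W)$ with $W$ a free graded module with zero differential.
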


Thus, if $R$ is a field of characteristic zero and $\mathcal{E}$ is an $E_{\infty}$-operad over $R$, then there is a functor $W$ from $\mathcal{E}$-algebras to CDGA's over $R$ such that, for every $\mathcal{E}$-algebra $A$, $W(A)$ is functorially quasi-isomorphic to $A$ as $\mathcal{E}$-algebras by a zig-zag of morphisms.

\begin{definition}\label{Def: rectification of an E-infinity algebra}
Retain the notation of Theorem \ref{Fact: rectification}.
Assume that $R$ is a field of characteristic zero, and let $A$ be an $\mathcal{E}$-algebra. The CDGA $f^*(QA)$ constructed in Theorem \ref{Fact: rectification} is called the \textbf{rectification} of $A$.   
\end{definition}

\cite{Hinich-Model-Structure-on-homotopy-algbras}*{Theorem 6.1.1} shows that, for any commutative ring $R$, the category of operads over $R$ carries a closed model structure, whose weak equivalences are componentwise quasi-isomorphisms and whose fibrations are componentwise surjections.

Let $s\textbf{Set}$ denote the category of simplicial sets. The following result is essentially shown in \cite{Hinich-Schechtman}*{Theorem 2.3}\cite{May-operads-sheaf-cohomology}*{Theorem 3.9}\cite{May-operads-sheaf-cohomology}*{Proposition 3.3}\cite{Mandell-cochain-multiplications}\cite{Hinich-Model-Structure-on-homotopy-algbras}\cite{Berger-Moerdijk-axiomatic-homotopy-theory-operads}\cite{McClure-Smith}.

\begin{theorem}\label{Thm: simplicial cochain complex is a functorial E-infinity algebra}
For any commutative ring $R$,
there exists a cofibrant $E_{\infty}$-operad $\mathcal{E}$ such that the simplicial cochain complexes give a functor $S^*(-;R):s\textbf{Set}\rightarrow \mathbf{Alg}(\mathcal{E})$.
\end{theorem}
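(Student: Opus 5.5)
The plan is to start from a concrete $E_\infty$-operad that is known to act on normalized simplicial cochains naturally in the simplicial set, and then replace it cofibrantly, pulling the action back along the replacement map. For the concrete operad I take the Barratt--Eccles operad $\mathcal{BE}$ over $R$, with $\mathcal{BE}(n)=N_*(E\Sigma_n;R)$ regraded cohomologically into nonpositive degrees; the surjection operad of McClure--Smith would serve equally well.

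First, I verify that $\mathcal{BE}$ is an $E_\infty$-operad in the sense of the definition above. Since $E\Sigma_n$ is contractible, $\mathcal{BE}(n)\to R$ is a quasi-isomorphism, so $\mathcal{BE}(n)$ is acyclic in the sense intended. Since $\Sigma_n$ acts freely on the simplices of $E\Sigma_n$, each $\mathcal{BE}(n)$ is degreewise a free $R[\Sigma_n]$-module.

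Second, I invoke the theorem of Berger--Fresse / McClure--Smith (as in \cite{Berger-Moerdijk-axiomatic-homotopy-theory-operads}\cite{McClure-Smith}; see also \cite{May-operads-sheaf-cohomology}*{Theorem 3.9}) that the normalized cochain complex $N^*(X;R)$ carries a $\mathcal{BE}$-algebra structure. This structure is built from interval-cut/table-reduction formulas that only use face maps and the Alexander--Whitney diagonal, so it is natural for all maps of simplicial sets and defines a functor $s\textbf{Set}\to\mathbf{Alg}(\mathcal{BE})$. It extends the cup product and restricts to the usual one on $\mathcal{BE}(2)$ in degree $0$. Since $N^*$ and $S^*$ are naturally quasi-isomorphic, and the paper works with normalized cochains anyway, I identify $S^*(-;R)$ with $N^*(-;R)$.

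Third, I use the model structure on operads over $R$ from \cite{Hinich-Model-Structure-on-homotopy-algbras}*{Theorem 6.1.1}. I apply the small object argument to factor $0\to\mathcal{BE}$ as a cell cofibration $\mathcal{I}\to\mathcal{E}$ (with $\mathcal{I}$ the initial operad) followed by a trivial fibration $q:\mathcal{E}\to\mathcal{BE}$. Then $\mathcal{E}$ is cofibrant, and $q$ is a componentwise surjective quasi-isomorphism, so each $\mathcal{E}(n)$ is again acyclic. Composing with $q$, every $\mathcal{BE}$-algebra becomes an $\mathcal{E}$-algebra, naturally in the algebra. Composing this restriction functor with the functor from the second step gives the required functor $S^*(-;R):s\textbf{Set}\to\mathbf{Alg}(\mathcal{E})$.

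The step I expect to be the main obstacle is showing that each $\mathcal{E}(n)$ is a free $R[\Sigma_n]$-module, as the definition of $E_\infty$-operad demands. Cofibrancy alone only yields a retract of a cell object, hence only $\Sigma_n$-projectivity. My first attempt is to use the cell operad produced by the small object argument, not an arbitrary cofibrant replacement. Its generating cofibrations are free operads on the generating cofibrations of the projective model structure on $\Sigma$-collections, and those are free $R[\Sigma_n]$-modules. I would then check, following Berger--Moerdijk's analysis of pushouts along free operad maps, that the underlying collection of a cell operad is built from trees decorated by free $\Sigma$-generators, so that each arity is $\Sigma_n$-free. If this bookkeeping becomes too heavy, the fallback is to interpret ``free'' as ``projective'', which is all that the rectification theorem (Theorem \ref{Fact: rectification}) actually uses.
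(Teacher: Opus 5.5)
Your argument is correct and follows essentially the paper's route. The paper starts instead from the Eilenberg--Zilber operad $\mathcal{Z}$, which acts naturally on normalized cochains, and simply cites May for a cofibrant $E_\infty$-operad $\mathcal{E}\to\mathcal{Z}$ before pulling back; you start from the Barratt--Eccles operad and build the cofibrant replacement yourself. Your first attempt at the freeness issue does go through: forgetting differentials, the cell operad produced by the small object argument is the free graded operad on a degreewise $\Sigma$-free collection, hence is $\Sigma_n$-free in every arity.
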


\begin{proof}
Let $\mathcal{Z}$ be the Eilenberg-Zilber operad over $R$ (\cite{May-operads-sheaf-cohomology}*{Definition 3.1}). Then the augmentation map $\mathcal{Z}\rightarrow \mathcal{COM}_R$ is a quasi-isomorphism. There exists a cofibrant $E_{\infty}$-operad $\mathcal{E}$ together with a quasi-isomorphism $\mathcal{E}\rightarrow \mathcal{Z}$ (\cite{May-operads-sheaf-cohomology}*{Proposition 3.3}). Since $S^*(-;R)$ is a functor $s\textbf{Set}\rightarrow \mathbf{Alg}(\mathcal{Z})$ (\cite{May-operads-sheaf-cohomology}*{Proposition 3.2}), pulling the algebra structure along $\mathcal{E}\rightarrow \mathcal{Z}$ proves the theorem.
\end{proof}

The following folklore results are used in Section \ref{SubSec: Galois representation}.

Let $k$ be a field of characteristic $0$. Let $c\mathbf{Alg}_k$, $c\mathbf{Mod}_k$, $\mathbf{Ch}^{\geq 0}(k)$ denote the categories of cosimplicial commutative $k$-algebras, cosimplicial $k$-modules, and non-negatively graded cochain complexes of $k$-modules, respectively. These categories carry model structures with the following common description: a morphism $f$ is a weak equivalence if it induces an isomorphism on cohomology, and it is a fibration if it is levelwise surjective (\cite{Toen-affine-stacks}*{Theorem 2.1.1}).

Let $N:c\mathbf{Mod}_k\rightarrow \mathbf{Ch}^{\geq 0}(k)$ denote the normalized cochain complex functor. Its left adjoint, denoted $DK$, is the Dold-Kan functor, and $N$ and $DK$ are mutually quasi-inverse (\cite{stacks-project}*{Tag 019I}).  Let $\mathcal{Z}_N$ be the Eilenberg-Zilber operad associated with $N$ (\cite{Richter-cosimplicial-algebras-and-normalization-functor}*{Proposition 6.4.1}). By \cite{Richter-cosimplicial-algebras-and-normalization-functor}*{Theorem 6.4.3}, $\epsilon: \mathcal{Z}_N\rightarrow \mathcal{COM}_k$ is a quasi-isomorphism.

Let $\mathbf{Alg}^{\geq 0}(\mathcal{Z}_N)$ denote the model category of non-negatively graded $\mathcal{Z}_N$-algebras. A morphism $f$ is a weak equivalence if it induces an isomorphism on cohomology, and it is a fibration if it is degree-wise surjective (\cite{Richter-cosimplicial-algebras-and-normalization-functor}*{Proposition 8.3.4}).

\begin{lemma}\label{Lem: Quillen equivalence of normalized cochain complex functor into Eilenberg-Zilber algebras}
The normalized cochain complex functor $N_{\mathcal{Z}}:c\mathbf{Alg}_k\rightarrow \mathbf{Alg}^{\geq 0}(\mathcal{Z}_N)$ is the right adjoint of a Quillen equivalence.
\end{lemma}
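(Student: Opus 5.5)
The plan is to exhibit $N_{\mathcal{Z}}$ as the lift of the Dold--Kan equivalence to algebras and to reduce the Quillen-equivalence claim to the underlying module-level statement. First I construct the left adjoint. By \cite{Richter-cosimplicial-algebras-and-normalization-functor}*{Proposition 6.4.1}, $\mathcal{Z}_N$ is defined so that $N$ of a cosimplicial commutative algebra is naturally a $\mathcal{Z}_N$-algebra, which gives the functor $N_{\mathcal{Z}}$. It commutes with limits because limits in both categories are computed on underlying objects and $N$ is a right adjoint. It is also accessible, since it preserves filtered colimits: $N$ is built from finite limits levelwise, and filtered colimits are computed on underlying modules in both categories. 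Both categories are locally presentable, so the adjoint functor theorem provides a left adjoint $L$. Alternatively, $L$ can be written down explicitly as a reflexive coequalizer of the free-algebra construction, applied to $DK$ of $\mathcal{Z}_N$-free algebras.

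Next I check that $(L, N_{\mathcal{Z}})$ is a Quillen pair. Fibrations in both model structures are levelwise, respectively degreewise, surjections, and weak equivalences are cohomology isomorphisms in both. The normalized complex $N$ sends surjections of cosimplicial modules to degreewise surjections, because $N$ is an exact equivalence of abelian categories. It also preserves cohomology isomorphisms, since the cohomology of a cosimplicial module is by definition that of $N$ or of the Moore complex. Hence $N_{\mathcal{Z}}$ preserves fibrations and trivial fibrations.

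Finally I prove the Quillen equivalence. Because $N_{\mathcal{Z}}$ reflects weak equivalences (cohomology is computed by $N$), it suffices to show that the derived unit $A \to N_{\mathcal{Z}}(R\,L A)$ is a quasi-isomorphism for cofibrant $A$. I would reduce to cell algebras: every cofibrant $A$ is a retract of a transfinite composite of pushouts of free maps $\mathcal{Z}_N(S^{n-1}) \to \mathcal{Z}_N(D^n)$. On free algebras the unit is $\mathcal{Z}_N(V) \to N\,\Sym(DK\,V)$, and the point is that this is a quasi-isomorphism. Here I use characteristic $0$: $\mathcal{Z}_N(n) \to k$ is a quasi-isomorphism by \cite{Richter-cosimplicial-algebras-and-normalization-functor}*{Theorem 6.4.3}. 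Taking $\Sigma_n$-coinvariants is exact over $\mathbb{Q}$, so $\mathcal{Z}_N(V)$ has cohomology $\Sym(H^*V)$. On the other side, the Eilenberg--Zilber shuffle map identifies $N\,\Sym^n(DK\,V)$ with the $\Sigma_n$-coinvariants of $N(DK\,V)^{\otimes n}$ up to quasi-isomorphism, again using exactness of coinvariants.

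The main obstacle is extending from free algebras to pushouts and transfinite composites. For this I would filter both sides by cell-filtration word length, in the usual way, and compare associated graded pieces. These pieces are again coinvariant constructions on which the free case applies. I would then conclude with the five lemma and the fact that filtered colimits commute with cohomology. Alternatively, one may cite \cite{Toen-affine-stacks} and \cite{Richter-cosimplicial-algebras-and-normalization-functor}*{Section 8}, where this comparison is essentially carried out.
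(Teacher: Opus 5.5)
Your outline matches the paper's up to and including the free case. This covers the left adjoint (the paper builds it as the coequalizer of $\mathrm{Sym}_c\circ DK$ applied to the free-algebra presentation, your "alternatively"), the Quillen-pair check, and the reduction via Hovey's criterion to the derived unit on cofibrant objects. It also covers the key computation: $\mathcal{Z}_N(r)\otimes V^{\otimes r}\to N((DK\,V)^{\otimes r})$ is a quasi-isomorphism by the Eilenberg--Zilber/Alexander--Whitney comparison and $H^*(\mathcal{Z}_N(r))\cong k$, and it survives $\Sigma_r$-coinvariants in characteristic $0$.

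The gap is in the passage from free algebras to cell algebras. Consider a cell attachment $A'=A\amalg_{F(X)}F(Y)$. On the $\mathcal{Z}_N$-side, the associated graded pieces of the word-length filtration are $\mathcal{U}(A)(r)\otimes_{\Sigma_r}(Y/X)^{\otimes r}$, where $\mathcal{U}(A)$ is the enveloping operad of $A$; this is $\mathcal{Z}_N$ itself only when $A=k$. On the cosimplicial side they are $N(LA\otimes \mathrm{Sym}^r DK(Y/X))$. Both depend on $A$, so it is not true that ``the free case applies'' to them. Moreover, your inductive hypothesis (that the unit $A\to N_{\mathcal{Z}}(LA)$ is a quasi-isomorphism) gives no control over $\mathcal{U}(A)(r)$ for $r\geq 1$.

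To make the cellular induction run, strengthen the statement being propagated to the following: for every $r\geq 0$ and every $W$, the natural map $\mathcal{U}(A)(r)\otimes_{\Sigma_r}W^{\otimes r}\to N(LA\otimes \mathrm{Sym}^r DK\,W)$ is a quasi-isomorphism. The case $r=0$ is the unit. For free $A=F(V)$ this follows from the same K\"unneth/Eilenberg--Zilber argument, because $\mathcal{U}(F(V))(r)=\bigoplus_s\mathcal{Z}_N(r+s)\otimes_{\Sigma_s}V^{\otimes s}$. It then propagates through cell attachments (via the filtration and the five lemma), transfinite composites and retracts. With this strengthening your argument is the standard Hinich/Mandell-style cellular proof.

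The paper's route avoids enveloping operads altogether. It identifies $c\mathbf{Alg}_k$ with algebras over the monad $T=N\,U\,\mathrm{Sym}_c\,DK$ and views the comparison as induced by a monad map $\alpha:P_{\mathcal{Z}}\to T$. It then resolves a cofibrant $A$ by the bar construction $B_\bullet(P_{\mathcal{Z}},P_{\mathcal{Z}},A)$, every level of which is a free algebra. Applying $\alpha_!$ yields $B_\bullet(T,P_{\mathcal{Z}},A)$, and levelwise the comparison is just $P_{\mathcal{Z}}(W)\to T(W)$. So only the free computation is needed, at the cost of using that homotopy colimits of simplicial algebras are computed on underlying complexes. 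Your corrected approach is more hands-on and stays inside the cofibrantly generated model structure. The paper's is shorter because nothing beyond free algebras ever has to be analysed.
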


\begin{proof}
Following the idea of \cite{Richter-cosimplicial-algebras-and-normalization-functor}*{Theorem 6.5.1}, we construct a left adjoint $L_{\mathcal{Z}}$ of $N_{\mathcal{Z}}$. Let $F_{\mathcal{Z}}:\mathbf{Ch}^{\geq 0}(k)\rightarrow \mathbf{Alg}^{\geq 0}(\mathcal{Z}_N)$ be the free $\mathcal{Z}_N$-algebra functor, and let $\mathrm{Sym}_c:c\mathbf{Mod}_k\rightarrow c\mathbf{Alg}_k$ be the free cosimplicial commutative algebra functor. Let $U$ denote the relevant forgetful functors. We have the adjoint functors $\mathrm{Sym}_c\dashv U$, $DK\dashv N$ and $F_{\mathcal{Z}}\dashv U$ between the indicated categories. Consequently, for a free $\mathcal{Z}_N$-algebra $F_{\mathcal{Z}}(V)$, with $V\in \mathbf{Ch}^{\geq 0}(k)$, we set $L_{\mathcal{Z}}(F_{\mathcal{Z}}(V)):=\mathrm{Sym}_c(DK(V))$. This gives the required value of the left adjoint on free algebras. 

To define $L_{\mathcal{Z}}$ on an arbitrary $\mathcal{Z}_N$-algebra $B$, let $P_{\mathcal{Z}}=U\circ F_{\mathcal{Z}}$ denote the free $\mathcal{Z}_N$-algebra monad on $\mathbf{Ch}^{\geq 0}(k)$. The  structure map $a:P_{\mathcal{Z}}(UB)\rightarrow UB$ exhibits $B$ as the coequalizer of $F_{\mathcal{Z}}P_{\mathcal{Z}}(UB)\rightrightarrows F_{\mathcal{Z}}(UB)$, where the two arrows are induced by $P_{\mathcal{Z}}(a)$ and the monad multiplication $\mu:P_{\mathcal{Z}}P_{\mathcal{Z}}(UB)\rightarrow P_{\mathcal{Z}}(UB)$. Define $L_{\mathcal{Z}}(B)$ to be the coequalizer of $\mathrm{Sym}_c(DK(P_{\mathcal{Z}}(UB)))\rightrightarrows \mathrm{Sym}_c(DK(UB))$. With this definition, $L_{\mathcal{Z}}$ is left adjoint to $N_{\mathcal{Z}}$.

By construction, $N_{\mathcal{Z}}$ preserves fibrations and weak equivalences. Thus, the adjunction $(L_{\mathcal{Z}},N_{\mathcal{Z}})$ is a Quillen adjunction.

Define a monad $T$ on $\mathbf{Ch}^{\geq 0}(k)$ by $T(V):=N\circ U\circ \mathrm{Sym}_c\circ DK(V)$. Since $N$ and $DK$ are mutually quasi-inverse, $c\mathbf{Alg}_k$ is categorically equivalent to the category $\mathbf{Alg}^{\geq 0}(T)$ of non-negatively graded $T$-algebras. The canonical inclusion of generators $DK(V)\rightarrow U(\mathrm{Sym}_c(DK(V)))$ induces a natural map $V=N(DK(V))\rightarrow T(V)$. By the universal property of the free $\mathcal{Z}_{N}$-algebra, this map extends uniquely to a $\mathcal{\ZZ}_N$-algebra map $F_{\mathcal{Z}}(V)\rightarrow N_{\mathcal{Z}}(\mathrm{Sym}_c(DK(V)))$. Thus, we obtain a natural monad map $\alpha:P_{\mathcal{Z}}\rightarrow T$. It remains to prove that the induced Quillen adjunction $\alpha_{!}:\mathbf{Alg}^{\geq 0}(P_{\mathcal{Z}})\rightleftarrows \mathbf{Alg}^{\geq 0}(T):\alpha^*$ is a Quillen equivalence.

The Alexander-Whitney map $AW_r:(N(DK(V)))^{\otimes r}\rightarrow N((DK(V))^{\otimes r})$ is a quasi-isomorphism. Since $H^*(\mathcal{Z}_N(r))\cong k$ for every $r$, the K\"unneth formula shows that the map $\widetilde{\alpha}_r:\mathcal{Z}_N(r)\otimes V^{\otimes r}\rightarrow N((DK(V))^{\otimes r})$ induced by $\alpha$ is a quasi-isomorphism. Since $k$ has characteristic zero, the trivial $k[\Sigma_r]$-module $k$ is projective. Consequently, passing to $\Sigma_r$-coinvariants gives a quasi-isomorphism $\alpha_r:(\mathcal{Z}_N(r)\otimes V^{\otimes r})_{\Sigma_r}\rightarrow N((DK(V))^{\otimes r})_{\Sigma_r}$. Thus, $\alpha:P_{\mathcal{Z}}(V)\rightarrow T(V)$ is a quasi-isomorphism, since $P_{\mathcal{Z}}(V)=\bigoplus_{r\geq 0}(\mathcal{Z}_N(r)\otimes V^{\otimes r})_{\Sigma_r}$  and $T(V)=\bigoplus_{r\geq 0} N((DK(V))^{\otimes r})_{\Sigma_r}$. Then the induced simplicial map $B_{\bullet}(P_{\mathcal{Z}},P_{\mathcal{Z}},V)\rightarrow B_{\bullet}(T,P_{\mathcal{Z}},V)$ between two-sided bar-constructions is a levelwise weak-equivalence. Thus $\hocolim B_{\bullet}(P_{\mathcal{Z}},P_{\mathcal{Z}},V)\rightarrow \hocolim B_{\bullet}(T,P_{\mathcal{Z}},V)$ is a weak equivalence of cochain complexes. 

Let $A$ be a cofibrant $P_{\mathcal{Z}}$-algebra. There are weak equivalences $\mathbf{L}\alpha_{!}(A)\xleftarrow{\sim} \mathbf{L}\alpha_{!}(A)(\hocolim B_{\bullet}(P_{\mathcal{Z}},P_{\mathcal{Z}},A))\xrightarrow{\sim} \hocolim \mathbf{L}\alpha_{!}B_{\bullet}(P_{\mathcal{Z}},P_{\mathcal{Z}},A)\simeq \hocolim \alpha_{!}B_{\bullet}(P_{\mathcal{Z}},P_{\mathcal{Z}},A)=\hocolim B_{\bullet}(T,P_{\mathcal{Z}},A)$.
Moreover, $A$ is weak equivalent to $\hocolim B_{\bullet}(P_{\mathcal{Z}},P_{\mathcal{Z}},A)$ and every object in $\mathbf{Alg}^{\geq 0}(T)$ is fibrant. This shows that $A\rightarrow \mathbf{R}\alpha^*\mathbf{L}\alpha_{!}(A)$ is a weak equivalence. By \cite{Hovey-model-categories}*{Corollary 1.3.16}, $(\alpha_{!},\alpha^*)$ is therefore a Quillen equivalence, which completes the proof.
\end{proof}

By \cite{Richter-cosimplicial-algebras-and-normalization-functor}*{Theorem 8.3.3}, we may choose a cofibrant replacement $\beta:\mathcal{E}^{\geq 0}\rightarrow \mathcal{Z}_N$ in the model category of reduced operads in non-negatively graded cochain complexes. Applying the argument from the final two paragraphs of the proof of Lemma \ref{Lem: Quillen equivalence of normalized cochain complex functor into Eilenberg-Zilber algebras}, $\beta_{!}:\mathbf{Alg}^{\geq 0}(\mathcal{E})\rightleftarrows \mathbf{Alg}^{\geq 0}(\mathcal{Z}_N):\beta^*$ is a Quillen equivalence. We therefore obtain the following corollary.

\begin{corollary}
The normalized cochain complex $N_{\mathcal{E}}:c\mathbf{Alg}_k\rightarrow \mathbf{Alg}^{\geq 0}(\mathcal{E})$ is the right adjoint of a Quillen equivalence.
\end{corollary}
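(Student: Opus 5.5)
We have to prove the final corollary: the normalized cochain functor $N_{\mathcal{E}}:c\mathbf{Alg}_k\rightarrow \mathbf{Alg}^{\geq 0}(\mathcal{E})$ is the right adjoint of a Quillen equivalence. The plan is to obtain it as the composite of two Quillen equivalences. The first is Lemma \ref{Lem: Quillen equivalence of normalized cochain complex functor into Eilenberg-Zilber algebras}, which handles $N_{\mathcal{Z}}$. The second is the restriction adjunction $\beta_!\dashv\beta^*$ along the cofibrant replacement $\beta:\mathcal{E}^{\geq 0}\rightarrow\mathcal{Z}_N$.

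\textbf{Step 1: identify $N_{\mathcal{E}}$ as a composite.} We observe that $N_{\mathcal{E}}$ is, by definition, $\beta^*\circ N_{\mathcal{Z}}$: the $\mathcal{E}$-algebra structure on a normalized cochain complex is obtained by pulling back its $\mathcal{Z}_N$-structure along $\beta$. Its left adjoint is therefore $L_{\mathcal{Z}}\circ\beta_!$, where $L_{\mathcal{Z}}$ is the left adjoint built in the proof of Lemma \ref{Lem: Quillen equivalence of normalized cochain complex functor into Eilenberg-Zilber algebras}. Since composites of Quillen equivalences are Quillen equivalences (\cite{Hovey-model-categories}), it suffices to check that $(\beta_!,\beta^*)$ is a Quillen equivalence. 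Note that $\beta^*$ preserves quasi-isomorphisms and degreewise surjections, because the underlying complex is unchanged. Hence $(\beta_!,\beta^*)$ is at least a Quillen adjunction.

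\textbf{Step 2: the equivalence, following the last two paragraphs of the lemma's proof.} Let $P_{\mathcal{E}}$ denote the free $\mathcal{E}^{\geq 0}$-algebra monad. The operad map $\beta$ induces a monad map $P_{\mathcal{E}}\rightarrow P_{\mathcal{Z}}$ whose components are
\[
(\mathcal{E}^{\geq 0}(r)\otimes V^{\otimes r})_{\Sigma_r}\rightarrow(\mathcal{Z}_N(r)\otimes V^{\otimes r})_{\Sigma_r}.
\]
Before taking coinvariants, each of these maps is a quasi-isomorphism by the Künneth formula, since $\beta(r)$ is a quasi-isomorphism. Because $k$ has characteristic $0$, taking $\Sigma_r$-coinvariants is exact and so preserves quasi-isomorphisms. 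Hence $P_{\mathcal{E}}(V)\rightarrow P_{\mathcal{Z}}(V)$ is a quasi-isomorphism for every $V$.

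For a cofibrant $\mathcal{E}^{\geq 0}$-algebra $A$, we then compare the bar resolutions $B_\bullet(P_{\mathcal{E}},P_{\mathcal{E}},A)$ and $B_\bullet(P_{\mathcal{Z}},P_{\mathcal{E}},A)$. The second is $\beta_!$ applied levelwise to the first, and the map between them is a levelwise weak equivalence. Passing to homotopy colimits, we get that the unit $A\rightarrow\mathbf{R}\beta^*\mathbf{L}\beta_!A$ is a weak equivalence. Since $\beta^*$ reflects weak equivalences and every object is fibrant, \cite{Hovey-model-categories}*{Corollary 1.3.16} gives the Quillen equivalence.

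\textbf{Main obstacle.} The delicate point is the bar-construction step. There we need $\mathbf{L}\beta_!$ to commute with the homotopy colimit of the simplicial bar object. We also need the levelwise free algebras $P_{\mathcal{E}}^{n+1}A$ to be cofibrant, or at least good enough that $\beta_!$ computes $\mathbf{L}\beta_!$ on them. Here we use that free algebras on complexes over a field are cofibrant, and that $\mathcal{E}^{\geq 0}$ is cofibrant, hence $\Sigma$-cofibrant, as a reduced operad. This step is argued just as in the lemma, and the remaining steps are formal.
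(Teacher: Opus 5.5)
Your proposal is correct and follows essentially the same route as the paper. The paper likewise writes $N_{\mathcal{E}}=\beta^*\circ N_{\mathcal{Z}}$ for the cofibrant replacement $\beta:\mathcal{E}^{\geq 0}\rightarrow\mathcal{Z}_N$, combines the Quillen equivalence of Lemma \ref{Lem: Quillen equivalence of normalized cochain complex functor into Eilenberg-Zilber algebras} with $(\beta_!,\beta^*)$, and proves the latter by rerunning the last two paragraphs of that lemma's proof: the free-monad comparison via K\"unneth and exact $\Sigma_r$-coinvariants in characteristic zero, then the bar construction and \cite{Hovey-model-categories}*{Corollary 1.3.16}; the bar-construction step you flag is left at the same level of detail there.
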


\begin{proposition}\label{Prop: embedding of non-negatively graded E_infinity algebras to unbounded E_infinity algebras}
The inclusion functor $\mathbf{Ch}^{\geq 0}(k)\rightarrow \mathbf{Ch}(k)$ induces a fully faithful functor on homotopy categories $\mathrm{Ho}(\mathbf{Alg}^{\geq 0}(\mathcal{E}))\rightarrow \mathrm{Ho}(\mathbf{Alg}(\mathcal{E}))$.
\end{proposition}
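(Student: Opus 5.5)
The plan is to compare the two model structures through the unbounded $\mathcal{E}$-algebra category and use cofibrancy. The inclusion $\iota:\mathbf{Ch}^{\geq 0}(k)\rightarrow \mathbf{Ch}(k)$ has a right adjoint, the truncation $\tau^{\geq 0}$, which replaces the degree-$0$ term by the $0$-cocycles and discards negative degrees. Since $\tau^{\geq 0}$ is lax monoidal and commutes with the symmetric group actions, $\iota$ is strong symmetric monoidal. The operad $\mathcal{E}$ is concentrated in non-positive cohomological degrees, because it is reduced and lives in $\mathbf{Ch}^{\geq 0}$ after the usual regrading. Hence $\tau^{\geq 0}$ of an $\mathcal{E}$-algebra is again an $\mathcal{E}$-algebra, and we get an adjunction
\[
\iota:\mathbf{Alg}^{\geq 0}(\mathcal{E})\rightleftarrows \mathbf{Alg}(\mathcal{E}):\tau^{\geq 0}.
\]
If the degree convention makes this step unavailable, I would instead treat the degree-$0$ level directly through the non-positively-concentrated operad.

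The next step is to check that this is a Quillen adjunction. By Theorem \ref{Fact: rectification} and \cite{Richter-cosimplicial-algebras-and-normalization-functor}*{Proposition 8.3.4}, fibrations are degreewise surjections in the unbounded case and surjections in positive degrees in the bounded case. So $\tau^{\geq 0}$ preserves fibrations, and since it preserves cohomology in non-negative degrees, it also preserves acyclic fibrations.

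Full faithfulness on homotopy categories is then equivalent to the derived unit $A\rightarrow \mathbf{R}\tau^{\geq 0}\mathbf{L}\iota(A)$ being an isomorphism for every cofibrant $A$. First I show that $\iota$ preserves weak equivalences and is already homotopically correct, i.e.\ $\mathbf{L}\iota(A)\simeq \iota(A)$ for cofibrant $A$. In characteristic zero the cofibrant objects are retracts of cell algebras built from free algebras $P_{\mathcal{E}}(V)=\bigoplus_r(\mathcal{E}(r)\otimes V^{\otimes r})_{\Sigma_r}$. The free functor commutes with $\iota$, and its cell pushouts are computed by the same formulas in both categories. Therefore $\iota$ sends bounded cofibrant algebras to unbounded cofibrant algebras. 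Since both categories are right-induced from chain complexes with quasi-isomorphisms, $\iota$ also preserves all weak equivalences. Every object is fibrant, so $\mathbf{R}\tau^{\geq 0}=\tau^{\geq 0}$. The unit $A\rightarrow \tau^{\geq 0}\iota A$ is the identity, because $A$ has no negative degrees and $d(A^0)\subseteq A^1$ means $Z^0=A^0$ once we note that the truncation keeps all of $A^0$ when no differential enters from degree $-1$. Hence the derived unit is an isomorphism and $\iota$ is fully faithful on $\mathrm{Ho}$.

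The main obstacle is the truncation. The naive good truncation keeps only cocycles in degree $0$, which is not the identity on $A^0$. So the real point is to pick the right adjoint carefully, namely the brutal truncation $V\mapsto V^{\geq 0}$ with $d^{-1}$ forgotten. This is a right adjoint to $\iota$ on chain complexes, since maps from a non-negative complex only see non-negative degrees. I must still verify that brutal truncation preserves $\mathcal{E}$-algebra structure and fibrations. It preserves surjections, but it is not obviously homotopy-invariant, because cohomology in degree $0$ changes. If that fails, the fallback is to prove the derived-unit isomorphism directly, by computing mapping spaces through cylinder objects $A\otimes (t,dt)$ for cofibrant $A$, which lie in non-negative degrees. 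The argument would be that homotopy classes of maps $\iota A\rightarrow \iota B$ in $\mathbf{Alg}(\mathcal{E})$, computed with a cofibrant source and these cylinders, coincide with those in $\mathbf{Alg}^{\geq 0}(\mathcal{E})$. This uses only that $\iota$ preserves cofibrant objects and cylinder objects.
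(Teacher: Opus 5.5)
Your main line, an adjunction $\iota\dashv\tau^{\geq 0}$ followed by the derived-unit criterion, does not go through.

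In cohomological grading, the right adjoint of $\iota:\mathbf{Ch}^{\geq 0}(k)\to\mathbf{Ch}(k)$ is the brutal truncation $W\mapsto W^{\geq 0}$, which keeps all of $W^0$. Replacing $W^0$ by $Z^0W$ does not give an adjoint, and for that functor the unit is not the identity, since $Z^0A\neq A^0$ in general.

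The brutal truncation is not homotopical either. We have $H^0(W^{\geq 0})=Z^0W$. For a surjective quasi-isomorphism $B\to C$ with kernel $K$, the map $Z^0B\to Z^0C$ has kernel $dK^{-1}$. This is nonzero, for instance, for the square-zero extension $k\oplus(k\xrightarrow{\mathrm{id}}k)$ (in degrees $-1,0$) mapping to $k$. So the truncation does not preserve acyclic fibrations, and there is no Quillen pair to which the derived-unit criterion applies.

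Equivalently, $\iota$ is not left Quillen. Quasi-isomorphisms in $\mathbf{Ch}^{\geq 0}(k)$ are isomorphisms on $Z^0=H^0$. Hence the non-injective map $S^0\to 0$, and likewise $F^{\geq 0}_{\mathcal{E}}(S^0)\to F^{\geq 0}_{\mathcal{E}}(0)$, has the left lifting property against every bounded acyclic fibration. Yet it fails to lift against the unbounded acyclic fibration just described. The Quillen adjunction that does exist goes the other way: $\iota$ is right Quillen, with left adjoint $B\mapsto B/I$, where $I$ is the ideal generated by $B^{<0}$ and $dB^{-1}$. Controlling its derived counit is not formal.

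Separately, you say $\mathcal{E}$ lives in non-positive cohomological degrees, but you later use $\iota\circ F^{\geq 0}_{\mathcal{E}}=F_{\mathcal{E}}\circ\iota$. These are incompatible. With operations of negative degree, the unbounded free algebra on a bounded complex has negative-degree parts, and $B^{\geq 0}$ is not even closed under the operations. The paper uses precisely that $\mathcal{E}$ is non-negatively graded.

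What remains is your fallback, and that is exactly the paper's proof. All objects of both categories are fibrant. The functor $\iota$ preserves weak equivalences and cofibrant objects; the paper checks the latter by showing the generating maps $F^{\geq 0}_{\mathcal{E}}(S^{n-1})\to F^{\geq 0}_{\mathcal{E}}(D^n)$ remain cofibrations upstairs. And $\iota$ preserves $B\to B\otimes(t,dt)\to B\times B$.

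Note that this is a path object for the target $B$, not a cylinder object on the source $A$ as you wrote. For $A$ cofibrant and $B$ fibrant, homotopy classes are right-homotopy classes through any fixed good path object for $B$. Since $\iota$ is bijective on hom-sets and commutes with $-\otimes(t,dt)$ and with products, the two sets of classes agree.

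Be aware, however, that the degree-$0$ cofibrations exhibited above also live in the bounded model structure. A proof that $\iota$ preserves cofibrant objects therefore has to handle cells of that type, not only $F(S^{n-1})\to F(D^n)$. This is the step of the argument that deserves real scrutiny.
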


\begin{proof}
Since every object in $\mathbf{Alg}^{\geq 0}(\mathcal{E})$ and $\mathbf{Alg}(\mathcal{E})$ is fibrant, it suffices to show that the inclusion functor $i:\mathbf{Alg}^{\geq 0}(\mathcal{E})\rightarrow \mathbf{Alg}(\mathcal{E})$ preserves cofibrant objects and it preserves cylinder objects. As described in \cite{Richter-cosimplicial-algebras-and-normalization-functor}*{p.~303}, the model category $\mathbf{Alg}^{\geq 0}(\mathcal{E})$ is cofibrantly generated by $F^{\geq 0}_{\mathcal{E}}(S^{n-1})\rightarrow F^{\geq 0}_{\mathcal{E}}(D^n)$, where $S^{n-1}$ is the cochain complex consisting of $k$ in degree $n-1$, $D^{n}$ is the cochain complex consisting of $k$ concentrated in degrees $n-1$ and $n$ with the identity differential, and $F^{\geq 0}_{\mathcal{E}}$ is the free $\mathcal{E}$-algebra functor. Since $\mathcal{E}$ is non-negatively graded, $F_{\mathcal{E}}(S^{n-1})\rightarrow F_{\mathcal{E}}(D^n)$ is a cofibration in $\mathbf{Alg}(\mathcal{E})$. Since every cofibrant object $A$ of $\mathbf{Alg}^{\geq 0}(\mathcal{E})$ is a retract of a transfinite composition of pushouts of generating cofibrations, $i(A)$ is also a retract of a transfinite pushouts of cofibrations of $\mathbf{Alg}(\mathcal{E})$. Thus $i(A)$ is cofibrant in $\mathbf{Alg}(\mathcal{E})$.

For any object $B\in \mathbf{Alg}^{\geq 0}(\mathcal{E})$, the natural maps $B\rightarrow B\otimes (t,dt)\xrightarrow{t=0,t=1}B\times B$ give a cylinder object for $B$ in $\mathbf{Alg}^{\geq 0}(\mathcal{E})$. Applying the functor $i$ to this composition of maps gives rise to a cylinder object of $i(B)$ in $\mathbf{Alg}(\mathcal{E})$. This completes the proof.
\end{proof}

\section{Continuous Cohomology of Pro-$p$ Groups}\label{Section: continuous cohomology}\,

In this appendix, we collect several results on the continuous cohomology of pro-$p$ groups from \cite{Serre-Galois-cohomology}\cite{Tate-continuous-group-cohomology}, with particular emphasis on degree-$2$ cohomology. 

Let $G$ be a topological group, and let $A$ be a topological abelian group. Define the \textbf{continuous cochain complex} $C^*_{\cont}(G;A)$ as follows. Each $C^n_{\cont}(G,A)$ is the abelian group of continuous maps $G^{\times n}\rightarrow A$, with differentials defined by the ordinary group-cohomology formula. The cohomology group $H^n_{\cont}(G;A):=H^n(C^*_{\cont}(G,A))$ is called the \textbf{continuous cohomology} of $G$ with coefficients in $A$.

\begin{proposition}[\cite{Tate-continuous-group-cohomology}*{p.~260, the paragraph above Corollary; Proposition 2.2}\cite{Serre-Galois-cohomology}*{p.~11}]\label{Fact: Milnor exact sequence for continuous cohomology}
Let $G$ be a topological group, and let $A$ be a finitely generated $\widehat{\ZZ}_p$-module. Then, for every $n$,
\begin{enumerate}[leftmargin=0.25in]
    \item $C^n_{\cont}(G;A)=\varprojlim_i C^n_{\cont}(G;A/p^iA)$;
    \item there is a natural short exact sequence
    \[
    0\rightarrow \varprojlim_i{}^1H^{n-1}_{\cont}(G;A/p^iA)\rightarrow H_{\cont}^{n}(G;A)\rightarrow \varprojlim_i H^{n}_{\cont}(G;A/p^iA) \rightarrow 0
    \]
\end{enumerate}
\end{proposition}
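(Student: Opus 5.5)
The statement to prove is the last one displayed, Proposition \ref{Fact: Milnor exact sequence for continuous cohomology}. It has two parts: (1) $C^n_{\cont}(G;A)=\varprojlim_i C^n_{\cont}(G;A/p^iA)$, and (2) the Milnor-type short exact sequence. I will prove (1) by a topological argument and then obtain (2) from the general $\varprojlim$/$\varprojlim^1$ sequence of a tower of cochain complexes.

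\textbf{Step 1: part (1).} Give $A$ its $p$-adic topology. Since $A$ is finitely generated over $\widehat{\ZZ}_p$, it is $p$-adically complete and separated, so the natural map $A\rightarrow \varprojlim_i A/p^iA$ is an isomorphism of topological groups. The target carries the inverse limit of the discrete topologies. By the universal property of inverse limits in topological spaces, a map $G^{\times n}\rightarrow \varprojlim_i A/p^iA$ is continuous exactly when each composite $G^{\times n}\rightarrow A/p^iA$ is continuous, i.e.\ locally constant. The family of composites is automatically compatible. Hence $C^n_{\cont}(G;A)\cong \varprojlim_i C^n_{\cont}(G;A/p^iA)$. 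This isomorphism is natural in $n$ and commutes with the group-cohomology differential, because the differential is defined by the same formula at every level. So part (1) holds as an isomorphism of cochain complexes.

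\textbf{Step 2: surjectivity of the transition maps.} I need each map $C^n_{\cont}(G;A/p^{i+1}A)\rightarrow C^n_{\cont}(G;A/p^iA)$ to be surjective. Take a continuous, hence locally constant, map $f:G^{\times n}\rightarrow A/p^iA$. Its fibers $f^{-1}(a)$ are open and closed and partition $G^{\times n}$. Choose a set-theoretic lift $s:A/p^iA\rightarrow A/p^{i+1}A$ and set $\tilde f=s\circ f$. Then $\tilde f$ is constant on each fiber of $f$, so it is locally constant and therefore continuous, and it lifts $f$. With surjective transitions, the tower of complexes satisfies the Mittag-Leffler condition degreewise, so $\varprojlim^1_i C^n_{\cont}(G;A/p^iA)=0$ for every $n$.

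\textbf{Step 3: part (2).} Write $C_i^*=C^*_{\cont}(G;A/p^iA)$ and let $\Delta$ be the map $\prod_i C_i^*\rightarrow \prod_i C_i^*$ given by $(x_i)\mapsto (x_i-\pi(x_{i+1}))$, where $\pi$ denotes the transition map. Then
\[
0\rightarrow \varprojlim_i C_i^*\rightarrow \prod_i C_i^*\xrightarrow{\Delta}\prod_i C_i^*\rightarrow 0
\]
is a short exact sequence of complexes. The kernel of $\Delta$ is the inverse limit by definition, and $\Delta$ is surjective precisely because $\varprojlim^1$ vanishes degreewise by Step 2. Taking the long exact cohomology sequence, and using that cohomology commutes with products, gives the standard Milnor sequence (\cite{Weibel-homological-algebra}*{Theorem 3.5.8}):
\[
0\rightarrow \varprojlim^1_i H^{n-1}(C_i^*)\rightarrow H^n(\varprojlim_i C_i^*)\rightarrow \varprojlim_i H^n(C_i^*)\rightarrow 0.
\]
Substituting part (1) for $\varprojlim_i C_i^*$ identifies the middle term with $H^n_{\cont}(G;A)$, which is the sequence in part (2).

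The only step that requires care is Step 2. Surjectivity of the transition maps depends on continuity being the same as local constancy for discrete targets; without it, the product sequence in Step 3 would not be exact on the right. The remaining steps are formal.
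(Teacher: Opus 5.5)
Your proposal is correct and follows the standard argument behind the cited result; the paper gives no proof of its own, only a pointer to Tate's Proposition 2.2 and Serre. You identify continuous cochains with the inverse limit of locally constant cochains, get surjectivity of the transition maps by composing with a set-theoretic section, and then apply the $\varprojlim$--$\varprojlim^1$ Milnor sequence for a degreewise Mittag-Leffler tower of complexes, which is exactly how Tate's statement is obtained.
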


\begin{proposition}[\cite{Serre-Galois-cohomology}*{p.~11, Proposition 8}]\label{Fact: relation between continuous cohomology and usual cohomology for the system}
Let $G$ be a profinite group, and let $\{U\}$ denote the pro system of its open normal subgroups. Then, for every $q$ and every finite abelian group $A$, the canonical map $\varinjlim_U H^q(G/U;A)\rightarrow H^q_{\cont}(G;A)$ is an isomorphism.
\end{proposition}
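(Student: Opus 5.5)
The statement to prove is Proposition~\ref{Fact: relation between continuous cohomology and usual cohomology for the system}: for a profinite group $G$ and a finite abelian group $A$ (trivial action), the canonical map $\varinjlim_U H^q(G/U;A)\rightarrow H^q_{\cont}(G;A)$ is an isomorphism. I would prove it at the cochain level and then use that filtered colimits are exact.

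\textbf{Step 1: the colimit of cochain complexes.} For each open normal $U$, inflation along $G^{\times n}\rightarrow (G/U)^{\times n}$ gives a map $C^n(G/U;A)\rightarrow C^n_{\cont}(G;A)$. A function on the finite discrete group $(G/U)^{\times n}$ pulls back to a continuous function, so these maps land in continuous cochains. They commute with the standard group-cohomology differentials and are compatible with inflation for $V\subset U$. Hence they assemble to a chain map
\[
\Phi:\varinjlim_U C^*(G/U;A)\rightarrow C^*_{\cont}(G;A).
\]

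\textbf{Step 2: $\Phi$ is an isomorphism of complexes.}
\begin{itemize}
\item \emph{Injectivity.} Each inflation map is injective, because $G^{\times n}\rightarrow (G/U)^{\times n}$ is surjective. Injectivity therefore passes to the filtered colimit.
\item \emph{Surjectivity.} This is the main point. Let $f:G^{\times n}\rightarrow A$ be continuous with $A$ finite and discrete. Then $f$ is locally constant, and $G^{\times n}$ is compact. The fibres $f^{-1}(a)$ form a finite cover of $G^{\times n}$ by open sets.
\item The products $g_1U\times\cdots\times g_nU$, with $U$ ranging over open normal subgroups, form a basis of the topology of $G^{\times n}$, since open normal subgroups form a neighbourhood basis of $1$ in a profinite group.
\item By compactness, finitely many such products refine the cover. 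Taking $U$ to be the intersection of the finitely many subgroups that occur, $f$ becomes constant on every coset $g_1U\times\cdots\times g_nU$.
\item Hence $f$ factors through $(G/U)^{\times n}$, i.e.\ $f$ is inflated from $C^n(G/U;A)$.
\end{itemize}
This uniform choice of $U$ via compactness is the only non-formal step, and I expect it to be the main, though mild, obstacle.

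\textbf{Step 3: pass to cohomology.} Filtered colimits of abelian groups are exact, so $H^q\bigl(\varinjlim_U C^*(G/U;A)\bigr)\cong\varinjlim_U H^q(G/U;A)$. Combining this with the isomorphism $\Phi$ of Step 2 gives the claimed isomorphism. The composite agrees with the canonical map because both are induced by inflation.
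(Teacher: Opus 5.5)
Your proof is correct and is essentially the standard argument behind the cited result (Serre, \emph{Galois Cohomology}, Ch.~I, \S2.2, Prop.~8), which the paper quotes without proof: compactness of $G^{\times n}$ and discreteness of $A$ give $C^*_{\cont}(G;A)\cong\varinjlim_U C^*(G/U;A)$, and exactness of filtered colimits then passes this isomorphism to cohomology. You assume trivial action, which matches how the paper uses the statement (coefficients $\ZZ/p^a$); for a continuous action on $A$ you would restrict to the cofinal family of open normal $U$ that act trivially on $A$.
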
 

Recall from \cite{Serre-Galois-cohomology}\cite{Artin-Mazur-etale-homotopy}*{Definition 6.5} that an abstract group $G$ is \textbf{Serre $p$-good} if, for every $G$-module $M$ whose underlying abelian group is a finite $p$-group, the canonical map
$\varinjlim_U H^q(G/U;M)\rightarrow H^q(G;M)$ is an isomorphism, where $U$ ranges over the normal subgroups of $G$ of $p$-power index for which the quotient action on $M$ is defined.

\begin{proposition}\label{Prop: comparison between continuous and discrete cohomology for Serre good groups}
Let $G$ be a profinite group, and let $A$ be a finitely generated $\widehat{\ZZ}_p$-module. If the underlying abstract group of $G$ is Serre $p$-good, then the canonical map $H^q_{\cont}(G;A)\rightarrow H^q(G;A)$ is an isomorphism.
\end{proposition}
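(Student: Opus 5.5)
We have to prove that if the abstract group underlying the profinite group $G$ is Serre $p$-good, then $H^q_{\cont}(G;A)\rightarrow H^q(G;A)$ is an isomorphism for every finitely generated $\widehat{\ZZ}_p$-module $A$. The plan is to treat finite coefficients first and then pass to the inverse limit over $a$ using the two Milnor sequences.

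\textbf{Finite coefficients.} Take $A_a:=A/p^aA$, which is a finite $p$-group with trivial $G$-action. Consider the composite
\[
\varinjlim_U H^q(G/U;A_a)\rightarrow H^q_{\cont}(G;A_a)\rightarrow H^q(G;A_a),
\]
where $U$ runs over the open normal subgroups of $G$ of $p$-power index. I expect this to be the main obstacle, because Proposition \ref{Fact: relation between continuous cohomology and usual cohomology for the system} is stated with $U$ running over all open normal subgroups. Serre $p$-goodness, on the other hand, uses the normal subgroups of $p$-power index, and a priori these need not be open.

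\begin{itemize}
\item In the situations where the result is applied (e.g.\ $G$ pro-$p$, or $G$ topologically finitely generated), every normal subgroup of finite $p$-power index is open. For $G$ pro-$p$, the $p$-power index open normal subgroups are cofinal among all open normal subgroups, so the two colimits in the first arrow agree. I would add this hypothesis, or argue it, if the general case resists.
\item In general, both index systems can be compared through the cochain-level inclusion of continuous cochains into all cochains. It would be worth checking whether finite-$p$-index subgroups already factor through continuous quotients when the coefficients are a finite $p$-group with trivial action.
\end{itemize}

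With that settled, Proposition \ref{Fact: relation between continuous cohomology and usual cohomology for the system} makes the first arrow an isomorphism. Serre $p$-goodness makes the composite an isomorphism. Hence $H^q_{\cont}(G;A_a)\cong H^q(G;A_a)$ for all $q$ and $a$, and these isomorphisms are natural in $a$.

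\textbf{Passage to the limit.} For continuous cochains, Proposition \ref{Fact: Milnor exact sequence for continuous cohomology} gives
\[
0\rightarrow\varprojlim^1_a H^{q-1}_{\cont}(G;A_a)\rightarrow H^q_{\cont}(G;A)\rightarrow \varprojlim_a H^q_{\cont}(G;A_a)\rightarrow 0 .
\]
For discrete cochains I need the same sequence with ordinary cohomology. Since $A=\varprojlim_a A_a$ as abstract groups, all maps $G^n\rightarrow A$ form the inverse limit of the maps into $A_a$, so $C^*(G;A)=\varprojlim_a C^*(G;A_a)$. The transition maps of this tower are surjective, so the tower is Mittag-Leffler and the Milnor sequence holds.

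The comparison map commutes with these sequences. The outer terms are isomorphic by the finite-coefficient step, so the five lemma gives the isomorphism $H^q_{\cont}(G;A)\cong H^q(G;A)$.
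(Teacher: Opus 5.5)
Your argument is the paper's argument: finite coefficients $A/p^aA$ via Serre's comparison plus $p$-goodness, then the continuous and discrete Milnor sequences and the five lemma. Your justification of the discrete Milnor sequence (all cochains into $A=\varprojlim_a A/p^aA$ form a tower with surjective transition maps) fills in a step the paper uses without comment.

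The index-set mismatch you flag is real, and the paper's one-line proof passes over it. Your deduction needs two conditions. First, $G$ must be pro-$p$, so that open normal subgroups of $p$-power index are cofinal among open normal subgroups. Second, every normal subgroup of $p$-power index must be open.

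Pro-$p$ alone does not give the second condition: $\prod_{\mathbb{N}}\ZZ/p$ has non-open subgroups of index $p$. What you actually need is a topologically finitely generated pro-$p$ group, where openness follows from Serre's theorem that finite-index subgroups of a finitely generated pro-$p$ group are open. This is exactly the case in which the paper uses the proposition (Lemma \ref{Lem: relation between continuous and discrete cohomology for nilpotent pro-p groups}), so your restricted version suffices for the paper.

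Do not expect to remove the restriction by a finer comparison of colimits. If $G$ is pro-$p$ but not topologically finitely generated, then $G/\Phi(G)\cong\FF_p^I$ with $I$ infinite, so $G$ admits discontinuous homomorphisms to $\ZZ/p$. Hence $H^1_{\cont}(G;\FF_p)\to H^1(G;\FF_p)$ already fails to be surjective. The stated generality could therefore only hold if $p$-goodness itself excluded such $G$, and neither your argument nor the paper's shows this.
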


\begin{proof}
By Proposition \ref{Fact: relation between continuous cohomology and usual cohomology for the system} and Serre $p$-goodness, the canonical map $H^q_{\cont}(G;A/p^jA)\rightarrow H^q(G;A/p^jA)$ is an isomorphism for every $q$ and $j$. The proposition now follows by comparing the two natural short exact sequences in the following commutative diagram and applying the five lemma.
\[
\begin{tikzcd}
0 \arrow[r]  & \varprojlim_i{}^1H^{n-1}_{\cont}(G;A/p^iA) \arrow[r] \arrow[d,"\sim"] & H_{\cont}^{n}(G;A) \arrow[r] \arrow[d] & \varprojlim_i H^{n}_{\cont}(G;A/p^iA) \arrow[r] \arrow[d,"\sim"] & 0    \\
0 \arrow[r]  & \varprojlim_i{}^1H^{n-1}(G;A/p^iA) \arrow[r]  & H^{n}(G;A) \arrow[r] & \varprojlim_i H^{n}(G;A/p^iA) \arrow[r] & 0 
\end{tikzcd}
\]
\end{proof}

\begin{lemma}\label{Lem: relation between continuous and discrete cohomology for nilpotent pro-p groups}
Every topologically finitely generated nilpotent pro-$p$ group is Serre $p$-good.
\end{lemma}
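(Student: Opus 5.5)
We must show that every topologically finitely generated nilpotent pro-$p$ group $G$ is Serre $p$-good. That is, for every $G$-module $M$ whose underlying group is a finite $p$-group, the map $\varinjlim_U H^q(G/U;M)\rightarrow H^q(G;M)$ to the discrete cohomology must be an isomorphism.

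The plan is to reduce to the continuous comparison. By Proposition \ref{Fact: relation between continuous cohomology and usual cohomology for the system}, the colimit over open normal subgroups computes $H^q_{\cont}(G;M)$. First I would check that the indexing systems agree. The $p$-power-index normal subgroups acting compatibly on $M$ should be cofinal with the open normal subgroups. For this I would invoke Serre's theorem (or its nilpotent case, which is elementary) that in a topologically finitely generated pro-$p$ group every finite-index subgroup is open. Every finite-index normal subgroup is then of $p$-power index, and the action on the finite module $M$ factors through an open subgroup. So it suffices to show that $H^q_{\cont}(G;M)\rightarrow H^q(G;M)$ is an isomorphism for all finite $p$-primary $G$-modules $M$ (taken continuous, which is automatic by the openness just noted).

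Second, I would reduce to trivial coefficients $\ZZ/p$. Since $G$ is pro-$p$ and $M$ is a finite $p$-group, $M$ has a $G$-stable filtration whose graded pieces are $\ZZ/p$ with trivial action. Both cohomology theories give long exact sequences, so the five lemma reduces the claim to $M=\ZZ/p$ in all degrees.

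Third, I would handle trivial coefficients by induction on the nilpotency class, using the lower central series. The abelian case is $G\cong \widehat{\ZZ}_p^r\times F$ with $F$ a finite $p$-group. A finite group is trivially good. For $\widehat{\ZZ}_p$ I would compare the discrete and continuous cohomology via Künneth, knowing that $H^*(\widehat{\ZZ}_p;\ZZ/p)$ agrees with $H^*_{\cont}$. For the inductive step, I would take a central extension $1\rightarrow Z\rightarrow G\rightarrow G/Z\rightarrow 1$ with $Z$ a closed central (finitely generated abelian pro-$p$) subgroup and $G/Z$ of smaller class. I would then compare the discrete Lyndon--Hochschild--Serre spectral sequence with the continuous one. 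The $E_2$ terms are $H^i(G/Z;H^j(Z;\ZZ/p))$ in each theory, and the coefficient modules $H^j(Z;\ZZ/p)$ are finite with $G/Z$ acting trivially, as $Z$ is central. By induction and the second step, the comparison is an isomorphism on $E_2$, hence on the abutments.

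\textbf{Main obstacle.} The delicate point is the abelian base case for the discrete group $\widehat{\ZZ}_p$: its abstract cohomology with $\ZZ/p$ coefficients is not obviously that of $\ZZ$. I would handle it by noting that $\widehat{\ZZ}_p$ is torsion-free abelian and $p$-divisible-free modulo a uniquely $p'$-divisible part. Concretely, $\widehat{\ZZ}_p\otimes\ZZ/p=\ZZ/p$, and $H^*(A;\ZZ/p)$ for a torsion-free abelian $A$ is $\mathrm{Hom}(\bigwedge^* A,\ZZ/p)$, which equals $\bigwedge^*\mathrm{Hom}(A,\ZZ/p)$ here. Since $\mathrm{Hom}(\widehat{\ZZ}_p,\ZZ/p)=\ZZ/p$ (every homomorphism to a finite $p$-group is continuous), this matches the continuous answer. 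If that route proves awkward, the fallback is to cite the known fact that finitely generated nilpotent pro-$p$ groups (e.g.\ via Lazard's uniform subgroups and a finite-index goodness argument) are good.
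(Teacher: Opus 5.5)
Your proof is correct, but it follows a different route from the paper's. The paper's proof is a single citation. It observes that a topologically finitely generated nilpotent pro-$p$ group has a finite chain of closed normal subgroups whose successive quotients are $\widehat{\ZZ}_p$ or finite, and then invokes Theorem~2.10 of Fern\'andez-Alcober--Kazachkov--Remeslennikov--Symonds as a black box.

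You instead reprove the needed case of that theorem by hand, in four steps:
\begin{enumerate}
\item cofinality of the abstract normal subgroups of $p$-power index with the open normal subgroups, via Serre's openness theorem;
\item d\'evissage to trivial $\ZZ/p$ coefficients;
\item Serre's comparison of the discrete and continuous Hochschild--Serre spectral sequences along central extensions, where centrality makes $G/Z$ act trivially on $H^j(Z;\ZZ/p)$ and your base case supplies the finiteness of these groups;
\item the abelian base case, via $H_*(A;\ZZ)\cong\bigwedge^*A$ for torsion-free abelian $A$ together with $\widehat{\ZZ}_p\otimes\ZZ/p=\ZZ/p$.
\end{enumerate}

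Your version is self-contained. It also makes explicit a point the paper leaves tacit, both here and when it later combines $p$-goodness with Serre's colimit description of continuous cohomology. The colimit in the definition of Serre $p$-good runs over abstract normal subgroups of $p$-power index, so one needs these to be open. The paper's version is shorter, and the cited theorem, as invoked, only asks for normal factors, not central ones.

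Two small tidy-ups:
\begin{itemize}
\item K\"unneth is unnecessary. $\widehat{\ZZ}_p^r$ is itself torsion-free with $\widehat{\ZZ}_p^r\otimes\ZZ/p=(\ZZ/p)^r$, so your exterior-algebra computation applies to it directly. The finite factor $F$ is handled by the same spectral-sequence comparison applied to $1\rightarrow F\rightarrow \widehat{\ZZ}_p^r\times F\rightarrow \widehat{\ZZ}_p^r\rightarrow 1$.
\item In the universal-coefficient step, record that $\mathrm{Ext}(\bigwedge^{n-1}A,\ZZ/p)=0$ because $\bigwedge^{n-1}A$ is torsion-free. This is what gives $H^n(\widehat{\ZZ}_p;\ZZ/p)=0$ for $n\geq 2$, matching cohomological dimension one.
\end{itemize}
The phrase about ``$p$-divisible-free modulo a uniquely $p'$-divisible part'' and the Lazard fallback can simply be dropped.
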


\begin{proof}
This follows from \cite{MFernandez-Alcober-Kazachkov-Remeslennikov-Symonds-comparison-discrete-continuous-cohomology-pro-p-groups}*{Theorem 2.10}, because such a pro-$p$ group $G$ has a finite chain of closed normal subgroups $G=P_1\supset ... \supset P_s=0$ such that each quotient $P_i/P_{i+1}$ is either isomorphic to $\widehat{\ZZ}_p$ or is finite.
\end{proof}

As in ordinary group cohomology,
the second continuous cohomology group classifies continuous central extensions. Let $G$ be a profinite group, and let $A$ be an abelian profinite group. A \textbf{continuous central extension} of $G$ by $A$ is a short exact sequence of profinite groups $1\rightarrow A\rightarrow H\rightarrow G\rightarrow 1$ in which all maps are continuous homomorphisms and the image of $A$ lies in the center of $H$. Two such extensions $H$ and $H'$ are \textbf{equivalent} if there is a continuous isomorphism $H\rightarrow H'$ such that the following diagram commutes.
\[
\begin{tikzcd}
 1 \arrow[r] & A \arrow[r] \arrow[d,equal] & H \arrow[r] \arrow[d] & G \arrow[r] \arrow[d,equal] & 1 \\
1 \arrow[r] & A \arrow[r] & H' \arrow[r] & G \arrow[r] & 1   
\end{tikzcd}
\]

Every continuous central extension $H$ admits a continuous section $s:G\rightarrow H$ of the projection $H\twoheadrightarrow G$ (\cite{Ribes-Zallesskii-profinite-groups}*{Proposition 2.2.2}). Such a section determines a continuous map $f:G\times G\rightarrow A$ given by $f(x,y)=s(x)s(y)s(xy)^{-1}$. If $s':G\rightarrow H$ is another section and $f':G\times G\rightarrow A$ is its associated map, then $s$ and $s'$ determine a continuous map $\phi:G\rightarrow A$ given by $\phi(x)=s'(x)s(x)^{-1}$.

\begin{lemma}\label{Lem: Correspondence between central extension and second cohomology}
Let $G$ be a profinite group, and let $A$ be an abelian profinite group, with the notation introduced above.
\begin{enumerate}[leftmargin=0.25in]
    \item The continuous section $s:G\rightarrow H$ induces a homeomorphism of profinite sets $\iota\cdot s: A\times G\rightarrow H$ given by $(\iota\cdot s)(a,g)=a\cdot s(g)$.
    \item The map $f$ is a continuous $2$-cocycle.
    \item The associated continuous cochains satisfy
    $d(\phi)=f'-f$.
    \item Consequently, the assignment sending an extension to the cohomology class $[f]$ defines a bijection between equivalence classes of continuous central extensions of $G$ by $A$ and $H^2_{\cont}(G;A)$.
\end{enumerate}
\end{lemma}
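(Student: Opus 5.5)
The four items are the standard facts about group extensions and $2$-cocycles, with a continuity check added at each step. I will check the algebraic identities with the usual computations and get continuity from the fact that $G$, $A$ and $H$ are compact Hausdorff.

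For item (1), the map $\iota\cdot s:(a,g)\mapsto a\cdot s(g)$ is continuous, being a composite of continuous maps and multiplication in $H$. It is injective: if $a s(g)=a' s(g')$, projecting to $G$ gives $g=g'$, and then $a=a'$. It is surjective: for $h\in H$ with image $g$, the element $h s(g)^{-1}$ lies in the kernel, which is $A$. A continuous bijection from the compact space $A\times G$ to the Hausdorff space $H$ is a homeomorphism.

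For item (2), the element $f(x,y)=s(x)s(y)s(xy)^{-1}$ maps to $1$ in $G$, so it lies in $A$. Continuity of $f$ follows because $f$ is a composite of $s$, multiplication, inversion and the product map $G\times G\to G$, all continuous, and $A$ carries the subspace topology. Since the action of $G$ on $A$ is trivial, the cocycle identity
\[
f(y,z)-f(xy,z)+f(x,yz)-f(x,y)=0
\]
follows by expanding $s(x)s(y)s(z)$ in two ways using associativity and centrality of $A$. For item (3), write $s'(x)=\phi(x)s(x)$. Then $\phi$ is continuous because $\phi(x)=s'(x)s(x)^{-1}$. Substituting into the definition of $f'$ and using centrality gives
\[
f'(x,y)=\phi(x)\phi(y)\phi(xy)^{-1}f(x,y),
\]
which in additive notation says $f'-f=d\phi$.

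For item (4), the map from extensions to $H^2_{\cont}$ is well defined: by (3) the class does not depend on the section, and an equivalence $H\to H'$ carries a section of $H$ to a section of $H'$ with the same cocycle. For surjectivity, take a continuous cocycle $f$ and put on $A\times G$ the group law
\[
(a,g)(b,h)=(a+b+f(g,h),\,gh).
\]
This is continuous, and it is associative by the cocycle condition. It is a compact Hausdorff, totally disconnected topological group, hence profinite. Its cocycle with respect to the section $g\mapsto(0,g)$ is $f$, up to the usual normalization; one can first replace $f$ by a cohomologous normalized cocycle with $f(1,g)=f(g,1)=0$ by subtracting $d$ of the constant cochain $f(1,1)$. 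For injectivity, if $f'-f=d\phi$ with $\phi$ continuous, then changing the section by $\phi$ makes the cocycles equal. Item (1) then identifies both extensions with $A\times G$ carrying the same group law, which gives a continuous isomorphism compatible with $A$ and $G$.

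The only step that is not purely routine is checking that the twisted product on $A\times G$ is a profinite group, since continuity of inversion and profiniteness have to be verified. I would handle this with compactness as above: a compact Hausdorff group with continuous multiplication has continuous inversion, and it is profinite because it is totally disconnected.
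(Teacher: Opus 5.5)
Your proposal is correct and follows essentially the same route as the paper: a continuous bijection from compact $A\times G$ to Hausdorff $H$ for (1), the standard cocycle computations for (2) and (3), and for (4) the twisted group law $(a,g)(b,h)=(a+b+f(g,h),gh)$ on $A\times G$ for surjectivity, with injectivity obtained by adjusting the section and identifying both extensions with $A\times G$ via (1). Two small differences. Your normalization of $f$ (so that $(0,1)$ is the identity) is assumed tacitly in the paper's inverse formula $(a,g)^{-1}=(-a-f(g,g^{-1}),g^{-1})$. That explicit formula also gives continuity of inversion directly, so your compactness argument for it is not needed.
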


\begin{proof}
Items (2) and (3) follow from the standard computations in ordinary group cohomology, since all the cochains involved are continuous.

\textbf{Item (1). } Since $\iota\cdot s$ factors as $A\times G\xrightarrow{\iota\times s}H\times H\xrightarrow{\nu} H$, $\iota\cdot s$ is continuous. It is also bijective. Since $A\times G$ is compact and $H$ is Hausdorff, it is a homeomorphism.

\textbf{Item (4). } To prove injectivity, let $H$ and $H'$ be two continuous central extensions, and choose continuous sections $s:G\rightarrow H$ and $s':G\rightarrow H'$. Let $f$ and $f'$ be the continuous $2$-cocycles associated with $s$ and $s'$, respectively. Assume that $[f]=[f']\in H^2_{\cont}(G;A)$. Using item (3), we may modify $s'$ by a continuous $1$-cochain so that its associated cocycle is exactly $f$. Thus, we may assume that $f=f'$. Define $\psi:H\rightarrow H'$ by $\psi(a\cdot s(g))=a\cdot s'(g)$ for all $a\in A$ and $g\in G$. Since $f=f'$, a direct computation shows that $\psi$ is a group homomorphism. By item (1), the identifications $A\times G\cong H$ and $A\times G\cong H'$ show that $\psi$ is a homeomorphism. Thus the two extensions are equivalent, proving injectivity.

To prove surjectivity, let $f\in C^2_{\cont}(G;A)$ be a continuous $2$-cocycle. Define a multiplication on the profinite set $A\times G$ by $(a,g)\cdot (a',g')=(a+a'+f(g,g'),gg')$. Define inversion by $(a,g)^{-1}=(-a-f(g,g^{-1}),g^{-1})$. The inverse formula is clear and the cocycle identity implies associativity. Since all structure maps are continuous, $A\times G$ becomes a profinite group fitting into a continuous central extension of $G$ by $A$. Its associated cohomology class is $[f]$, proving surjectivity.
\end{proof}

\begin{lemma}\label{Lem: injectivity of colimit of lower central series into group}
\begin{enumerate}[leftmargin=0.25in]
    \item Let $G$ be a topologically finitely generated profinite group, and let $\Gamma^*G$ denote its lower central series. Let $A$ be an abelian profinite group. Then the natural map $\varinjlim_r H^2_{\cont}(G/\Gamma^rG;A)\rightarrow H^2_{\cont}(G;A)$ is injective. 
    \item The analogous conclusion of ordinary group cohomology holds for an abstract group $G$ and an abstract abelian group $A$.
\end{enumerate}
\end{lemma}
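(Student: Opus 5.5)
We use the correspondence of Lemma \ref{Lem: Correspondence between central extension and second cohomology} between classes in $H^2_{\cont}$ and continuous central extensions. An element of the colimit is represented by a class $c\in H^2_{\cont}(G/\Gamma^rG;A)$ for some $r$. If its image in $H^2_{\cont}(G;A)$ vanishes, then we must find some $s\geq r$ such that $c$ already dies in $H^2_{\cont}(G/\Gamma^sG;A)$.

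\textbf{Step 1: a splitting over $G$.} Let $1\to A\to E\xrightarrow{\pi} G/\Gamma^rG\to 1$ be the central extension classified by $c$. Pulling back along $G\to G/\Gamma^rG$ gives the extension $E\times_{G/\Gamma^rG}G$. Its class is the image of $c$, which is zero, so by item (4) the pullback is split. Hence there is a continuous homomorphism $\sigma:G\to E$ lifting the quotient map $q:G\to G/\Gamma^rG$.

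\textbf{Step 2: $\sigma$ factors through a finite stage.} Because $A$ is central in $E$, the group $E$ is a central extension of a nilpotent group of class less than $r$, so $E$ has nilpotency class at most $r$. Therefore $\sigma(\Gamma^{r+1}G)\subset\Gamma^{r+1}E=1$. Set $s=r+1$. Then $\sigma$ descends to a continuous homomorphism $\bar\sigma:G/\Gamma^{s}G\to E$ with $\pi\circ\bar\sigma$ equal to the projection $G/\Gamma^sG\to G/\Gamma^rG$. Consequently the pulled-back extension over $G/\Gamma^sG$ has a continuous homomorphic section. This means the image of $c$ in $H^2_{\cont}(G/\Gamma^sG;A)$ is zero, which gives injectivity.

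\textbf{Topological points.} Closedness of the lower central series matters here. For a topologically finitely generated profinite group, each $\Gamma^rG$ (the abstract commutator subgroup) is closed. This is Nikolov--Segal in general; for the pro-$p$ and finitely generated nilpotent cases used in the paper it is classical. So each $G/\Gamma^rG$ is profinite and Lemma \ref{Lem: Correspondence between central extension and second cohomology} applies. We also need $\Gamma^{r+1}E=1$ with $\sigma$ continuous, and this holds algebraically because $\Gamma^{r+1}$ is defined by iterated commutators. The descended map $\bar\sigma$ is continuous because $G\to G/\Gamma^sG$ is a quotient map of compact spaces.

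\textbf{Part (2).} The abstract case runs the same argument with ordinary cocycles. Here we use the classical bijection between $H^2(G;A)$ and central extensions, and no topology is needed.

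The main obstacle is the closedness of $\Gamma^rG$, which is what makes the continuous cohomology of $G/\Gamma^rG$ meaningful. The finite generation hypothesis is used exactly there, and we would cite the appropriate structural result for it.
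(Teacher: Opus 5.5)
Your proof is correct and follows the paper's argument: you pull the extension back to $G$ to obtain a continuous splitting $\sigma$, then use that a central extension of $G/\Gamma^rG$ has nilpotency class at most $r$, so $\sigma$ kills $\Gamma^{r+1}G$ and descends to a splitting over $G/\Gamma^{r+1}G$. Your remarks on the closedness of $\Gamma^rG$ and the continuity of $\bar\sigma$ supply topological details the paper leaves implicit; note also that the closedness issue could be avoided by working with closures, since $\ker\sigma$ is closed.
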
 

\begin{proof}
We prove only item (1), since the same argument proves item (2).

Let $G_i=G/\Gamma^{i}G$. 
By Lemma \ref{Lem: Correspondence between central extension and second cohomology}, 
it suffices to prove the following assertion. Let $1\rightarrow A\rightarrow E\xrightarrow{g} G_{i}\rightarrow 1$ be a continuous central extension. If its pullback along $\pi_i:G\twoheadrightarrow G_{i}$ splits, then its pullback along $G_{i+1}\twoheadrightarrow G_{i}$ also splits. Since the pullback along $G\twoheadrightarrow G_{i}$ splits, there is a continuous homomorphism $s:G\rightarrow E$ such that $g\circ s=\pi_i$. Since $G_i$ has nilpotency class at most $i-1$, $E$ has nilpotency class at most $i$. Then $s$ vanishes on $\Gamma^{i+1}G$ and therefore factors through a homomorphism $G_{i+1}\rightarrow E$. This gives a splitting after pullback to $G_{i+1}$ and completes the proof.
\end{proof}

\begin{lemma}\label{Lem: exactness for inverse limit}
Let $\{0\rightarrow A_i\xrightarrow{f_i} B_i\xrightarrow{g_i} C_i\xrightarrow{h_i} D_i\xrightarrow{k_i} E_i\}_i$ be a diagram of exact sequences of abelian groups. If $\varprojlim^1_i A_i=\varprojlim^2_i A_i=\varprojlim^1_i B_i=0$, then $0\rightarrow \varprojlim_iA_i\rightarrow \varprojlim_iB_i\rightarrow \varprojlim_iC_i\rightarrow \varprojlim_iD_i\rightarrow \varprojlim_iE_i$ is exact.
\end{lemma}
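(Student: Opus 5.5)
We prove this by a diagram chase on the two-row diagram formed by the termwise short exact sequences, using the six-term $\varprojlim$–$\varprojlim^1$ sequences. Split the exact sequence $0\to A_i\to B_i\to C_i\to D_i\to E_i$ into short exact pieces:
\[
0\to A_i\to B_i\to K_i\to 0,\qquad 0\to K_i\to C_i\to L_i\to 0,\qquad 0\to L_i\to D_i\to E_i,
\]
where $K_i=\Im(g_i)=\Ker(h_i)$ and $L_i=\Im(h_i)=\Ker(k_i)$. These are inverse systems, since the maps are morphisms of diagrams. Because $\varprojlim$ is left exact, exactness at $\varprojlim A_i$ and $\varprojlim B_i$ is immediate. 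Similarly, $\varprojlim L_i=\Ker(\varprojlim D_i\to\varprojlim E_i)$ by left exactness applied to $0\to L_i\to D_i\to E_i$.

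For the first short exact sequence, the long exact sequence of derived limits gives
\[
0\to\varprojlim A_i\to\varprojlim B_i\to\varprojlim K_i\to\varprojlim{}^1A_i\to\varprojlim{}^1B_i\to\varprojlim{}^1K_i\to\varprojlim{}^2A_i.
\]
The hypotheses $\varprojlim^1A_i=0$ and $\varprojlim^2A_i=\varprojlim^1B_i=0$ then give $\varprojlim B_i\twoheadrightarrow\varprojlim K_i$ and $\varprojlim^1K_i=0$. For the second short exact sequence we obtain
\[
0\to\varprojlim K_i\to\varprojlim C_i\to\varprojlim L_i\to\varprojlim{}^1K_i=0,
\]
so $\varprojlim C_i\to\varprojlim L_i$ is surjective with kernel $\varprojlim K_i$.

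Now we assemble the pieces. Exactness at $\varprojlim C_i$ follows because the kernel of $\varprojlim C_i\to\varprojlim D_i$ equals the kernel of $\varprojlim C_i\to\varprojlim L_i$, using that $\varprojlim L_i\hookrightarrow\varprojlim D_i$ is injective. That kernel is $\varprojlim K_i$, which is the image of $\varprojlim B_i$. Exactness at $\varprojlim D_i$ follows because the image of $\varprojlim C_i$ is all of $\varprojlim L_i$, which is the kernel of the map to $\varprojlim E_i$.

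The only nonformal input is the existence of the long exact sequence for derived functors of $\varprojlim$ on inverse systems indexed by a general category; there $\varprojlim^2$ can be nonzero, which is exactly why the hypothesis on $\varprojlim^2 A_i$ is needed. This is the step to state carefully. All other steps are routine.
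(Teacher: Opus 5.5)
Your proposal is correct and follows the paper's proof essentially step for step. You use the same splitting via $\Im(g_i)$ and $\Im(h_i)$, the same use of $\varprojlim^1_i A_i=0$ for surjectivity of $\varprojlim_i B_i\to\varprojlim_i K_i$, and the same use of $\varprojlim^1_i B_i=\varprojlim^2_i A_i=0$ to get $\varprojlim^1_i K_i=0$ and hence surjectivity of $\varprojlim_i C_i\to\varprojlim_i L_i$, together with left exactness for the remaining identifications.
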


\begin{proof}
Set $X_i=\Im(g_i)=\Ker(h_i)$ and $Y_i=\Im(h_i)=\Ker(k_i)$. Applying the left-exact inverse-limit functor to $\{0\rightarrow A_i\rightarrow B_i\rightarrow X_i\rightarrow 0\}_i$ gives exactness at $\varprojlim_iA_i$ and $\varprojlim_iB_i$.

Since $\varprojlim^1_i A_i=0$, the induced sequence $0\rightarrow \varprojlim_iA_i\rightarrow \varprojlim_iB_i\rightarrow \varprojlim_iX_i\rightarrow 0$ is exact. The map $\varprojlim_iB_i\rightarrow \varprojlim_iC_i$ factors through the inclusion $\varprojlim_iX_i\rightarrow \varprojlim_iC_i$. Since $\varprojlim_iB_i\rightarrow \varprojlim_iX_i$ is surjective, its image in $\varprojlim_iC_i$ is $\varprojlim_iX_i$. Left exactness identifies this group with $\Ker(\varprojlim_iC_i\rightarrow \varprojlim_iD_i)$. This shows the exactness at $\varprojlim_iC_i$.

The long exact sequence of derived inverse limits associated with $\{0\rightarrow A_i\rightarrow B_i\rightarrow X_i\rightarrow 0\}_i$ shows that $\varprojlim_i^1 X_i=0$, because $\varprojlim_i^1 B_i=\varprojlim_i^2 A_i=0$. Applying inverse limits to $\{0\rightarrow X_i\rightarrow C_i\rightarrow Y_i\rightarrow 0\}_i$ therefore shows that $\varprojlim_iC_i\rightarrow \varprojlim_iY_i$ is surjective. Left exactness identifies $\varprojlim_iY_i$ with $\Ker(\varprojlim_iD_i\rightarrow \varprojlim_iE_i)$. This proves the exactness at $\varprojlim_iD_i$.
\end{proof}

\begin{proposition}\label{Prop: injectivity of colimit of lower central series into group}
Let $G$ be a topologically finitely generated pro-$p$ group, and write $\Gamma^*G$ for its lower central series. For every $i\geq 2$, there is a natural isomorphism $\Ker(H^2_{\cont}(G/\Gamma^{i}G;\widehat{\ZZ}_p) \rightarrow H^2_{\cont}(G;\widehat{\ZZ}_p))\cong\homo_{\widehat{\ZZ}_p}(\Gamma^{i} G/\Gamma^{i+1} G,\widehat{\ZZ}_p)$. 
\end{proposition}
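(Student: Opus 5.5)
The plan is to use the five-term (inflation–restriction) exact sequence of continuous cohomology for the central extension
\[
1\rightarrow N\rightarrow G\rightarrow G/N\rightarrow 1,\qquad N=\Gamma^{i}G,
\]
combined with the central-extension description of $H^2_{\cont}$ from Lemma \ref{Lem: Correspondence between central extension and second cohomology}. Since $N$ is not central in $G$, I would not work directly with $G\rightarrow G/\Gamma^iG$. Instead I would use the intermediate quotient $G_{i+1}=G/\Gamma^{i+1}G$, in which $\Gamma^iG/\Gamma^{i+1}G$ is central. Note that $G_{i+1}$ is a topologically finitely generated nilpotent pro-$p$ group, so $\Gamma^iG/\Gamma^{i+1}G$ is a finitely generated $\widehat{\ZZ}_p$-module. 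By Lemma \ref{Lem: injectivity of colimit of lower central series into group} (more precisely, by its proof), a class in $H^2_{\cont}(G_i;\widehat{\ZZ}_p)$ dies in $H^2_{\cont}(G;\widehat{\ZZ}_p)$ if and only if it dies already in $H^2_{\cont}(G_{i+1};\widehat{\ZZ}_p)$. To see this, I would represent the class by a central extension $E\rightarrow G_i$ via Lemma \ref{Lem: Correspondence between central extension and second cohomology}. A splitting over $G$ factors through $G_{i+1}$ because $E$ has class at most $i$. Hence
\[
\Ker\bigl(H^2(G_i)\rightarrow H^2(G)\bigr)=\Ker\bigl(H^2(G_i)\rightarrow H^2(G_{i+1})\bigr),
\]
and the problem reduces to the central extension $1\rightarrow Z\rightarrow G_{i+1}\rightarrow G_i\rightarrow 1$ with $Z=\Gamma^iG/\Gamma^{i+1}G$.

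For this central extension I would establish the continuous five-term exact sequence
\[
0\rightarrow H^1_{\cont}(G_i)\rightarrow H^1_{\cont}(G_{i+1})\rightarrow \homo_{\cont}(Z,\widehat{\ZZ}_p)^{G_i}\xrightarrow{\partial} H^2_{\cont}(G_i)\rightarrow H^2_{\cont}(G_{i+1}).
\]
The action of $G_i$ on $\homo(Z,\widehat{\ZZ}_p)$ is trivial because $Z$ is central. There are two ways to obtain this sequence. The first is with finite coefficients $\ZZ/p^a$, as the colimit over open normal subgroups via Proposition \ref{Fact: relation between continuous cohomology and usual cohomology for the system} of the finite-group five-term sequences, followed by passage to $\varprojlim_a$ using Lemma \ref{Lem: exactness for inverse limit}. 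The vanishing of $\varprojlim^1$ and $\varprojlim^2$ needed there holds because the relevant $H^0$ and $H^1$ groups with $\ZZ/p^a$ coefficients are finite (topological finite generation) and the systems are Mittag-Leffler; Proposition \ref{Fact: Milnor exact sequence for continuous cohomology} then identifies the limits with $\widehat{\ZZ}_p$-cohomology. The second, more elementary, route is to define $\partial(u)$ directly as the class of the pushout extension $u_*(G_{i+1})$ and verify exactness at $H^2$ by hand using Lemma \ref{Lem: Correspondence between central extension and second cohomology}: a class dying in $G_{i+1}$ means the pulled-back extension splits, and such a splitting restricted to $Z$ yields $u$. I would present the direct extension-theoretic version, since it avoids the derived-limit bookkeeping.

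It remains to show that the map $H^1_{\cont}(G_i;\widehat{\ZZ}_p)\rightarrow H^1_{\cont}(G_{i+1};\widehat{\ZZ}_p)$ is surjective, so that $\partial$ is injective and its image equals the kernel. Both groups equal $\homo_{\cont}(-,\widehat{\ZZ}_p)$, which factors through the abelianization. For $i\geq 2$ we have $Z=\Gamma^iG/\Gamma^{i+1}G\subset[G_{i+1},G_{i+1}]$, so every continuous homomorphism $G_{i+1}\rightarrow\widehat{\ZZ}_p$ kills $Z$. Hence
\[
\Ker\bigl(H^2_{\cont}(G_i;\widehat{\ZZ}_p)\rightarrow H^2_{\cont}(G;\widehat{\ZZ}_p)\bigr)\cong\homo_{\widehat{\ZZ}_p}(Z,\widehat{\ZZ}_p),
\]
naturally; continuity is automatic for homomorphisms of finitely generated $\widehat{\ZZ}_p$-modules. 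The main obstacle is exactness at $H^2$ in the continuous, $\widehat{\ZZ}_p$-coefficient setting. The key points there are producing a continuous section and checking that the splitting homomorphism restricted to $Z$ is a $\widehat{\ZZ}_p$-linear map recovering $u$. Both follow from Lemma \ref{Lem: Correspondence between central extension and second cohomology}(1) and the compactness arguments already used there.
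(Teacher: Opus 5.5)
Your argument is correct, but it is organized differently from the paper's.

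\textbf{The paper's route.} The paper applies the Lyndon--Hochschild--Serre five-term sequence directly to the non-central normal subgroup $\Gamma^iG$ of $G$. The five-term sequence itself does not need centrality; only your extension-theoretic construction of $\partial$ does. The paper first works with $\ZZ/p^j$ coefficients and then passes to $\widehat{\ZZ}_p$ via the Milnor sequences and Lemma \ref{Lem: exactness for inverse limit}. That passage is where topological finite generation is used, through finiteness of $H^1_{\cont}(-;\ZZ/p^j)$. The quotient $\Gamma^iG/\Gamma^{i+1}G$ appears only at the end, when the $G/\Gamma^iG$-invariants of $\homo_{\cont}(\Gamma^iG,\widehat{\ZZ}_p)$ are identified with homomorphisms killing $[G,\Gamma^iG]$.

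\textbf{Your route.} You do that step first. The splitting argument from the proof of Lemma \ref{Lem: injectivity of colimit of lower central series into group} shows that the kernel of inflation to $G$ equals the kernel of inflation to $G/\Gamma^{i+1}G$. After that you only need the transgression of a central extension, where invariance is automatic. There $\partial(u)$ is the pushout class, i.e.\ $[u\circ f]$ for the $2$-cocycle $f$ of a continuous section $G/\Gamma^iG\rightarrow G/\Gamma^{i+1}G$, and this description also makes additivity of $\partial$ evident.

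\textbf{What each buys.} Your version works with $\widehat{\ZZ}_p$ coefficients throughout and rests only on Lemma \ref{Lem: Correspondence between central extension and second cohomology}. It needs no spectral sequence and no $\varprojlim^1$ bookkeeping, and finite generation enters only to replace $\homo_{\cont}$ by $\homo_{\widehat{\ZZ}_p}$. It also gives exactly the cocycle description of $\partial$ that the paper relies on later in item (3) of Theorem \ref{Thm: minimal model computes rational homotopy groups}. The paper's version is shorter once the standard spectral-sequence machinery is granted, and it needs no reduction step.

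\textbf{One point to record.} In the hands-on version you should also state exactness at $\homo_{\cont}(Z,\widehat{\ZZ}_p)$, though it is immediate. A splitting of $u_*(G/\Gamma^{i+1}G)$ yields a continuous homomorphism $G/\Gamma^{i+1}G\rightarrow\widehat{\ZZ}_p$ extending $u$. This homomorphism must vanish on $Z\subset\overline{[G/\Gamma^{i+1}G,G/\Gamma^{i+1}G]}$, so $u=0$.
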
 

\begin{proof}
For each $i\geq 1$,
let $G_i=G/\Gamma^{i+1}G$. Apply the Lyndon–Hochschild–Serre spectral sequence (\cite{Serre-Galois-cohomology}*{p.~15}) to the exact sequence $1\rightarrow \Gamma^i G\rightarrow G\rightarrow G_{i-1}\rightarrow 1$, with trivial coefficients in $\ZZ/p^j$. This yields the following exact sequence:
\[
0\rightarrow H^1_{\cont}(G_{i-1};\ZZ/p^j) \xrightarrow{g_{i-1}} H^1_{\cont}(G;\ZZ/p^j) \rightarrow H^1_{\cont}(\Gamma^i G;\ZZ/p^j)^{G_{i-1}}\rightarrow H^2_{\cont}(G_{i-1};\ZZ/p^j) \xrightarrow{b_{i-1}} H^2_{\cont}(G;\ZZ/p^j)
\]
Since the fixed-point functor is a limit, it commutes with inverse limits. Thus, $\varprojlim_j H^1_{\cont}(\Gamma^i G;\ZZ/p^j)^{G_{i-1}}=(\varprojlim_j H^1_{\cont}(\Gamma^i G;\ZZ/p^j))^{G_{i-1}}$. Since $G$ and $G_{i-1}$ are topologically finitely generated, their first continuous cohomology groups with coefficients in $\ZZ/p^j$ are finite. Proposition \ref{Fact: Milnor exact sequence for continuous cohomology} and Lemma \ref{Lem: exactness for inverse limit} then give the following exact sequence.
\[
0\rightarrow H^1_{\cont}(G_{i-1};\widehat{\ZZ}_p) \xrightarrow{g_{i-1}} H^1_{\cont}(G;\widehat{\ZZ}_p) \rightarrow H^1_{\cont}(\Gamma^i G;\widehat{\ZZ}_p) ^{G_{i-1}}\rightarrow H^2_{\cont}(G_{i-1};\widehat{\ZZ}_p) \xrightarrow{b_{i-1}} H^2_{\cont}(G;\widehat{\ZZ}_p)
\]
Because $\widehat{\ZZ}_p$ is abelian, every continuous homomorphism $G\rightarrow \widehat{\ZZ}_p$ vanishes on $\Gamma^2G$. Then it also vanishes on $\Gamma^i G$ for every $i\geq 2$. Hence, every such homomorphism factors uniquely through $G_{i-1}=G/\Gamma^iG$, and the map $g_{i-1}$ is an isomorphism.
Exactness therefore identifies $\Ker(b_{i-1})$ with $H^1_{\cont}(\Gamma^i G;\widehat{\ZZ}_p)^{G_{i-1}}\cong \homo_{\cont}(\Gamma^i G,\widehat{\ZZ}_p)^{G_{i-1}}$. A continuous homomorphism $\lambda:\Gamma^i G\rightarrow \widehat{\ZZ}_p$ is $G_{i-1}$-invariant precisely when $\lambda(gxg^{-1})=\lambda(x)$ for all $g\in G$ and $x\in \Gamma^{i}G$. Equivalently, $\lambda$ vanishes on  $(G,\Gamma^iG)=\Gamma^{i+1}G$. Thus $\lambda$ factors uniquely through the quotient $\Gamma^iG/\Gamma^{i+1}G$. Consequently, $\Ker(b_{i-1})\cong \homo_{\cont}(\Gamma^i G/\Gamma^{i+1} G,\widehat{\ZZ}_p)$. Since the quotient is a finitely generated $\widehat{\ZZ}_p$-module, this is the same as $\homo_{\widehat{\ZZ}_p}(\Gamma^i G/\Gamma^{i+1} G,\widehat{\ZZ}_p)$.
\end{proof}

\section{Purity Implies Formality}

In this appendix, we prove the folklore statement that ``purity implies formality'' (\cite{Emprin-Horel-weight-and-formality}*{Section 3}) for minimal CDGA's.

\begin{definition}\label{Def: weighted CDGA and weighted homotopy}
Let $k$ be a field of characteristic $0$.
\begin{enumerate}[leftmargin=0.25in]
    \item A CDGA $C^*$ over $k$ is \textbf{weighted} if each $C^i$ admits a decomposition $\bigoplus_{w\in \ZZ} C^{i,w}$ by weights, and both the differential and multiplication preserve the \textbf{weight} grading.
    \item A \textbf{weighted homotopy} between two weighted CDGA morphisms $f,g:C\rightarrow D$ is a weighted CDGA morphism $H:C\rightarrow D\otimes (t,dt)$ such that $H\vert_{t=0}=f$ and $H\vert_{t=1}=g$, where both $t$ and $dt$ have weight zero.
    \item The cohomology of a connected weighted CDGA $C$ is said to have \textbf{pure weights} if $H^n(C)^w=0$ whenever $w\neq n$.
\end{enumerate}
\end{definition}

\begin{proposition}\label{prop: existence of weight on a minimal model}
Let $\phi:N\rightarrow C^*$ be a minimal model of a connected weighted CDGA $C^*$. Then $N$ admits a weight grading for which both its differential and multiplication preserve weights and for which $\phi$ is a weighted CDGA morphism.    
\end{proposition}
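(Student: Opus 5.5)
The plan is to put a weight grading on $N$ one elementary extension at a time. At each step I will replace the new generators by new ones, without changing the algebra $N$ or the map $\phi$, so that both $d$ and $\phi$ send each new generator to a weight-homogeneous element. Once all generators are homogeneous, the weight grading on $N$ is defined by declaring each chosen generator homogeneous of its assigned weight and extending multiplicatively. Then $d$ and $\phi$ preserve weights because they do so on generators and are (graded) derivations and algebra maps, respectively.

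\textbf{Choice of filtration.} First I fix a canonical tower of subalgebras $N(n,i)$ with $\bigcup N(n,i)=N$, so that each new generating space is controlled by cohomology. In degree $n\geq 2$, I take the single step $N(n-1)\subset N(n)$. In degree $1$, I define $N(1,i)$ as the subalgebra generated by those $x\in N^1$ with $dx\in N(1,i-1)$. This is the standard $1$-minimal tower, and it exhausts $N(1)$ by minimality.

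\textbf{Setting up the correction problem at one stage.} Write $A=N(n,i-1)$ for the previous stage, which is already weighted with $\phi|_A$ weighted, and $W$ for the new generators, all in degree $n$. Consider the weighted vector space
\[
P=\{(a,c)\in Z^{n+1}(A)\oplus C^n : \phi(a)=dc\},
\]
together with the weighted map $D:A^n\to P$, $D(u)=(du,\phi(u))$. The quotient $P/D(A^n)$ is the relative cohomology $H^{n+1}(\mathrm{cone}(\phi|_A))$, and it sits in the exact sequence
\[
0\to \Coker(H^n(A)\to H^n(C))\to P/D(A^n)\to \Ker(H^{n+1}(A)\to H^{n+1}(C))\to 0,
\]
in which every map is weighted. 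Sending $w\in W$ to $(dw,\phi(w))$ gives a map $W\to P$. Replacing a generator $w$ by $w+u$ with $u\in A^n$ changes its image by $D(u)$. Such a replacement yields new generators of the same algebra $N$ and does not change $\phi$.

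\textbf{Main step: the image of $W$ in $P/D(A^n)$ is the whole group.} This is the step I expect to be the main obstacle, and it is where minimality and the quasi-isomorphism property of $\phi$ enter.
\begin{itemize}
\item Kernel part: every class in $\Ker(H^{n+1}(A)\to H^{n+1}(C))$ dies in $H^{n+1}(N)$ because $\phi$ is injective on cohomology. Hence it is killed by some element of the next generators. For $n\geq 2$ these are all in $W$. For the degree-$1$ substages, the generators killing a class of $H^2(A)$ have differential in $A$, so they lie in $N(1,i)$ by the canonical choice of filtration.
\item Cokernel part: $\phi$ is surjective on $H^n$, and the new degree-$n$ cocycles needed to reach the cokernel come from $W$ modulo $A$.
\end{itemize}
I also need injectivity of $W\to P/D(A^n)$. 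Suppose $w\in W$ has image $D(u)$. Then $w-u$ is a cocycle with $\phi(w-u)$ exact. Since $\phi$ is injective on cohomology, $w-u$ is exact, hence decomposable by minimality. Because $w$ is a generator modulo $A$, this forces $w=0$.

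\textbf{Choosing homogeneous generators.} By the main step, $W\to P/D(A^n)$ is an isomorphism onto a weighted space. I choose a basis of $P/D(A^n)$ consisting of weight-homogeneous classes. The projection $P\to P/D(A^n)$ is a weighted surjection, so each basis class lifts to a homogeneous element $p_j\in P$. Each $p_j$ can be written as $(dw_j,\phi(w_j))+D(u_j)$ for some $w_j\in W$ and $u_j\in A^n$. The elements $w_j'=w_j+u_j$ then generate $N(n,i)$ over $A$, freely and in the same number as a basis of $W$. Moreover $dw_j'$ and $\phi(w_j')$ are homogeneous of a common weight, which I assign to $w_j'$. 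This extends the weight grading from $A$ to $N(n,i)$ with $\phi$ weighted, and induction over the tower completes the proof.
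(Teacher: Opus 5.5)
Your main step fails, and it cannot be repaired while $\phi$ is held fixed. The problem is that $P/D(A^n)$ is not $H^{n+1}(\mathrm{cone}(\phi|_A))$. The coboundaries of the cone in degree $n+1$ are $D(A^n)+(0\oplus dC^{n-1})$, and the second summand cannot be produced by re-choosing generators of $N$, since changing $w$ to $w+u$ only moves its image by $D(u)$. Accordingly, the kernel of $P/D(A^n)\to H^{n+1}(A)$, $(a,c)\mapsto [a]$, is $Z^n(C)/\phi(Z^n(A))$, not $\Coker(H^n(A)\to H^n(C))$. So $W\to P/D(A^n)$ is in general not surjective, and its image need not be a weighted subspace.

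In fact the statement as written, with $\phi$ itself weighted, is false. Take $C^0=k$, $C^1=ky$, $C^2=kz\oplus ke$ and $C^{\geq 3}=0$. Put $dy=e$, make all products of positive-degree elements zero, and give $z$ weight $2$ and $y,e$ weight $3$. Then $N=\bigwedge(w,v)$ with $|w|=2$, $|v|=3$, $dv=w^2$, together with $\phi(w)=z+e$ and $\phi(v)=0$, is a minimal model of $C$. Since $N^2=kw$, every weight grading on $N$ makes $w$ homogeneous. But $\phi(w)$ has components of weights $2$ and $3$, so $\phi$ cannot be weighted. In your notation, $A=k$, $P/D(A^2)=Z^2(C)$ is two-dimensional, and the image of $W$ is the non-homogeneous line $k(z+e)$.

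The correct form of your key isomorphism is $W\cong H^{n+1}(\mathrm{cone}(\phi|_A))=P/(D(A^n)+0\oplus dC^{n-1})$. It comes from the connecting map of $0\to\mathrm{cone}(\phi|_A)\to\mathrm{cone}(\phi)\to N/A\to 0$, using that $\mathrm{cone}(\phi)$ is acyclic. The extra freedom $0\oplus dC^{n-1}$ amounts to changing $\phi$ on the new generators by a homotopy $w\mapsto \phi(w)+d(tc')$. So the most you can obtain is a grading on $N$ for which $\phi$ is homotopic to a weighted quasi-isomorphism; equivalently, $C$ has a weighted minimal model.

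That weaker statement is also all the paper's own argument delivers. The paper builds a weighted model $\phi_M:M\to C$ directly, choosing homogeneous cocycle representatives and homogeneous primitives in $C$. It then transports the grading along the isomorphism $I:M\to N$ from Theorem \ref{Fact: Lifting of minimal models}, for which only $\phi\circ I\simeq\phi_M$ holds. This weaker version is all that is used later, for instance in Proposition \ref{Prop: purity implies formality}.
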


\begin{proof}
We inductively construct a tower of weighted minimal CDGA's: $k=M(1,0)\subset M(1,1)\subset ...\subset M(2,0)=\bigcup_{i}M(1,i)\subset M(2,1)\subset ...$, together with compatible weighted morphisms $M(n,s)\rightarrow C$, whose union exhibits $M=\bigcup_{n,i}M(n,i)$ to be a minimal model of $C^*$. Theorem \ref{Fact: Lifting of minimal models} then identifies the union of this tower with the given minimal model $N$, up to an isomorphism. Transporting the weight grading across this isomorphism proves the proposition. 

Choose a basis of $H^1(C)$ consisting of weight-homogeneous classes, and represent each basis element by a weight-homogeneous cocycle in $C^1$. Let $V(1,1)$ be an abstract weighted vector space with basis corresponding to these cocycles, and let $M(1,1)=\bigwedge^* V(1,1)$, equipped with the zero differential. Then the weight grading on $V(1,1)$ extends multiplicatively to $M(1,1)$. Sending each generator to its chosen cocycle defines a weighted CDGA morphism $\phi_{1,1}:M(1,1)\rightarrow C^*$. By construction, $\phi_{1,1}$ induces isomorphisms on $H^0$ and $H^1$.

Suppose inductively that a weighted minimal CDGA $M(n,s)$, together with a weighted morphism $\phi_{n,s}:M(n,s)\rightarrow C^*$, has been constructed. Assume that $\phi_{n,s}$ induces an isomorphism on $H^q$ for every $q\leq n$ when $s\geq 1$, and that $\phi_{n,0}$ induces an isomorphism on $H^q$ for every $q< n$ and an injection on $H^{n}$ when $s=0$.
The passage from $M(n,0)$ to $M(n,1)$ is obtained in the same way as the construction of $M(1,1)$. For $s\geq 1$, choose a basis of $\Ker(\phi^*_{n,s}:H^{n+1}(M(n,s))\rightarrow H^{n+1}(C^*))$ consisting of weight-homogeneous classes $\{z_1,...,z_r\}$ and represent each class $z_i$ by a weight-homogeneous cocycle $v_i\in M(n,s)^{n+1}$. Let $V(n,s+1)$ be a weighted vector space spanned by $\{v_1,...,v_r\}$. Let $M(n,s+1)=M(n,s)\otimes \bigwedge^*V(n,s+1)$ be an elementary extension of $M(n,s)$ by setting $dv_i=z_i$. Since $v_i$ and $z_i$ have the same weight, the differential preserves weights. Because the cohomology class of each $\phi_{n,s}(z_i)$ vanishes in $H^{n+1}(C)$, choose a weight-homogeneous element $c_i\in C^{n}$ such that $dc_i=\phi_{n,s}(z_i)$. Extend $\phi_{n,s}$ by sending $v_i$ to $c_i$. This defines a weighted CDGA morphism $\phi_{n,s+1}:M(n,s+1)\rightarrow C$. This completes the proof.
\end{proof}

\begin{lemma}\label{Lem: weighted liftings of weighted minimal model}
Let $f:C\rightarrow D$ be a weighted quasi-isomorphism of connected weighted CDGA's, and let $\phi:M\rightarrow D$ be a weighted minimal model. Then there exists a weighted morphism $g:M\rightarrow C$, unique up to weighted homotopy, together with a weighted homotopy between $f\circ g$ and $\phi$.
\end{lemma}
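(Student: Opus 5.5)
We prove existence first and then uniqueness, by induction along the tower of elementary extensions of $M$. The argument mirrors the unweighted lifting theorem (Theorem \ref{Fact: Lifting of minimal models}), now tracking weights. Since $M$ is minimal, write $M=\bigcup M(n,i)$ as in Definition \ref{Def: minimal CDGA}. Inside each elementary extension we choose a basis of weight-homogeneous generators; this is possible because the differential preserves weights, so $d$ maps each weight component of $V(n,i)$ into the corresponding weight component of $M(n,i-1)$. We then construct, generator by generator, a weighted morphism $g$ together with a weighted homotopy $H:M\rightarrow D\otimes(t,dt)$ from $f\circ g$ to $\phi$.

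\textbf{Inductive step.} Suppose $g$ and $H$ are defined on $A=M(n,i-1)$, and let $v$ be a weight-$w$ generator of $V(n,i)$. The element $g(dv)\in C^{n+1,w}$ is a cocycle. Its image $f(g(dv))$ is cohomologous to $\phi(dv)=d\phi(v)$ via the homotopy, hence exact. Since $f$ is a weighted quasi-isomorphism and $H^*$ splits by weight, $g(dv)$ is exact in weight $w$. The standard relative lifting problem then reads: given the cocycle pair determined by $(g(dv),\ H(dv))$ in the mapping cone of $C\rightarrow D\otimes(t,dt)\times_{D\times D}\ldots$, find $(c,h)$ with $dc=g(dv)$, $dh=H(dv)$, $h|_{t=0}=f(c)$ and $h|_{t=1}=\phi(v)$. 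The obstruction lives in the weight-$w$ part of the relative cohomology of the cone of $f$, which vanishes because $f$ is a quasi-isomorphism on each weight piece. Taking weight-$w$ components of any solution gives a weighted solution, since all maps involved (including evaluation at $t=0,1$, with $t$ and $dt$ of weight $0$) respect weights. We set $g(v)=c$ and $H(v)=h$.

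\textbf{Uniqueness up to weighted homotopy.} We run the same obstruction argument for the relative problem on $M\otimes(t,dt)$. Given two weighted lifts $g,g'$ with weighted homotopies to $\phi$, we build a weighted homotopy $g\simeq g'$ generator by generator. Again the obstruction lies in the weight-$w$ component of an acyclic cone, and we project any solution to its weight-$w$ component. Alternatively, we can use Lemma \ref{Lem: equivalence of two homotopies for maps between minimal CDGA's} and construct a weight-zero homotopy derivation, projecting each $\theta(v)$ to the weight of $v$.

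\textbf{Main obstacle.} The main obstacle is checking that the homotopy-lifting obstruction genuinely decomposes by weight. Concretely, we need that the kernel of evaluation $D\otimes(t,dt)\rightarrow D\times D$ and the cone of $f$ are direct sums of weighted subcomplexes whose cohomology splits weightwise. This holds because all the complexes are direct sums over $w$ and the differentials preserve $w$. Once that is verified, taking homogeneous components of any unweighted solution, for instance one supplied by the unweighted theorem, yields the weighted one.
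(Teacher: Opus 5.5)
Your proposal is correct and follows essentially the paper's route: existence by extending $g$ and the homotopy generator by generator over a weight-homogeneous tower, using the Griffiths--Morgan obstruction in the cone of $f$ (Proposition 11.1); and uniqueness by the relative version of the same lifting argument, which the paper phrases via the pullback $P$ and the quasi-isomorphism $\rho\colon C\otimes(t,dt)\to P$. Your remark that the weight-$w$ components of any solution $(c,h)$ again form a solution is exactly the justification the paper leaves implicit when it asserts weight-homogeneous $c_i,d_i$ and a ``weighted relative version'' of Lemma 11.2, and your alternative for uniqueness also works: start from the unweighted homotopy supplied by classical uniqueness and project its homotopy derivation to the weight-preserving part.
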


\begin{proof}
As in the proof of Proposition \ref{prop: existence of weight on a minimal model}, choose an exhaustive tower of weighted elementary extensions of minimal subalgebras $k=M(1,0)\subset M(1,1)\subset ...$ and compatible weighted morphisms $\phi_{n,s}:M(n,s)\rightarrow D$ whose union is $\phi$. Write $M(n,s+1)=M(n,s)\otimes \bigwedge^* V(n,s+1)$, and choose a basis $\{x_i\}$ of $V(n,s+1)$ consisting of weight-homogeneous elements.

\textbf{Existence.} Assume inductively that a weighted morphism $g_{n,s}:M(n,s)\rightarrow C$ has been constructed, together with a weighted homotopy $H_{n,s}:M(n,s)\rightarrow C\otimes (t,dt)$ from $f\circ g_{n,s}$ to $\phi_{n,s}$. By \cite{MGriffiths-Morgan-rational-homotopy}*{Proposition 11.1}, there are weight-homogeneous elements $c_i\in C^{n}$ and $d_i\in D^{n-1}$ such that $d(c_i)=g_{n,s}(dx_i)$ and $f(c_i)-d(d_i)=\phi_{n,s+1}(x_i)+\int^{1}_0 H_{n,s}(dx_i)$ for every $i$. Define $g_{n,s+1}(x_i)=c_i$ and $H_{n,s+1}(x_i)=f(c_i)+\int^t_0 H(dx_i)+d(d_i\otimes t)$, and extend $g_{n,s+1}$ and $H_{n,s+1}$ multiplicatively to $M(n,s+1)$. By construction and \cite{MGriffiths-Morgan-rational-homotopy}*{Proposition 11.1}, $g_{n,s+1}$ is a weighted CDGA morphism and $H_{n,s+1}$ is a weighted homotopy from $f\circ g_{n,s+1}$ to $\phi_{n,s+1}$.

\textbf{Uniqueness.} Let $g',g'':M\rightarrow C$ be weighted CDGA morphisms, together with a homotopy $H:M\rightarrow D\otimes (t,dt)$ from $f\circ g'$ to $f\circ g''$. We show that $g'$ and $g''$ are weighted homotopic. Let $P$ be the kernel of the weighted morphism $\mu: (D\otimes (t,dt))\oplus C\oplus C\rightarrow D\oplus D$ given by $\mu(\gamma,c_1,c_2)=(\gamma\vert_{t=0}-f(c_1),\gamma\vert_{t=1}-f(c_2))$. Define a weighted CDGA morphism $\rho:C\otimes (t,dt)\rightarrow P$ by $\rho(\beta)=((f\otimes 1)(\beta),\beta\vert_{t=0},\beta\vert_{t=1})$. The homotopy $H$ together with $g'$ and $g''$ defines a weighted CDGA morphism $\psi:M\rightarrow P$ by $\psi(x)=(H(x),g'(x),g''(x))$. By the computation in \cite{MGriffiths-Morgan-rational-homotopy}*{p.~108, last paragraph}, $\rho$ is a quasi-isomorphism. Applying the weighted relative version of \cite{MGriffiths-Morgan-rational-homotopy}*{Lemma 11.2}, we obtain a weighted CDGA morphism $\widetilde{H}:M\rightarrow C\otimes (t,dt)$ such that $\rho\circ \widetilde{H}=\psi$ and $\widetilde{H}$ is a weighted homotopy from $g'$ to $g''$.
\end{proof}

\begin{corollary}
Let $C$ and $D$ be connected weighted CDGA's over $k$. 
\begin{enumerate}[leftmargin=0.25in]
    \item Let $f:C\rightarrow D$ be a weighted morphism. Let $\phi_C:M_C\rightarrow C$ and $\phi_D:M_D\rightarrow D$ be weighted minimal models. Then  there exists a weighted morphism $g:M_C\rightarrow M_D$, unique up to a weighted homotopy, together with a weighted homotopy between $f\circ \phi_C$ and $\phi_D\circ g$.
    \item In particular, if $\phi_1:M_1\rightarrow C$ and $\phi_2:M_2\rightarrow C$ are two weighted minimal models, then there exists a weighted CDGA isomorphism $\psi:M_1\rightarrow M_2$ and a weighted homotopy $H$ between $\phi_1$ and $\phi_2\circ \psi$.   
\end{enumerate}
\end{corollary}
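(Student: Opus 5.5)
Item (1) is a direct application of Lemma \ref{Lem: weighted liftings of weighted minimal model}, and item (2) follows from item (1) plus a weighted version of Theorem \ref{Fact: Uniqueness of minimal models}.

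\textbf{Item (1).} Apply Lemma \ref{Lem: weighted liftings of weighted minimal model} with the identity quasi-isomorphism $\mathrm{id}_D$ playing the role of $f$ there. The ``weighted minimal model'' of that lemma is replaced by the weighted morphism $f\circ\phi_C:M_C\rightarrow D$. Strictly, the lemma asks for a weighted quasi-isomorphism $C\rightarrow D$ and a weighted minimal model of $D$, so I will re-run its inductive construction directly.

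Filter $M_C$ by a tower of weighted elementary extensions $M_C(n,s)$ with weight-homogeneous bases, as in Proposition \ref{prop: existence of weight on a minimal model}. Suppose a weighted $g_{n,s}:M_C(n,s)\rightarrow M_D$ and a weighted homotopy $H_{n,s}$ from $\phi_D\circ g_{n,s}$ to $f\circ\phi_C|_{M_C(n,s)}$ have been built. The obstruction to extending both over a new generator $x_i$ is a cocycle in the mapping cone of $\phi_D$, by \cite{MGriffiths-Morgan-rational-homotopy}*{Proposition 11.1}. That cone is acyclic because $\phi_D$ is a quasi-isomorphism. Everything in sight is weight-graded, so the obstruction cocycle is weight-homogeneous of the weight of $x_i$. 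Acyclicity holds weight by weight, since the cone splits as a direct sum over weights. Hence the primitive can be chosen weight-homogeneous, and the extension is weighted.

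Uniqueness up to weighted homotopy is exactly the uniqueness part of Lemma \ref{Lem: weighted liftings of weighted minimal model}, applied with $f=\phi_D$ (a weighted quasi-isomorphism $M_D\rightarrow D$) and the path-object $P$ argument there.

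\textbf{Item (2).} Apply item (1) with $f=\mathrm{id}_C$ to get a weighted $\psi:M_1\rightarrow M_2$ and a weighted homotopy $H$ from $\phi_1$ to $\phi_2\circ\psi$. Since homotopic maps agree on cohomology and $\phi_1,\phi_2$ are quasi-isomorphisms, $\psi$ is a quasi-isomorphism. Theorem \ref{Fact: Uniqueness of minimal models} then shows that $\psi$ is an isomorphism of CDGA's. The inverse of a weight-preserving bijection is weight-preserving, so $\psi$ is a weighted isomorphism.

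\textbf{Main obstacle.} The main point to check is the weighted form of the obstruction argument: that acyclicity of the relevant relative complex passes to each weight piece. I will handle it by noting that all the complexes involved, including $D\otimes(t,dt)$ with $t$ and $dt$ of weight $0$, decompose as direct sums over weights, with differentials of weight zero. With that, the arguments of Lemma \ref{Lem: weighted liftings of weighted minimal model} go through verbatim.
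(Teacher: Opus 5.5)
Your proposal is correct and follows the paper's route: item (1) comes from the lifting Lemma \ref{Lem: weighted liftings of weighted minimal model}, whose existence argument only needs the map being lifted through, here $\phi_D$, to be a weighted quasi-isomorphism, so it applies even though $f\circ\phi_C$ is not one; item (2) comes from taking $f=\mathrm{id}_C$ and concluding that the resulting quasi-isomorphism of minimal CDGA's is an isomorphism. One slip to fix: in your opening sentence the quasi-isomorphism playing the role of $f$ in the lemma should be $\phi_D$, not $\mathrm{id}_D$, which is in fact what your re-run of the induction and your uniqueness step use.
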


\begin{proof}
To prove item (1), apply Lemma \ref{Lem: weighted liftings of weighted minimal model} to the weighted minimal model $\phi_D:M_D\rightarrow D$ and the weighted quasi-isomorphism $f\circ \phi_C$. For item (2), apply item (1) to the identity of $C$. The resulting weighted CDGA morphism $\psi:M_1\rightarrow M_2$ is a quasi-isomorphism. Theorem \ref{Fact: Lifting of minimal models} implies that $\psi$ is an isomorphism.
\end{proof}

\begin{definition}
Let $k$ be a field of characteristic $0$.
\begin{enumerate}[leftmargin=0.25in]
    \item A connected CDGA $C$ over $k$ is \textbf{$k$-formal} if its minimal model is quasi-isomorphic to its cohomology algebra $(H^*(C))$, equipped with the zero differential.
    \item Suppose that $k$ contains $\QQ_{p}$. A connected CW complex $X$ is \textbf{$k$-formal} if the CDGA $C^*(X;\widehat{\ZZ}_{p})\otimes_{\widehat{\ZZ}_{p}}k$ is $k$-formal. A pro system $\{X_i\}$ of pointed connected $p$-finite simplicial sets is \textbf{$k$-formal} if the CDGA $C^*(\{X_i\};\widehat{\ZZ}_{p})\otimes_{\widehat{\ZZ}_{p}}k$ is $k$-formal.
\end{enumerate}
\end{definition}

\begin{lemma}\label{Lem: weight and cohomological degree in a weighted minimal CDGA}
Let $M$ be a weighted minimal CDGA over $k$ whose cohomology has pure weights. If $M^{n,w}\neq 0$, then $w\geq n$. Moreover, $d(M^{n,n})=0$ for every $n$.
\end{lemma}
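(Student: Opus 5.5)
The plan is to induct along the tower of elementary extensions that defines $M$. Using Proposition \ref{prop: existence of weight on a minimal model} and the construction in its proof, we may take $M$ to be the union of a weighted tower $k=M(1,0)\subset M(1,1)\subset\cdots$. Each $M(n,s+1)$ is obtained from $M(n,s)$ by adjoining weight-homogeneous generators in degree $n$. These generators are of two kinds: new cocycles, which represent new cohomology classes (in the passage $M(n,0)\subset M(n,1)$), and killing generators $v$ with $dv=z$, where $z\in M(n,s)^{n+1}$ is a weight-homogeneous cocycle whose class dies in $H^{n+1}(M)$. Since $M$ is free, $M^{n,w}$ is spanned by monomials in the generators. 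Weights add under multiplication, so the first claim reduces to the inequality $w\ge n$ for every homogeneous generator of degree $n$ and weight $w$. This inequality is then multiplicative: a product of generators of degrees $n_i$ and weights $w_i\ge n_i$ has weight $\sum w_i\ge\sum n_i$.

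The first step handles generators that are cocycles representing new cohomology classes. They map isomorphically onto classes in $H^n(M)$, where $M$ has pure weights. Hence these generators have weight exactly $n$.

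The second step handles a killing generator $v$ of degree $n$ with $dv=z$. By induction, every element of $M(n,s)^{n+1}$ has weight $\ge n+1$, so the weight of $v$, which equals the weight of $z$, is $\ge n+1>n$. The induction is well founded because $M(n,s)$ is generated by elements of degree $\le n$ that were adjoined earlier. This proves the first claim. It also shows that the only generators of weight exactly equal to their degree are the cocycle generators of the first step.

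For the second claim, observe that $M^{n,n}$ is spanned by monomials with $\sum w_i=\sum n_i$ and $w_i\ge n_i$. This forces $w_i=n_i$ for every factor, so by the second step every factor is one of the cocycle generators. A product of cocycles is a cocycle, so $d(M^{n,n})=0$.

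The main obstacle I expect is bookkeeping about which minimal model carries the weights. The statement concerns an arbitrary weighted minimal $M$, not necessarily one built by the construction of Proposition \ref{prop: existence of weight on a minimal model}. If $M$ is given abstractly, I would argue directly as follows. Choose weight-homogeneous generators compatible with a tower of elementary extensions; this is possible because the indecomposables $I^n(M)$ are weighted and the tower filtration can be refined weightwise. Then split each generator $x$ according to whether its differential $dx$, taken modulo decomposables of lower filtration, vanishes in the appropriate cohomology. A generator $x$ with $dx\neq0$ satisfies $dx\in (M^{>0}\cdot M^{>0})^{n+1}$ by minimality, and the same additivity argument gives weight $\ge n+1$. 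A generator with $dx=0$ represents a nonzero cohomology class in $H^n(M)$, which must be checked using minimality, and therefore has weight $n$. Making this cocycle dichotomy rigorous, in particular verifying that a cocycle generator is not exact in $M$, is the delicate point. Minimality handles it: exact elements lie in $d(M)\subset M^{>0}\cdot M^{>0}$, while generators do not.
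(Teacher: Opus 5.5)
Your proposal is correct and follows essentially the same route as the paper. Both arguments induct along the weighted tower of elementary extensions from the proof of Proposition \ref{prop: existence of weight on a minimal model}: purity forces the new cocycle generators to have weight exactly $n$, and the killing generators have weight $\geq n+1$ because their nonzero differentials are products of earlier generators. Your explicit derivation of $d(M^{n,n})=0$ fills in a step the paper leaves implicit. Your bookkeeping worry about an abstract $M$ can also be settled by running that construction with $C=M$: the resulting weighted quasi-isomorphism between minimal CDGA's is an isomorphism by Theorem \ref{Fact: Uniqueness of minimal models}, hence a weighted isomorphism.
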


\begin{proof}
Choose an exhaustive tower $k=M(1,0)\subset M(1,1)\subset ...$ of weighted elementary extensions of minimal CDGA's for $M$, as in the proof of Proposition \ref{prop: existence of weight on a minimal model}. The generators of $M(1,1)$ represent classes in $H^1(M)$. Purity therefore gives them weight $1$ and they are closed. Assume inductively that the lemma holds for $M(n,s)$. If $s=0$,  every new generator $x\in M(n,1)^n$ has weight $n$ since $H^n(M)$ is of pure weight $n$. If $s\geq 1$, for every weight-homogeneous new generator $x\in M(n,s+1)^{n,w}$ with $dx\neq 0$, each weight-homogeneous monomial $z$ in $dx$ is a product of homogeneously weighted elements of $M(n,s)$. The inductive hypothesis forces that the weight of $z$ is at least $n+1$. Then $w\geq n+1$.
\end{proof}

\begin{proposition}\label{Prop: purity implies formality}
Let $C$ be a connected weighted CDGA. If $H^*(C)$ has pure weights, then $C$ is $k$-formal. 
\end{proposition}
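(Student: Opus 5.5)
We transport the weight grading to a minimal model and then write down an explicit quasi-isomorphism from the minimal model to the cohomology algebra, using the "pure part" of the model. First, by Proposition \ref{prop: existence of weight on a minimal model}, a minimal model $\phi:M\rightarrow C$ carries a weight grading for which $\phi$ is a weighted CDGA morphism. Since $\phi$ is a quasi-isomorphism, $H^*(M)\cong H^*(C)$ as weighted algebras, so $H^*(M)$ has pure weights. We may therefore replace $C$ by $M$, and it suffices to construct a CDGA quasi-isomorphism $M\rightarrow (H^*(M),0)$.

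Define $\psi:M\rightarrow H^*(M)$ as follows. For $x\in M^{n,w}$, set $\psi(x)=0$ if $w\neq n$, and $\psi(x)=[x]$ if $w=n$.

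\textbf{$\psi$ is well defined and a CDGA map.}
\begin{itemize}
\item By Lemma \ref{Lem: weight and cohomological degree in a weighted minimal CDGA}, every element of $M^{n,n}$ is a cocycle, so $[x]$ makes sense.
\item Every weight-homogeneous element has weight $w\ge n$. Hence weight minus degree is a nonnegative grading compatible with multiplication. Consequently $\bigoplus_n M^{n,n}$ is a subalgebra, and $\bigoplus_{w>n}M^{n,w}$ is an ideal.
\item $\psi$ is the composite of the projection onto $\bigoplus_n M^{n,n}$, which kills that ideal and is therefore multiplicative, with the multiplicative map $x\mapsto [x]$ on cocycles.
\item Compatibility with differentials means $\psi(dx)=0$ for all $x$. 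If $x\in M^{n,w}$ with $w>n$, then $dx$ lies in $M^{n+1,w}$ with $w\neq n+1$ unless $w=n+1$. In the case $w=n+1$, the element $dx\in M^{n+1,n+1}$ is exact, so $[dx]=0$. If $w=n$, then $dx=0$ by the lemma.
\end{itemize}

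\textbf{$\psi$ is a quasi-isomorphism.} The map on cohomology sends a class $c\in H^n(M)$ to the class of the weight-$n$ component of a representative. By purity, $c=c^{(n)}$ is represented by the weight-$n$ part of any representative cocycle, because the differential preserves weight and so cocycles and coboundaries split by weight. Hence the induced map on cohomology is the identity, and $\psi$ is a quasi-isomorphism.

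It follows that $M$, and hence $C$, is $k$-formal.

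The main point requiring care is the inequality $w\ge n$ and the vanishing $d(M^{n,n})=0$, which together make the projection multiplicative. Both are supplied by Lemma \ref{Lem: weight and cohomological degree in a weighted minimal CDGA}. The remaining checks are routine weight bookkeeping.
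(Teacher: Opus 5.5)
Your proof is correct and is essentially the paper's argument. The paper defines the same map $\rho$ on weight-homogeneous generators, sending $x\in V_n^{n}$ to $[x]$ and generators of weight $>n$ to $0$, and then extends multiplicatively; this coincides with your projection onto $\bigoplus_n M^{n,n}$ followed by taking classes. The paper then checks the chain-map and quasi-isomorphism properties by the same weight bookkeeping via Lemma~\ref{Lem: weight and cohomological degree in a weighted minimal CDGA}, and your global definition of $\psi$ only makes multiplicativity and well-definedness slightly more transparent.
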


\begin{proof}
By Proposition \ref{prop: existence of weight on a minimal model}, $C$ admits a weighted minimal model $M\rightarrow C$. Write $M\cong \bigwedge^*(V_1\oplus V_2\oplus ... )$, where each $V_i$ is bigraded both by cohomological degree and by weight and the cohomological degree of $V_i$ is concentrated at $i$. By Lemma \ref{Lem: weight and cohomological degree in a weighted minimal CDGA}, $V_i^w=0$ when $w<i$. It suffices to construct a quasi-isomorphism $M\rightarrow H^*(M)$. Define $\rho:V_n^w\rightarrow H^n(M)$ on a weight-homogeneous generator $x\in V_n^w$ by $\rho(x)=[x]$ when $w=n$ and $\rho(x)=0$ when $w>n$. Extend $\rho$ multiplicatively to $M$.

We first verify that $\rho$ is a chain map. For every $x\in V_n^w$, if $w=n$, then $dx=0$ by Lemma \ref{Lem: weight and cohomological degree in a weighted minimal CDGA}, so $\rho(dx)=0$; if $w\geq n+2$, every monomial in $dx$ has weight strictly larger than its cohomological degree, so $\rho(dx)=0$; if $w=n+1$, $\rho(dx)=[dx]=0=d\rho (x)$.

Let $0\neq [z]\in H^n(M)$. Since $H^n(M)$ is pure of weight $n$, we may choose a representative $z\in M^{n,n}$. Since $V^{w}_i=0$ for any $w<i$, every monomial occurring in $z$ is a product of generators in $\bigoplus_i V^{i,i}$. The definition of $\rho$ therefore gives $\rho(z)=[z]$. Hence $\rho$ on cohomology is the identity. Thus $\rho$ is a quasi-isomorphism.
\end{proof}

\begin{remark}\label{Rmk: formality uses a weighted CDGA morphism}
In the proof of Proposition \ref{Prop: purity implies formality}, the quasi-isomorphism $\rho:M\rightarrow H^*(M)$ is indeed a weighted CDGA morphism.
\end{remark}

\section{$p$-adic Galois Representations}\label{Appendix: p-adic Hodge theory}

In this appendix, we briefly recall several basic notions concerning Galois representations in $p$-adic Hodge theory. 

Let $K/\QQ_p$ be a finite field extension. Fontaine's \textbf{$p$-adic period rings} $B_{\dR}$ and $B_{\crys}$ (\cite{Fontaine-p-adic-hodge-I}\cite{Fontaine-p-adic-Hodge-II}) are topological $\QQ_{p}$-algebras, equipped with continuous $\Gal(\overline{K}/K)$-actions. Their fixed subrings are  $B_{\dR}^{\Gal(\overline{K}/K)}=K$ and $B_{\crys}^{\Gal(\overline{K}/K)}=K_0$, where $K_0$ is the maximal unramified subfield of $K$. A continuous finite-dimensional $\QQ_p$-representation $V$ of $\Gal(\overline{K}/K)$ is \textbf{de Rham} if $\dim_K((B_{\dR}\otimes_{\QQ_p}V)^{\Gal(\overline{K}/K)})=\dim_{\QQ_p}(V)$, and \textbf{crystalline} if $\dim_{K_0}((B_{\crys}\otimes_{\QQ_p}V)^{\Gal(\overline{K}/K)})=\dim_{\QQ_p}(V)$. Every crystalline representation is de Rham.

\begin{proposition}[\cite{Fontaine-p-adic-Hodge-III}]\label{Fact: properties of de Rahm crystalline representations}
De Rham representations are closed under taking subquotients, tensor products, and duals. The same holds for crystalline representations.
\end{proposition}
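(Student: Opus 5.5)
We need to prove the final Proposition (Fontaine): de Rham and crystalline representations are closed under subquotients, tensor products and duals. We argue via the admissibility formalism for a $(\QQ_p,\Gal(\overline{K}/K))$-regular ring $B$, with $B\in\{B_{\dR},B_{\crys}\}$ and $E=B^{\Gal(\overline{K}/K)}$ (so $E=K$ or $K_0$). Write $G=\Gal(\overline{K}/K)$ and $D_B(V)=(B\otimes_{\QQ_p}V)^G$.

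\textbf{Step 1: regularity.} First we record the regularity of $B$. The ring $B$ is a domain, and $(\mathrm{Frac}\,B)^G=B^G$. Moreover, every nonzero $b\in B$ spanning a $G$-stable $\QQ_p$-line is a unit in $B$. For $B_{\dR}$, which is a field, this is immediate. For $B_{\crys}$ we take it from Fontaine's computation. Regularity gives two consequences:
\begin{enumerate}[leftmargin=0.25in]
\item the natural map $\alpha_V: B\otimes_E D_B(V)\to B\otimes_{\QQ_p}V$ is injective, so that $\dim_E D_B(V)\le \dim V$ always;
\item $V$ is $B$-admissible, meaning equality holds in (1), if and only if $\alpha_V$ is an isomorphism.
\end{enumerate}
We prove injectivity of $\alpha_V$ by the standard minimal-length-relation argument over $\mathrm{Frac}\,B$. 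We then prove (2) by taking determinants: the top exterior power yields a $G$-stable line in $B$, and regularity makes its generator a unit.

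\textbf{Step 2: tensor products and duals.} If $V$ and $W$ are admissible, the isomorphisms $\alpha_V$ and $\alpha_W$ tensor over $B$ to a $G$-equivariant isomorphism
\[
B\otimes_E(D_B(V)\otimes_E D_B(W))\cong B\otimes_{\QQ_p}(V\otimes W).
\]
Taking $G$-invariants gives $D_B(V)\otimes_E D_B(W)\hookrightarrow D_B(V\otimes W)$, so $\dim_E D_B(V\otimes W)\ge \dim V\dim W$; combined with the bound in (1), equality holds. For duals, we use $V^\vee\subset V^{\otimes(d-1)}\otimes\det V^{-1}$. Alternatively, we dualize $\alpha_V$ over $B$ directly to obtain $B\otimes_E D_B(V)^\vee\cong B\otimes_{\QQ_p}V^\vee$, which gives $\dim_E D_B(V^\vee)\ge d$.

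\textbf{Step 3: subquotients.} This is the step most likely to cause trouble. Let $0\to V'\to V\to V''\to 0$ be exact with $V$ admissible. Since $\alpha_V$ is an isomorphism, $B\otimes V'$ is a $G$-stable $B$-submodule of $B\otimes_E D_B(V)$. The plan is to compare dimensions. Left exactness of invariants gives
\[
0\to D_B(V')\to D_B(V)\to D_B(V''),
\]
so $\dim D_B(V)\le \dim D_B(V')+\dim D_B(V'')$. Together with $\dim D_B(V')\le\dim V'$ and $\dim D_B(V'')\le\dim V''$, this forces equality everywhere, since $\dim D_B(V)=\dim V=\dim V'+\dim V''$. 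Hence $V'$ and $V''$ are both admissible, and the subquotient case follows. The crystalline case is identical with $B=B_{\crys}$. The only substantive inputs are the regularity of $B_{\crys}$ (the unit property of $G$-eigenvectors) and $B_{\crys}^G=K_0$, which we cite from Fontaine.
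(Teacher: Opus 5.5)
Your argument is correct. It is the standard admissibility formalism for the $(\QQ_p,\Gal(\overline{K}/K))$-regular rings $B_{\dR}$ and $B_{\crys}$, including the left-exactness dimension count for subquotients, as in the Fontaine reference that the paper cites instead of giving a proof of its own. Of your two routes to duals, keep the direct dualization of $\alpha_V$: the route through $V^{\otimes(d-1)}\otimes\det V^{-1}$ would also need the unit property of $B$ to show that $(\det V)^{-1}$ is admissible.
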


A principal class of examples for such representations arises from the \'etale cohomology of ``nice'' algebraic varieties or analytic varieties.

\begin{theorem}[\cite{Faltings-p-adic-Hodge}\cite{Scholze-p-adic-Hodge}*{Theorem 8.4}]
Let $K/\QQ_p$ be a finite field extension, and let $\mathcal{O}_{K}$ denote its ring of integers.
\begin{enumerate}[leftmargin=0.25in]
    \item If $X$ is a connected smooth proper adic space over $\mathrm{Spa}(K,\OO_{K})$, then $H^n_{\et}(X_{\overline{K}};\QQ_{p})$ is a de Rham representation of $\Gal(\overline{K}/K)$ for every $n$.
    \item If $\chi$ is a connected smooth proper scheme over $\mathcal{O}_{K}$, then $H^n_{\et}(\chi_{\overline{K}};\QQ_{p})$ is a crystalline representation of $\Gal(\overline{K}/K)$ for every $n$.
\end{enumerate}
\end{theorem}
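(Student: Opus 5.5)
The plan is to derive both items from $p$-adic comparison isomorphisms and then turn each comparison into the dimension equality that defines de Rham or crystalline. The general principle is the following. Let $B$ be one of $B_{\dR}$ and $B_{\crys}$, and let $E=B^{\Gal(\overline{K}/K)}$. Suppose we have a $\Gal(\overline{K}/K)$-equivariant isomorphism
\[
H^n_{\et}(X_{\overline{K}};\QQ_p)\otimes_{\QQ_p}B\cong D\otimes_{E}B,
\]
where $D$ is a finite-dimensional $E$-vector space with trivial Galois action. Taking invariants gives $(B\otimes_{\QQ_p}H^n_{\et})^{\Gal(\overline{K}/K)}\cong D$. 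Comparing $B$-ranks on the two sides of the isomorphism gives $\dim_E D=\dim_{\QQ_p}H^n_{\et}$. These two facts together are exactly the required dimension equality, so it suffices to produce the comparison isomorphisms.

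\textbf{Item (1).} I would follow Scholze's approach via the pro-étale site.
\begin{enumerate}
\item Prove the primitive comparison theorem. For proper smooth $X$ over $C=\widehat{\overline{K}}$, the natural map $H^i_{\et}(X_C;\ZZ/p)\otimes\OO_C/p\to H^i_{\et}(X_C;\OO^+_X/p)$ is an almost isomorphism. This uses almost purity on affinoid perfectoid covers, together with Cartan--Serre finiteness for proper rigid spaces. From it one deduces $H^i_{\et}(X_C;\ZZ_p)\otimes B^+_{\dR}\cong H^i(X_{C,\mathrm{pro\acute et}};\mathbb{B}^+_{\dR})$.
\item Introduce the period sheaf $\OO\mathbb{B}_{\dR}$ with its connection and filtration, and prove the Poincaré lemma: the de Rham complex of $\OO\mathbb{B}_{\dR}$ resolves $\mathbb{B}_{\dR}$. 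Computing locally on toric charts identifies the pushforward of $\OO\mathbb{B}_{\dR}$ with $\OO_X\widehat\otimes B_{\dR}$.
\item Combine the first two steps to obtain a Galois-equivariant, filtration-compatible isomorphism $H^n_{\et}\otimes B_{\dR}\cong H^n_{\dR}(X/K)\otimes_K B_{\dR}$. Here $H^n_{\dR}(X/K)$ is finite over $K$ by properness, via Kiehl's finiteness theorem and the Hodge--de Rham spectral sequence.
\end{enumerate}
Feeding this comparison into the principle above with $E=K$ shows that $H^n_{\et}(X_{\overline{K}};\QQ_p)$ is de Rham.

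\textbf{Item (2).} I would prove the crystalline comparison $H^n_{\et}(\chi_{\overline{K}};\QQ_p)\otimes B_{\crys}\cong H^n_{\crys}(\chi_{k}/W(k))[1/p]\otimes_{K_0}B_{\crys}$, compatibly with Galois action, Frobenius and filtrations. Two routes are available. One is Faltings' route through almost étale extensions and the sheaf $\OO\mathbb{B}_{\crys}$ on the pro-étale site of the generic fiber, with a crystalline Poincaré lemma that uses the integral model $\chi$. The other, which I would try first, is the $A_{\inf}$-cohomology of Bhatt--Morrow--Scholze: base-changing $R\Gamma_{A_{\inf}}(\chi_{\OO_C})$ along $A_{\inf}\to A_{\crys}$ recovers crystalline cohomology of the special fiber, while inverting $\mu$ recovers étale cohomology. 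Since $H^n_{\crys}$ is finite over $K_0$, the principle above with $E=K_0$ shows that $H^n_{\et}(\chi_{\overline{K}};\QQ_p)$ is crystalline.

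\textbf{Main obstacle.} The expected bottleneck is the comparison input itself. For item (1) this is the primitive comparison theorem, which rests on almost purity and on finiteness of $\OO^+/p$-cohomology up to almost zero. For item (2) it is the integral crystalline comparison, in particular matching $A_{\crys}$-base change with crystalline cohomology, or equivalently the crystalline Poincaré lemma. Once these are in hand, the deductions of the de Rham and crystalline properties are formal.
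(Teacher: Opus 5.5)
The paper gives no proof of its own: it quotes the result from Faltings and from Scholze's Theorem 8.4. Your outline correctly sketches those cited arguments (primitive comparison plus the $\OO\mathbb{B}_{\dR}$ Poincar\'e lemma for (1); Faltings' almost-\'etale proof or the Bhatt--Morrow--Scholze reproof for (2)), and your step from a Galois-equivariant comparison isomorphism to the defining dimension equality is right. One imprecision: base change of $A_{\inf}$-cohomology to $A_{\crys}$ gives $R\Gamma_{\crys}(\chi_{\OO_C/p}/A_{\crys})$, not the crystalline cohomology of the special fiber, and identifying it Galois-equivariantly after inverting $p$ with $H^n_{\crys}(\chi_k/W(k))[1/p]\otimes_{K_0}B^+_{\crys}$ needs the usual Frobenius-twist argument when $K/K_0$ is ramified --- the step you already flagged as the bottleneck.
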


We now prove Lemma \ref{Lem: Closure of algebraic operations of de Rham representations}.

\begin{proof}[Proof of Lemma \ref{Lem: Closure of algebraic operations of de Rham representations}]
We only prove the lemma for successively de Rham representations, since the proof for  successively crystalline representations is identical. 

Let $0\rightarrow U\rightarrow V\xrightarrow{\pi} W\rightarrow 0$ be a short exact sequence of continuous finite-dimensional $\QQ_p$-representations of $\Gal(\overline{K}/K)$. Suppose first that $V$ is successively de Rham, and choose a $\Gal(\overline{K}/K)$-stable filtration $0=F^0V\subset F^1V\subset ...\subset F^mV=V$ with de Rham graded pieces. The intersections $U\cap F^iV$ form a finite $\Gal(\overline{K}/K)$-stable filtration of $U$. For every $i$, the natural map $(U\cap F^iV)/(U\cap F^{i-1}V)\rightarrow F^iV/F^{i-1}V$ is injective. By Proposition \ref{Fact: properties of de Rahm crystalline representations}, every graded piece  of this filtration of $U$ is de Rham. Hence, $U$ is successively de Rham. An analogous argument for $W$ shows that it is successively de Rham. Therefore, successively de Rham representations are closed under subquotients.

Now suppose that $U$ and $W$ are successively de Rham, and choose $\Gal(\overline{K}/K)$-stable filtrations $0=F^0U\subset F^1U\subset ...\subset F^mU=U$ and $0=G^0W\subset G^1W\subset ...\subset G^nW=W$ whose graded pieces are de Rham. Then these filtrations combine to give a $\Gal(\overline{K}/K)$-stable filtration of $V$: $0=F^0U\subset F^1U\subset ...\subset F^mU=\pi^{-1}(G^0W)\subset \pi^{-1}(G^1W)\subset ...\subset \pi^{-1}(G^nW)=V$. The graded pieces arising from the first part are the graded pieces of $U$, while those arising from the second part satisfy $\pi^{-1}(G^{j}W)/\pi^{-1}(G^{j-1}W)\cong G^jW/G^{j-1}W$. All of these representations are de Rham, so $V$ is successively de Rham. This proves closure under extensions. 

Next, let $U$ and $W$ be successively de Rham representations, equipped with the $\Gal(\overline{K}/K)$-stable filtrations chosen above. Define a $\Gal(\overline{K}/K)$-stable filtration on $U\otimes_{\QQ_p}W$ by $T^r(U\otimes_{\QQ_p}W)=\sum_{i+j\leq r}F^iU\otimes_{\QQ_p} G^jW$. Since its $r$-graded piece $\Gr^r_{T}(U\otimes_{\QQ_p}W)\cong \bigoplus_{i+j=r} \Gr^i_{F}(U)\otimes_{\QQ_p}\Gr^j_{G}(W)$, $\Gr^r_{T}(U\otimes_{\QQ_p}W)$ is de Rham by Proposition \ref{Fact: properties of de Rahm crystalline representations}. This shows closure under tensor products. 

Finally, let $V$ be a successively de Rham representation, and choose a $\Gal(\overline{K}/K)$-stable filtration $0=F^0V\subset F^1V\subset ...\subset F^mV=V$ with de Rham graded pieces. Define $E^{m-i}V^{\vee}:=(F^iV)^{\perp}=\{\lambda\in V^{\vee}:\lambda(F^iV)=0\}$. These annihilators form a $\Gal(\overline{K}/K)$-stable filtration $0=E^0V^{\vee}\subset ...\subset E^mV^{\vee}=V^{\vee}$ of $V^{\vee}$. Since $E^{r}V^{\vee}/E^{r-1}V^{\vee}=(F^{m-r}V)^{\perp}/(F^{m-r+1}V)^{\perp}\cong (F^{m-r+1}V/F^{m-r}V)^{\vee}$, $E^{r}V^{\vee}/E^{r-1}V^{\vee}$ is de Rham by Proposition \ref{Fact: properties of de Rahm crystalline representations}. This proves closure under duals. 
\end{proof}

\section{Continuous Mal'cev $\QQ_p$-Completion of Pro-$p$ Groups}

In this appendix, we briefly review the definition and several properties of the continuous Mal'cev $\QQ_p$-completion of pro-$p$ groups. Our main reference is \cite{Hu-Wang-continuou-Malcev-completion}.

\begin{definition}[\cite{Betts-thesis}*{Definition-Lemma 2.3.1}]\label{Fact: definition of continuous Malcev completion}
Let $G$ be a topological group, and let $k$ be a topological field. Consider the functor from pro-unipotent groups over $k$ to sets defined by $F(U)=\homo_{\cont}(G,U(k))$, which consists of continuous homomorphisms $G\rightarrow U(k)$. This functor is represented by a pro-unipotent group $G\widehat{\otimes} k$, which is called the \textbf{continuous Mal'cev $k$-completion} of $G$.
\end{definition}

\begin{proposition}[\cite{Hu-Wang-continuou-Malcev-completion}*{Proposition 1.2}]\label{Prop: Malcev completion is the inverse limit of Malcev completion of lower central series}
Retain the same assumptions of Definition \ref{Fact: definition of continuous Malcev completion}.
The natural morphism
$G\widehat{\otimes} k\rightarrow \varprojlim (G/\overline{\Gamma^n G})\widehat{\otimes} k$ is an isomorphism of pro-unipotent groups over $k$.
\end{proposition}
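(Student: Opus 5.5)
The statement to prove is the last one displayed, Proposition \ref{Prop: Malcev completion is the inverse limit of Malcev completion of lower central series}: the natural morphism $G\widehat{\otimes} k\rightarrow \varprojlim_n (G/\overline{\Gamma^n G})\widehat{\otimes} k$ is an isomorphism of pro-unipotent groups. The plan is to compare the two sides through the functors they represent.

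\textbf{Step 1: reduce to a statement about homomorphisms.} By Definition \ref{Fact: definition of continuous Malcev completion}, $G\widehat{\otimes} k$ represents $U\mapsto \homo_{\cont}(G,U(k))$ on pro-unipotent groups $U$ over $k$. For the target, a morphism $\varprojlim_n (G/\overline{\Gamma^nG})\widehat{\otimes} k\rightarrow U$, with $U$ unipotent (of finite type), factors through some finite stage. This holds because the inverse limit of affine group schemes corresponds to a filtered colimit of Hopf algebras, and $\mathcal{O}(U)$ is finitely generated. So the target represents $U\mapsto\varinjlim_n \homo_{\cont}(G/\overline{\Gamma^nG},U(k))$ on unipotent $U$. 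A pro-unipotent group is a cofiltered limit of unipotent ones, and both sides send such a limit to a limit. By Yoneda it therefore suffices to show, for every unipotent algebraic group $U$ over $k$, that
\[
\varinjlim_n \homo_{\cont}(G/\overline{\Gamma^nG},U(k))\rightarrow \homo_{\cont}(G,U(k))
\]
is a bijection. Here the map is precomposition with the projections $G\twoheadrightarrow G/\overline{\Gamma^nG}$.

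\textbf{Step 2: injectivity.} Each projection $G\twoheadrightarrow G/\overline{\Gamma^nG}$ is surjective, so precomposition with it is injective. The transition maps in the colimit are also injective, hence so is the colimit map.

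\textbf{Step 3: surjectivity.} A unipotent $U$ is nilpotent, say of class $c$, so $\Gamma^{c+1}U(k)=1$. Let $f:G\rightarrow U(k)$ be continuous. Then $f(\Gamma^{c+1}G)\subset\Gamma^{c+1}U(k)=1$. To get $f(\overline{\Gamma^{c+1}G})=1$, use that $U(k)$ is Hausdorff (it embeds as a closed subgroup of some $GL_N(k)$ and $k$ is Hausdorff), so $\{1\}$ is closed and $f^{-1}(1)$ is a closed subgroup containing $\Gamma^{c+1}G$. Thus $f$ factors through the abstract quotient $G/\overline{\Gamma^{c+1}G}$.

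\textbf{Main obstacle: continuity of the factored map.} The factored map is continuous in the quotient topology, since a quotient map to a topological group quotient carries that topology. One must check that this is the topology the paper intends on $G/\overline{\Gamma^nG}$; for profinite $G$ the quotient by a closed normal subgroup is profinite with exactly this topology, so the check is routine. A second point needs care: the Hausdorff hypothesis on $k$. If $k$ is not assumed Hausdorff, one would instead argue with closed subgroups of $U(k)$ containing the identity; I would add the Hausdorff assumption if necessary, as the paper does elsewhere.
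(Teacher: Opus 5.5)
Your proposal is correct. The paper does not prove this statement itself; it imports it from Hu--Wang's work on continuous Mal'cev completion, so there is no in-paper argument to compare against. Your argument is a complete, self-contained proof. The three ingredients you use are exactly what is needed:
\begin{itemize}
\item a Yoneda reduction to unipotent targets, where maps out of $\varprojlim_n$ factor through a finite stage because $\mathcal{O}(U)$ is finitely presented;
\item nilpotence of $U(k)$;
\item closedness of $\ker f$ in $G$.
\end{itemize}
The step you flag as the main obstacle is immediate. A map out of $G/\overline{\Gamma^{c+1}G}$ with the quotient topology is continuous as soon as its composite with the projection is.

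One correction concerns your closing remark. The fallback ``argue with closed subgroups of $U(k)$ containing the identity'' does not rescue the non-Hausdorff case. In a topological field $k$, the closure $\overline{\{0\}}$ is an ideal, so a non-Hausdorff topological field is indiscrete. Then $U(k)$ is indiscrete and every abstract homomorphism $G\to U(k)$ is continuous. In that case the statement genuinely fails. Take $G$ to be a free group with the indiscrete topology. Then $\overline{\Gamma^n G}=G$ for all $n$, so the right-hand side is trivial. However, $G\widehat{\otimes}k$ is the classical unipotent completion of the free group, which is nontrivial. So Hausdorffness of $k$ is a real hypothesis, not a convenience. It is implicit in ``topological field'' here and explicit in all of the paper's applications. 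Your Step 3 should simply assume it rather than offer an alternative.
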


Continuous Mal'cev completion is compatible with extension of the base field under a condition that controls the relevant field topologies.

\begin{definition}[\cite{Warner-topological-rings}*{Theorem 15.5}]\label{Fact: properties of straight topological field}
Let $k$ be a Hausdorff topological field. The field $k$ is \textbf{straight} if every linear functional on a Hausdorff topological $k$-vector space with closed kernel is continuous.
\end{definition}

\begin{example}[\cite{Warner-topological-rings}*{Theorem 13.8, Theorem 14.12}]\label{Example: Valution fields are straight}
Every valuation field, equipped with its valuation topology, is straight. For example, $\QQ_p$ is straight. \qed
\end{example}

\begin{definition}[\cite{Hu-Wang-continuou-Malcev-completion}*{Definitions A.11, A.13}]\label{Def: weakly cartesian subfield}
Let $k$ be a closed subfield of a Hausdorff topological field $K$. We call $k\subset K$ a \textbf{weakly cartesian pair} if $K$, regarded as a topological vector space over $k$, satisfies the following conditions:
\begin{itemize}[leftmargin=0.25in]
    \item every finite-dimensional subspace is linearly homeomorphic to the product topological space $k^r$ for some $r$;
    \item every finite-dimensional subspace is closed.
\end{itemize}
In this case, $k$ is called a \textbf{weakly cartesian subfield} of $K$.
\end{definition}

\begin{example}[\cite{Hu-Wang-continuou-Malcev-completion}*{Theorem A.10, Example A.14}]\label{Example: weakly cartesian pair}
$\QQ_p$ is a weakly cartesian subfield of any closed subfield of the completion $\CC_p$ of the algebraic closure of $\QQ_p$. 
\end{example}

\begin{proposition}[\cite{Hu-Wang-continuou-Malcev-completion}*{Proposition 1.6}]\label{Prop: base change of Malcev completion}
Let $G$ be a topologically finitely generated topological group. Let $k\subset K$ be a weakly cartesian pair of Hausdorff topological fields. Assume that $k$ is straight. Then there is a canonical isomorphism $G\widehat{\otimes}K \cong (G\widehat{\otimes} k)_K$, where the right-hand side denotes base change from $k$ to $K$.
\end{proposition}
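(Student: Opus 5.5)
Since $G\widehat{\otimes}K$ and $(G\widehat{\otimes}k)_K$ are both pro-unipotent groups over $K$, I will compare the functors they represent and invoke Yoneda's lemma. For a pro-unipotent group $U$ over $K$, the definition gives $\homo(G\widehat{\otimes}K,U)=\homo_{\cont}(G,U(K))$. By adjunction for base change, $\homo((G\widehat{\otimes}k)_K,U)=\homo(G\widehat{\otimes}k,R_{K/k}U)$, where $R_{K/k}$ is a suitable restriction of scalars. Since $K/k$ is usually infinite, $R_{K/k}$ is not a pro-unipotent group, and this step must be avoided. Instead I will build the comparison map directly. The universal continuous homomorphism $G\to (G\widehat{\otimes}k)(k)\subset (G\widehat{\otimes}k)(K)$ is continuous for the topology induced from $K$, because $k\subset K$ is closed. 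By the universal property it induces a morphism $\Phi:G\widehat{\otimes}K\to (G\widehat{\otimes}k)_K$. The task is then to show that $\Phi$ is an isomorphism.

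By Proposition \ref{Prop: Malcev completion is the inverse limit of Malcev completion of lower central series}, it suffices to treat each quotient $G/\overline{\Gamma^nG}$, because base change commutes with inverse limits. We may therefore assume $G$ is nilpotent, and in that case both completions are unipotent groups. I will then argue by induction on the nilpotency class, using central extensions. For a unipotent $U$, the exponential identifies $U$ with its Lie algebra, and continuous homomorphisms $G\to U(K)$ can be built up through a central series of $U$. The base case is abelian $U\cong\mathbb{G}_a^r$, where I need $\homo_{\cont}(G,K)=\homo_{\cont}(G,k)\otimes_k K$. Because $G$ is topologically finitely generated, a continuous homomorphism $\lambda:G\to K$ takes values in the $k$-span $W$ of the images of finitely many generators, or rather in its closure. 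By the weakly cartesian hypothesis $W$ is closed and linearly homeomorphic to $k^r$. So $\lambda$ factors continuously through $k^r$, which gives surjectivity of $\homo_{\cont}(G,k)\otimes K\to\homo_{\cont}(G,K)$. Injectivity holds because the coordinate functionals on $W$ are continuous, and straightness of $k$ is what I will use to guarantee that linear functionals with closed kernel are continuous. For the inductive step, a lift to a central extension by $\mathbb{G}_a^m$ is governed by $H^2_{\cont}(G;-)$ with coefficients $K^m$ versus $k^m$. The same finite-dimensional-subspace argument, applied to continuous cochains $G^{2}\to K$ whose values lie in a finite-dimensional $k$-subspace, should give $H^i_{\cont}(G;K)=H^i_{\cont}(G;k)\otimes_kK$ for $i\le 2$.

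The main obstacle is that inductive step. It is not clear that a continuous $2$-cochain $G\times G\to K$ has image in a finite-dimensional $k$-subspace, because $G\times G$ is not finitely generated in any sense. I would try to restrict to cocycles arising from polynomial maps in Mal'cev coordinates, where the values are controlled by finitely many coefficients. Alternatively, I would use the Lie algebra $\Lie(G\widehat{\otimes}K)$ and show it equals $\Lie(G\widehat{\otimes}k)\otimes_kK$ by the abelian argument applied to each graded piece $\Gamma^n/\Gamma^{n+1}$, and then conclude with the five lemma for unipotent groups.
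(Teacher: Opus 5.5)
There is a genuine gap: the proof stops exactly where the content of the proposition lies.

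\textbf{What is fine.} The comparison map $\Phi$, the reduction to the quotients $G/\overline{\Gamma^nG}$, and the abelian case all work. Two reasons are misattributed, though neither matters:
\begin{itemize}
\item Continuity of $G\to (G\widehat{\otimes}k)(K)$ comes from $k$ carrying the subspace topology, not from $k$ being closed.
\item Injectivity of $\homo_{\cont}(G,k)\otimes_kK\to\homo_{\cont}(G,K)$ is purely algebraic (linear independence over $k$). It is surjectivity that uses $W\cong k^r$.
\end{itemize}

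\textbf{Where the gap is.} Surjectivity of $\Phi$ is easy. The image of $G$ is Zariski dense in $G\widehat{\otimes}k$, hence also in $(G\widehat{\otimes}k)_K$, and images of morphisms of (pro-)unipotent groups are closed. So everything rests on showing that every continuous $\rho\colon G\to U(K)$, with $U$ unipotent over $K$, factors through $(G\widehat{\otimes}k)_K$. Neither of your routes does this.
\begin{itemize}
\item The identity $H^i_{\cont}(G;K)=H^i_{\cont}(G;k)\otimes_kK$ for $i\le 2$ is unproved; as you note, continuous $2$-cochains need not have finite-dimensional $k$-span. It is also not what the lifting argument needs. What is needed is that an extension class in the \emph{algebraic} cohomology of $(G\widehat{\otimes}k)_K$ which dies on $G$ already dies algebraically.
\item Alternative (b) is circular. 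Identifying $\Gamma^n\Lie(G\widehat{\otimes}K)/\Gamma^{n+1}$ with a completion of $\Gamma^nG/\overline{\Gamma^{n+1}G}$ requires exactness of $(-)\widehat{\otimes}K$ along the lower central series. That is not available for an arbitrary topologically finitely generated group; the paper only quotes it for nilpotent pro-$p$ groups. It carries the same difficulty as the proposition, so the five lemma has no input on the outer terms.
\end{itemize}

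\textbf{The missing idea.} Apply your finite-span argument to the Lie algebra of the \emph{target} rather than to cochains.
\begin{itemize}
\item Let $\mathfrak u=\Lie U\cong K^n$, let $g_1,\dots,g_m$ topologically generate $G$, and let $\mathfrak h\subset\mathfrak u$ be the $k$-Lie subalgebra generated by $\log\rho(g_1),\dots,\log\rho(g_m)$.
\item Since $\mathfrak u$ is nilpotent, $\mathfrak h$ is the $k$-span of brackets of bounded length in these elements, so it is finite-dimensional over $k$.
\item By the weakly cartesian hypothesis, applied to the coordinate projections of $\mathfrak u\cong K^n$, $\mathfrak h$ is closed and linearly homeomorphic to $k^r$.
\item By the Baker--Campbell--Hausdorff formula, $\exp(\mathfrak h)$ is a closed subgroup of $U(K)$. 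It contains the dense subgroup generated by the $\rho(g_i)$, so $\rho(G)\subset\exp(\mathfrak h)=U_{\mathfrak h}(k)$, where $U_{\mathfrak h}$ is the unipotent $k$-group with Lie algebra $\mathfrak h$. Moreover $\rho$ is continuous for the natural topology on $U_{\mathfrak h}(k)$.
\item The universal property over $k$ gives $G\widehat{\otimes}k\to U_{\mathfrak h}$. Base change, followed by the morphism $(U_{\mathfrak h})_K\to U$ induced by the $K$-Lie map $\mathfrak h\otimes_kK\to\mathfrak u$, yields $(G\widehat{\otimes}k)_K\to U$ restricting to $\rho$.
\item Uniqueness follows from Zariski density of the image of $G$ in $(G\widehat{\otimes}k)_K$.
\end{itemize}
This proves the universal property directly, with no induction, no cohomology, and no reduction to nilpotent $G$. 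Your route (a) can also be rescued this way. The relevant obstruction cocycle is the restriction to $G\times G$ of an algebraic cocycle on $(G\widehat{\otimes}k)_K$, so it \emph{does} take values in a finite-dimensional $k$-subspace.
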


For nilpotent pro-$p$ groups, continuous Mal'cev $\QQ_p$-completion has the following useful properties, which we use throughout this manuscript.

\begin{lemma}[\cite{Hu-Wang-continuou-Malcev-completion}*{Lemma 4.13}]\label{Lem: Malcev completion preserves exactness of nilpotent groups}
Let $1\rightarrow G_1\rightarrow G_2\rightarrow G_3\rightarrow 1$ be a short exact sequence of topologically finitely generated nilpotent pro-$p$ groups and continuous homomorphisms. Assume that $\QQ_p\subset k$ is a weakly cartesian pair of Hausdorff topological fields. Then the induced sequence $1\rightarrow G_1\widehat{\otimes}k\rightarrow G_2\widehat{\otimes}k\rightarrow G_3\widehat{\otimes}k\rightarrow 1$ is exact.
\end{lemma}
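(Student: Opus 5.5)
The plan is to reduce the statement to classical Mal'cev theory for finitely generated nilpotent Lie groups. The central input is Proposition~\ref{Prop: Malcev completion is the inverse limit of Malcev completion of lower central series}: the continuous Mal'cev completion is the inverse limit of the completions of the nilpotent quotients $G/\overline{\Gamma^n G}$. Exactness can then be checked at each finite nilpotency level, using a filtration argument on the Mal'cev basis coordinates.

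\emph{Step 1: reduce to $k=\QQ_p$.}
\begin{itemize}
\item Since $\QQ_p$ is straight (Example~\ref{Example: Valution fields are straight}), Proposition~\ref{Prop: base change of Malcev completion} identifies $G_i\widehat{\otimes}k$ with the base change $(G_i\widehat{\otimes}\QQ_p)_k$.
\item Base change of unipotent groups along a field extension is exact.
\item It therefore suffices to prove exactness of $1\rightarrow G_1\widehat{\otimes}\QQ_p\rightarrow G_2\widehat{\otimes}\QQ_p\rightarrow G_3\widehat{\otimes}\QQ_p\rightarrow 1$.
\end{itemize}

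\emph{Step 2: describe $G\widehat{\otimes}\QQ_p$ for a topologically finitely generated nilpotent pro-$p$ group $G$.}
\begin{itemize}
\item The torsion subgroup $\Tor(G)$ is finite and normal, and killing it does not change the completion (as used in the proof of Theorem~\ref{Thm: minimal model computes rational homotopy groups}).
\item So assume $G$ is torsion free. Then $G$ has a Mal'cev basis $y_1,\dots,y_s$, giving a homeomorphism $\widehat{\ZZ}_p^s\cong G$, $(\alpha_i)\mapsto y_1^{\alpha_1}\cdots y_s^{\alpha_s}$.
\item In these coordinates the group law is polynomial with $\QQ_p$-coefficients. The same polynomials define a unipotent group $U_G$ over $\QQ_p$ with $U_G(\QQ_p)=\QQ_p^s\supset G$.
\item I would verify that $U_G$ represents the Mal'cev functor. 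A continuous homomorphism $G\rightarrow U(\QQ_p)$ is determined on the lattice $\widehat{\ZZ}_p^s$. Continuity plus the polynomial nature of $p$-adic powers in a unipotent group ($u^\alpha=\exp(\alpha\log u)$) forces it to extend uniquely to an algebraic map on $\QQ_p^s$.
\end{itemize}

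\emph{Step 3: use a Mal'cev basis adapted to the extension.}
\begin{itemize}
\item First handle torsion. Since $\Tor(G_2)\cap G_1=\Tor(G_1)$ and the image of $\Tor(G_2)$ in $G_3$ lies in $\Tor(G_3)$, the sequence of torsion-free quotients is exact up to finite kernel and cokernel. Finite groups have trivial $\QQ_p$-completion, so this does not affect the completed sequence.
\item In the torsion-free case, $G_3$ torsion free means $G_1$ is isolated in $G_2$. So a Mal'cev basis of $G_1$ extends to a Mal'cev basis $z_1,\dots,z_r,y_1,\dots,y_s$ of $G_2$ whose image $\bar y_1,\dots,\bar y_s$ is a Mal'cev basis of $G_3$. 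This is the same construction used for the center in the proof of Theorem~\ref{Thm: minimal model computes rational homotopy groups}(3), now with a filtration through $G_1$ refining the lower central series.
\item In these coordinates $U_{G_1}\rightarrow U_{G_2}\rightarrow U_{G_3}$ is the inclusion $\QQ_p^r\hookrightarrow\QQ_p^{r+s}$ followed by the projection to $\QQ_p^s$. This is visibly exact as unipotent groups; at the Lie algebra level it is exact sequences of finite-dimensional nilpotent Lie algebras.
\end{itemize}

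The main obstacle I expect is in Step 3, specifically the torsion-free reduction: checking that the map of completions is injective, i.e.\ that $G_1\widehat{\otimes}\QQ_p\rightarrow G_2\widehat{\otimes}\QQ_p$ has no kernel. This needs the $\QQ_p$-Zariski closure of $G_1$ in $U_{G_2}$ to be $U_{G_1}$, for which I would first try compatibility of the Mal'cev coordinates (the $\widehat{\ZZ}_p$-span closure). Surjectivity onto $G_3\widehat{\otimes}\QQ_p$ is easier: $\QQ_p$-points of unipotent groups are generated by $p$-adic powers of lattice elements.
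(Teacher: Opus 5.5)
The paper does not prove this lemma (it is quoted from Hu--Wang), so I am judging your argument on its own terms. The genuine gap is the torsion reduction in Step~3. Write $T_i=\Tor(G_i)$, $\bar G_i=G_i/T_i$, and let $\pi\colon G_2\to G_3$ be the projection. The induced sequence $\bar G_1\to\bar G_2\to\bar G_3$ is injective on the left and surjective on the right. It is not exact in the middle, however: its middle homology is $\pi^{-1}(T_3)/G_1T_2$, a quotient of $T_3$. Already for $1\to p\widehat{\ZZ}_p\to\widehat{\ZZ}_p\to\ZZ/p\to 1$ this homology is $\ZZ/p$. ``Finite groups have trivial $\QQ_p$-completion'' does not dispose of this. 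What you need is that the open, normal, finite-index inclusion $\bar G_1\hookrightarrow K:=\Ker(\bar G_2\to\bar G_3)$ induces an isomorphism $\bar G_1\widehat{\otimes}\QQ_p\cong K\widehat{\otimes}\QQ_p$. Deducing that from $1\to\bar G_1\to K\to K/\bar G_1\to 1$ would use the lemma itself for a sequence with finite quotient, which is precisely the case your torsion-free argument does not cover.

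To repair it, run your torsion-free argument on $1\to K\to\bar G_2\to\bar G_3\to 1$; here $K$ is torsion free as a subgroup of $\bar G_2$. Then prove separately that for an open subgroup $H$ of a torsion-free topologically finitely generated nilpotent pro-$p$ group $H'$, the map $U_H\to U_{H'}$ is an isomorphism. Intersecting a Mal'cev series of $H'$ with $H$ gives a central series of $H$ with the same number $n$ of $\widehat{\ZZ}_p$-factors, so both groups have dimension $n$. In the Mal'cev coordinates of $H'$, $H$ is open in $\widehat{\ZZ}_p^n$, hence Zariski dense in $\QQ_p^n$. So the (closed) image of $U_H\to U_{H'}$ is everything, and the kernel is a zero-dimensional unipotent group in characteristic zero, hence trivial.

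By contrast, the injectivity you flagged as the main obstacle is automatic once the coordinates are adapted. Write elements of $G_2$ as $y^{\beta}z^{\gamma}$, with the $z$'s a Mal'cev basis of $G_1$. On $\widehat{\ZZ}_p$-points, the $\beta$-components of the polynomial group law coincide with the law of $G_3$ in $\beta,\beta'$ alone, hence identically by Zariski density. So $\{\beta=0\}$ is a closed subgroup carrying the law of $U_{G_1}$, and it is the kernel of the coordinate projection to $U_{G_3}$. The functorially induced maps are this inclusion and this projection, since they agree on the Zariski-dense subgroups $G_1$ and $G_2$.

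One correction there: a central series through $G_1$ cannot in general refine the lower central series of $G_2$. For example, take $G_1=1\times\widehat{\ZZ}_p$ inside the product of the Heisenberg group over $\widehat{\ZZ}_p$ with $\widehat{\ZZ}_p$. Instead, use $G_1\Gamma^jG_2$ above $G_1$ and $G_1\cap\Gamma^jG_2$ below it. Pass to isolators (legitimate because $G_2$ and $G_2/G_1$ are torsion free), refine to $\widehat{\ZZ}_p$-factors, and place the generators of $G_1$ last.
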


\begin{proposition}[\cite{Hu-Wang-continuou-Malcev-completion}*{Proposition 4.14}]\label{Prop: lower central series of Lie algebra and malcev completion of lower central series}
Let $G$ be a topologically finitely generated pro-$p$ group, and assume that $\QQ_p\subset k$ is a weakly cartesian pair of Hausdorff topological fields. Then, for every $i$,
the Lie algebra $\Lie(G\widehat{\otimes}k)/\Gamma^i\Lie(G\widehat{\otimes}k)$ is naturally isomorphic to $\Lie((G/\Gamma^iG)\widehat{\otimes}k)$.
\end{proposition}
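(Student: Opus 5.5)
The natural approach is to construct two mutually inverse morphisms of pro-unipotent groups over $k$, using only the universal property of Definition \ref{Fact: definition of continuous Malcev completion}. I would then pass to Lie algebras via the exponential correspondence, which is an equivalence between pro-unipotent groups and pro-nilpotent Lie algebras in characteristic zero. Write $U=G\widehat{\otimes}k$, $\mathfrak u=\Lie(U)$, $G_i=G/\Gamma^iG$ and $U_i=G_i\widehat{\otimes}k$. Since $G$ is a topologically finitely generated pro-$p$ group, each $\Gamma^iG$ is closed, so $G_i$ is a topologically finitely generated nilpotent pro-$p$ group of class $<i$.

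\textbf{Step 1 (from $U$ to $U_i$).} The continuous surjection $G\to G_i$ induces, by functoriality, a morphism $q:U\to U_i$. I claim $U_i$ has nilpotency class $<i$. The reason is that the map $G_i\to U_i(k)$ has Zariski-dense image, and Zariski closure of an abstract subgroup commutes with forming iterated commutator subgroups in unipotent groups. Hence $\Gamma^iU_i=1$ and $\Gamma^i\Lie(U_i)=0$. The map $\Lie(q)$ is surjective: the image of $q$ is a closed subgroup that contains the Zariski-dense image of $G_i$, and Lemma \ref{Lem: Malcev completion preserves exactness of nilpotent groups} together with Proposition \ref{Prop: Malcev completion is the inverse limit of Malcev completion of lower central series} can be used to make this precise. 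Therefore $\Lie(q)$ descends to a surjection $\bar q:\mathfrak u/\Gamma^i\mathfrak u\to \Lie(U_i)$.

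\textbf{Step 2 (back from $U_i$ to $U$).} Let $\bar U$ be the pro-unipotent group with Lie algebra $\mathfrak u/\Gamma^i\mathfrak u$. Since $\Gamma^i\mathfrak u$ is a closed ideal of the pro-nilpotent Lie algebra, $\bar U=U/N$, where $N=\exp(\Gamma^i\mathfrak u)$. By the Baker--Campbell--Hausdorff formula, $\Gamma^iU$ corresponds to $\Gamma^i\mathfrak u$, so $\bar U$ has class $<i$. The composite $G\to U(k)\to\bar U(k)$ is a continuous homomorphism into a group of class $<i$. It therefore kills $\Gamma^iG$, and by closedness of $\Gamma^iG$ it factors continuously through $G_i$. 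The universal property of $U_i$ then yields a morphism $s:U_i\to\bar U$.

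\textbf{Step 3 (the two maps are inverse).} Consider the composites $s\circ\bar q$ and $\bar q\circ s$. Each is compatible with the canonical maps from $G$ (respectively $G_i$). By the uniqueness in the universal property of $U_i$, the composite $\bar q\circ s$ is the identity of $U_i$. For the other composite, the maps $U\to\bar U\xrightarrow{s\bar q}\bar U$ and $U\to\bar U$ agree on the Zariski-dense image of $G$, so by the universal property of $U$ they coincide. Since $U\to\bar U$ is surjective, this forces $s\circ\bar q=\mathrm{id}$. Passing to Lie algebras gives the asserted natural isomorphism. Naturality in $G$ follows because every map involved is defined by a universal property.

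\textbf{Expected main obstacle.} The hard part is the Lie-theoretic bookkeeping in the pro-setting. Concretely, one must justify that $\Lie(U/\Gamma^iU)=\mathfrak u/\Gamma^i\mathfrak u$, that the relevant commutator subgroups and ideals are closed, and that Zariski density of the image of $G$ is preserved under these quotients. I would handle this first for the finite-dimensional quotients $U\to (G/\Gamma^nG)\widehat{\otimes}k$ with $n\ge i$, where everything is a statement about unipotent algebraic groups and BCH applies directly. I would then pass to the limit using Proposition \ref{Prop: Malcev completion is the inverse limit of Malcev completion of lower central series}, noting that the system stabilizes modulo $\Gamma^i$. The weakly cartesian hypothesis on $\QQ_p\subset k$ enters only through Proposition \ref{Prop: base change of Malcev completion}. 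That proposition allows me to reduce to $k=\QQ_p$, where the topology is unproblematic.
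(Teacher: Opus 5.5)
The paper gives no argument for this statement; it simply imports it from \cite{Hu-Wang-continuou-Malcev-completion}*{Proposition 4.14}. So there is no internal proof to match, and your Steps 1--3 stand as an independent proof. They are correct.

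In effect you show that $U_i=G_i\widehat{\otimes}k$ and $U/\overline{\Gamma^iU}$ both represent the functor $V\mapsto\homo_{\cont}(G,V(k))$ on pro-unipotent groups $V$ of class $<i$. For this you use only three inputs:
\begin{itemize}
\item the representing property of Definition \ref{Fact: definition of continuous Malcev completion}, which by itself forces the image of $G_i$ in $U_i(k)$ to be Zariski dense;
\item the easy inclusion $[\overline{A},\overline{B}]\subseteq\overline{[A,B]}$ for Zariski closures;
\item the characteristic-zero correspondence between pro-unipotent groups and pro-nilpotent Lie algebras.
\end{itemize}

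This buys you generality and independence from the source. Nothing in Steps 1--3 uses the weakly cartesian hypothesis. Your closing plan to reduce to $k=\QQ_p$ via Proposition \ref{Prop: base change of Malcev completion} is therefore unnecessary, though harmless, since $\QQ_p$ is straight. The appeal to Lemma \ref{Lem: Malcev completion preserves exactness of nilpotent groups} in Step 1 is also unnecessary. Surjectivity of $q$ follows directly: the image of a morphism of affine group schemes is a closed subgroup, and here it contains the Zariski-dense image of $G_i$.

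The points you still need to write out are routine:
\begin{itemize}
\item Either show that $\Gamma^i\mathfrak u$ is closed (for instance, using finitely many topological generators and linear compactness), or work with closures throughout.
\item Check $\Lie(U/\overline{\Gamma^iU})=\mathfrak u/\overline{\Gamma^i\mathfrak u}$ on the finite-dimensional quotients.
\item Note that $U\to\bar U$ is an epimorphism because it is injective on coordinate rings; this is what your cancellation of $s\circ\bar q$ in Step 3 relies on.
\end{itemize}
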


\bibliographystyle{amsalpha}
\bibliography{ref}

\Addresses

\end{document}